\tikzset{->-/.style={decoration={
			markings,
			mark=at position .45 with {\arrow{>}}},postaction={decorate}}}
\theoremstyle{definition}
\newtheorem{definition}{Definition}[section]
\newtheorem{remark}[definition]{Remark}
\newtheorem{notation}[definition]{Notation}
\theoremstyle{theorem}
\newtheorem{theorem}[definition]{Theorem}
\newtheorem{proposition}[definition]{Proposition}
\newtheorem{corollary}[definition]{Corollary}
\newtheorem{lemma}[definition]{Lemma}
\def\Z{\mathbb{Z}}
\def\R{\mathbb{R}}
\def\P{\mathbb{P}}
\def\M{\mathcal{M}}
\def\R{\mathcal{R}}
\def\CH{\mathrm{CH}}
\def\Sym{\mathrm{Sym}}
\def\RH{\mathcal{RH}}
\def\Sym{\mathrm{Sym}}
\def\det{\mathrm{det}}
\def\Gm{\mathbb{G}_m}
\def\GL{\mathrm{GL}}
\def\GI{(\Gm \times \Gm) \rtimes \mu_2}
\def\PGL{\mathrm{PGL}}
\def\SL{\mathrm{SL}}
\def\cO{\mathcal{O}}
\def\uD{\underline{\Delta}}
\def\cH{\mathcal{H}}
\def\cD{\mathcal{D}}
\def\cX{\mathcal{X}}
\def\cY{\mathcal{Y}}
\def\cZ{\mathcal{Z}}
\def\cL{\mathcal{L}}
\def\cT{\mathcal{T}}
\def\cM{\mathcal{M}}
\def\cS{\mathcal{S}}
\title{The Integral Chow Rings of the Moduli Stacks of Hyperelliptic Prym Pairs II}
\author{Alessio Cela and Alberto Landi}
\date{\vspace{-5ex}}
\begin{document}
	
	\maketitle
	
	\begin{abstract}
		
		This paper is the second in a series devoted to describing the integral Chow ring of the moduli stacks $\mathcal{RH}_g$ of hyperelliptic Prym pairs. For fixed genus $g$, the stack $\mathcal{RH}_g$ is the disjoint union of $\lfloor (g+1)/2 \rfloor$ components $\mathcal{RH}_g^n$ for $n = 1, \ldots, \lfloor (g+1)/2 \rfloor$. In this paper, we give presentations and compute the integral Chow rings of the components $\mathcal{RH}_g^{(g+1)/2}$ for odd $g$. As an application, we also obtain presentations and Chow rings for all irreducible components of the moduli stack of hyperelliptic Spin curves of odd genus. An intermediate result of independent interest is the computation of the integral Chow ring of the moduli stack of unordered pairs of divisors of the same even degree in $\mathbb{P}^1$.

	\end{abstract}
	
	\tableofcontents
	
	\section{Introduction}
	
	This paper concerns the study of the geometry of the stack parametrizing hyperelliptic Prym pairs and the computation of its integral Chow ring.
	
	The computation of Chow rings of moduli stacks has attracted significant interest in algebraic geometry, starting from the seminal work of Mumford~\cite{Mumford1983}, where he studies the rational Chow ring of the stack $\cM_g$ parametrizing smooth genus $g$ curves, as well as its Deligne-Mumford compactification $\overline{\cM}_g$. Since then, numerous computations have been carried out. As for $\cM_{g,n}$ the rational Chow ring of $\cM_g$ is known for $g\leq9$ (\cite{Mumford1983,Fab90I,Faber,Izadi,PenevVakil,CL23}). On the other hand, the integral case is considerably more difficult, as reflected by the comparatively smaller number of known results.  The integral Chow ring of the stack of $n$-pointed smooth genus $g$ curves $\cM_{g,n}$ is currently known only for the following few values of $(g,n)$: $(0,n)$ all $n$ (\cite{Kee92}), $(1,n)$ for $n\leq10$ (\cite{EG98,Inc22,AreObAbr,Bis24}), $(2,n)$ for $n\leq2$ (\cite{Vis98,Per22,Lan24}). When $(g,n)=(3,0)$ the almost integral Chow ring of $\cM_{g,n}$ is computed in\cite{Per23}.
	
	Other computations of Chow rings of moduli spaces related to curves involve stable maps \cite{Cavalieri}, admissible covers \cite{CL-hurwitz,CL-hurwitz2}, or geometrically significant substacks of $\mathcal{M}_{g,n}$, namely the hyperelliptic locus~\cite{EF09,FV11,DL18,Lan23,Lan24,EH22}.
	
	While substantial progress has been made and many conjectures formulated regarding the intersection theory of moduli stacks of curves, relatively little is known about the intersection theory of the moduli stack of Prym pairs. The program to compute the Chow rings of the moduli stack of Prym pairs was initiated in \cite{CIL24}, where the genus 2 case was addressed, and further developed in \cite{CLI}, where the computation of the integral Chow ring of the hyperelliptic locus in arbitrary genus was begun. This paper is the second in this ongoing series of works.
	
	A Prym pair of genus $g \geq 2$ is essentially an étale double cover $\pi: D \to C$ between smooth curves, with $C$ of genus $g$. In this sense, these stacks fit naturally into the framework of moduli stacks of covers of curves. Moreover, such a cover defines a principally polarized abelian variety of dimension $g-1$, known as the associated Prym variety. The close relationship between Prym pairs and abelian varieties has been extensively studied; see, for example, \cite{Mumford, Farkas, Bea, Beabis, DonagiI}. In particular, moduli stacks of Prym pairs play a significant role due to their rich interactions with several other important moduli stacks.
	
	\subsection{Main definitions}
	
	We briefly recall the definition of Prym pairs and their associated stack, referring to \cite[Section 2.1]{CIL24} and \cite[Section 1.1]{CLI} for a more extensive treatment.
	
	A \emph{Prym curve} of genus~$g$ over a scheme $S$ is a triple $(C/S, \eta, \beta)$, where $C \to S$ is a smooth, proper curve of genus~$g$ with geometrically connected fibers, $\eta \in \mathrm{Pic}(C)$ is a line bundle whose restriction to every geometric fiber of $C \to S$ is nontrivial, and $\beta: \eta^{\otimes 2} \xrightarrow{\sim} \mathcal{O}_C$ is an isomorphism of line bundles on $C$.
	
	The stack $\mathcal{R}_g$ is defined as the stackification of the prestack whose objects over a scheme $S$ are families $(C \to S, \eta, \beta)$ of genus $g$ Prym curves over $S$ and a morphism from
	$(C \to S, \eta, \beta)$ to $(C' \to S', \eta', \beta')$ consists of a cartesian diagram
	\begin{equation}\label{eqn: morphism prestack}
		\begin{tikzcd}
			C \arrow{r}{\varphi} \arrow{d} & C' \arrow{d} \\
			S \arrow{r}{f} & S'
		\end{tikzcd}
	\end{equation}
	for which there exists an isomorphism $\tau: \varphi^* \eta' \to \eta$ such that the diagram
	\begin{equation}\label{eqn: diagram of sheaves}
		\begin{tikzcd}
			\varphi^*(\eta'^{\otimes 2}) \arrow{r}{\tau^{\otimes 2}} \arrow{d}[swap]{\varphi^*(\beta')} & \eta^{\otimes 2} \arrow{d}{\beta} \\
			\varphi^* \mathcal{O}_{C'} \arrow{r} & \mathcal{O}_C
		\end{tikzcd}
	\end{equation}
	commutes.
	
	The isomorphism $\tau$ is not part of the data of the morphism in $\mathcal{R}_g$; if it is included, one obtains a $\mu_2$-gerbe over $\mathcal{R}_g$, denoted in \cite{CIL24} by $\widetilde{\R}_g$.
	
	The moduli stack $\RH_g$ of hyperelliptic Prym pairs of genus $g$ is the pullback of the hyperelliptic locus $\mathcal{H}_g \subseteq \mathcal{M}_g$ in the moduli stack of smooth genus $g$ curves under the natural forgetful map $\R_g \to \M_g$. 
	
	This paper concerns the stacks $\mathcal{R}\mathcal{H}_g$. These decompose as the disjoint union of open and closed substacks 
	\begin{equation}\label{eqn: decomposition RH}
		\mathcal{R}\mathcal{H}_g = \bigsqcup_{1 \leq n \leq \frac{g+1}{2}} \mathcal{R}\mathcal{H}_g^n,
	\end{equation}
	where $\mathcal{R}\mathcal{H}_g^n$ is the locus where the line bundle $\eta$ on the curve $C$ is isomorphic to $(n\cdot g^1_2)\otimes \mathcal{O}_C(-e)$, for a reduced Cartier divisor $e$ consisting of $2n$ (distinct) Weierstrass points. Here, $g^1_2$ denotes the unique $g^1_2$ on the curve $C$.
	
	The aim of this paper is to continue the computation of the integral Chow rings of the stacks $\mathcal{R}\mathcal{H}_g^n$ started in \cite{CIL24} and \cite{CLI}. In the present work, we deal with the case where $g$ is odd and $n = (g+1)/2$. This is the most challenging case, as the natural presentation of $\mathcal{RH}_g^n$ as a quotient stack involves a complicated group. Even after expressing $\mathcal{RH}_g^n$ as a $\mu_2$-gerbe over a simpler quotient stack, this new space involves the classifying space of $\PGL_2$, combined with an inconvenient $\mu_2$-action.
	
	\subsection{Statements of the main results}
	
	Let $C$ be a smooth hyperelliptic curve of genus $g$ defined over our base field $k$ and assume $g \geq 2$. Let $E_n$ denote the family of effective reduced divisors supported on $2n$ distinct Weierstrass points of $C$ and let $\beta_n: E_n \to \mathrm{Pic}^0(C)[2] \smallsetminus \{ \cO\}$ be the map sending $e \in E_n$ to $(n \cdot g^1_2) \otimes \cO_C(-e)$ and set $B_n= \beta_n(E_n)$. Then by \cite[Lemma 4.3]{Ver13} (see also \cite[Lemma 3]{CIL24} and \cite[Lemma 1.1]{CLI}) 
	\begin{itemize}
		\item[(i)] for $n \leq \lfloor \frac{g}{2} \rfloor$ the map $\beta_n$ is injective;
		\item[(ii)] for $n= \frac{g+1}{2}$ the map $\beta_n$ is $2:1$ over its image;
		\item[(iii)] we have a disjoint union $ \mathrm{Pic}^0(C)[2] \smallsetminus \{ \mathcal{O}_C \}= \bigsqcup_{1 \leq n \leq \frac{g+1}{2}} B_n$.
	\end{itemize}
	
	The decomposition \eqref{eqn: decomposition RH} follows from this. See \cite[Section 1.2]{CLI}.
	Let $\mathcal{H}_g$ denote the moduli stack of hyperelliptic curves of genus $g$. For $a \geq 1$, let $\mathcal{D}_a$ denote the moduli stack parametrizing pairs $(\mathcal{P} \to S, D_a \subseteq \mathcal{P})$, where $\mathcal{P} \to S$ is a Brauer–Severi scheme of relative dimension $1$, and $D_a$ is a Cartier divisor that is finite and \'etale over $S$ of degree $a$. There is a natural map $\mathcal{H}_g \to \mathcal{D}_{2g+2}$ which associate to a genus $2$ curve $C$ the branch locus of the unique (up to $\PGL_2$ action) $2:1$ map $ C \to \P^1$.
	
	For $a, b \geq 1$, let $\mathcal{D}_{a,b}$ denote the open substack of $\mathcal{D}_a \times_{B\mathrm{PGL}_2} \mathcal{D}_b$ parametrizing tuples $(\mathcal{P} \to S, D_a, D_b)$, where $D_a$ and $D_b$ are disjoint Cartier divisors in $\mathcal{P}$ that are finite and étale of degrees $a$ and $b$, respectively, over $S$.
	
	\begin{lemma}\label{lemma: key cartesian diagram}
		We have a cartesian diagram
		\[
		\begin{tikzcd}
			\bigsqcup_{1 \leq n \leq \frac{g+1}{2}} \mathcal{R}\mathcal{H}_g^n \arrow[d] \arrow[r] & \mathcal{H}_g \arrow[d] \\
			\left[ \frac{\mathcal{D}_{g+1,g+1}}{\mu_2} \right] \sqcup \bigsqcup_{1 \leq n < \frac{g+1}{2}} \mathcal{D}_{2n, 2g+2 - 2n} \arrow[r] & \mathcal{D}_{2g+2}
		\end{tikzcd}
		\]
		
		Here, the bottom horizontal map corresponds to taking the sum of $D_{2n}$ and $D_{2g+2 - 2n}$, while the top horizontal map records the associated hyperelliptic curve. The left vertical map records the image, under the unique (up to $\mathrm{PGL}_2$) degree $2$ map from the hyperelliptic curve to $\mathbb{P}^1$, of the degree $2n$ divisor $e \in E_n$ and OF its complement in the full Weierstrass divisor $W \subseteq C$.
		
		Moreover, the vertical map on the left respects the disjoint union indexed by $n$; that is, each $\mathcal{RH}_g^n$ maps to $\mathcal{D}_{2n, 2g+2 - 2n}$ (or to $[\mathcal{D}_{g+1, g+1}/\mu_2]$ when $g$ is odd and $n = (g+1)/2$). Here, the group $\mu_2$ acts on $\mathcal{D}_{g+1, g+1}$ by swapping the two degree $g+1$ divisors.
		
	\end{lemma}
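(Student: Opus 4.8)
The plan is to prove that the square is cartesian by exhibiting, for each $n$, an equivalence of stacks
\[
\mathcal{RH}_g^n \;\simeq\; \mathcal{H}_g \times_{\mathcal{D}_{2g+2}} \mathcal{D}_{2n,\,2g+2-2n}
\qquad\bigl(1 \le n < \tfrac{g+1}{2}\bigr),
\]
together with $\mathcal{RH}_g^{(g+1)/2} \simeq \mathcal{H}_g \times_{\mathcal{D}_{2g+2}} [\mathcal{D}_{g+1,g+1}/\mu_2]$, each compatible with the stated horizontal and vertical maps; taking the disjoint union over $n$ then yields the diagram. I would work throughout in families over an arbitrary base $S$. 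The first step is to set up the branch-divisor picture underlying the map $\mathcal{H}_g \to \mathcal{D}_{2g+2}$: for a hyperelliptic family $C \to S$ let $\mathcal{P} \to S$ be the associated Brauer–Severi scheme, $\pi \colon C \to \mathcal{P}$ the degree $2$ quotient, $B \subseteq \mathcal{P}$ the branch divisor (finite étale of degree $2g+2$ over $S$), and $W = \pi^{-1}(B)_{\mathrm{red}} \subseteq C$ the relative Weierstrass divisor. The structural input I rely on is that $\pi$ restricts to an isomorphism $W \xrightarrow{\sim} B$, together with the relations $2w \sim g^1_2$ for each Weierstrass section and $W \sim (g+1)\, g^1_2$ in the relative Picard group.

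Unwinding the fiber product, I would show that an object of $\mathcal{F} := \mathcal{H}_g \times_{\mathcal{D}_{2g+2}} \mathcal{D}_{2n,2g+2-2n}$ over $S$ is precisely a hyperelliptic family $C \to S$ together with a decomposition $B = D_{2n} + D_{2g+2-2n}$ of its branch divisor into disjoint finite étale divisors of the indicated degrees: since $\mathcal{D}_{a,b} \to \mathcal{D}_{a+b}$ is the summation map and fibering over $\mathcal{H}_g$ pins down the pair $(\mathcal{P},B)$, the extra datum is exactly such a splitting (the disjointness being automatic as $B$ is étale). For $n=(g+1)/2$ the same analysis applies with $[\mathcal{D}_{g+1,g+1}/\mu_2]$ in place of $\mathcal{D}_{g+1,g+1}$, so there the splitting is recorded \emph{unordered}, i.e.\ modulo the swap of the two degree $g+1$ divisors.

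Next I would construct the comparison functors. From a splitting $B = D_{2n}+D_{2g+2-2n}$ I pull back under $W \cong B$ to obtain $W = e + (W-e)$ with $e$ a reduced degree $2n$ divisor of Weierstrass sections, set $\eta := (n\cdot g^1_2)\otimes \mathcal{O}_C(-e)$, and equip it with the canonical isomorphism $\beta \colon \eta^{\otimes 2}\xrightarrow{\sim}\mathcal{O}_C$ coming from $2e \sim 2n\cdot g^1_2$; by items (i)–(iii) this produces a hyperelliptic Prym pair lying in $\mathcal{RH}_g^n$. Conversely, from $(C,\eta,\beta) \in \mathcal{RH}_g^n$ one recovers the divisor $e$ supported on $W$ with $\mathcal{O}_C(e) \cong (n\cdot g^1_2)\otimes \eta^{-1}$ of degree $2n$ (uniquely for $n<(g+1)/2$ by (i), and up to the involution $e \leftrightarrow W-e$ for $n=(g+1)/2$ by (ii)), and then pushes forward to the splitting $(\pi(e),\pi(W-e))$. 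I would verify these assignments are quasi-inverse and respect the grading by $n$, so that $\mathcal{RH}_g^n$ maps to the $n$-th component. The $\mu_2$-gerbe structures are consistent on both sides — both $\mathcal{H}_g \to \mathcal{D}_{2g+2}$ and $\mathcal{RH}_g^n \to \mathcal{D}_{2n,2g+2-2n}$ are banded by the hyperelliptic involution — so no spurious automorphisms intervene.

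Finally I would treat the distinguished component $n=(g+1)/2$, where the $2:1$ behaviour of item (ii) enters. Here $D_{2n}$ and $D_{2g+2-2n}$ both have degree $g+1$, and the key computation is that $e$ and its complement $W-e$ yield the \emph{same} line bundle: using $W \sim (g+1)\,g^1_2 = 2n\cdot g^1_2$ and that $\eta$ is $2$-torsion, one finds the class of $(n\cdot g^1_2)\otimes\mathcal{O}_C(-(W-e))$ equals the inverse of $\eta$, hence equals $\eta$ itself. Thus the map from ordered splittings to Prym pairs is invariant under the swap $D_{g+1}\leftrightarrow D_{g+1}$, descends along the $\mu_2$-quotient, and gives the equivalence with $[\mathcal{D}_{g+1,g+1}/\mu_2]\times_{\mathcal{D}_{2g+2}}\mathcal{H}_g$. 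I expect the main obstacle to be precisely this bookkeeping of the comparison functor in families: recovering $e$ canonically and étale-locally from $(\eta,\beta)$ as a reduced divisor on $W$, constructing $\beta$ compatibly over $S$, and matching the $\mu_2$-rigidification implicit in $\mathcal{R}_g$ (whose morphisms forget $\tau$) with the hyperelliptic-involution gerbe on the divisor side — in particular checking that the swap $e \leftrightarrow W-e$ is exactly the $\mu_2$ acting on $\mathcal{D}_{g+1,g+1}$.
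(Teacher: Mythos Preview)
Your proposal is correct and follows essentially the same approach as the paper: the paper defers the bulk of the argument to \cite[Lemma~1.14]{CLI} and only spells out the special case $n=(g+1)/2$, observing that $\beta_n^{-1}(\beta_n(e))=\{e,\,w-e\}$ so that the two divisors of equal degree $g+1$ cannot be distinguished, which forces the further $\mu_2$-quotient. Your write-up is more self-contained and makes the comparison functors and the key identity $(n\cdot g^1_2)\otimes\mathcal{O}_C(-(W-e))\cong\eta^{-1}\cong\eta$ explicit, but the underlying logic is the same.
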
 
	
	\begin{proof}
		The proof is essentially the same as that of \cite[Lemma 1.14]{CLI}. The case where $g$ is odd and $n = (g+1)/2$ is special because, in this case, the map
		\[
		\beta_n\colon E_n \to B_n \subseteq \mathrm{Pic}^0(C)[2] \smallsetminus \{ \mathcal{O} \}
		\]
		is $2:1$ onto its image. More precisely, for all $e \in E_n$, we have
		\[
		\beta_n^{-1}(\beta_n(e)) = \{ e,\ w - e \},
		\]
		where $w$ is the Weierstrass divisor of $C$. In particular, two Prym pairs $(C,g^1_2(-e),\beta)$ and $(C,g^1_2(-(w-e)),\beta')$ are isomorphic in $\RH_g$.

		In other words, given a Prym pair $(C/S, \eta, \beta)$, we can extract from it two divisors: one of degree $2n$ and one of degree $2g + 2-2n$. However, when $n = (g+1)/2$, we have $2n =2g + 2-2n$ and thus the two degrees coincide, making it impossible to distinguish the two divisors. This explains the necessity of taking a further quotient by $\mu_2$.
	\end{proof}

	The map $\mathcal{H}_g \to \mathcal{D}_{2g+2}$ has been studied extensively and we will use several of the results in  \cite{AV04},\cite{EH22}.
	
	\subsubsection{Presentations of $[\cD_{g+1,g+1}/ \mu_2]$ and $\RH_g^{(g+1)/2}$ }\label{subsec: pres g odd}
	
	Suppose that $g \geq 2$ is odd and that $n=(g+1)/2$.
	\begin{notation}\label{notation: groups and representations}
		Let $G:=\GI \subseteq \GL_2$ be the subgroup of matrices preserving the set of lines $\{ k (1,0), k(0,1)\} \subseteq k^2$. Equivalently, this is the subgroup of $\GL_2$ consisting of matrices of the form 
		$$
		(a,b;0):=
		\begin{pmatrix}
			a & 0 \\
			0 & b 
		\end{pmatrix}
		\ \text{or} \ 
		(a,b;1):=
		\begin{pmatrix}
			0 & a \\
			b & 0 
		\end{pmatrix}
		$$
		for $a,b \in k^*$. 
		
		We will denote by $V$ be the representation of $G$ induced by the standard representation of $\GL_2$ via the inclusion $G \subseteq \GL_2$ and by $\Gamma$ the representation of $G$ induced by the sign representation of $\mu_2$ via the quotient map $G \to \mu_2$.
		
		For every $j \geq 1$, we will denote by $W_j$ the $2j+1$-dimensional representation of $\PGL_2$ where the underlying vector space is the space of homogeneous polynomials of degree $2j$ in two variables and 
		$$
		[B] \cdot f(X,Y)= \det (B)^j (f \circ B^{-1}) (X,Y).
		$$
		
		Finally, we will denote by $\chi$ the standard representation of $\Gm$ and by $t \in \CH^*(B \Gm)$ its first Chern class.
		
	\end{notation}  
	
	When $g$ is odd, there exists an explicit description of the map $\cH_g \to \cD_{2g+2}$ in terms of quotient stacks:
	\begin{equation}\label{eqn: map in EH}
		\begin{tikzcd}
			\mathcal{H}_g \cong \bigg[ \frac{\chi^{\otimes -2} \otimes W_{g+1} \smallsetminus \Delta}{\mathbb{G}_m \times \mathrm{PGL}_2} \bigg] \arrow[r] & \mathcal{D}_{2g+2} \cong \bigg[ \frac{\chi^{\otimes -1} \otimes W_{g+1} \smallsetminus \Delta}{\mathbb{G}_m \times \mathrm{PGL}_2} \bigg] \cong \bigg[ \frac{\P(W_{g+1}) \smallsetminus \underline{\Delta}}{ \mathrm{PGL}_2} \bigg].
		\end{tikzcd}
	\end{equation}
	See  \cite[Theorem 4.1]{AV04} and \cite[Remark 2.1]{EH22}. This is the base change of the morphism $B \Gm \to B\Gm$ given by $t \mapsto t^2$ via $[ (\chi^{\otimes -1} \otimes W_{g+1} \smallsetminus \Delta) /\mathbb{G}_m \times \mathrm{PGL}_2 ] \to B\Gm$. In particular, $\cH_g$ is the $\mu_2$-root-stack over $\cD_{2g+2}$ obtained by adding a square root of the line bundle $\cO_{\P(W_{g+1})}(-1)$. The loci $\Delta$ and $\underline{\Delta}$ denote the locus of singular polynomials in the affine and projective spaces, respectively. 
	
	\begin{theorem}\label{thm: presentation Dg+1g+1mu2}
		Suppose the genus $g \geq 3$ is odd. Then, we have an isomorphism of algebraic stacks
		$$
		\left[ \frac{\mathcal{D}_{g+1,g+1}}{\mu_2} \right] \cong \Bigg[ \frac{V^\vee \otimes W_{\frac{g+1}{2}} \smallsetminus \Delta}{G\times \PGL_2 }  \Bigg]
		$$
		where $\Delta$ is the locus of pairs of polynomials $(f,g)$ whose product $fg$ is singular.
	\end{theorem}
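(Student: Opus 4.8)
The plan is to build the isomorphism in three stages: first present the single–divisor stack $\cD_{g+1}$ as a quotient, then take the fibre product over $B\PGL_2$ to obtain $\cD_{g+1,g+1}$ with a torus structure group, and finally absorb the swapping $\mu_2$ into the semidirect product $G$. Since $g$ is odd, $g+1 = 2\cdot\tfrac{g+1}{2}$ is even, so the line bundle $\cO(g+1)$ on the universal conic over $B\PGL_2$ is $\PGL_2$–linearised, with space of sections the representation $W_{\frac{g+1}{2}}$ of binary forms of degree $g+1$; a degree $g+1$ divisor on the conic is such a form up to scaling, and it is finite étale exactly when the form is squarefree. Arguing exactly as for $\cD_{2g+2}$ in \eqref{eqn: map in EH} (see \cite{AV04, EH22}), this gives
\[
\cD_{g+1}\cong\Bigl[\tfrac{\chi^{\otimes -1}\otimes W_{\frac{g+1}{2}}\smallsetminus\Delta}{\Gm\times\PGL_2}\Bigr],
\]
where the $\Gm$ factor is the scaling that projectivises the form, $\PGL_2$ acts on the conic, and $\Delta$ is the locus of singular (non–squarefree) forms. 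The structure map $\cD_{g+1}\to B\PGL_2$ recording the conic is induced by the projection $\Gm\times\PGL_2\to\PGL_2$.

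Next I would use the following elementary fact: for two copies of the morphism just described, both $[X_i/(\Gm\times\PGL_2)]\to B\PGL_2$ induced by the projection to $\PGL_2$, the fibre product over $B\PGL_2$ identifies the two $\PGL_2$–torsors and equals $[(X_1\times X_2)/(\Gm\times\Gm\times\PGL_2)]$, with $\PGL_2$ acting diagonally and the two copies of $\Gm$ scaling the two factors independently. Applying this with $X_i=\chi^{\otimes -1}\otimes W_{\frac{g+1}{2}}\smallsetminus\Delta$, I identify the two independent copies of $W_{\frac{g+1}{2}}$ with $V^\vee\otimes W_{\frac{g+1}{2}}$: restricted to the diagonal torus $\Gm\times\Gm\subseteq G$, the dual standard representation $V^\vee$ splits as the two characters of weights $(-1,0)$ and $(0,-1)$, so that $\Gm\times\Gm$ scales the two forms separately, matching the two $\chi^{\otimes -1}$; here $\PGL_2$ acts only on the $W_{\frac{g+1}{2}}$ tensor factor, commuting with the torus action on $V^\vee$. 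The open condition cutting out $\cD_{g+1,g+1}$ — that the two étale divisors be disjoint — is precisely that the product $fh$ of the two forms be squarefree, i.e.\ that we delete the locus $\Delta=\{fh\ \text{singular}\}$. Hence
\[
\cD_{g+1,g+1}\cong\Bigl[\tfrac{V^\vee\otimes W_{\frac{g+1}{2}}\smallsetminus\Delta}{(\Gm\times\Gm)\times\PGL_2}\Bigr].
\]

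Finally, the generator of the $\mu_2$ swapping the two degree $g+1$ divisors acts on $V^\vee\otimes W_{\frac{g+1}{2}}$ by interchanging the two copies of $W_{\frac{g+1}{2}}$, which is exactly the action of the anti–diagonal element $(1,1;1)\in\mu_2\subseteq G$ on $V^\vee$. Since this swap together with the diagonal torus generates $(\Gm\times\Gm)\rtimes\mu_2=G$ and preserves $\Delta$ (as $fh=hf$), passing to the $\mu_2$–quotient simply enlarges the structure group from $(\Gm\times\Gm)\times\PGL_2$ to $G\times\PGL_2$, which yields the claimed
\[
\Bigl[\tfrac{\cD_{g+1,g+1}}{\mu_2}\Bigr]\cong\Bigl[\tfrac{V^\vee\otimes W_{\frac{g+1}{2}}\smallsetminus\Delta}{G\times\PGL_2}\Bigr].
\]

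The main obstacle I expect is bookkeeping rather than a conceptual difficulty: correctly identifying the fibre product over $B\PGL_2$ so that the conic (hence $\PGL_2$) is genuinely shared while the two scalings remain independent, and pinning down that it is $V^\vee$ — with its precise $G$–action — rather than $V$ that simultaneously encodes the two independent scalings of the diagonal torus and the divisor swap carried out by $\mu_2$. The remaining points to verify carefully are that $\Delta$ is invariant under all of $G\times\PGL_2$, and that the disjoint–étale open condition is equivalent to the squarefreeness of $fh$.
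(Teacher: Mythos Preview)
Your proposal is correct and follows essentially the same approach as the paper. The paper also starts from the presentation of $\cD_{g+1,g+1}$ as a quotient by $\Gm\times\Gm\times\PGL_2$ (citing this from \cite{CLI}), then constructs an explicit map from the $G\times\PGL_2$-quotient to $[\cD_{g+1,g+1}/\mu_2]$ and verifies it is an isomorphism by observing that the $\mu_2$-torsor $\cD_{g+1,g+1}\to[\cD_{g+1,g+1}/\mu_2]$ factors through it via the natural $\mu_2$-torsor coming from the inclusion $\Gm^2\times\PGL_2\subset G\times\PGL_2$; this is exactly your ``enlarging the structure group'' step phrased in terms of torsors.
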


	The proof is presented in \S\ref{subsec: pres g odd}. Combining this with Lemma \ref{lemma: key cartesian diagram}, we will obtain the following.
	
	\begin{theorem}\label{thm: presentation RHg,g+1/2}
		Suppose the genus $g \geq 3$ is odd. Then, we have an isomorphism of algebraic stacks
		$$
		\RH_g^{(g+1)/2} \cong \Bigg[ \frac{V^\vee \otimes W_{\frac{g+1}{2}} \smallsetminus \Delta}{H }  \Bigg]
		$$
		where $H =\{ ((a,b;\varepsilon),t,[B]) \ | \ t^2= (-1)^\varepsilon \mathrm{det}(A)\} \subseteq G \times \Gm \times \PGL_2$ acts on $V^\vee \otimes W_{\frac{g+1}{2}}$ via the projection $H \to G \times \PGL_2$.
	\end{theorem}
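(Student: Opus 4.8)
The plan is to combine Lemma~\ref{lemma: key cartesian diagram} with Theorem~\ref{thm: presentation Dg+1g+1mu2} and then reinterpret the resulting fibre product as a $\mu_2$-root stack. First I would restrict the cartesian square of Lemma~\ref{lemma: key cartesian diagram} to the component $n=(g+1)/2$: since the left vertical map respects the decomposition indexed by $n$ and sends $\RH_g^{(g+1)/2}$ to $[\mathcal{D}_{g+1,g+1}/\mu_2]$, this exhibits
\[
\RH_g^{(g+1)/2}\cong \mathcal{H}_g\times_{\mathcal{D}_{2g+2}}\left[\frac{\mathcal{D}_{g+1,g+1}}{\mu_2}\right].
\]
Next I would invoke the description recalled around \eqref{eqn: map in EH}: the morphism $\mathcal{H}_g\to\mathcal{D}_{2g+2}$ is the base change, along the squaring map $B\Gm\to B\Gm$, of the morphism $\mathcal{D}_{2g+2}\to B\Gm$ classifying $\cO_{\P(W_{g+1})}(-1)$. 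Substituting this and cancelling the two copies of $\mathcal{D}_{2g+2}$ gives
\[
\RH_g^{(g+1)/2}\cong \left[\frac{\mathcal{D}_{g+1,g+1}}{\mu_2}\right]\times_{B\Gm} B\Gm,
\]
where the left-hand map is the composite $[\mathcal{D}_{g+1,g+1}/\mu_2]\to\mathcal{D}_{2g+2}\to B\Gm$ and the right-hand map is squaring; that is, $\RH_g^{(g+1)/2}$ is the $\mu_2$-root stack over $[\mathcal{D}_{g+1,g+1}/\mu_2]$ obtained by adjoining a square root of the pullback of $\cO_{\P(W_{g+1})}(-1)$.

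The second step is to make the map $[\mathcal{D}_{g+1,g+1}/\mu_2]\to B\Gm$ explicit in the presentation of Theorem~\ref{thm: presentation Dg+1g+1mu2}. Under that isomorphism the morphism to $\mathcal{D}_{2g+2}$ is realized by the multiplication map $V^\vee\otimes W_{\frac{g+1}{2}}\to \chi^{\otimes -1}\otimes W_{g+1}$ sending a pair of degree-$(g+1)$ forms to their product, which carries $V^\vee\otimes W_{\frac{g+1}{2}}\smallsetminus\Delta$ (pairs with squarefree product) into $\chi^{\otimes -1}\otimes W_{g+1}\smallsetminus\Delta$ (squarefree forms). I would check that this map is equivariant for a group homomorphism $G\times\PGL_2\to\Gm\times\PGL_2$: the $\PGL_2$ factors are compatible because $([B]f_1)([B]f_2)=[B](f_1 f_2)$ in $W_{g+1}$, while a direct computation of the action of $(a,b;\varepsilon)$ on $V^\vee\otimes W_{\frac{g+1}{2}}$ shows that the product scales by $(ab)^{-1}$ for both $\varepsilon=0$ and $\varepsilon=1$ (in the $\varepsilon=1$ case the swap exchanges the two factors, but commutativity of multiplication leaves the product unchanged up to the same scalar). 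Hence the homomorphism is $((a,b;\varepsilon),[B])\mapsto(ab,[B])$, and, since $\PGL_2$ has no nontrivial characters, the composite to $B\Gm$ is the character $\lambda\colon G\times\PGL_2\to\Gm$, $\lambda((a,b;\varepsilon),[B])=ab$. The crucial identity is $ab=(-1)^\varepsilon\det(a,b;\varepsilon)$, i.e.\ $\lambda=(-1)^\varepsilon\det$.

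Finally I would carry out the root-stack computation. Using the standard description of a $\mu_2$-root stack of a quotient stack whose defining line bundle comes from a character, the base change along the squaring map leaves the space $V^\vee\otimes W_{\frac{g+1}{2}}\smallsetminus\Delta$ unchanged and replaces the structure group by the fibre product
\[
H=(G\times\PGL_2)\times_{\Gm,\ s\mapsto s^2}\Gm=\{\,((a,b;\varepsilon),[B],t)\ \mid\ t^2=\lambda((a,b;\varepsilon),[B])\,\},
\]
acting through the projection $H\to G\times\PGL_2$. Rewriting $t^2=\lambda=ab=(-1)^\varepsilon\det(A)$ and reordering the factors yields exactly the group $H$ of the statement. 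I expect the main obstacle to lie in the middle step: pinning down $\lambda$ with the correct $\varepsilon$-dependence (in particular verifying that the $\mu_2$-swap contributes no extra sign, so that $\lambda=ab$ rather than $\det$ itself, and noting that the sign convention for $\cO_{\P(W_{g+1})}(\pm 1)$ only replaces $t$ by $t^{-1}$ and hence leaves $H$ unchanged up to isomorphism), together with justifying that the geometric fibre product over $B\Gm$ is computed by the group-theoretic fibre product $H$ in these coordinates.
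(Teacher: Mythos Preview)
Your proposal is correct and follows essentially the same route as the paper: it combines Lemma~\ref{lemma: key cartesian diagram} with Theorem~\ref{thm: presentation Dg+1g+1mu2}, identifies the map $[\mathcal{D}_{g+1,g+1}/\mu_2]\to\mathcal{D}_{2g+2}\to B\Gm$ via the multiplication $(f,g)\mapsto fg$ and the group homomorphism $((a,b;\varepsilon),[B])\mapsto(ab,[B])$, and then recognizes $H$ as the fibre product of this homomorphism with the squaring map. Your write-up is in fact more detailed than the paper's, spelling out the equivariance check and the identity $ab=(-1)^\varepsilon\det(A)$ that the paper leaves implicit.
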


	\subsubsection{The Chow rings $\CH^*(\RH_g^{(g+1)/2})$ and $\CH^*([D_{g+1,g+1}/\mu_2])$}
	
	The difficulty in computing $\CH^*(\mathcal{R}\mathcal{H}_g^{(g+1)/2})$ using the presentation in Theorem~\ref{thm: presentation Dg+1g+1mu2} arises already from the fact that the Chow ring of the classifying space $BH$ is not known. To avoid this issue, we will not use the above presentation. Instead, we apply \cite[Proposition~3.5]{CLI}, which computes the integral Chow ring of a $\mu_2$-root gerbe from that of the base and the first Chern class of the line bundle whose root has been added.
	
	Before stating our result we require further notation. Recall that the Chow ring of the classifying stack $B \PGL_2$ is known \cite{pandharipande1996chow,Vez98,DL18}:
	
	\begin{equation}
		\CH^*(B \PGL_2)= \frac{\Z[c_2,c_3]}{(2c_3)}.
	\end{equation}
	
	The classes $c_i$ for $i = 2,3$ denote the Chern classes of the representation $W_1$. Since $W_1$ is isomorphic to the adjoint representation $\mathsf{sl}_2$ of $\PGL_2$, these are equivalently the Chern classes of $\mathsf{sl}_2$, as discussed in~\cite{Vez98}. Moreover, there is an isomorphism of stacks  $B \PGL_2 \cong [\mathcal{S}/\GL_3]$ 
	where $\mathcal{S}$ denotes the space of smooth homogeneous polynomials of degree~2 in two variables (see~\cite[Proposition~1.5]{DL18}). Under this identification, the classes $c_i$ arise as the pullbacks of the Chern classes of the standard $\GL_3$-representation from $B \GL_3$, as shown in~\cite[Lemma~1.3]{DL18}.

	\begin{notation}
		We set $\gamma = c_1(\Gamma) \in \CH^*(B\mu_2)$, $\beta_i = c_i(V) \in \CH^*(B \GL_2)$ and will use the same symbols to denote their pullbacks to other spaces.
	\end{notation}

	We can now state our first Chow ring computation.
	
	\begin{theorem}\label{thm: Chow Dg+1g+1mu2}
		Suppose that $g \geq 3$ is odd. Then we have
		$$
		\CH^*([\mathcal{D}_{g+1,g+1}/\mu_2]) \cong \frac{\Z[\beta_1, \beta_2, \gamma, c_2, c_3]}{I}
		$$
		where $I$ is the ideal generated by the following relations:
		\begin{align*}
			\bullet & \ c_1, \ 2c_3, \ 2\gamma, \ \gamma(\beta_1+\gamma), \quad \text{coming from }B(G\times\PGL_2)\\
			\bullet & \ 2\beta_1\\
			\bullet & \ 4\beta_2+(g^2-1)c_2\\
			\bullet & \ \beta_1^4+\beta_1^3\gamma+2\beta_2^2+\beta_1^2c_2+\beta_1\gamma c_2-(g^2+1)\beta_2c_2+c_3(\beta_1+\gamma)+\frac{(g^2-1)^2}{8}c_2^2\\
			\bullet & \ \beta_1^3\beta_2+\beta_1^2\beta_2\gamma+\beta_1\beta_2^2-\beta_2^2\gamma+\beta_1\beta_2c_2+\beta_2\gamma c_2\\
			\bullet & \ 2\beta_2-\frac{(g+1)^2}{2}c_2\\
			\bullet & \ -g\beta_2+\left(\frac{g+1}{2}\right)\beta_1^2+g\left(\frac{g+1}{2}\right)^2c_2 \\
			\bullet & \ \beta_1\beta_2+\left(\frac{g+1}{2}\right)^2c_3 \\
			\bullet & \ \beta_2^2+\left(\frac{g^2-1}{2}\right)\beta_2c_2+\left(\frac{g+1}{2}\right)^2\left(\frac{g-1}{2}\right)^2c_2^2+\left(\frac{g+1}{2}\right)\beta_1c_3.
		\end{align*}
	\end{theorem}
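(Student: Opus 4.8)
Throughout, write $X=V^\vee\otimes W_{\frac{g+1}{2}}$ and let $T\subseteq G$ be the diagonal torus, so that $G=T\rtimes\mu_2$ with $\mu_2$ acting by the swap. The plan is to start from the quotient presentation of Theorem~\ref{thm: presentation Dg+1g+1mu2}, identifying $[\cD_{g+1,g+1}/\mu_2]$ with $[(X\smallsetminus\Delta)/(G\times\PGL_2)]$, and to run the localization (excision) sequence for the inclusion of the discriminant $\Delta$. First I would compute the Chow ring of the ambient classifying stack $B(G\times\PGL_2)$, which is generated by $\beta_1=c_1(V),\beta_2=c_2(V),\gamma=c_1(\Gamma)$ (from $BG$) and $c_2,c_3$ (from $B\PGL_2$). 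The factor $\CH^*(B\PGL_2)=\Z[c_2,c_3]/(2c_3)$ is recalled in the excerpt, with $c_1=0$. For $\CH^*(BG)$ one has $2\gamma=0$, and the crucial relation $\gamma(\beta_1+\gamma)=0$ I would derive from the isomorphism $V\otimes\Gamma\cong V$: this holds because $V$ is induced from a character of $T$ while $\Gamma$ is pulled back from $\mu_2$ (so $\mathrm{Res}_T\Gamma$ is trivial and the projection formula for induction applies), and comparing $c_2(V\otimes\Gamma)=\beta_2+\beta_1\gamma+\gamma^2$ with $c_2(V)=\beta_2$ yields $\gamma(\beta_1+\gamma)=0$. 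That these are all the relations, giving $\CH^*(BG)=\Z[\beta_1,\beta_2,\gamma]/(2\gamma,\gamma(\beta_1+\gamma))$, follows from a standard analysis of the $\mu_2$-cover $BT\to BG$: the restriction to $\CH^*(BT)=\Z[t_1,t_2]$ has image the symmetric part $\Z[\beta_1,\beta_2]$, and the $2$-torsion is exactly the $\gamma$-part. Together these produce the four relations labelled \emph{coming from} $B(G\times\PGL_2)$.

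Since $X$ is a representation, $[X/(G\times\PGL_2)]$ is a vector bundle over $B(G\times\PGL_2)$ and $\CH^*([X/(G\times\PGL_2)])=\CH^*(B(G\times\PGL_2))$. The excision sequence then gives
\[
\CH^*\!\big([\cD_{g+1,g+1}/\mu_2]\big)\;=\;\CH^*\!\big(B(G\times\PGL_2)\big)\big/J,
\]
where $J$ is the image of the pushforward $i_*\colon\CH^*([\Delta/(G\times\PGL_2)])\to\CH^*(B(G\times\PGL_2))$. Thus the entire problem reduces to computing the ideal $J$, and the remaining relations in the statement are precisely a generating set for it.

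To compute $J$ I would stratify $\Delta$ using the factorization $\mathrm{disc}(f_1f_2)=\mathrm{disc}(f_1)\,\mathrm{disc}(f_2)\,\mathrm{Res}(f_1,f_2)^2$: set-theoretically $\Delta=\Delta_{\mathrm{res}}\cup\Delta_{\mathrm{disc}}$, where $\Delta_{\mathrm{res}}=\{f_1,f_2\text{ share a root}\}$ and $\Delta_{\mathrm{disc}}=\Delta_{\mathrm{disc},1}\cup\Delta_{\mathrm{disc},2}$ is the locus where one of the factors has a double root. Because $G$ swaps $f_1\leftrightarrow f_2$, I would carry out all cycle computations $(T\times\PGL_2)$-equivariantly (so that the two forms are separated) and then descend to $G$, which is exactly what feeds $\gamma$ into the formulas. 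Each stratum is the generically injective image of a \emph{root map} from a projective bundle over the classifying stack: $\Delta_{\mathrm{res}}$ from $\{(p,f_1',f_2'):f_i=\ell_p f_i'\}$ with $\ell_p$ the linear form at $p\in\P^1$ and $\deg f_i'=g$; and $\Delta_{\mathrm{disc},i}$ from $\{(p,f_i'',f_j):f_i=\ell_p^2f_i''\}$ with $\deg f_i''=g-1$. Pushing forward $1$ along these maps yields the divisor classes $[\Delta_{\mathrm{res}}]$ and $[\Delta_{\mathrm{disc}}]$ (and the class $[\Delta_{\mathrm{disc},1}]+[\Delta_{\mathrm{disc},2}]+2[\Delta_{\mathrm{res}}]$ of the discriminant line bundle), which account for the lowest-degree generators such as $2\beta_1$ and the degree-$2$ relations; pushing forward the hyperplane classes of the projective-bundle sources, together with the fundamental classes of the deeper strata (common-and-double root, two double roots, a triple root, two common roots), produces the relations in degrees $3$–$5$. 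The coefficients involving $\tfrac{g+1}{2}$ come from the individual degree-$(g+1)$ factors, while those involving $g^2-1$ come from the degree-$(2g+2)$ product/branch divisor.

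The hard part is twofold. First, the equivariant pushforwards must be executed carefully: the source bundles carry the two $\Gm$-weights and the $\PGL_2$-action on $\P^1$, and correctly tracking the Segre/Euler classes of the root maps — together with the multiplicity $2$ with which $\Delta_{\mathrm{res}}$ enters $\mathrm{disc}(f_1f_2)$ — is what pins down the precise integer coefficients and the $2$-torsion phenomena (e.g.\ $2\beta_1=0$, $2\beta_2-\tfrac{(g+1)^2}{2}c_2$). Second, and more seriously, I must show that the relations produced actually generate $J$ and that none is missing. For this I would either (i) verify that the explicit ideal already cuts the ambient ring down to a graded ring with the expected ranks in each degree, matched against an independent additive computation of $\CH^*(X\smallsetminus\Delta)$ obtained by peeling off the strata of $\Delta$ one at a time via affine-bundle and excision arguments; or (ii) compare with the pullback along the finite map $[\cD_{g+1,g+1}/\mu_2]\to\cD_{2g+2}$ of Lemma~\ref{lemma: key cartesian diagram}, whose target has known Chow ring, to control the subring generated by $c_2,c_3$ and the norm class $\beta_1+\gamma$. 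Establishing this completeness — that the listed ideal equals $J$ exactly, \emph{integrally} — is the main obstacle.
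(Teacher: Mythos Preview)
Your overall framework — excision from the representation $X$ and computing the pushforward ideal $J$ from a stratification of the discriminant by root-configuration types — is exactly the paper's approach, and your identification of $\CH^*(B(G\times\PGL_2))$ is correct. The relations you list do arise from the strata you name, and the paper's envelope maps $M_r$, $S_r$, $H_r$ are precisely the ``root maps'' you describe (common factor of degree $r$, a double factor in one form, a double factor in each form). So the produced relations are right.

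The genuine gap is in the completeness step, and neither of your proposed options would close it. Option (i) requires an \emph{independent} additive computation of $\CH^*(X\smallsetminus\Delta)$ to match ranks against; no such computation is available (indeed, obtaining it is the content of the theorem), and ``peeling off strata one at a time'' runs into the same pushforward problem you are trying to solve. Option (ii), comparing with $\cD_{2g+2}$, controls only the image of $\CH^*(\cD_{2g+2})$ — the classes $c_2$, $c_3$, and $\beta_1+\gamma$ — and says nothing about $\beta_2$ or about the $2$-torsion in $\gamma$ separately, so it cannot pin down $J$ integrally. The paper instead proves completeness \emph{directly}: it shows that the images of all the deeper envelope maps $M_r$, $S_r$, $H_r$ (for all $r$) already lie in the ideal generated by the low-$r$ ones, via a long induction. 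This uses several tools you have not mentioned: passing to $\GL_3$-counterparts to make sense of the odd-degree projective spaces $\P^r$ (which are not projectivizations of $\PGL_2$-representations), systematically splitting the computation into $\Z[1/2]$- and $\Z_{(2)}$-coefficient parts, and — crucially for handling the $\mu_2$-quotient and the $H_r$ maps — the explicit additive structure of $\CH^*(BH_{1,1})$ where $H_{1,1}=(\Gm\times\Gm^{\times 2})\rtimes\mu_2$. Your plan to ``compute $(T\times\PGL_2)$-equivariantly and then descend'' elides exactly this difficulty: the descent is governed by the transfer $\psi_*$ along the double cover $BT\to BG$, and controlling what lies in $\mathrm{im}(\psi_*)$ versus the genuinely new $\gamma$-contributions is the heart of the argument.
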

	
	Using \cite[Proposition 3.5]{CLI} and identifying the pullback of $\cO_{\P(W_{g+1})}(1)$ under $[\cD_{g+1,g+1}/\mu_2] \to \cD_{2g+2}$, in \S\ref{subsec: conclusion computation} we obtain:
	
	\begin{theorem}\label{thm: Chow RHgg+1 odd}
		Suppose that $g \geq 3$ is odd. Then we have
		$$
		\CH^*(\RH_g^{(g+1)/2}) \cong \frac{\Z[\beta_1, \beta_2, \gamma, c_2, c_3,t]}{I+(2t-(\beta_1+\gamma))}
		$$
		where $I$ is the ideal generated by the classes listed in Theorem~\ref{thm: Chow Dg+1g+1mu2}.
	\end{theorem}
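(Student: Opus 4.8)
The plan is to deduce Theorem~\ref{thm: Chow RHgg+1 odd} from Theorem~\ref{thm: Chow Dg+1g+1mu2} by a base-change argument, exhibiting $\RH_g^{(g+1)/2}$ as a $\mu_2$-root gerbe over $[\cD_{g+1,g+1}/\mu_2]$ and applying \cite[Proposition~3.5]{CLI}. First I would combine Lemma~\ref{lemma: key cartesian diagram} with the description \eqref{eqn: map in EH}. The lemma exhibits $\RH_g^{(g+1)/2}$ as the fiber product $\cH_g\times_{\cD_{2g+2}}[\cD_{g+1,g+1}/\mu_2]$, and by \eqref{eqn: map in EH} the map $\cH_g\to\cD_{2g+2}$ is the $\mu_2$-root gerbe obtained by adjoining a square root of $\cO_{\P(W_{g+1})}(-1)$ (being the base change of the squaring map $B\Gm\to B\Gm$). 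Since root gerbes are stable under base change, the projection $\RH_g^{(g+1)/2}\to[\cD_{g+1,g+1}/\mu_2]$ is again a $\mu_2$-root gerbe, this time adjoining a square root of $L:=s^*\cO_{\P(W_{g+1})}(-1)$, where $s\colon[\cD_{g+1,g+1}/\mu_2]\to\cD_{2g+2}$ is the bottom horizontal map of the diagram.

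Next I would invoke \cite[Proposition~3.5]{CLI}. Applied to this root gerbe it produces
\[
\CH^*(\RH_g^{(g+1)/2})\cong\CH^*\big([\cD_{g+1,g+1}/\mu_2]\big)[t]\big/\big(2t-c_1(L)\big),
\]
where $t=c_1(R)$ is the first Chern class of the tautological square root $R$ (so that $R^{\otimes 2}\cong L$ and $2t=c_1(L)$). Together with Theorem~\ref{thm: Chow Dg+1g+1mu2} this reduces the whole statement to identifying the class $c_1(L)\in\CH^1([\cD_{g+1,g+1}/\mu_2])$, and I expect this single Chern-class computation to be the crux of the argument.

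To compute $c_1(L)$ I would work in the presentation of Theorem~\ref{thm: presentation Dg+1g+1mu2}, under which a point is a pair $(f,g)\in V^\vee\otimes W_{(g+1)/2}$ of degree $g+1$ polynomials and $s$ sends $(f,g)$ to the class of the product $fg\in W_{g+1}$, so that the fiber of $L=s^*\cO(-1)$ over $(f,g)$ is the line $k\cdot fg\subseteq W_{g+1}$. The point is to recognize this as a standard equivariant line bundle. Writing $\xi\colon V\to W_{(g+1)/2}$ for the homomorphism corresponding to $(f,g)$ (so $\xi(e_1)=f$, $\xi(e_2)=g$), the composite $\Sym^2 V \xrightarrow{\Sym^2\xi}\Sym^2 W_{(g+1)/2}\xrightarrow{\mu}W_{g+1}$, with $\mu$ the $\PGL_2$-equivariant multiplication map, carries the fixed line $k\cdot e_1e_2\subseteq\Sym^2 V$ onto $k\cdot fg$. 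A direct check on the two types of elements of $G$ shows that $(a,b;0)$ and $(a,b;1)$ both scale $e_1e_2$ by $ab$, so that $k\cdot e_1e_2$ affords the character $\det(V)\otimes\Gamma$ (with trivial $\PGL_2$-action), whose first Chern class is $\beta_1+\gamma$. Since $fg$ is a nonzero vector of $W_{g+1}$ at every point of the complement of $\Delta$, the map $e_1e_2\mapsto fg$ is a nowhere-vanishing $G\times\PGL_2$-equivariant homomorphism there, hence an isomorphism onto $L$. Thus $c_1(L)=\beta_1+\gamma$, and the relation $2t-c_1(L)$ becomes $2t-(\beta_1+\gamma)$, as claimed.

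The only genuinely delicate point is this last one: getting the equivariant structure correct, and in particular tracking the twist by $\Gamma$ that the swap element $(a,b;1)\in G$ contributes, so that $c_1(L)$ comes out to be exactly $\beta_1+\gamma$ rather than some other combination of the generators. Everything else — the cartesian diagram, the stability of root gerbes under base change, and the purely ring-theoretic effect recorded by \cite[Proposition~3.5]{CLI} — is formal, so I would keep the exposition on those steps brief and concentrate the verification on the isomorphism $k\cdot e_1e_2\otimes\cO\cong L$ and the identification of the character $\det(V)\otimes\Gamma$.
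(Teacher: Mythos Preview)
Your proposal is correct and follows essentially the same route as the paper: both identify $\RH_g^{(g+1)/2}$ as the $\mu_2$-root gerbe over $[\cD_{g+1,g+1}/\mu_2]$ obtained from the pullback of $\cO_{\P(W_{g+1})}(-1)$ along the multiplication map, apply \cite[Proposition~3.5]{CLI}, and then compute $c_1(L)=\beta_1+\gamma$ by recognizing the underlying $G$-character as $(a,b;\varepsilon)\mapsto ab$. The paper phrases this last step via the group homomorphism $G\times\PGL_2\to\Gm\times\PGL_2$, $(a,b;\varepsilon,[B])\mapsto(ab,[B])$, while you realize the same character as $\det(V)\otimes\Gamma$ acting on the line $k\cdot e_1e_2\subset\Sym^2V$; these are the same computation.
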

	
	To conclude, in \S\ref{subsec: interpretation generators g+1}, we will give a geometric interpretation of these generators as Chern classes of certain natural vector bundles on $\RH_g^{(g+1)/2}$.
	
	\subsubsection{Hyperelliptic Spin Pairs of odd genus}\label{sec: Spin curves}
	
	An interesting application of our result concerns hyperelliptic Spin curves of odd genus.
	
	Fix a genus $g \geq 2$. The moduli stack of Spin curves $\cS_g$ is defined similarly to $\mathcal{R}_g$, as the stackification of the prestack whose objects are triples $(C/S, \theta, \alpha)$, where $C \to S$ is a smooth genus $g$ curve, $\theta \in \mathrm{Pic}(C)$ is a line bundle, and $\alpha: \theta^{\otimes 2} \to \omega_{C/S}$ is an isomorphism. Morphisms $(C/S, \theta, \alpha) \to (C'/S', \theta', \alpha')$ in this prestack are given by cartesian diagrams as in \eqref{eqn: morphism prestack}, for which there exists a morphism $\tau: \varphi^* \theta' \to \theta$ such that the diagram corresponding to \eqref{eqn: diagram of sheaves} commutes, with $\cO_{C'}$ and $\cO_C$ replaced respectively by $\omega_{C'/S'}$ and $\omega_{C/S}$. We refer to \cite{Cor89} for an introduction to Spin curves, often also called theta characteristics. 
	
	There is a natural (representable) finite and étale morphism $\mathcal{S}_g \to \mathcal{M}_g$ of degree $2^{2g}$. We denote by $\mathcal{SH}_g$ the restriction of $\cS_g$ to $\cH_g$.
	
	Let $\widetilde{\mathcal{J}}_g$ be the stack parametrizing smooth genus~$g$ curves together with line bundles of degree~$0$, with morphisms given by maps between the curves and the corresponding line bundles. Then $\widetilde{\mathcal{J}}_g$ carries a natural $\mathbb{G}_m$-2-structure, whose rigidification we denote by $\mathcal{J}_g$. Let $\mathcal{J}_g[2] \subseteq \mathcal{J}_g$ be the closed substack where the line bundle is $2$-torsion. There is an open and closed decomposition (see \cite[Proposition 11]{CIL24})
	\begin{equation}\label{eqn: decomposition J[2]}
		\mathcal{J}_g[2] = \mathcal{R}_g \sqcup \mathcal{M}_g,
	\end{equation}
	where $\mathcal{M}_g \subseteq \mathcal{J}_g[2]$ is the substack corresponding to the section given by the structure sheaf.
	
	Furthermore, there is a natural free and transitive group action 
	$$
	\mathcal{J}_g[2] \curvearrowright \cS_g \to \mathcal{M}_g
	$$
	over $\mathcal{M}_g$, obtained by twisting a Spin curve with a 2-torsion line bundle.
	
	\begin{proposition}\label{prop: decomposition Spin}
		When $g$ is odd, the forgetful map $\mathcal{SH}_g \to \mathcal{H}_g$ has a section. In particular, we have a decomposition 
		$$
		\mathcal{SH}_g \cong \cH_g \sqcup \mathcal{RH}_g.
		$$
	\end{proposition}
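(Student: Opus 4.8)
The plan is to produce the section first and then to deduce the decomposition formally from the structures recalled just above the statement.

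For the section, the key point is that every hyperelliptic curve of odd genus carries a canonical theta characteristic. Let $C\to S$ be a family of smooth hyperelliptic curves of genus $g$, with hyperelliptic structure $\pi\colon C\to\mathcal{P}$, where $\mathcal{P}\to S$ is the associated Brauer--Severi scheme of relative dimension $1$. Writing $g^1_2\in\Pic(C)$ for the (relative) hyperelliptic line bundle, one has $\omega_{C/S}\cong (g^1_2)^{\otimes(g-1)}$. Since $g$ is odd, $(g-1)/2\in\Z$, and I would set
$$
\theta_0:=(g^1_2)^{\otimes\frac{g-1}{2}},
$$
together with the tautological isomorphism $\alpha_0\colon\theta_0^{\otimes 2}\xrightarrow{\sim}\omega_{C/S}$. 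This defines a morphism $s\colon\cH_g\to\mathcal{SH}_g$, $(C\to S)\mapsto(C\to S,\theta_0,\alpha_0)$, which is a section of the forgetful map because $g^1_2$, and hence the whole construction, is canonically attached to $C$ and compatible with base change and with isomorphisms of hyperelliptic curves. The one point deserving care is that $g^1_2$ is a genuine line bundle on $C$ even though $\mathcal{O}_{\mathcal{P}}(1)$ need not exist on $\mathcal{P}$: the morphism $\pi$ is itself a section of $\mathcal{P}\times_S C\to C$, so the Brauer class of $\mathcal{P}$ dies after pullback along $C\to S$, and $g^1_2$ is recovered by pulling back the now-existing relative $\mathcal{O}(1)$ along this section. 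This is exactly where the parity hypothesis enters, since for even $g$ the exponent $(g-1)/2$ is not an integer and no such canonical square root is available.

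Next I would use the section to trivialize a torsor. By the free and transitive action recalled above, $\cS_g\to\M_g$ is a torsor under the group stack $\mathcal{J}_g[2]\to\M_g$, the action being twisting of a spin structure by a $2$-torsion line bundle. Restricting along $\cH_g\hookrightarrow\M_g$, the stack $\mathcal{SH}_g\to\cH_g$ is a torsor under $\mathcal{J}_g[2]\times_{\M_g}\cH_g\to\cH_g$. A torsor equipped with a section is canonically trivial, so the morphism
$$
\mathcal{J}_g[2]\times_{\M_g}\cH_g\longrightarrow\mathcal{SH}_g,\qquad (C,\eta)\longmapsto \eta\cdot s(C)=(C,\theta_0\otimes\eta),
$$
is an isomorphism of stacks over $\cH_g$; concretely, freeness and transitivity say that $(a,x)\mapsto(a\cdot x,x)$ is an equivalence $\mathcal{J}_g[2]\times_{\cH_g}\mathcal{SH}_g\xrightarrow{\sim}\mathcal{SH}_g\times_{\cH_g}\mathcal{SH}_g$, and precomposing with $\mathrm{id}\times s$ yields the trivialization.

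Finally I would restrict the decomposition \eqref{eqn: decomposition J[2]}. Pulling $\mathcal{J}_g[2]=\R_g\sqcup\M_g$ back along $\cH_g\hookrightarrow\M_g$ gives $\mathcal{J}_g[2]\times_{\M_g}\cH_g=\R_g|_{\cH_g}\sqcup\M_g|_{\cH_g}$. By definition $\R_g|_{\cH_g}=\RH_g$, and the component $\M_g\subseteq\mathcal{J}_g[2]$ coming from the structure-sheaf section restricts to $\cH_g$; combining with the previous step yields
$$
\mathcal{SH}_g\cong\mathcal{J}_g[2]\times_{\M_g}\cH_g\cong\cH_g\sqcup\RH_g,
$$
as claimed. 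Under this identification the copy of $\cH_g$ is precisely the image of the section $s$, i.e.\ the distinguished theta characteristic $\theta_0$, while $\RH_g$ collects the spin structures obtained by twisting $\theta_0$ by a nontrivial $2$-torsion line bundle. I expect the main obstacle to be the construction and well-definedness of $s$: everything after it is a formal consequence of the torsor structure and of \eqref{eqn: decomposition J[2]}, whereas producing $\theta_0$ requires both the parity hypothesis on $g$ and the verification that $g^1_2$ exists integrally over families via the Brauer-theoretic point above.
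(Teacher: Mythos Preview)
Your overall strategy---construct a section and then trivialize the $\mathcal{J}_g[2]$-torsor---is the same as the paper's, and the second half of the argument is fine. The gap is in the construction of $g^1_2$ as an honest line bundle on the universal family.

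Your Brauer-theoretic argument shows that $\mathcal{P}\times_S C\to C$ has a section $\sigma$, hence is a projective bundle; but ``the now-existing relative $\mathcal{O}(1)$'' is only defined up to twist by $\Pic(C)$, so it is not canonical and cannot be used to define a morphism of stacks. The only canonical choice coming from the section is $\mathcal{O}_{\mathcal{P}_C}(\sigma)$, and $\sigma^*\mathcal{O}_{\mathcal{P}_C}(\sigma)$ is the normal bundle $N_{\sigma}\cong\pi^*T_{\mathcal{P}/S}=\pi^*\omega_{\mathcal{P}/S}^{-1}$, which is $(g^1_2)^{\otimes 2}$, not $g^1_2$. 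Equivalently, over the presentation $\mathcal{H}_g\cong[(\chi^{-2}\otimes W_{g+1}\smallsetminus\Delta)/(\mathbb{G}_m\times\PGL_2)]$ the universal $\mathbb{P}^1$ carries no $\PGL_2$-linearized $\mathcal{O}(1)$, so $g^1_2=\pi^*\mathcal{O}(1)$ does not descend; only even powers do. Consequently your construction of $\theta_0=(g^1_2)^{\otimes(g-1)/2}$ goes through when $(g-1)/2$ is even, i.e.\ $g\equiv 1\pmod 4$, but breaks down for $g\equiv 3\pmod 4$.

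The paper repairs this as follows: pass to the $\mu_2$-gerbe $\widehat{\mathcal{H}}_g\to\mathcal{H}_g$ that kills the Brauer class of $\mathcal{P}$ over the base (not over $C$), so that $\mathcal{O}_{\widehat{\mathcal{P}}}(1)$ and hence $\widehat{M}=\widehat{f}^*\mathcal{O}_{\widehat{\mathcal{P}}}(1)^{\otimes(g-1)/2}$ genuinely exist; this yields a morphism $\widehat{\mathcal{H}}_g\to\mathcal{SH}_g$ over $\mathcal{H}_g$. Then one uses the elementary fact that a finite \'etale map acquires a section as soon as it acquires one after base change along a universal homeomorphism (here the $\mu_2$-gerbe) to descend the section to $\mathcal{H}_g$. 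This is exactly the extra step your argument is missing.
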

	
	The proof is given in \S\ref{sec: Spin pairs}. As a consequence, combining Theorems~\ref{thm: presentation RHg,g+1/2} and~\ref{thm: Chow RHgg+1 odd} with \cite[Theorems~1.18 and~1.19]{CLI}, we obtain presentations and Chow rings for all components of $\mathcal{SH}_g$ when $g$ is odd.
	\subsection*{Convention}
	We work over an algebraically closed field $k$ of characteristic $0$ or bigger than $2g+2$. For a discussion on why this assumption is necessary, see~\cite[Example 1.1]{CLI}.
	
	\subsection{Acknowledgments}
	Part of this work was carried out during the second author's visit to the University of Cambridge. We are grateful to this institution for its hospitality. We thank Samir Canning, Brendan Hassett, Aitor Iribar Lopez, Michele Pernice, Dhruv Ranganathan, and Bernardo Tarini for discussions related to the topic of this work. We are also grateful to Suichu Zhang for pointing out the connection between Prym and Spin curves described in \S\ref{sec: Spin curves}, during the 2025 Summer Research Institute in Algebraic Geometry at Colorado State University. A.C. is supported by SNF grant P500PT-222363.
	
	\section{Proof of the presentations of $[\cD_{g+1,g+1}/ \mu_2]$ and $\RH_g^{(g+1)/2}$ }\label{sec: presentation g odd}
	
	In this section we prove Theorems \ref{thm: presentation Dg+1g+1mu2} and \ref{thm: presentation RHg,g+1/2}.
	
	\begin{proof}[Proof of Theorem \ref{thm: presentation Dg+1g+1mu2}]
		We start with recalling that, by \cite[Equation 4]{CLI}, we have
		
		$$
		\mathcal{D}_{g+1,g+1} \cong \bigg[ \frac{(\chi^{(1)})^{-1} \otimes W_{\frac{g+1}{2}} \times (\chi^{(2)})^{-1} \otimes W_{\frac{g+1}{2}} \smallsetminus \Delta}{\mathbb{G}_m^{(1)} \times \mathbb{G}_m^{(2)} \times \mathrm{PGL}_2} \bigg]
		$$
		
		where $\Delta$ denotes the locus of pairs of polynomials $(f, g)$ such that the product $fg$ is singular. The groups $\mathbb{G}_m^{(1)}$ and $\mathbb{G}_m^{(2)}$ are both equal to $\mathbb{G}_m$, with the superscripts indicating that each acts only on the corresponding factor in the product. Similarly, the notation $\chi^{(i)}$ refers to the same character $\chi$, where the superscript specifies which $\mathbb{G}_m$ factor is acting.
		
		There is a natural map
		\[
		\tau^{\mathrm{pre}}: \Bigg[ \frac{V^\vee \otimes W_{\frac{g+1}{2}} \smallsetminus \Delta}{G \times \PGL_2} \Bigg]^{\mathrm{pre}} \longrightarrow \left[ \frac{\mathcal{D}_{g+1,g+1}}{\mu_2} \right],
		\]
		defined as follows. On objects, it assigns to a pair of polynomials $(f,g)$ over a scheme $S$ their respective zero loci in $\mathbb{P}^1_S$.
		
		Let $(f_1,g_1) \to (f_2,g_2)$ be a morphism in the quotient prestack, given by a morphism $\varphi: S_1 \to S_2$ and an element $g = (A, [B]) \in G \times \PGL_2(S_1)$ such that
		\[
		\varphi^*(f_2, g_2) = g \cdot (f_1, g_1).
		\]
		Let $F_i$ and $G_i$ denote the zero divisors of $f_i$ and $g_i$ in $\mathbb{P}^1_{S_i}$ for $i = 1,2$. Let $\phi: \mathbb{P}^1_{S_1} \to \mathbb{P}^1_{S_2}$ be the morphism induced by $[B]$ and $\varphi$. Then, as Cartier divisors, we have either $\phi^* F_2 = F_1$ and $\phi^* G_2 = G_1$, or $\phi^* F_2 = G_1$ and $\phi^* G_2 = F_1$. This defines a morphism
		\[
		(F_1, G_1 \subseteq \mathbb{P}^1_{S_1} \to S_1) \longrightarrow (F_2, G_2 \subseteq \mathbb{P}^1_{S_2} \to S_2)
		\]
		in the stack $[ \mathcal{D}_{g+1,g+1} / \mu_2 ]$, and thereby defines $\tau^{\mathrm{pre}}$ on morphisms.

		We denote by $\tau$ the stackification of $\tau^{\mathrm{pre}}$. Since the $\mu_2$-torsor $\mathcal{D}_{g+1,g+1} \to [\mathcal{D}_{g+1,g+1}/\mu_2]$ factors through $\tau$ via the natural $\mu_2$-torsor
		\[
		\mathcal{D}_{g+1,g+1} \cong \bigg[ \frac{(\chi^{(1)})^{-1} \otimes W_{\frac{g+1}{2}} \times (\chi^{(2)})^{-1} \otimes W_{\frac{g+1}{2}} \smallsetminus \Delta}{\mathbb{G}_m^{(1)} \times \mathbb{G}_m^{(2)} \times \mathrm{PGL}_2} \bigg] \longrightarrow \Bigg[ \frac{V^\vee \otimes W_{\frac{g+1}{2}} \smallsetminus \Delta}{G \times \PGL_2} \Bigg],
		\]
		the morphism $\tau$ must be an isomorphism.

	\end{proof}

	\begin{proof}[Proof of Theorem \ref{thm: presentation RHg,g+1/2}]
		The cartesian diagram in Lemma~\ref{lemma: key cartesian diagram} and Theorem~\ref{thm: presentation Dg+1g+1mu2} show that $\RH_g^{(g+1)/2}$ is obtained as the base-change
		\[
		\begin{tikzcd}
			\RH_g^{(g+1)/2} \arrow[r]\arrow[d] & \cH_g = \bigg[ \frac{\chi^{ \otimes -2} \otimes W_{g+1} \smallsetminus \Delta}{\Gm \times \PGL_2} \bigg] \arrow[r]\arrow[d] & B \Gm \arrow[d]\\ 
			\big[\mathcal{D}_{g+1,g+1} / \mu_2 \big] = \Bigg[ \frac{V^\vee \otimes W_{\frac{g+1}{2}} \smallsetminus \Delta}{ G \times \PGL_2 }  \Bigg] \arrow[r] & \mathcal{D}_{2g+2} = \bigg[ \frac{\chi^{\otimes -1} \otimes W_{g+1} \smallsetminus \Delta}{\mathbb{G}_m \times \mathrm{PGL}_2} \bigg] \arrow[r] & B\Gm
		\end{tikzcd}
		\]
		where both squares are cartesian, the rightmost map is induced by the squaring homomorphism $\Gm \to \Gm$, and the first horizontal arrow at the bottom is given by the product $(f,g) \mapsto fg$, together with the group homomorphism $G \times \PGL_2 \to \Gm \times \PGL_2$ defined by $((a,b;\varepsilon), [B]) \mapsto (ab, [B])$.
		
		The theorem follows observing that $H$ is precisely the base-change of this map with the squaring map $\Gm \to \Gm$.
	\end{proof}
	
	\section{Auxiliary computations}

	\subsection{Chow rings of the classifying spaces of the groups $H_{\ell_1,\ell_2}$} \label{sec: groups H}
	
	For every $l_1,l_2\geq0$, we define $H_{\ell_1,\ell_2}:=(\Gm^{\ell_1}\times (\Gm^{\ell_2})^{\times 2})\rtimes\mu_2$, where $\mu_2$ acts on $\Gm^{\ell_1}$ by $-1\cdot t=t^{-1}$, and on $(\Gm^{\ell_2})^{\times 2}$ by swapping the two components. Note that, $H_{0,1}=G$.
	
	Let $\psi:B(\Gm^{\ell_1}\times (\Gm^{\ell_2})^{\times 2})\rightarrow BH_{\ell_1,\ell_2}$ be the double cover induced by the natural inclusion of groups. Denote by $\phi:BH_{\ell_1,\ell_2}\rightarrow B(\Gm^{\ell_1}\rtimes\mu_2)$ induced by projection $H_{\ell_1,\ell_2} \rightarrow \Gm^{\ell_1}\rtimes\mu_2$. For every $1\leq j\leq \ell_2$, denote by $\pi_j:BH_{\ell_1,\ell_2}\rightarrow B\GL_2$ the morphism induced by the composite $H_{\ell_1,\ell_2}\rightarrow G \subset\GL_2$, where the first morphism is the projection to the $j$-th component of $(\Gm^{\ell_2})^{\times 2}\rtimes\mu_2$.
	
	Recall also that $\beta_2\in\CH^*(B\GL_2)$ denotes the second Chern class of the standard $\GL_2$-representation $V$.
	
	\begin{proposition}\label{prop: Chow classifying Hl1l2}
		Every element $\alpha\in\CH^*(BH_{\ell_1,\ell_2})$ can be written as $\alpha=\alpha_1+\alpha_2$, where $\alpha_1\in\mathrm{im}(\psi_*)$, and $\alpha_2$ is sum of products of classes of the form $\pi_j^*(\beta_2^i)$ for $1\leq j\leq l_2$ and $i\geq0$, and classes of the form $\phi^*(\epsilon)$ for $\epsilon\in\CH^*(B(\Gm^{\ell_1}\rtimes\mu_2))$.     
	\end{proposition}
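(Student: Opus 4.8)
The plan is to use that $\psi\colon B(\Gm^{\ell_1}\times(\Gm^{\ell_2})^{\times 2})\to BH_{\ell_1,\ell_2}$ is a degree-two finite \'etale cover, so $\psi^*$ and the transfer $\psi_*$ obey the Mackey relations $\psi_*\psi^*=2$ and $\psi^*\psi_*=1+\sigma$, where $\sigma$ is the involution of $R:=\CH^*(B(\Gm^{\ell_1}\times(\Gm^{\ell_2})^{\times 2}))=\Z[s_1,\dots,s_{\ell_1},u_1,\dots,u_{\ell_2},v_1,\dots,v_{\ell_2}]$ induced by the nontrivial coset, namely $s_i\mapsto -s_i$ and $u_j\leftrightarrow v_j$. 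Let $S\subseteq\CH^*(BH_{\ell_1,\ell_2})$ be the subring generated by the $\pi_j^*(\beta_2)$ and by $\phi^*(\CH^*(B(\Gm^{\ell_1}\rtimes\mu_2)))$; the assertion is exactly $\CH^*(BH_{\ell_1,\ell_2})=\mathrm{im}(\psi_*)+S$. Several facts set the stage: since $2\alpha=\psi_*\psi^*\alpha$, the quotient by $\mathrm{im}(\psi_*)$ is $2$-torsion; since $\psi^*\gamma=0$, the projection formula gives $\gamma\cdot\mathrm{im}(\psi_*)=0$, while $\gamma$ is pulled back through $\phi$ and hence lies in $S$. Finally $\mathrm{im}(\psi^*)\subseteq R^\sigma$ (as $\sigma$ is inner in $H_{\ell_1,\ell_2}$), and a direct check gives $\psi^*\pi_j^*(\beta_2)=u_jv_j$ and $\psi^*\phi^*(\CH^*(B(\Gm^{\ell_1}\rtimes\mu_2)))=\Z[\,s_is_{i'}\,]$, so that $\psi^*(S)=\Z[\,s_is_{i'}\,][u_1v_1,\dots,u_{\ell_2}v_{\ell_2}]$.

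First I would prove the linear-algebra input $R^\sigma=\psi^*(S)+(1+\sigma)R$. Since $(1+\sigma)R=\psi^*\psi_*(R)$, this amounts to showing that $\psi^*(S)$ surjects onto the Tate group $\hat H^0(\mu_2,R)=R^\sigma/(1+\sigma)R$. I would compute $\hat H^0$ on the monomial basis: $\sigma$ sends $s^au^bv^c$ to $(-1)^{|a|}s^au^cv^b$ (with $|a|=\sum a_i$), so the monomials with $b\neq c$ fall into free pairs (possibly sign-twisted) contributing nothing, the fixed monomials $s^au^bv^b$ with $|a|$ odd are anti-invariant and contribute nothing, and $\hat H^0(\mu_2,R)$ is the $\mathbb{F}_2$-span of the classes of $s^au^bv^b$ with $|a|$ even. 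These are exactly the reductions of the generators of $\psi^*(S)$, giving the surjection and hence the claim. (The identity $\psi^*\phi^*(\cdots)=\Z[\,s_is_{i'}\,]$ is the $\ell_2=0$ case, which I would dispatch first by realizing each $s_is_{i'}$ as $\psi^*$ of second Chern classes of the two-dimensional representations of $\Gm^{\ell_1}\rtimes\mu_2$ induced by the characters $s_i$ and $s_i+s_{i'}$.)

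With this in hand the argument is an induction on degree. Given $\alpha$, write $\psi^*\alpha=\psi^*(\mathsf s)+(1+\sigma)r=\psi^*\big(\mathsf s+\psi_*(r)\big)$ with $\mathsf s\in S$, $r\in R$, so that $\kappa:=\alpha-\mathsf s-\psi_*(r)\in\ker\psi^*$. The crux is to identify
\[
\ker\big(\psi^*\colon\CH^*(BH_{\ell_1,\ell_2})\to R\big)=\gamma\cdot\CH^*(BH_{\ell_1,\ell_2}),
\]
the inclusion $\supseteq$ being clear from $\psi^*\gamma=0$. For $\subseteq$ I would factor $\psi=p\circ\iota$, where $p\colon \mathbb{L}^{\circ}\to BH_{\ell_1,\ell_2}$ is the $\Gm$-torsor (complement of the zero section) of the $2$-torsion line bundle with $c_1=\gamma$, and $\iota$ is the inclusion of the corresponding $\mu_2$-subtorsor (well defined since $\psi^*$ of that line bundle is trivial). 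The Gysin sequence of the zero section gives $\CH^*(\mathbb{L}^{\circ})=\CH^*(BH_{\ell_1,\ell_2})/\gamma\cdot\CH^*(BH_{\ell_1,\ell_2})$ with $\ker p^*=\gamma\cdot\CH^*(BH_{\ell_1,\ell_2})$, so the crux is precisely injectivity of the residual map $\iota^*\colon\CH^*(\mathbb{L}^{\circ})\to R$; equivalently, that $\CH^*(BH_{\ell_1,\ell_2})/\gamma(\cdots)$ is torsion-free and maps isomorphically onto $R^\sigma$. Rationally this is automatic ($\gamma$ is torsion and the rational Chow ring of the quotient is $R^\sigma$), so the content is the integral/mod-$2$ statement, which I would extract from the localization (root-gerbe) calculus in the spirit of \cite[Proposition~3.5]{CLI}. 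This is the step I expect to be the main obstacle, being the only one not formal in the transfer calculus.

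Granting the crux, the induction closes at once. Writing $\kappa=\gamma\alpha'$ with $\alpha'$ of strictly lower degree, the inductive hypothesis gives $\alpha'=\alpha_1'+\alpha_2'$ with $\alpha_1'\in\mathrm{im}(\psi_*)$ and $\alpha_2'\in S$; then $\gamma\alpha_1'=0$ and $\gamma\alpha_2'\in S$, whence $\kappa\in S$ and $\alpha=\mathsf s+\psi_*(r)+\kappa\in\mathrm{im}(\psi_*)+S$. The base case of degree $0$ is immediate since $1\in S$.
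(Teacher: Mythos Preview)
Your approach via transfer calculus and Tate cohomology is genuinely different from the paper's, which proceeds by induction on $\ell_2$ using geometric approximations $[\P^n\times\P^n/H_{\ell_1,\ell_2-1}]$ of $BH_{\ell_1,\ell_2}$ together with excision along the locus at infinity. Your computation $R^\sigma=\psi^*(S)+(1+\sigma)R$ is correct, and the inductive scheme would close cleanly given the crux.

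The crux $\ker\psi^*=(\gamma)$, however, is a genuine gap. You correctly flag it as the main obstacle, but the proposed fix does not go through. First, the appeal to \cite[Proposition~3.5]{CLI} is misplaced: that result computes the Chow ring of a $\mu_2$-root gerbe from its base, whereas here $BK\to BH_{\ell_1,\ell_2}$ is a $\mu_2$-\emph{torsor}, and $\mu_2$ is not central in $H_{\ell_1,\ell_2}$ once $\ell_1+\ell_2>0$, so there is no root-gerbe structure in sight. Second, your factorization through $\mathbb{L}^\circ$ reduces the crux to injectivity of $\iota^*\colon\CH^*(\mathbb{L}^\circ)\to R$, but $\mathbb{L}^\circ\cong BH_{\ell_1,\ell_2}\times_{B\mu_2}\Gm$ is a nontrivial twisted family of $BK$'s over $\Gm$ (with monodromy $\sigma$), and restriction to a fibre need not be injective on Chow groups without further input. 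In fact, since $\psi_*\psi^*=2$ makes $\psi^*$ rationally injective, the crux is equivalent to the torsion-freeness of $\CH^*(BH_{\ell_1,\ell_2})/(\gamma)$ --- a structural statement about $\CH^*(BH_{\ell_1,\ell_2})$ of essentially the same depth as the proposition itself. The paper avoids this circularity: its excision argument never needs to identify $\ker\psi^*$, and the only nontrivial input beyond stratification bookkeeping is the identity $\phi_{2*}(1)=\pi_{\ell_2}^*(\beta_2)$ for the deepest stratum, verified by a direct cartesian comparison with $B\GL_2$.
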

	\begin{proof}
		We prove the statement by induction on $\ell_2$, where the case $\ell_2=0$ is obvious. Then, assume $\ell_2\geq1$ and that the statement is true for every $\ell_2'\leq \ell_2$. For $n \geq 0$ let $E_n$ be the the $H_{\ell_1,\ell_2}$-representation $E_n=\pi_{\ell_2}^*((V^\vee)^{n+1})\cong \mathbb{A}^{n+1}\times\mathbb{A}^{n+1}$ and let $B_n:=\{0\}\times\mathbb{A}^{n+1}\cup\mathbb{A}^{n+1}\times\{0\}\subseteq E_n$. Note that the surjective ring homomorphism
		\[
		\begin{tikzcd}
			\CH^*(BH_{\ell_1,\ell_2})\arrow[r] & \CH^*\bigg( \bigg[ \frac{E_n \smallsetminus B_n }{ H_{\ell_1,\ell_2}}\bigg] \bigg) \cong \CH^*\bigg( \bigg[ \frac{\P^n\times\P^n }{ H_{\ell_1,\ell_2-1}}\bigg] \bigg) 
		\end{tikzcd}
		\]
		is an isomorphism in degree $\leq n$; see~\cite[Lemma 4.3]{AI17} for the last isomorphism. It therefore suffices to show that for all $n \geq 0$ the group $\CH^*_{H_{\ell_1,\ell_2-1}}(\P^n \times \P^n)$ is generated in degrees $\leq n$ by the classes appearing in the statement (viewed in this ring via the map above). Consider the open/closed decomposition 
		\[
		\bigg[\frac{ \mathbb{A}^n\times\mathbb{A}^n}{H_{\ell_1,\ell_2-1}} \bigg] \subset  \bigg[\frac{ \P^n\times\P^n}{H_{\ell_1,\ell_2-1}} \bigg] \supset   \bigg[\frac{ \P^{n}\times\P^{n-1} \cup \P^{n-1}\times\P^{n}}{H_{\ell_1,\ell_2-1}} \bigg] :=\mathcal{W},
		\]
		where $\mathbb{A}^n\subset\P^n$ is the open subscheme where the last coordinate is different from $0$. Notice that the morphism
		$$
		\bigg[ \frac{\P^n\times\P^{n-1}\sqcup\P^{n-1}\times\P^n}{H_{\ell_1,\ell_2-1}} \bigg ] \sqcup \bigg[ \frac{\P^{n-1}\times\P^{n-1}}{H_{\ell_1,\ell_2-1}} \bigg] \xrightarrow{\phi_1 \sqcup \phi_2} \mathcal{W}
		$$
		given by the normalization map and the natural inclusion, induces a surjection on Chow rings. Moreover, we have an isomorphism
		$$
		\bigg[ \frac{\P^n\times\P^{n-1}}{\Gm^{\ell_1}\times (\Gm^{\ell_{2}-1})^{\times 2}} \bigg ] \cong \bigg[ \frac{\P^n\times\P^{n-1}\sqcup\P^{n-1}\times\P^n}{H_{\ell_1, \ell_2-1}} \bigg ]
		$$
		induced by the inclusion on thee first factor. Putting everything together, we get an exact sequence
		\[
		\begin{tikzcd}[column sep=small]
			\CH_{\Gm^{\ell_1}\times (\Gm^{\ell_2})^{\times 2}}^*(\P^n\times\P^{n-1})\oplus\CH_{H_{\ell_1,\ell_2-1}}^*((\P^{n-1})^2)\arrow[r] & \CH_{H_{\ell_1,\ell_2-1}}^*((\P^n)^2)\arrow[r] & \CH^*(BH_{\ell_1,\ell_2-1})\arrow[r] & 0.
		\end{tikzcd}
		\]
		By the inductive hypothesis and the exact sequence above, it suffices to show that every element in the image of the first homomorphism of groups is of the form described in the statement. We prove this by induction on $n$, with the case $n = 0$ being immediate. The map $\phi_1$ factors as
		\[
		\begin{tikzcd}
			\bigg[ \frac{\P^n\times\P^{n-1}}{\Gm^{\ell_1}\times (\Gm^{\ell_{2}-1})^{\times 2}} \bigg ]  \arrow[r,hookrightarrow] & \bigg[ \frac{\P^n\times\P^{n}}{\Gm^{\ell_1}\times (\Gm^{\ell_{2}-1})^{\times 2}} \bigg ] \arrow[r,"\psi_n"] & \bigg[ \frac{\P^n\times\P^{n}}{H_{\ell_1,\ell_2-1}} \bigg ] ,
		\end{tikzcd}
		\]
		implying that the image of $\phi_{1*}$ is contained in $\mathrm{im}(\psi_*)$. For $\phi_{2*}$, by the inductive hypothesis and the fact that all the classes on $[ (\P^{n-1})^2 / H_{\ell_1, \ell_2 - 1} ]$ appearing in the statement come from the image of $\phi_2^*$, it remains only to show that $\phi_{2*}(1) = \beta_2$.
		
		This follows from the following cartesian diagram
		\[
		\begin{tikzcd}
			\bigg[ \frac{\P^{n-1}\times\P^{n-1}}{H_{\ell_1,\ell_2-1}} \bigg]\arrow[d,"\cong"]\arrow[r,"\phi_2"] & {\bigg[ \frac{\P^n\times\P^n}{H_{\ell_1,\ell_2-1}} \bigg]}\arrow[d,"\cong"]\\
			\bigg[ \frac{(V^\vee)^n\smallsetminus B_{n-1}}{H_{\ell_1,\ell_2}} \bigg] \arrow[r]\arrow[d,"\pi_{\ell_2}"] & \bigg[ \frac{(V^\vee)^{n+1}\smallsetminus B_{n}}{H_{\ell_1,\ell_2}} \bigg]\arrow[d,"\pi_{\ell_2}"]\\
			\bigg[ \frac{(V^\vee)^n\smallsetminus B_{n-1}}{\GL_2} \bigg]\arrow[r,"v"] & \bigg[ \frac{(V^\vee)^{n+1}\smallsetminus B_{n}}{\GL_2} \bigg],
		\end{tikzcd}
		\]
		and the fact that $v_*(1)=\beta_2$.
	\end{proof}
	
	\begin{corollary}\label{cor: Chow2 of H11}
		The group $\CH^2(BH_{1,1})$ is given by 
		$$
		\CH^2(BH_{1,1})= \Z \langle \pi_1^*(\beta_2) \rangle \oplus  \Z \langle \psi_*(t x_1) \rangle \oplus \Z \langle \psi_*(x_1^2) \rangle \oplus  \Z \langle \phi^*(c_2) \rangle \oplus \frac{ \Z}{2\Z} \langle \phi^*(\gamma^2) \rangle.
		$$ 
		Moreover, the kernel of the homomorphism of groups $\psi^*: \CH^2(B H_{1,1}) \to \CH^2(B \Gm \times \Gm^{\times 2})$ is the subgroup $\langle \gamma^2 \rangle$.
	\end{corollary}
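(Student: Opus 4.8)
The plan is to apply Proposition~\ref{prop: Chow classifying Hl1l2} with $(\ell_1,\ell_2)=(1,1)$ to list a finite generating set of $\CH^2(BH_{1,1})$, and then to detect all relations by pulling back along the double cover $\psi\colon B(\Gm\times\Gm^{\times 2})\to BH_{1,1}$, whose target ring is the polynomial ring $\Z[t,x_1,x_2]$ (with $t=c_1(\chi)$ on the first factor and $x_1,x_2$ the Chern classes of the two characters of $\Gm^{\times 2}$). Since $\psi$ is representable, finite and \'etale of degree $2$, the pushforward $\psi_*$ preserves codimension, so the $\mathrm{im}(\psi_*)$ part of the proposition in degree $2$ is spanned by $\psi_*$ of the six monomials $t^2,tx_1,tx_2,x_1^2,x_1x_2,x_2^2$, while the $\alpha_2$ part is spanned by $\pi_1^*(\beta_2)$, $\phi^*(c_2)$ and $\phi^*(\gamma^2)$ (in degree $2$ only powers of $\beta_2$ and pullbacks from $B(\Gm\rtimes\mu_2)$ can occur). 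This gives nine a priori generators.

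First I would trim this list using the two standard identities for a connected $\mu_2$-cover, namely $\psi_*\sigma^*=\psi_*$ and $\psi^*\psi_*=1+\sigma^*$, where $\sigma$ is the deck involution acting on $\Z[t,x_1,x_2]$ by $t\mapsto-t$ and $x_1\leftrightarrow x_2$. The first identity gives $\psi_*(tx_2)=-\psi_*(tx_1)$ and $\psi_*(x_2^2)=\psi_*(x_1^2)$. For the two symmetric monomials I would note that they are pullbacks: computing $\psi^*$ on the generators (using that $V$ restricts along $\pi_1\psi$ to $\chi_1\oplus\chi_2$, and that the standard $2$-dimensional representation of $\Gm\rtimes\mu_2$ restricts along $\phi\psi$ to $\chi\oplus\chi^{-1}$) yields $\psi^*\pi_1^*(\beta_2)=x_1x_2$ and $\psi^*\phi^*(c_2)=-t^2$. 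Hence $x_1x_2,t^2\in\mathrm{im}(\psi^*)$, and the projection formula together with $\psi_*(1)=2$ gives $\psi_*(x_1x_2)=2\pi_1^*(\beta_2)$ and $\psi_*(t^2)=-2\phi^*(c_2)$. After these reductions the surviving generators are exactly the five classes $\pi_1^*(\beta_2)$, $\psi_*(tx_1)$, $\psi_*(x_1^2)$, $\phi^*(c_2)$, $\phi^*(\gamma^2)$.

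To pin down the group structure I would pull all five back by $\psi^*$ into $\Z[t,x_1,x_2]$. Using $\psi^*\psi_*=1+\sigma^*$ one gets $\psi^*\psi_*(tx_1)=tx_1-tx_2$ and $\psi^*\psi_*(x_1^2)=x_1^2+x_2^2$, together with $\psi^*\pi_1^*(\beta_2)=x_1x_2$, $\psi^*\phi^*(c_2)=-t^2$, and $\psi^*\phi^*(\gamma^2)=0$ (as $\gamma$ restricts to $0$ on $B\Gm$). The four nonzero images $x_1x_2,\,-t^2,\,tx_1-tx_2,\,x_1^2+x_2^2$ are supported on pairwise disjoint monomials, hence are $\Z$-linearly independent; so $\pi_1^*(\beta_2),\psi_*(tx_1),\psi_*(x_1^2),\phi^*(c_2)$ span a free rank-$4$ subgroup on which $\psi^*$ is injective. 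The same computation shows any degree-$2$ element of $\ker\psi^*$ is a multiple of $\phi^*(\gamma^2)$, which will give the stated description of $\ker\psi^*$ once $\phi^*(\gamma^2)$ is known to generate a $\Z/2$.

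The one step needing extra input — and the crux of the argument — is showing $\phi^*(\gamma^2)$ has order exactly $2$, i.e. that it is nonzero (the relation $2\gamma=0$ already gives $2\phi^*(\gamma^2)=0$). Here I would use that the projection $\phi\colon H_{1,1}\to\Gm\rtimes\mu_2$ admits the group-theoretic section $s(a,\varepsilon)=((a,1,1),\varepsilon)$, which is a homomorphism precisely because the swap action of $\mu_2$ fixes $(1,1)\in\Gm^{\times 2}$. Then $s^*\phi^*=\mathrm{id}$, so $\phi^*$ is injective and $\phi^*(\gamma^2)\neq0$ because $\gamma^2\neq0$ in $\CH^2(B(\Gm\rtimes\mu_2))=\Z\langle c_2\rangle\oplus(\Z/2)\langle\gamma^2\rangle$; equivalently, restricting further to the $\mu_2$-factor sends $\gamma^2$ to the nonzero generator of $\CH^2(B\mu_2)=\Z/2$. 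Finally, since $\psi^*$ kills $\phi^*(\gamma^2)$ but is injective on the rank-$4$ part, no nonzero multiple of $\phi^*(\gamma^2)$ lies in that part, so the sum is direct and we obtain $\CH^2(BH_{1,1})\cong\Z^{4}\oplus\Z/2$ with the asserted generators and $\ker\psi^*=\langle\gamma^2\rangle$.
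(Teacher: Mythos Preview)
Your proof is correct and follows essentially the same route as the paper's: both apply Proposition~\ref{prop: Chow classifying Hl1l2}, reduce the nine a priori generators to five via the transfer identities $\psi_*\sigma^*=\psi_*$ and $\psi^*\psi_*=1+\sigma^*$, detect independence of four of them by pulling back along $\psi$, and show $\phi^*(\gamma^2)\neq 0$ by restricting along a section of $\phi$ (you spell out the section $s(a,\varepsilon)=((a,1,1),\varepsilon)$ explicitly, the paper only names the map $B(\Gm\rtimes\mu_2)\to BH_{1,1}$). The one discrepancy is a sign: you compute $\psi^*\phi^*(c_2)=-t^2$ while the paper writes $t^2$; this is a harmless convention issue (compare the relation $\tau^2+c_2=0$ in $\CH^*_{\PGL_2}(\P^1)$) and does not affect the linear-independence argument.
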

	\begin{proof}
		By Proposition~\ref{prop: Chow classifying Hl1l2}, we know that $\CH^2(BH_{1,1})$ is additively generated by the image of $\psi_*$ and the classes $\phi^*(\gamma^2)$, $\phi^*(c_2)$ and $\pi_1^*(\beta_2)$. See \cite[Theorem 5.2]{Lar19} for the Chow ring of $G=H_{0,1}$. Since $\CH(B(\Gm\times\Gm^2))\cong\Z[t,x_1,x_2]$, where $t$, $x_1$ and $x_2$ are the first Chern classes of $\chi$ associated to the first, second and third projection respectively, we know that the image of $\psi_*$ is additively generated by the classes 
		\[
		\psi_*(t^2),\quad\psi_*(tx_1),\quad\psi_*(tx_2),\quad\psi_*(x_1^2),\quad\psi_*(x_2^2),\quad\psi_*(x_1x_2).
		\]
		First, notice that $\psi_*(x_1x_2)=\psi_*(\psi^*\pi_1^*\beta_2)=2\pi_1^*(\beta_2)\in\Z\langle\pi_1^*(\beta_2)\rangle$. Moreover, by $\mu_2$-invariance, we have $\psi_*(x_1^2)=\psi_*(x_2^2)$ and $\psi_*(tx_1)=-\psi_*(tx_2)$. Also, $\psi^*(\phi^*(c_2))=t^2$, hence $\psi_*(t^2)=2\phi^*(c_2)$. It follows that $\CH^*(BH_{1,1})$ is additively generated by the classes in the statement and it is enough to prove that the only relation between them is $2\gamma^2=0$ ( which we already know to hold). To prove it, we compute the pullback along $\psi$, showing that the only element that is sent to $0$ to zero is $\gamma^2$. Notice that this would leave open the possibility of $\gamma^2$ being 0, which is ruled out by considering its the pullback along $B(\Gm\rtimes\mu_2)\rightarrow BH_{1,1}$. Clearly, $\psi^*(\gamma^2)=0$. By Lemma~\ref{lemma: push-pull psi} below, we have that $\psi^*\psi_*(tx_1)=t(x_1-x_2)$ and $\psi^*\psi_*(x_1^2)=x_1^2+x_2^2$. Finally, we have that $\psi^*\phi^*(c_2)=t^2$ and $\psi^*\pi_1^*(\beta_2)=x_1x_2$. As these four classes are linearly independent in $\CH^2(B(\Gm\times\Gm^2))$, the result follows.
	\end{proof}
	
	In the previous proof we have used the next well known result, which we record here also for ease of reference later in the paper.
	
	\begin{lemma}\label{lemma: push-pull psi}
		Let $f:\mathcal{X}\rightarrow\mathcal{Y}$ be a $\mu_2$-torsor of quotient stacks over a field $k$ of characteristic different from 2, and let $\mu:\mathcal{X}\rightarrow\mathcal{X}$ be the automorphism relative to $-1\in\mu_2$. Then, for every $\alpha\in\CH^*(\mathcal{X})$ we have
		\[
		f^*f_*(\alpha)=\alpha+\mu^*(\alpha).
		\]
	\end{lemma}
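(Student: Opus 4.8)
The plan is to realize the operator $f^* f_*$ through base change along the square formed by $f$ with itself, and then to exploit the fact that a $\mu_2$-torsor has an especially simple self-fiber-product. First I would form the cartesian square
$$
\begin{tikzcd}
\mathcal{X} \times_{\mathcal{Y}} \mathcal{X} \arrow[r, "p_2"] \arrow[d, "p_1"'] & \mathcal{X} \arrow[d, "f"] \\
\mathcal{X} \arrow[r, "f"'] & \mathcal{Y}.
\end{tikzcd}
$$
Since $f$ is a $\mu_2$-torsor, it is representable, finite, and étale, hence in particular flat and proper. Flat base change together with the compatibility of proper pushforward and flat pullback — valid for representable proper flat morphisms of quotient stacks in the equivariant intersection theory of Edidin--Graham — then yields the key identity $f^* f_*(\alpha) = (p_1)_* \, p_2^*(\alpha)$ for every $\alpha \in \CH^*(\mathcal{X})$.

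Next I would use the torsor structure to describe the self-fiber-product explicitly. Because $f$ is a $\mu_2$-torsor and $\mathrm{char}(k) \neq 2$ (so that $\mu_2$ is the constant étale group), the action map provides an isomorphism $\mathcal{X} \times \mu_2 \xrightarrow{\sim} \mathcal{X} \times_{\mathcal{Y}} \mathcal{X}$, $(x,s) \mapsto (x, s\cdot x)$. Concretely this identifies $\mathcal{X} \times_{\mathcal{Y}} \mathcal{X} \cong \mathcal{X} \sqcup \mathcal{X}$, where the first copy is the diagonal, embedded via $(\mathrm{id},\mathrm{id})$, and the second is the graph of the deck transformation, embedded via $(\mathrm{id},\mu)$ (this is well defined precisely because $\mu$ is an automorphism of $\mathcal{X}$ over $\mathcal{Y}$). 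Under this identification $p_1$ restricts to the identity on each copy, while $p_2$ restricts to the identity on the first copy and to $\mu$ on the second.

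Combining the two steps, $(p_1)_* p_2^*(\alpha)$ decomposes as the sum of the contributions of the two components: the diagonal copy contributes $\mathrm{id}_* \, \mathrm{id}^*(\alpha) = \alpha$, and the graph copy contributes $\mathrm{id}_* \, \mu^*(\alpha) = \mu^*(\alpha)$. Hence $f^* f_*(\alpha) = \alpha + \mu^*(\alpha)$, as claimed.

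The only genuinely delicate point is the first step: one must be sure that flat base change and the projection-type identity $f^* f_* = (p_1)_* p_2^*$ are legitimate for quotient stacks rather than merely for schemes. This is where I expect the main care to be needed, but it causes no real trouble here, since $f$ is representable, finite, flat, and unramified, so all the operations involved and their compatibilities follow from the corresponding statements in equivariant Chow theory. Once that is granted, the decomposition of the fiber product and the resulting computation are purely formal.
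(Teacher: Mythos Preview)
Your argument is correct and is essentially the content of the reference the paper cites (Fulton, \emph{Intersection Theory}, Example~1.7.6), which gives the formula $f^*f_*(\alpha)=\sum_{g\in G} g^*(\alpha)$ for a finite flat Galois cover via exactly this base-change trick. The paper's proof is a one-line citation, whereas you have spelled out the cartesian-square computation and noted the mild care needed to transport it to quotient stacks; there is no substantive difference.
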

	\begin{proof}
		It follows immediately from~\cite[Example 1.7.6]{Ful98}.
	\end{proof}

	\subsection{$\GL_3$-counterparts}\label{subsec: GL3 counterparts}
	In this subsection we briefly recall the main definitions and results on $\GL_3$-counterparts. For a more thorough treatment see the paper~\cite{DL18} where they were firstly introduced, or~\cite[Subsection 3.4]{CLI}.
	
	\begin{definition}\label{def: GL3 counterparts}\cite[Definition 1.3 and Definition 2.2]{DL18}
		Let $H$ be a smooth affine group scheme over $k$. Let $X$ be a scheme of finite type over $\mathrm{Spec}(k)$ endowed with a $H\times\PGL_2$-action. Then a $H\times\GL_3$-counterpart of $X$ is a scheme $X'$ endowed with a $H\times\GL_3$-action and an isomorphism $ [X'/H\times\GL_3] \cong [X/H\times\PGL_2]$. Given two schemes $X$ and $Y$ with a $H\times\PGL_2$-action, and a $H\times\PGL_2$-equivariant morphism $f:X\rightarrow Y$, a $H\times\GL_3$-counterpart of $f$ is the datum of $H\times\GL_3$-counterparts $X'$ and $Y'$ of $X$ and $Y$ respectively, together with a $H\times\GL_3$-equivariant morphism $f':X'\rightarrow Y'$ such that the diagram
		\[
		\begin{tikzcd}
			{\left[\frac{X}{H\times\PGL_2}\right]}\arrow[r,"f"]\arrow[d,"\cong"'] & {\left[\frac{Y}{H\times\PGL_2}\right]}\arrow[d,"\cong"]\\
			{\left[\frac{X'}{H\times\GL_3}\right]}\arrow[r,"f'"] & {\left[\frac{Y'}{H\times\GL_3}\right]}
		\end{tikzcd}
		\]
		commutes.
	\end{definition}
	We will mainly be interested in the case when $H$ is trivial or $H=\mu_2$.
	In \cite[Section 1]{DL18}, the author proved that every proper $\PGL_2$-morphism admits a $\GL_3$-counterpart, and he explicitly computed a $\GL_3$-counterpart of the quotient stack $[\P(W_m) / \PGL_2]$. Here, we recall his construction as well as the definition of certain natural substacks.  
	
	Roughly speaking, $[\P(W_m) / \PGL_2]$ parametrizes degree $2m$ divisors on a smooth rational curve. Let $\mathcal{S} \subseteq k[X_1,X_2,X_3]_2 \smallsetminus \{ 0\}$ be the subset of smooth degree $2$ polynomials. A matrix $A \in \GL_3$ acts on a point of $\mathcal{S}$ by precomposition with $A^{-1}$ and we have isomorphisms $[\mathcal{S}/\GL_3]\cong B \PGL_2 \cong \mathfrak{M}_0$, where $\mathfrak{M}_0$ denotes the moduli stack of smooth genus $0$ curves. 
	
	For $m \in \Z_{\geq 1}$, let $V_m$ be the cokernel of the bundle map
	\begin{equation*}
		\begin{tikzcd}[row sep=tiny]
			\mathcal{S} \times k[X_1,X_2,X_3]_{m-2} \arrow[r] & \mathcal{S} \times k[X_1,X_2,X_3]_{m}\\
			(q,f) \arrow[r,mapsto] & (q,qf)
		\end{tikzcd}    
	\end{equation*}
	
	The action of $\GL_3$ on $\mathcal{S}$ extends to an action on the projective bundle $\P(V_m)$ by setting $A \cdot (q,[f])=(q \circ A^{-1}, [f \circ A^{-1}])$. Moreover, we have an isomorphism $[\P(V_m) /\GL_3] \cong [\P(W_m)/ \PGL_2]$ obtained by identifying$(q,[f]) \in \P(V_m)$ with the divisor given by the zero locus of $f$ in the conic defined by $q$ (see \cite[Proposition 3.2]{DL18}).
	
	\begin{notation}
		We denote by $\xi_{2m} \in \CH^*_{\GL_3}(\P(V_m))$ the $\GL_3$-equivariant first Chern class of $\cO_{\P(V_m)}(1)$. By an abuse of notation we will also denote with the same symbol its pullback to $\CH^*_{\Gm^3}(\P(V_m))$, where $\Gm^3 \subseteq \GL_3$ is the maximal torus consisting of diagonal matrices.
	\end{notation}
	
	For $r,\ell \in \Z_{\geq 0}$ such that $r+\ell \leq m$, we have inclusions
	\begin{equation}\label{eqn: map j-m,r,l}
		\begin{tikzcd}
			j_{m;r,\ell}: \P(V_{m-r-\ell}) \arrow[r,hookrightarrow] & \P(V_m)
		\end{tikzcd}
	\end{equation}
	given by $(q,[f]) \mapsto (q,[f \cdot X_0^r X_1^\ell])$. These are not $\GL_3$-equivariant, but only $\Gm^3$-equivariant. Thus we have $\Gm^3$-equivariant classes $[W_{m;r,\ell}] \in \CH^*_{\Gm^3}(\P(V_m))$ defined by the images of the maps $j_{m;r,\ell}$.
	
	In this paper we will only need the case $(r,l)=(1,0)$, that is, $j_{m;1,0}$ and $[W_{m;1,0}]$. This class has already been computed in~\cite{CLI}, and we report here the result for the convenience of the reader.
	
	\begin{lemma}[{\cite[Proposition 3.16, Remark 3.17]{CLI}}]\label{lem: computation Wm10}
		Let $m\geq2$. The $\Gm^3$-equivariant class of $[W_{m;1,0}]$ is
		\[
		[W_{m;1,0}]=\xi_{2m}^2+m^2c_2+(2m-1)\xi_{2m}t_1+m(2m-1)t_1^2.
		\]
		where the $t_i$ denote the first Chern classes of the standard $\Gm$-representations, each corresponding to the $i$-th factor in $\Gm^3$, and $c_2= t_1 t_2 + t_1 t_3 + t_2 t_3$.
	\end{lemma}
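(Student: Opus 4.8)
The plan is to realize $W_{m;1,0}$ as the projectivization of a subbundle of $V_m$ and to read its class off from the Chern classes of the associated rank-two quotient. Multiplication by $X_1$ defines a fiberwise injective, $\Gm^3$-equivariant bundle map $V_{m-1}\to V_m$ over $\mathcal{S}$ (injective because a smooth conic $q$ is coprime to $X_1$), whose image is precisely the subbundle $X_1\cdot V_{m-1}\subseteq V_m$; its projectivization is the image of $j_{m;1,0}$. Writing $Q:=V_m/(X_1\cdot V_{m-1})$ for the rank-two quotient, the cycle $W_{m;1,0}=\P(X_1 V_{m-1})$ is the zero locus of the tautological section of $\pi^*Q\otimes\cO_{\P(V_m)}(1)$ (where $\pi\colon\P(V_m)\to\mathcal{S}$), which is regular since the embedding has codimension $2=\operatorname{rank}Q$. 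Hence
\[
[W_{m;1,0}]=c_2\!\left(\pi^*Q\otimes\cO_{\P(V_m)}(1)\right)=\xi_{2m}^2+c_1(Q)\,\xi_{2m}+c_2(Q),
\]
and the problem reduces to computing $c_1(Q)$ and $c_2(Q)$ in $\CH^*_{\Gm^3}(\mathcal{S})$.

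To compute $c(Q)$ I would work with the constant (representation) bundles $\underline{k[X]_d}$ and carefully track the equivariant twists of the relevant multiplication maps: multiplication by the tautological conic $q$ is equivariant with no twist, whereas multiplication by $X_1$ twists by the character of weight $-t_1$. Resolving $Q=\underline{k[X]_m}/(X_1\underline{k[X]_{m-1}}+q\,\underline{k[X]_{m-2}})$ via the exact sequence attached to $X_1\underline{k[X]_{m-1}}\cap q\,\underline{k[X]_{m-2}}=X_1 q\,\underline{k[X]_{m-3}}$ expresses $c(Q)$ as a ratio of total Chern classes in which only the monomials in $X_2,X_3$ survive, namely
\[
c(Q)=\frac{\prod_{b+c=m}(1-bt_2-ct_3)}{\prod_{b+c=m-2}(1-bt_2-ct_3)}.
\]
Geometrically this reflects the isomorphism $Q\cong\rho_*\bigl(\cO(m)|_Z\bigr)$, where $Z=C_q\cap\{X_1=0\}$ is the length-two intersection of the conic with the coordinate line, so that $W_{m;1,0}$ is the locus of degree $2m$ divisors containing $Z$. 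A routine symmetric-function computation then yields $c_1(Q)=-(2m-1)(t_2+t_3)$ and $c_2(Q)=m(m-1)(t_2+t_3)^2+m^2 t_2t_3$ in $\Z[t_2,t_3]$.

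The final, and most delicate, step is to rewrite these expressions using the relations in $\CH^*_{\Gm^3}(\mathcal{S})$. The isomorphism $[\mathcal{S}/\GL_3]\cong B\PGL_2$, together with $\CH^*(B\PGL_2)=\Z[c_2,c_3]/(2c_3)$, forces the first Chern class of the standard $\GL_3$-representation to vanish, so that $t_1+t_2+t_3=0$ and $2t_1t_2t_3=0$ hold in $\CH^*_{\Gm^3}(\mathcal{S})$. Substituting $t_2+t_3=-t_1$ gives $c_1(Q)=(2m-1)t_1$, and using in addition $t_2t_3=c_2+t_1^2$ (with $c_2=t_1t_2+t_1t_3+t_2t_3$) turns $c_2(Q)$ into $m^2c_2+m(2m-1)t_1^2$. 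Plugging these into the display above produces the asserted formula. I expect the main obstacle to be exactly this interplay between the formal Chern-class computation and the relations of $\CH^*_{\Gm^3}(\mathcal{S})$: the raw computation produces ratios that are only genuinely polynomial, and only match the stated (partly symmetric) answer, after imposing $t_1+t_2+t_3=0$; keeping the equivariant twist of $X_1$ straight, and checking that the spurious higher Chern classes of $Q$ indeed vanish (this is where $2t_1t_2t_3=0$ enters), are the points at which the argument is easiest to get wrong.
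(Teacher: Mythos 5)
Your proof is correct: I verified the equivariant twist of multiplication by $X_1$ (the map $V_{m-1}\otimes\chi_1^{-1}\to V_m$ is the equivariant one), the exact cancellation giving $c(Q)=\prod_{b+c=m}(1-bt_2-ct_3)\big/\prod_{b+c=m-2}(1-bt_2-ct_3)$, the values $c_1(Q)=-(2m-1)(t_2+t_3)$ and $c_2(Q)=m(m-1)(t_2+t_3)^2+m^2t_2t_3$, and the reduction via $t_1+t_2+t_3=0$ and $t_2t_3=c_2+t_1^2$, all of which reproduce the stated class $\xi_{2m}^2+(2m-1)\xi_{2m}t_1+m^2c_2+m(2m-1)t_1^2$. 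Note that the paper itself gives no proof of this lemma (it is imported from \cite{CLI}, Proposition 3.16 and Remark 3.17), and your mechanism --- realizing $W_{m;1,0}$ as $\P(X_1V_{m-1})\subseteq\P(V_m)$, writing it as the zero locus of the regular section of $\pi^*Q\otimes\cO_{\P(V_m)}(1)$, and computing $c(Q)$ by the Whitney formula --- is precisely the standard one used in that reference and in \cite{DL18}, so this is essentially the same approach rather than a new route. One small remark: your worry about the ``spurious higher Chern classes'' of $Q$ is unnecessary, since $c_1(Q)$ and $c_2(Q)$ are extracted degree by degree from $c(E)\,c(Q)=c(V_m)$; the vanishing of the degree $\geq 3$ terms of the ratio in $\CH^*_{\Gm^3}(\mathcal{S})$ is automatic (because $Q$ is an honest rank-two bundle) and is never used in the argument.
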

	In \S\ref{subsec: conclusion computation}, it will be useful to work with the restriction of the vector bundle $V_{m}\rightarrow\mathcal{S}$ over some open substacks of $\mathcal{S}$. We recall the definition of these restrictions.
	
	A quadric form $q \in \mathcal{S}$ can be written as $\sum_{\underline{i}} a_{\underline{i}} X_1^{i_1} X_2^{i_2} X_3^{i_3}$ where $\underline{i}=(i_1,i_2,i_3) \in \Z_{\geq 0}^3$ is such that $|\underline{i}|=i_1+i_2+i_3=2$. Let $\mathcal{S}_{\underline{i}} \subseteq \mathcal{S}$ be the open locus where $a_{\underline{i}} \neq 0$. The restriction projective bundles $\P(V_m)$ to each $\mathcal{S}_{\underline{i}}$ is trivial and 
	\begin{equation}\label{eqn: Chow P(Vm) restricted to Si}
		\CH^*_{\Gm^3}(\P(V_m)_{|\mathcal{S}_{\underline{i}}})=\frac{\CH^*_{\Gm^3}(\P(V_m))}{(i_1 t_1+ i_2 t_2 + i_3 t_3).}
	\end{equation}
	See \cite[Lemmas 4.1 and 4.5]{DL18}.
	
	Other than counterparts for the $\PGL_2$-schemes $\P(W_m)$, Di Lorenzo worked out the counterparts of some natural morphisms between product of those stacks, such as multiplication and squaring maps. See~\cite[Subsection 3.4]{DL18} and~\cite[Subsection 3.4]{CLI}. We will combine them to produce $\GL_3$-counterparts of some proper morphisms along which we need to compute the pushforward.
	
	We will also need to work with $\mu_2\times\GL_3$-counterparts of $\mu_2\times\PGL_2$-morphisms and schemes. For instance,
	\[
	\left[\frac{\P(W_m)\times\P(W_m)}{\mu_2\times\PGL_2}\right]\cong\left[\frac{\P(V_m)\times_{\mathcal{S}}\P(V_m)}{\mu_2\times\GL_3}\right],
	\]
	where in both cases $\mu_2$ acts by exchanging the two factors. This is a consequence of the next more general lemma.
	\begin{lemma}\label{lem: mu2xGL3 counterparts of symmetric products}
		Let $X$ be a scheme of finite type over $\mathrm{Spec} k$ endowed with a $\PGL_2$-action, and let $Y$ be a $\GL_3$-counterpart of it. Then, there exists a morphism $g:Y\rightarrow\mathcal{S}$ that is a $\GL_3$-counterpart to $X\rightarrow\mathrm{Spec} k$.
		
		Consider the $\mu_2\times\PGL_2$-action on $X\times X$ where $\mu_2$ exchanges the two factors and $\PGL_2$ acts diagonally. Then, $Y\times_{\mathcal{S}}Y$ endowed with the analogous $\mu_2\times\GL_3$-action is a $\mu_2\times\GL_3$-counterpart to $X\times X$. 
	\end{lemma}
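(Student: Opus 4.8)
The plan is to first construct the structure morphism $g\colon Y \to \mathcal{S}$, and then to realize $Y\times_{\mathcal{S}}Y$ as a $2$-fibered product of quotient stacks, which makes both the $\GL_3$- and the $\mu_2$-structure transparent.

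For the existence of $g$, note first that the isomorphism $[\mathcal{S}/\GL_3]\cong B\PGL_2$ exactly says that $\mathcal{S}$ is a $\GL_3$-counterpart of $\mathrm{Spec}(k)$ equipped with its trivial $\PGL_2$-action. Writing $\Phi\colon [Y/\GL_3]\xrightarrow{\ \sim\ }[X/\PGL_2]$ for the given counterpart isomorphism, I compose $\Phi$ with the canonical structure morphism $[X/\PGL_2]\to B\PGL_2\cong[\mathcal{S}/\GL_3]$ to obtain a morphism of stacks $\bar g\colon [Y/\GL_3]\to[\mathcal{S}/\GL_3]$. The next step is to upgrade $\bar g$ to an honest $\GL_3$-equivariant morphism of schemes. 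This is possible precisely because $\bar g$ is a morphism over $B\GL_3$: under the counterpart identifications, the canonical projections $[Y/\GL_3]\to B\GL_3$ and $[\mathcal{S}/\GL_3]\to B\GL_3$ both correspond to the structure map to $B\PGL_2$ followed by the morphism $B\PGL_2\to B\GL_3$ induced by the three-dimensional representation $W_1$ --- this compatibility is built into the $\GL_3$-counterpart formalism of \cite{DL18} and is exactly what makes the classes $c_i$ of $W_1$ arise as pullbacks from $B\GL_3$. Since pulling back the universal torsor $\mathrm{Spec}(k)\to B\GL_3$ gives a bijection
$$\mathrm{Hom}_{B\GL_3}\big([Y/\GL_3],[\mathcal{S}/\GL_3]\big)\cong\mathrm{Hom}^{\GL_3}(Y,\mathcal{S}),$$
the morphism $\bar g$ corresponds to a $\GL_3$-equivariant $g\colon Y\to\mathcal{S}$, and the defining square commutes by construction; thus $g$ is a $\GL_3$-counterpart of $X\to\mathrm{Spec}(k)$.

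For the second statement, equivariance of $g$ guarantees that $\GL_3$ acts diagonally on $Y\times_{\mathcal{S}}Y$, and the crucial formal identity is
$$\big[(Y\times_{\mathcal{S}}Y)/\GL_3\big]\cong[Y/\GL_3]\times_{[\mathcal{S}/\GL_3]}[Y/\GL_3].$$
Indeed, a $\GL_3$-torsor $Q\to T$ together with two equivariant maps $Q\to Y$ that agree after composing with $g$ is the same datum as an equivariant map $Q\to Y\times_{\mathcal{S}}Y$; here I use that $\mathcal{S}$ is a scheme, so that the $2$-fibered product over $[\mathcal{S}/\GL_3]$ imposes an honest equality rather than extra $2$-morphism data. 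Transporting the right-hand side along $\Phi$ and along $[\mathcal{S}/\GL_3]\cong B\PGL_2$, and using that $g$ corresponds to the structure map, turns it into $[X/\PGL_2]\times_{B\PGL_2}[X/\PGL_2]$, which is $[(X\times X)/\PGL_2]$ by the standard description of the diagonal $\PGL_2$-action as a fibered product over the classifying stack. This already shows that $Y\times_{\mathcal{S}}Y$ is a $\GL_3$-counterpart of $X\times X$ for the diagonal actions.

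It remains to introduce $\mu_2$. The swap automorphism commutes with the diagonal $\GL_3$-action (and with the diagonal $\PGL_2$-action), so both sides of the isomorphism above carry a residual $\mu_2$-action making the total action that of $\mu_2\times\GL_3$ (resp.\ $\mu_2\times\PGL_2$). I would then check that the isomorphism is $\mu_2$-equivariant: it is assembled symmetrically from the two identical factors, so exchanging them corresponds under the canonical commutativity isomorphism of the fibered product $\mathcal{A}\times_{\mathcal{C}}\mathcal{A}$ to exchanging them on the other side. Descending along the respective $\mu_2$-quotients then yields $[(Y\times_{\mathcal{S}}Y)/(\mu_2\times\GL_3)]\cong[(X\times X)/(\mu_2\times\PGL_2)]$, which is the assertion. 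The main obstacle is the passage carried out in the second paragraph --- verifying that $\bar g$ is a morphism over $B\GL_3$ so that it genuinely comes from an equivariant morphism of schemes, and not merely a morphism of quotient stacks that could twist the structure group; once this is secured, the rest is formal manipulation of fibered products and $\mu_2$-descent.
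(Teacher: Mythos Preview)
Your treatment of the second part matches the paper's: both identify $[Y\times_{\mathcal S}Y/\GL_3]$ with the $2$-fibered product $[Y/\GL_3]\times_{[\mathcal S/\GL_3]}[Y/\GL_3]$ (the paper draws this as a cartesian cube whose left-to-right maps are $\GL_3$-torsors), transport through the counterpart isomorphisms to reach $[X\times X/\PGL_2]$, and then descend along the compatible $\mu_2$-torsors on each side.

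For the first part, however, your abstract argument has a real gap at precisely the point you flag as the main obstacle. You need $\bar g$ to lie over $B\GL_3$, i.e.\ that the $\GL_3$-torsor $Y\to[Y/\GL_3]$ agrees, under $\Phi$, with the pullback of $\mathcal S\to[\mathcal S/\GL_3]$ along the structure map $[X/\PGL_2]\to B\PGL_2$. This does \emph{not} follow from Definition~\ref{def: GL3 counterparts}, which only asks for an isomorphism of quotient stacks with no compatibility imposed between the canonical projections to $B\GL_3$. Your appeal to the identification of the Chern classes $c_i$ of $W_1$ with pullbacks from $B\GL_3$ is a statement about Chow groups, not about the underlying torsors, so it cannot pin down the map. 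The paper does not argue abstractly here: it cites the \emph{proof} of \cite[Theorem~1.4]{DL18}, in which the counterpart $Y$ is constructed as the fibered product $[X/\PGL_2]\times_{B\PGL_2}\mathcal S$, so that $g:Y\to\mathcal S$ is literally the second projection and the required square is cartesian by construction. In short, the first assertion is a feature of DL18's specific construction rather than a consequence of the bare definition, and your abstract route cannot close without invoking that construction anyway.
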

	\begin{proof}
		The first part of the statement follows from the proof of~\cite[Theorem 1.4]{DL18}. For the second part, by construction we have cartesian cube
		\[
		\begin{tikzcd}[row sep=small, column sep=small]
			& Y\times_{\mathcal{S}}Y \arrow[rr] \arrow[dd] \arrow[dl] & & {\left[\frac{X\times X}{\PGL_2}\right]} \arrow[dl] \arrow[dd] \\
			Y \arrow[rr] \arrow[dd] & & {\left[\frac{X}{\PGL_2}\right]} \arrow[dd] \\
			& Y \arrow[rr] \arrow[dl] & & {\left[\frac{X}{\PGL_2}\right]} \arrow[dl] \\
			\mathcal{S} \arrow[rr] & & \mathfrak{M}_0
		\end{tikzcd}
		\]
		where the maps from the left to the right side of the cube are $\GL_3$-torsors. One concludes by noticing that the $\mu_2$-torsors $[Y\times_{\mathcal{S}}Y/\GL_3]\rightarrow[Y\times_{\mathcal{S}}Y/\mu_2\times\GL_3]$ and $[X\times X/\PGL_2]\rightarrow[X\times X/\mu_2\times\PGL_2]$ are compatible with the isomorphism $[Y\times_{\mathcal{S}}Y/\GL_3]\cong[X\times X/\PGL_2]$.
	\end{proof}
	One can prove a similar statement for products of morphism, disjoint unions and so forth. As we will not make extensive use of these techniques, we will limit ourselves in stating each time what is the $\mu_2\times\GL_3$-counterpart in the particular case we have at hand.
	
	The reason for passing to the $\GL_3$-counterparts is that we can then further restrict to the maximal torus $\Gm^3 \subseteq \GL_3$, whose representations are much simpler. 
	
	The following notation will be useful.

	\begin{notation}
		For an ideal $I$ in $\CH^*_{H \times \GL_3}(X)$, we will denote by $\widetilde{I}$ its extension to $\CH^*_{H \times \Gm^3}(X)$.
	\end{notation}
	
	\begin{remark}\label{rmk: extension commutes with maps}
		Recall that one can always recover $I$ from $\widetilde{I}$ using the identity
		$$
		\widetilde{I} \cap \CH^*_{H \times \GL_3}(X)=I
		$$
		which follows from~\cite[Lemma 2.1]{FuVi}. Furthermore, for a $H \times \GL_3$ proper map $f:X \to Y$, one has 
		$$
		\widetilde{\operatorname{im}(f_*)}= \operatorname{im}(f_*: \CH^*_{H \times \Gm^3}(X) \to \CH^*_{H \times \Gm^3}(Y)).
		$$
		This is an application of \cite[Proposition 3.4]{CLI}. 
	\end{remark}
	
	\subsection{Chow-K\"unneth Property}\label{subsec: Chow Kunneth Property}
	
	In this subsection we study the Chow K\"unneth property for quotient stacks, and prove that it is satisfied by $B\PGL_2$, $B\SL_n$ and $B\GL_n$ for every $n$. Some of the results are probably known to experts, but we report them here due to a lack of a reference.
	
	Every stack is assumed to be of finite type over a field $k$.
	We start by recalling some definitions.
	
	\begin{definition}
		Let $\cX$ be a quotient stack. For every quotient stack $\cT$, consider the Chow-Kunneth morphism
		\[
		\begin{tikzcd}
			\varphi_{\cT}:\CH_*(\cX) \otimes_{\Z} \CH_*(\cT)\arrow[r] & \CH_*(\cX\times\cT)
		\end{tikzcd}
		\]
		\begin{itemize}
			\item We say that $\cX$ satisfies the \emph{Chow-K\"unneth property}, CKP for short, if $\varphi_{\cT}$ is an isomorphism for every quotient stack $\cT$.
			\item We say that $\cX$ satisfies the \emph{Chow-K\"unneth generation property}, CKgP for short, if $\varphi_{\cT}$ is surjective for every quotient stack $\cT$.
		\end{itemize}
	\end{definition}
	The above definition was already given and studied in~\cite[Section 2.3]{Bae-Schmitt},~\cite[Section 3]{CL24} with rational coefficient.
	
	We start by stating two lemmas whose proof is similar to that of~\cite[Section 3]{CL24}.
	
	\begin{lemma}\label{lem: basic properties of CKgP}
		\begin{enumerate}
			\item Let $\cX_1,\ldots,\cX_n$ be quotient stacks satisfying CKgP. Then, the $n$-fold product $\cX_1\times\ldots\times\cX_n$ satisfies CKgP.
			\item Let $\cX$ be a quotient stack satisfying CKgP. Suppose that $f:\cY\rightarrow\cX$ is either
			\begin{itemize}
				\item[a)] a $\Gm$-torsor, or
				\item[b)] an affine bundle, or
				\item[c)] an open immersion, or
				\item[d)] a Grassmann bundle.
			\end{itemize}
			Then, $\cY$ satisfies CKgP. In the case b), also the converse holds.
			\item Suppose that $\cX=[X/G]$, $\cZ=[Z/G]$, and suppose given a $G$-equivariant Chow envelope $g:X\rightarrow Z$. If $\cX$ satisfies CKgP then also $\cZ$ does.
			\item If $\cX$ is a quotient stack with a finite stratification (see \cite[Page 27]{EH16} for the definition) whose strata are quotient stacks that satisfies CKgP, then $\cX$ satisfies CKgP.
		\end{enumerate}
	\end{lemma}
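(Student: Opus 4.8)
The statement to prove is Lemma~\ref{lem: basic properties of CKgP}, which collects four basic stability properties of the Chow-K\"unneth generation property (CKgP). The plan is to verify each item separately, relying throughout on the fact that CKgP for $\cX$ means precisely that the map $\varphi_\cT\colon \CH_*(\cX)\otimes_\Z \CH_*(\cT) \to \CH_*(\cX\times\cT)$ is surjective for every quotient stack $\cT$, and on the analogous results for rational coefficients in \cite[Section 3]{CL24}, whose proofs adapt verbatim to the integral setting since they only use functoriality of Chow groups together with the standard exact sequences.

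For item~(1), I would reduce to the case $n=2$ by induction and observe that for any test stack $\cT$, the stack $\cX_2\times\cT$ is again a quotient stack, so surjectivity of $\varphi_{\cX_2\times\cT}$ for $\cX_1$ combined with surjectivity of $\varphi_\cT$ for $\cX_2$ yields surjectivity of the Chow-K\"unneth map for the product $\cX_1\times\cX_2$; the point is simply that the composite factorization $\CH_*(\cX_1)\otimes\CH_*(\cX_2)\otimes\CH_*(\cT) \to \CH_*(\cX_1)\otimes \CH_*(\cX_2\times\cT) \to \CH_*(\cX_1\times\cX_2\times\cT)$ consists of two surjections. For item~(2), each of the morphism types a)--d) comes with a right-exact localization-type or projective/Grassmann bundle sequence relating $\CH_*(\cY\times\cT)$ to $\CH_*(\cX\times\cT)$ (and the base-changed versions over $\cT$): for a $\Gm$-torsor the Gysin sequence, for an affine bundle the pullback isomorphism (which also gives the converse, since an affine bundle induces an isomorphism on Chow groups compatibly with products), for an open immersion the right-exact localization sequence, and for a Grassmann bundle the projective-bundle-type decomposition. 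In each case one checks, by comparing these sequences for $\cY$ and for $\cY\times\cT$ and using a diagram chase, that surjectivity of $\varphi_\cT^\cX$ forces surjectivity of $\varphi_\cT^\cY$.

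For item~(3), given a $G$-equivariant Chow envelope $g\colon X\to Z$, the induced map $\bar g\colon \cX\to\cZ$ is a Chow envelope of stacks, so $\bar g_*\colon \CH_*(\cX)\to\CH_*(\cZ)$ is surjective and, crucially, so is the base-changed map $(\bar g\times\mathrm{id})_*\colon \CH_*(\cX\times\cT)\to\CH_*(\cZ\times\cT)$ for every $\cT$ (since the property of being a Chow envelope is stable under the product with $\cT$). Chasing the commutative square relating $\varphi_\cT^\cX$, $\varphi_\cT^\cZ$, $\bar g_*\otimes\mathrm{id}$, and $(\bar g\times\mathrm{id})_*$ then transfers surjectivity from $\cX$ to $\cZ$. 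Item~(4) is the most delicate: here I would induct on the number of strata, using the right-exact localization sequence for a closed stratum $\cZ\hookrightarrow\cX$ with open complement $\cU$, namely $\CH_*(\cZ)\to\CH_*(\cX)\to\CH_*(\cU)\to 0$ and its product-with-$\cT$ version. The main obstacle is precisely the bookkeeping in this step: one must verify that the localization sequences for $\cX$ and for $\cX\times\cT$ fit into a commutative ladder with the Chow-K\"unneth maps as vertical arrows, and that the tensor product $\CH_*(\cZ)\otimes\CH_*(\cT)\to\CH_*(\cX)\otimes\CH_*(\cT)\to\CH_*(\cU)\otimes\CH_*(\cT)\to 0$ remains right-exact --- tensoring over $\Z$ is only right-exact, but since each sequence is already right-exact at the relevant spot this suffices for a diagram chase concluding that $\varphi_\cT^\cX$ is surjective whenever it is for the (CKgP) strata.
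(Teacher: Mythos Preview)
Your proposal is correct and follows essentially the same approach as the paper. The paper's own proof simply cites the rational-coefficient versions in \cite[Lemmas 3.2, 3.3, 3.4, 3.5, 3.7]{CL24} and observes (as you also implicitly do via the Gysin sequence) that the $\Gm$-torsor case 2a) follows from 2b) and 2c) by realizing a $\Gm$-torsor as the complement of the zero section in the associated line bundle; your more explicit sketch of each step is exactly what those cited proofs contain.
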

	\begin{proof}
		The proofs of points 1), 3), 4) and parts b), c) and d) of point 2) are the same as the proofs for~\cite[Lemmas 3.2, 3.3, 3.4, 3.5, 3.7]{CL24}. Part a) of point 2) follows from points b) and c) noticing that a $\Gm$-torsor is the complement of the 0-section of the associated line bundle $\cL$.
	\end{proof}
	
	\begin{lemma}\label{lem: basic properties of CKP}
		\begin{enumerate}
			\item Let $\cX_1,\ldots,\cX_n$ be quotient stacks satisfying CKP. Then, the $n$-fold product $\cX_1\times\ldots\times\cX_n$ satisfies CKP.
			\item Let $\cX$ be a quotient stack satisfying CKP. Suppose that $f:\cY\rightarrow\cX$ is either
			\begin{itemize}
				\item[a)] a $\Gm$-torsor, or
				\item[b)] an affine bundle, or
				\item[c)] an open immersion whose reduced closed complement satisfies CKgP, or
				\item[d)] a Grassmann bundle.
			\end{itemize}
			Then, $\cY$ satisfies CKP. In the case b), also the converse holds.
		\end{enumerate}
	\end{lemma}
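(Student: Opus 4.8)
The plan is to follow the same inductive and geometric scheme as in Lemma~\ref{lem: basic properties of CKgP}, upgrading each surjectivity statement to an isomorphism statement by additionally tracking injectivity of the Chow--K\"unneth morphisms. The naturality of $\varphi_{\cT}$ with respect to flat pullback and proper pushforward --- which is merely the compatibility of the exterior product with these operations --- makes every comparison diagram commute, so in each case it suffices to compare $\varphi_{\cT}$ for $\cY$ against $\varphi_{\cT}$ for $\cX$ (and its base or complement) through an appropriate exact ladder.

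For part 1 I would reduce to $n=2$ by induction and factor the Chow--K\"unneth morphism for $\cX_1 \times \cX_2$ as the composite of the isomorphism $\CH_*(\cX_1) \otimes \CH_*(\cX_2 \times \cT) \xrightarrow{\sim} \CH_*(\cX_1 \times \cX_2 \times \cT)$ coming from CKP of $\cX_1$ with the isomorphism $\CH_*(\cX_2) \otimes \CH_*(\cT) \xrightarrow{\sim} \CH_*(\cX_2 \times \cT)$ coming from CKP of $\cX_2$; a routine compatibility check identifies this composite with $\varphi_{\cT}$ for $\cX_1 \times \cX_2$. For part (b), homotopy invariance of Chow groups for affine bundles makes both $f^*\colon \CH_*(\cX)\to\CH_*(\cY)$ and $(f\times\mathrm{id}_{\cT})^*$ isomorphisms compatible with $\varphi_{\cT}$, so CKP for $\cX$ and CKP for $\cY$ are equivalent, yielding both directions. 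Part (a) then follows exactly as in the CKgP case: a $\Gm$-torsor is the complement of the zero section of the associated line bundle, whose total space is an affine bundle (hence CKP by (b)) and whose zero section is isomorphic to $\cX$, which is CKP and therefore CKgP, so part (c) applies. For part (d) I would invoke the Grassmann bundle decomposition of $\CH_*$ as a free module over the base on an explicit set of classes, which holds compatibly for $\cY$ and $\cY \times \cT$; combined with CKP of $\cX$ this gives CKP of $\cY$.

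The crux is part (c), the open immersion with reduced closed complement $\cZ$ satisfying CKgP. Here I would write down the two localization sequences
\[
\CH_*(\cZ)\otimes\CH_*(\cT) \to \CH_*(\cX)\otimes\CH_*(\cT) \to \CH_*(\cY)\otimes\CH_*(\cT) \to 0
\]
and
\[
\CH_*(\cZ\times\cT) \to \CH_*(\cX\times\cT) \to \CH_*(\cY\times\cT) \to 0,
\]
the first obtained by tensoring the (right-exact) localization sequence with $\CH_*(\cT)$ and the second being the localization sequence of $\cY\times\cT\hookrightarrow\cX\times\cT$, joined by the vertical Chow--K\"unneth morphisms $\varphi_{\cT}^{\cZ},\varphi_{\cT}^{\cX},\varphi_{\cT}^{\cY}$. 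Since $\varphi_{\cT}^{\cX}$ is an isomorphism (CKP of $\cX$) and $\varphi_{\cT}^{\cZ}$ is surjective (CKgP of $\cZ$), a four-lemma diagram chase shows that $\varphi_{\cT}^{\cY}$ is an isomorphism: surjectivity is the argument already used for CKgP, while injectivity uses that any class in $\ker\varphi_{\cT}^{\cY}$ lifts to $\CH_*(\cX)\otimes\CH_*(\cT)$, whose image under $\varphi_{\cT}^{\cX}$ lands in the image of $\CH_*(\cZ\times\cT)$, is hit by $\varphi_{\cT}^{\cZ}$ by surjectivity, and hence, after applying $(\varphi_{\cT}^{\cX})^{-1}$, comes from the image of $\CH_*(\cZ)\otimes\CH_*(\cT)$, so maps to zero in $\CH_*(\cY)\otimes\CH_*(\cT)$ by exactness of the top row. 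The main obstacle --- and the only essential difference from the CKgP proof --- is precisely this injectivity step, where CKgP of the complement (rather than full CKP) is exactly the hypothesis that feeds the chase.
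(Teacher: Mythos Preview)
Your proposal is correct and follows essentially the same approach as the paper's proof, which is extremely terse: the paper dispatches part~1 as ``directly from the definitions,'' part~(b) via the pullback being an isomorphism, part~(c) via ``the excision formula,'' part~(a) as a consequence of (b) and (c), and part~(d) via Fulton's Grassmann bundle formula. Your write-up simply unpacks each of these references --- in particular your four-lemma chase for part~(c) is exactly the content of ``excision formula'' in this context --- so there is no substantive difference in strategy.
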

	\begin{proof}
		Point 1) follows directly from the definitions. Part b) of $2$ follows because the pullback  $\CH_*(\cX) \to \CH_*(\cY)$ is an isomorphism. Part c) follows from the excision formula, part a) follows from part b) and c). Finally, part d) follows from the formula~\cite[Proposition 14.6.5]{Ful98} for Chow groups of Grassmann bundles.
	\end{proof}
	
	\begin{proposition}\label{prop: CKP for some classifying stacks}
		The classifying stacks
		\begin{enumerate}
			\item $B\Gm^n$ for every $n$,
			\item $B\GL_n$ for every $n$,
			\item $B\SL_n$ for every $n$,
			\item $B\PGL_2$, and
			\item $BG$, where $G\cong (\Gm\times\Gm)\rtimes\mu_2$
		\end{enumerate}
		satisfy the CKP, assuming that the characteristic of $k$ is different from $2$ in the last case.
	\end{proposition}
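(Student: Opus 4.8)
The plan is to deduce each case from the structural results of Lemmas~\ref{lem: basic properties of CKgP} and~\ref{lem: basic properties of CKP}, starting from the (trivial) fact that $\mathrm{Spec}(k)$ satisfies CKP, and using Edidin--Graham approximations to compare the classifying stacks with honest varieties. I would treat the cases in the order $B\Gm^n$, $B\GL_n$, $B\SL_n$, $BG$, $B\PGL_2$, since each relies on the previous ones.

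For $B\Gm^n$ and $B\GL_n$ I would argue by approximation. Writing $BG\times\cT$ in Edidin--Graham form and replacing only the $BG$-factor by its $N$-th approximation is compatible with the external product map, so it suffices to prove CKP for the approximating smooth varieties, whose Chow groups agree in each fixed codimension with those of $BG$ for $N\gg0$. The approximations of $B\Gm$ are the projective spaces $\P^N$ and those of $B\GL_n$ are the Grassmannians $\mathrm{Gr}(n,N)$; each is a Grassmann bundle over $\mathrm{Spec}(k)$ and hence satisfies CKP by Lemma~\ref{lem: basic properties of CKP}(2.d). The case $B\Gm^n\cong(B\Gm)^n$ then follows from the product statement Lemma~\ref{lem: basic properties of CKP}(1). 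For $B\SL_n$, the determinant sequence $1\to\SL_n\to\GL_n\to\Gm\to1$ identifies $B\SL_n\to B\GL_n$ with the $\Gm$-torsor of the determinant line bundle; as $B\GL_n$ satisfies CKP, Lemma~\ref{lem: basic properties of CKP}(2.a) yields the claim.

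Since $G=\GI$ is the normalizer of the diagonal torus in $\GL_2$, it is the stabilizer of an unordered pair of distinct lines in $k^2$, and transitivity of the $\GL_2$-action on such pairs gives $BG\cong[(\Sym^2\P^1\smallsetminus\Delta)/\GL_2]$. This exhibits $BG$ as the open complement of the diagonal in the relative symmetric square of the $\P^1$-bundle $\P(V)\to B\GL_2$, which is a $\P^2$-bundle over $B\GL_2$ and hence satisfies CKP by Lemma~\ref{lem: basic properties of CKP}(2.d). The removed diagonal is the locus of double lines, $[\Delta/\GL_2]\cong B\mathrm{B}$ with $\mathrm{B}\subseteq\GL_2$ a Borel; it satisfies CKgP because it is an affine bundle over $B\Gm^2$ (Lemma~\ref{lem: basic properties of CKgP}(2.b)). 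Lemma~\ref{lem: basic properties of CKP}(2.c) then gives CKP for $BG$. The hypothesis $\mathrm{char}\,k\neq2$ enters here to ensure $\mu_2$ is étale and $\Sym^2\P^1$ has the expected structure.

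For $B\PGL_2\cong[\cS/\GL_3]$ I would use the presentation of $\cS$ as the smooth locus inside $k[X_1,X_2,X_3]_2$, so that $B\PGL_2$ is the open substack of $[k[X_1,X_2,X_3]_2/\GL_3]$ (an affine bundle over $B\GL_3$, hence CKP) complementary to the degenerate conics. Stratifying the degenerate locus by rank produces $\GL_3$-invariant locally closed strata, each of which is, up to an affine bundle, the classifying stack of the stabilizer of a normal form. These stabilizers are built by unipotent extension and direct product from $\GL_m$ $(m\le2)$, tori, and the orthogonal groups $\mathrm{O}_1=\mu_2$ and $\mathrm{O}_2=\Gm\rtimes\mu_2$; all but the last two are covered by the cases above, while $B\mathrm{O}_2$ is handled exactly as $BG$ (it is the stabilizer of an unordered pair of points one rank lower). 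By Lemma~\ref{lem: basic properties of CKgP}(1),(2.b),(4) the degenerate locus then satisfies CKgP provided $B\mu_2$ does, and Lemma~\ref{lem: basic properties of CKP}(2.c) concludes. \emph{The main obstacle is precisely this last point:} because the singular strata have disconnected stabilizers, one cannot avoid proving CKgP for classifying stacks with $\mu_2$ component group. I would establish CKgP for $B\mu_2$ directly: its approximations are the $\Gm$-torsors of $\cO_{\P^N}(2)$, and the associated localization sequence shows that $\CH_*(B\mu_2\times\cT)$ is generated over $\CH_*(\cT)$ by the powers of the tautological class, which is exactly surjectivity of the Künneth map.
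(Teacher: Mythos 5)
Your proposal is correct in substance, and for $B\Gm^n$, $B\GL_n$, $B\SL_n$ it coincides with the paper's argument (approximation by projective spaces and Grassmannians, then the $\Gm$-torsor $B\SL_n\to B\GL_n$). The divergence is in the last two cases. For $BG$ the paper simply cites \cite[Lemma 2.12]{Bur14}, whereas you give a self-contained geometric proof: $BG\cong[(\Sym^2\P^1\smallsetminus\Delta)/\GL_2]$ is open in a $\P^2$-bundle over $B\GL_2$, and the complement $[\Delta/\GL_2]\cong[\P^1/\GL_2]$ satisfies CKgP -- note you could quote part d) of point 2) of Lemma~\ref{lem: basic properties of CKgP} directly here, since $[\P^1/\GL_2]$ is itself a $\P^1$-bundle over $B\GL_2$, avoiding the Borel detour. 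For $B\PGL_2$ the routes are genuinely different: the paper never stratifies the singular locus by stabilizer type. It first trades $[\Delta/\GL_3]$ for its projectivization $[\uD/\GL_3]$ via a $\Gm$-torsor (part a) of point 2) of Lemma~\ref{lem: basic properties of CKgP}), and then invokes the equivariant Chow envelope of $\uD$ given by multiplication $(\ell_1,\ell_2)\mapsto\ell_1\ell_2$ and squaring $\ell\mapsto\ell^2$ of linear forms; the sources are (iterated) projective bundles over $B\GL_3$, hence CKgP, and point 3) of the same lemma concludes. This sidesteps exactly the obstacle you identified -- disconnected stabilizers -- so the paper never needs CKgP for $B\mu_2$ or $B\mathrm{O}_2$. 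Your orbit-stratification argument confronts it instead, and your proof of CKgP for $B\mu_2$ (approximation by the complement of the zero section of $\cO_{\P^N}(-2)$, excision, and the projective bundle formula over an arbitrary quotient stack $\cT$) is correct. What each buys: the envelope argument is shorter and needs no new classifying-stack input; yours is more systematic and yields CKgP for $B\mu_2$ and $B\mathrm{O}_2$ as byproducts, which is potentially reusable.

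Two points in your write-up need care, though both are repairable with the tools you cite. First, the affine bundles go the other way: $BB$ (Borel) is not an affine bundle over $B\Gm^2$; rather $B\Gm^2\cong[\mathbb{A}^1/B]\to BB$ is an affine bundle, and likewise $BL\to B(L\ltimes\mathbb{G}_a^k)$ for your unipotent extensions of stabilizers. So you must invoke the \emph{converse} statement in part b) of point 2) of Lemma~\ref{lem: basic properties of CKgP}, which the lemma does provide. Second, ``$B\mathrm{O}_2$ is handled exactly as $BG$'' is circular if you realize $\Gm\rtimes\mu_2$ as the stabilizer of an unordered pair of points under the $\PGL_2$-action (that presentation presupposes knowledge of $B\PGL_2$, e.g.\ via $[\P(W_1)/\PGL_2]$). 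Use instead that $\mathrm{O}_2=\ker\big(G\to\Gm,\ (a,b;\varepsilon)\mapsto ab\big)$, so $B\mathrm{O}_2\to BG$ is a $\Gm$-torsor and part a) of point 2) of Lemma~\ref{lem: basic properties of CKP} applies (you prove $BG$ before $B\PGL_2$, so this is available), or realize $\mathrm{O}_2$ as the $\GL_2$-stabilizer of a nondegenerate binary quadratic form and repeat your $BG$ argument one rank down inside $\Sym^2$ of the standard representation of $\GL_2$.
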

	\begin{proof}
		By induction on $n$, the case of $B\Gm^n$ reduces to that of $B\Gm$, which follows from~\cite[Lemma 10]{Oe18}. Briefly, $B\Gm$ can be approximated by projective spaces, in the sense that the composite
		\[
		\begin{tikzcd}
			\P^n\cong \left[\frac{\chi^{\oplus n+1}\smallsetminus 0}{\Gm}\right]\subset\left[\frac{\chi^{\oplus n+1}}{\Gm}\right]\arrow[r] & B\Gm
		\end{tikzcd}
		\]
		realizes $\P^n$ as an open subscheme of codimension $n$ of a vector bundle over $B\Gm$. Then, the statement follows from the fact that $\P^n$ satisfies CKP.
		Similarly, $B\GL_n$ satisfies CKP because it can be approximated by Grassmannians (in the sense above), that in turn satisfy CKP by part e) of point 2) of Lemma~\ref{lem: basic properties of CKP}.
		
		Since $B\SL_n\rightarrow B\GL_n$ is a $\Gm$-torsor, part a) of point 2) of Lemma~\ref{lem: basic properties of CKP} implies the result for $B\SL_n$ as well.
		The fact that $BG$ satisfies CKP follows from~\cite[Lemma 2.12]{Bur14}. We are left with the case of $B\PGL_2$.
		
		Denote by $V_{\GL_3}$ the standard representation of $\GL_3$ and recall that $B\PGL_2\cong [\mathcal{S}/\GL_3]$, where $\mathcal{S}$ is the open complement in $\Sym^2V_{\GL_3}^{\vee}$ of the closed subscheme $\Delta$ parametrizing singular conics. From points c) and d) of Lemma~\ref{lem: basic properties of CKgP} it follows that $B\PGL_2$ satisfies CKgP, and from point d) of Lemma~\ref{lem: basic properties of CKP} it is enough to show that $[\Delta/\GL_3]$ satisfies CKgP. Consider the action of $\Gm$ on $\Sym^{2}V_{\GL_3}^{\vee}$ given by multiplication, which commutes with the $\GL_3$-action, and let $\Delta\rightarrow\uD=[\Delta/\Gm]$ be the induced $\GL_3$-equivariant projection to the $\Gm$-quotient $\uD$. By part a) of point 2) of Lemma~\ref{lem: basic properties of CKgP} it is enough to prove that $[\uD/\GL_3]$ satisfies CKgP. Notice that there is a $\GL_3$-equivariant envelope consisting of the disjoint union of the two $\GL_3$-equivariant morphisms 
		\[
		\begin{tikzcd}
			\P(V_{\GL_3}^{\vee})\times\P(V_{\GL_3}^{\vee})\arrow[r] & \P(\Sym^2V_{\GL_3}^{\vee}), & (\ell_1,\ell_2)\mapsto \ell_1\ell_2
		\end{tikzcd}
		\]
		\[
		\begin{tikzcd}
			\P(V_{\GL_3}^{\vee})\arrow[r] & \P(\Sym^2V_{\GL_3}^{\vee}), & \ell\mapsto \ell^2.
		\end{tikzcd}
		\]
		By part b) of point 2) of Lemma~\ref{lem: basic properties of CKgP}, the sources of the two morphisms satisfy CKgP, hence also $[\uD/\GL_3]$ does by point 3) of the same lemma.
	\end{proof}
	
	It would be interesting to know if $B\PGL_n$ satisfies CKP for $n>2$. This is true with rational coefficients. Indeed, $B\SL_n\rightarrow B\PGL_n$ is a $\mu_n$-gerbe and the pullback along gerbes banded by finite groups induces isomorphisms between rational Chow groups (see ~\cite[Lemma 3.6]{CL24}).
	
	\section{Computation  $\CH^*([\cD_{g+1,g+1}/\mu_2])$ and $\RH_g^{(g+1)/2}$}
	
	As explained in the introduction, the presentation of $\RH_{g}^{\frac{g+1}{2}}$ given in Theorem~\ref{thm: presentation RHg,g+1/2} involves a group $H$ that is complicated, and the Chow ring of the corresponding classifying stack $BH$ is not known. To avoid working directly with $H$, we first compute $\CH^*\big([\cD_{g+1,g+1}/\mu_2]\big)$ using the presentation in Theorem~\ref{thm: presentation Dg+1g+1mu2}, and then apply \cite[Proposition~3.5]{CLI} to the $\mu_2$-root stack $\RH_{g}^{\frac{g+1}{2}} \to [\cD_{g+1,g+1}/\mu_2]$  described in Lemma~\ref{lemma: key cartesian diagram} to bootstrap the result to $\RH_g^{(g+1)/2}$.
	
	\subsection{Generators and first relations in $\CH^*([\cD_{g+1,g+1}/\mu_2])$}
	
	First, we pass to projective setting.
	By \cite[Proposition 4.5.3]{Rom22} (or the version of~\cite[Lemma 4.3]{AI17}) and Theorem \ref{thm: presentation Dg+1g+1mu2}, we have an isomorphism
	\begin{equation}
		[\cD_{g+1,g+1}/\mu_2]\cong \left[\frac{\P(W_{\frac{g+1}{2}})\times\P(W_{\frac{g+1}{2}})\smallsetminus\uD}{\mu_2\times\PGL_2}\right]
	\end{equation}
	
	Notice that the closed subscheme $Z_{g+1}:=0\times\mathbb{A}^{g+2}\sqcup\mathbb{A}^{g+2}\times0 \subseteq \mathbb{A}^{g+2}\times\mathbb{A}^{g+2} \cong V^\vee\otimes W_{\frac{g+1}{2}}$ is contained in $\Delta$. Let $j:Z_{g+1}\hookrightarrow V^{\vee}\otimes W_{\frac{g+1}{2}}$ be the inclusion. Recall that $G=\GI$, and observe that
	\[
	\left[\frac{Z_{g+1}}{G\times\PGL_2}\right]\cong\left[\frac{(\chi^{(1)})^{-1}\otimes W_{\frac{g+1}{2}}\times0}{\Gm^{(1)}\times\Gm^{(2)}\times\PGL_2}\right]\cong\left[\frac{0\times(\chi^{(2)})^{-1}\otimes W_{\frac{g+1}{2}}}{\Gm^{(1)}\times\Gm^{(2)}\times\PGL_2}\right].
	\]
	In particular, its Chow ring is isomorphic to 
	\begin{equation}\label{def of xi}
		\CH^*(B(\Gm\times\Gm\times\PGL_2))\cong\frac{\Z[x_1,x_2,c_2,c_3]}{(2c_3)},
	\end{equation}
	where $x_i \in \CH^*(B \Gm^{(i)})$ is the first Chern class of the standard representation. The $G\times\PGL_2$-quotient of $j$ factors as
	\begin{equation}\label{eqn: j factorization}
		\begin{tikzcd}
			j: \left[\frac{(\chi^{(1)})^{-1}\otimes W_{\frac{g+1}{2}}\times0}{\Gm\times\Gm\times\PGL_2}\right]\arrow[r,"\widetilde{j}"] & \left[\frac{(\chi^{(1)})^{-1}\otimes W_{\frac{g+1}{2}}\times(\chi^{(2)})^{-1}\otimes W_{\frac{g+1}{2}}}{\Gm\times\Gm\times\PGL_2}\right]\arrow[r,"\psi_{g+1}"] & \left[\frac{V^\vee \otimes W_{\frac{g+1}{2}}}{G\times \PGL_2 }  \right].
		\end{tikzcd}
	\end{equation}
	The last map is the one obtained from the degree 2 cover $B(\Gm\times\Gm\times\PGL_2)\rightarrow B(G\times\PGL_2)$ induced by the natural inclusion of groups.

	\begin{lemma}\label{lem: Chow alternative presentation Dg+1g+1mu2}
		We have
		\begin{align*}
			\CH^*\left(\left[\frac{\P(W_{\frac{g+1}{2}})\times\P(W_{\frac{g+1}{2}})}{\mu_2\times\PGL_2}\right]\right) & \cong\CH^*\left(\left[\frac{V^\vee \otimes W_{\frac{g+1}{2}} \smallsetminus Z_{g+1}}{G\times \PGL_2 }\right]\right)\\
			& \cong\frac{\Z[\beta_1,\beta_2,\gamma,c_2,c_3]}{(2\gamma,\gamma(\beta_1+\gamma),2c_3,c_{\mathrm{top}}(V^{\vee}\otimes W_{\frac{g+1}{2}}),j_*(1),j_*(x_1))}.
		\end{align*}
	\end{lemma}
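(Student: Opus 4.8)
The plan is to obtain the first isomorphism from a torsor argument and the second from the localization sequence attached to the equivariant closed immersion $j\colon Z_{g+1}\hookrightarrow V^\vee\otimes W_{\frac{g+1}{2}}$. For the first isomorphism, note that $V^\vee\otimes W_{\frac{g+1}{2}}\smallsetminus Z_{g+1}$ is exactly the locus of pairs $(f,g)$ of \emph{nonzero} polynomials; the subtorus $\Gm\times\Gm\subseteq G$ acts by independently rescaling the two factors, so the projection to $\P(W_{\frac{g+1}{2}})\times\P(W_{\frac{g+1}{2}})$ is a $(\Gm\times\Gm)$-torsor, equivariant for the residual action in which $\mu_2\subseteq G$ swaps the two factors and $\PGL_2$ acts diagonally. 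Hence the two quotient stacks agree, exactly as in \cite[Proposition 4.5.3]{Rom22} (or \cite[Lemma 4.3]{AI17}).

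For the second isomorphism I would write the localization/excision sequence
\[
\CH^*_{G\times\PGL_2}(Z_{g+1})\xrightarrow{\ j_*\ }\CH^*_{G\times\PGL_2}\!\left(V^\vee\otimes W_{\frac{g+1}{2}}\right)\longrightarrow\CH^*\!\left(\left[\frac{V^\vee\otimes W_{\frac{g+1}{2}}\smallsetminus Z_{g+1}}{G\times\PGL_2}\right]\right)\longrightarrow 0 .
\]
Since $V^\vee\otimes W_{\frac{g+1}{2}}$ is a representation, its equivariant Chow ring is $\CH^*(B(G\times\PGL_2))$, and by Proposition~\ref{prop: CKP for some classifying stacks} both $BG$ and $B\PGL_2$ satisfy the Chow--K\"unneth property, so
\[
\CH^*(B(G\times\PGL_2))\cong\CH^*(BG)\otimes_{\Z}\CH^*(B\PGL_2)\cong\frac{\Z[\beta_1,\beta_2,\gamma,c_2,c_3]}{(2\gamma,\ \gamma(\beta_1+\gamma),\ 2c_3)},
\]
where $\CH^*(BG)$ is taken from \cite[Theorem 5.2]{Lar19}. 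The right-hand map is a surjective ring homomorphism with kernel $\operatorname{im}(j_*)$, so this already produces the three relations $2\gamma,\ \gamma(\beta_1+\gamma),\ 2c_3$ and reduces the statement to the identification $\operatorname{im}(j_*)=(c_{\mathrm{top}}(V^\vee\otimes W_{\frac{g+1}{2}}),\,j_*(1),\,j_*(x_1))$.

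To identify this image I would exploit the factorization $j=\psi_{g+1}\circ\widetilde{j}$ of \eqref{eqn: j factorization} together with the projection formula, which makes $\operatorname{im}(j_*)$ an ideal. The point demanding care is that $Z_{g+1}$ is \emph{singular}: it is the union of the two coordinate subspaces $W_{\frac{g+1}{2}}\times 0$ and $0\times W_{\frac{g+1}{2}}$, meeting at the origin and interchanged by $\mu_2\subseteq G$. Its normalization $\nu\colon\widetilde{Z}_{g+1}\to Z_{g+1}$ (the \emph{disjoint} union of the two components) is a $G\times\PGL_2$-equivariant Chow envelope, so $\nu_*$ is surjective and $\operatorname{im}(j_*)=\operatorname{im}((j\circ\nu)_*)$. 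Because $G$ permutes the two components with stabilizer $\Gm\times\Gm$, one has $[\widetilde{Z}_{g+1}/(G\times\PGL_2)]\cong[(W_{\frac{g+1}{2}}\times 0)/(\Gm\times\Gm\times\PGL_2)]$, whose Chow ring is $\Z[x_1,x_2,c_2,c_3]/(2c_3)$. Regarded as a module over $\CH^*(B(G\times\PGL_2))$ through $\psi_{g+1}^*$ --- which sends $\gamma\mapsto 0$, $c_i\mapsto c_i$, and $\beta_1,\beta_2$ to (up to sign) the elementary symmetric polynomials $x_1+x_2$ and $x_1x_2$ --- this ring is generated by $\{1,x_1\}$, since $\Z[x_1,x_2]$ is free of rank two over its subring of symmetric polynomials with basis $\{1,x_1\}$. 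By the projection formula $\operatorname{im}(j_*)=(j_*(1),j_*(x_1))$, and the zero-section class $c_{\mathrm{top}}(V^\vee\otimes W_{\frac{g+1}{2}})$, being the pushforward of $1$ along $\{0\}\hookrightarrow Z_{g+1}\hookrightarrow V^\vee\otimes W_{\frac{g+1}{2}}$, already lies in this ideal; it is recorded among the generators for later geometric use. Passing to the cokernel then yields the claimed presentation.

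I expect the main obstacle to be precisely this last identification of $\operatorname{im}(j_*)$: the node of $Z_{g+1}$ at the origin means one cannot simply treat it as a single smooth linear subspace, and the Chow-envelope argument (or an equivalent Mayer--Vietoris analysis of the normalization) is what makes the passage to the two module generators $j_*(1),j_*(x_1)$ rigorous. The separate, more computational problem of evaluating these two pushforwards explicitly in terms of $\beta_1,\beta_2,\gamma,c_2,c_3$ is not needed for the present lemma and is deferred; it is where the $\GL_3$-counterpart techniques of \S\ref{subsec: GL3 counterparts} and the class $[W_{m;1,0}]$ of Lemma~\ref{lem: computation Wm10} will enter.
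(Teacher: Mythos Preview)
Your proposal is correct and follows essentially the same route as the paper: the torsor identification for the first isomorphism, the CKP/K\"unneth computation of $\CH^*(B(G\times\PGL_2))$, excision, and the observation that the source of the pushforward is generated as a module by $1$ and $x_1$. The only difference is in how the singular point of $Z_{g+1}$ is handled: the paper first excises the origin (this is where $c_{\mathrm{top}}$ enters as a separate relation), after which $Z_{g+1}\smallsetminus\{0\}$ is already a disjoint union and no normalization is needed; you instead keep $Z_{g+1}$ intact and pass to its normalization as an equivariant Chow envelope. Both are standard and yield the same ideal; your observation that $c_{\mathrm{top}}$ is then automatically in $(j_*(1),j_*(x_1))$ is correct and is implicit in the paper's later Lemma~\ref{lem: ctop is superfluos}. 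One small notational point: the paper's $j$ in \eqref{eqn: j factorization} already has the normalized source as domain, so what you call $(j\circ\nu)_*$ is literally the paper's $j_*$.
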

	\begin{proof}
		Since $G$ (or $\PGL_2$) satisfies the K\"unneth formula by Proposition~\ref{prop: CKP for some classifying stacks}, by \cite[Theorem 5.2]{Lar19} we have
		\[
		\CH^*\left(\left[\frac{V^{\vee}\otimes W_{\frac{g+1}{2}}\smallsetminus0}{G\times\PGL_2}\right]\right)\cong\frac{\Z[\beta_1,\beta_2,\gamma,c_2,c_3]}{(2\gamma,\gamma(\beta_1+\gamma),2c_3,c_{\mathrm{top}}(V^{\vee}\otimes W_{\frac{g+1}{2}}))}.
		\] 
		
		Denote by $j^o$ the inclusion
		$
		j^o : Z_{g+1} \smallsetminus \{0\} \hookrightarrow V^{\vee} \otimes W_{\frac{g+1}{2}} \setminus \{0\}.
		$
		Our goal is to compute the pushforward $j^o_*$.
		Notice that
		\[
		\left[\frac{Z_{g+1} \smallsetminus 0}{G\times\PGL_2}\right]\cong\left[\frac{(\chi^{(1)})^{-1}\otimes W_{\frac{g+1}{2}}\smallsetminus0}{\Gm^{(1)}\times\Gm^{(2)}\times\PGL_2}\right],
		\]
		whose Chow ring is then a quotient of $\CH^*(B(\Gm\times\Gm\times\PGL_2))$. Since the pushforward $j_*^o$ is a homomorphism of $\CH^*(B(G \times \PGL_2))$-modules and $\CH^*(B(\Gm^{(1)}\times\Gm^{(2)}\times\PGL_2))$ is generated by $1$ and $x_1$ as a $\CH^*(B(G \times \PGL_2))$-module, it is enough to quotient by $j_*^o(1)$ and $j_*^o(x_1)$.
	\end{proof}
	
	The morphism $\psi_{g+1}$ in Equation \eqref{eqn: j factorization} and its variants will play an important role later, as it is often convenient to work $\PGL_2$-equivariantly rather than $\mu_2 \times \PGL_2$-equivariantly. Another incarnation of the morphism $\psi_{g+1}$ is
	\begin{equation}\label{eqn: def psi}
		\psi_{g+1}: \left[ \frac{\P(W_{\frac{g+1}{2}}) \times \P(W_{\frac{g+1}{2}}) }{\PGL_2} \right] \longrightarrow \left[ \frac{\P(W_{\frac{g+1}{2}}) \times \P(W_{\frac{g+1}{2}}) }{\mu_2 \times \PGL_2} \right]
	\end{equation}
	induced by the inclusion $\PGL_2 \hookrightarrow \mu_2 \times \PGL_2$.
	In general, we will use $\psi: B(\Gm^2 \times \PGL_2) \to B(G \times \times \PGL_2)$ for the morphism between classifying stacks, while we will decorate the symbol with subscripts and/or superscripts to denote morphism between various quotient stacks obtained from $\psi$ by some base change.
	
	We record here the pullbacks of the standard generators under $\psi$ and $\psi_{g+1}$, which are equal: 
	\begin{equation}\label{eqn: relation beta and x}
		\psi^*(\beta_1) = x_1 + x_2 \quad \text{and} \quad \psi^*(\beta_2) = x_1 x_2.
	\end{equation}
	Again, here $x_i$ is the first Chern class of the tautological line bundle associated with $\chi^{(i)}$ on $B\Gm^{(i)}$ for $i = 1, 2$. In terms of the projective spaces above, we have $x_i = -\xi_{g+1}^{(i)}$, where $\xi_{g+1}^{(i)}$ denotes the first Chern class of the line bundle $\cO(1)$ from the $i$-th factor, for $i=1,2$.
	
	Let $\mu: [ (\P(W_{\frac{g+1}{2}}) \times \P(W_{\frac{g+1}{2}}) / \PGL_2 ] \to [\P(W_{\frac{g+1}{2}}) \times \P(W_{\frac{g+1}{2}}) / \PGL_2 ]$ be the involution swapping the two factors.
	
	\begin{lemma}\label{lemma: image psi*}
		We have that
		$$
		\operatorname{im}(\psi_{g+1}^*)= \CH^*_{\mu_2 \times \PGL_2}( (\P(W_{\frac{g+1}{2}}) \times \P(W_{\frac{g+1}{2}}) )^{\mu^*} 
		$$
		is the $\mu^*$-invariant subring of $\CH^*_{\mu_2 \times \PGL_2}( (\P(W_{\frac{g+1}{2}}) \times \P(W_{\frac{g+1}{2}}) )$.
	\end{lemma}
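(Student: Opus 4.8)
The plan is to establish the two inclusions separately, working throughout in the target ring $\CH^*_{\PGL_2}(\P(W_{\frac{g+1}{2}}) \times \P(W_{\frac{g+1}{2}}))$ of $\psi_{g+1}^*$, which is where both $\operatorname{im}(\psi_{g+1}^*)$ and the involution $\mu^*$ naturally live. The inclusion $\operatorname{im}(\psi_{g+1}^*) \subseteq \CH^*_{\PGL_2}(\P(W_{\frac{g+1}{2}})^2)^{\mu^*}$ is formal: since $\psi_{g+1}$ is the quotient by the swapping $\mu_2$ we have $\psi_{g+1} \circ \mu = \psi_{g+1}$, hence $\mu^* \circ \psi_{g+1}^* = \psi_{g+1}^*$, and every class in the image is $\mu^*$-invariant.

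For the reverse inclusion I would first pin down the image explicitly. By Lemma~\ref{lem: Chow alternative presentation Dg+1g+1mu2} the ring $\CH^*_{\mu_2 \times \PGL_2}(\P(W_{\frac{g+1}{2}})^2)$ is generated as a $\Z$-algebra by $\beta_1, \beta_2, \gamma, c_2, c_3$, so $\operatorname{im}(\psi_{g+1}^*)$ is generated by their pullbacks. Using $\psi_{g+1}^*(\gamma) = 0$ and $\psi_{g+1}^*(c_i) = c_i$, together with \eqref{eqn: relation beta and x}, this image is exactly the subalgebra $A[e_1, e_2]$, where $A := \CH^*(B\PGL_2) = \Z[c_2,c_3]/(2c_3)$ and $e_1 = \xi_1 + \xi_2$, $e_2 = \xi_1 \xi_2$ are the elementary symmetric functions in the two hyperplane classes $\xi_i := \xi_{g+1}^{(i)}$.

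Next I would describe the invariant subring via the projective bundle formula. Applied twice, it exhibits $\CH^*_{\PGL_2}(\P(W_{\frac{g+1}{2}})^2)$ as a free $A$-module with basis the monomials $\{\xi_1^a \xi_2^b : 0 \le a,b \le d-1\}$, where $d = \dim W_{\frac{g+1}{2}} = g+2$, on which $\mu^*$ acts as the $A$-algebra automorphism permuting the basis by $(a,b) \mapsto (b,a)$. The invariants of a permutation module over $A$ are the $A$-span of the orbit sums, so the $\mu^*$-invariant submodule is freely generated over $A$ by the classes $\xi_1^a \xi_2^a$ and $\xi_1^a \xi_2^b + \xi_1^b \xi_2^a$ for $a < b$. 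It then suffices to see each of these lies in $A[e_1,e_2]$: indeed $\xi_1^a\xi_2^a = e_2^a$, while $\xi_1^a \xi_2^b + \xi_1^b \xi_2^a = e_2^a\,(\xi_1^{b-a} + \xi_2^{b-a}) = e_2^a\, p_{b-a}$, and the power sums $p_k = \xi_1^k + \xi_2^k$ lie in $\Z[e_1,e_2]$ by Newton's identities (in two variables $p_k = e_1 p_{k-1} - e_2 p_{k-2}$, with integer coefficients). This gives $\CH^*_{\PGL_2}(\P(W_{\frac{g+1}{2}})^2)^{\mu^*} \subseteq A[e_1,e_2] = \operatorname{im}(\psi_{g+1}^*)$, completing the proof.

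The only genuine subtlety I anticipate is the reverse inclusion with \emph{integral} coefficients. The transfer relation of Lemma~\ref{lemma: push-pull psi}, namely $\psi_{g+1}^*(\psi_{g+1})_*(\alpha) = \alpha + \mu^*(\alpha)$, only yields $2\alpha \in \operatorname{im}(\psi_{g+1}^*)$ for invariant $\alpha$, leaving a potential $2$-torsion gap. The whole point of the argument above is to bypass this: because the monomial basis is literally \emph{permuted} by $\mu^*$, the invariant submodule is spanned by honest orbit sums (so no $H^1(\mu_2,-)$ obstruction can appear), and Newton's identities realize these orbit sums integrally through $e_1$ and $e_2$. I would therefore be careful to verify the two structural inputs this rests on: that the iterated projective bundle basis is genuinely $\mu^*$-stable as a set (it is, since $\mu^*$ is an $A$-algebra map with $\mu^*\xi_1 = \xi_2$), and that forming the orbit sums requires no reduction modulo the projective bundle relations (it does not, as all exponents $b-a$ remain $< d$).
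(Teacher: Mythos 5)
Your proof is correct and takes essentially the same route as the paper's: the paper's (very terse) proof likewise identifies $\operatorname{im}(\psi_{g+1}^*)$ as the subring generated by $x_1+x_2$, $x_1x_2$, $c_2$, $c_3$, using Equation~\eqref{eqn: relation beta and x} together with the generation statements from \cite[Lemma 5.1]{CLI} and Lemma~\ref{lem: Chow alternative presentation Dg+1g+1mu2}, and implicitly working (as you correctly do) in $\CH^*_{\PGL_2}$ rather than the ring literally named in the statement. The only difference is that the paper leaves unstated precisely the step you spell out — that the $\mu^*$-invariants of the quotient ring coincide integrally with this subalgebra, via the $\mu^*$-permuted free basis over $\CH^*(B\PGL_2)$, orbit sums, and Newton's identities — so your write-up is a complete version of the intended argument.
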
 
	\begin{proof}
		This follows from Equation \eqref{eqn: relation beta and x} and the fact that the classes $x_1, x_2, \gamma, c_2, c_3$ (respectively, $\beta_1, \beta_2, \gamma, c_2, c_3$) generate the Chow ring of the domain (respectively, codomain) of $\psi_{g+1}^*$ by \cite[Lemma 5.1]{CLI} and Lemma~\ref{lem: Chow alternative presentation Dg+1g+1mu2}.
	\end{proof}
	\begin{remark}
		Notice that the same proof shows that the image of the pullback along the morphism $\psi$ is
		$$
		\frac{\Z[\beta_1,\beta_2,c_2,c_3]}{(2c_3)}\subset\frac{\Z[x_1,x_2,c_2,c_3]}{(2c_3)}
		$$ which coincides with the image of $\CH^*(B(\GL_2\times\PGL_2))\rightarrow\CH^*(B(\Gm^2\times\PGL_2))$. Moreover, it is clear that the kernel of $\psi^*$ is the ideal generated by $\gamma$.
	\end{remark}
	
	We conclude the subsection with one last result that will be useful later in the paper.
	
	\begin{lemma}\label{lem: image pushforward psi}
		The image of
		\[
		\begin{tikzcd}
			\psi_*:\CH^*(B(\Gm^2\times\PGL_2))\arrow[r] & \CH^*(B(G\times\PGL_2))
		\end{tikzcd}
		\]
		is equal to the ideal $(2,\beta_1+\gamma)$. In particular, $j_*(1)\in(2,\beta_1+\gamma)$, where $j$ is the map in Equation \eqref{eqn: j factorization}.
	\end{lemma}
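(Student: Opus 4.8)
The goal is to identify the image of the pushforward $\psi_* \colon \CH^*(B(\Gm^2 \times \PGL_2)) \to \CH^*(B(G \times \PGL_2))$ with the ideal $(2, \beta_1 + \gamma)$. My plan is to exploit the projection formula together with the known behavior of $\psi^*$ and the push-pull identity of Lemma \ref{lemma: push-pull psi}.

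First I would establish the inclusion $\operatorname{im}(\psi_*) \subseteq (2, \beta_1 + \gamma)$. Since $\psi$ is a $\mu_2$-torsor, by the projection formula $\psi_*(\psi^* x \cdot y) = x \cdot \psi_*(y)$, so $\operatorname{im}(\psi_*)$ is an ideal in $\CH^*(B(G \times \PGL_2))$; hence it suffices to produce generators lying in $(2, \beta_1 + \gamma)$ and conversely to show these generate. For the containment, the two basic classes to compute are $\psi_*(1)$ and $\psi_*(x_1)$, because $\CH^*(B(\Gm^2 \times \PGL_2))$ is generated as a $\CH^*(B(G\times\PGL_2))$-module by $1$ and $x_1$ (this is the same module-generation statement used in the proof of Lemma \ref{lem: Chow alternative presentation Dg+1g+1mu2}). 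Using Lemma \ref{lemma: push-pull psi} one has $\psi^*\psi_*(1) = 1 + \mu^*(1) = 2$ and $\psi^*\psi_*(x_1) = x_1 + \mu^*(x_1) = x_1 + x_2 = \psi^*(\beta_1)$. Since $\ker(\psi^*) = (\gamma)$ (recorded in the Remark after Lemma \ref{lemma: image psi*}), this forces $\psi_*(1) \in 2 + (\gamma)$ and $\psi_*(x_1) \in \beta_1 + (\gamma)$, and because $2\gamma = 0$ and $\gamma(\beta_1+\gamma)=0$ in this ring, both $\psi_*(1)$ and $\psi_*(x_1)$ lie in $(2, \beta_1 + \gamma)$. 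This gives $\operatorname{im}(\psi_*) \subseteq (2, \beta_1+\gamma)$.

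For the reverse inclusion I would pin down $\psi_*(1)$ and $\psi_*(x_1)$ exactly, not just modulo $(\gamma)$. The computation above shows $\psi_*(1) = 2 + a\gamma$ and $\psi_*(x_1) = \beta_1 + b\gamma$ for some integers $a, b$; determining $a$ and $b$ (most cleanly by restricting along $B(\Gm \rtimes \mu_2) \to B(G \times \PGL_2)$, where $\gamma$ is nonzero and the relevant pushforwards can be read off directly, compare the end of the proof of Corollary \ref{cor: Chow2 of H11}) will show that $\psi_*(1)$ and $\psi_*(x_1)$ together generate the ideal $(2, \beta_1 + \gamma)$. Concretely, once $\psi_*(1)$ is seen to equal $2$ (i.e.\ $a=0$) and $\psi_*(x_1)$ to equal $\beta_1 + \gamma$ (i.e.\ $b=1$), the ideal they generate is visibly $(2, \beta_1+\gamma)$, giving the reverse containment and hence equality. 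The final clause, that $j_*(1) \in (2, \beta_1+\gamma)$, is then immediate from the factorization $j = \psi_{g+1} \circ \widetilde{j}$ in Equation \eqref{eqn: j factorization}: by functoriality $j_*(1) = \psi_{g+1,*}(\widetilde{j}_*(1))$ lies in $\operatorname{im}(\psi_{g+1,*})$, which equals $(2, \beta_1+\gamma)$ by the same argument applied to $\psi_{g+1}$ (the relevant pullback and module-generation facts transfer verbatim).

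The main obstacle I anticipate is the exact determination of the integer coefficients $a$ and $b$, i.e.\ showing $\psi_*(1) = 2$ and $\psi_*(x_1) = \beta_1 + \gamma$ on the nose rather than merely modulo $(\gamma)$. The push-pull identity only controls these classes after applying $\psi^*$, and since $\gamma$ lies in the kernel of $\psi^*$ it is invisible to that computation; resolving the ambiguity requires an independent restriction to a subgroup on which $\gamma$ survives. Everything else is formal once this normalization is settled.
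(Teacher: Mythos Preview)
Your approach is essentially the same as the paper's: both use that $\CH^*(B(\Gm^2\times\PGL_2))$ is generated as a $\CH^*(B(G\times\PGL_2))$-module by $1$ and $x_1$, so by the projection formula $\operatorname{im}(\psi_*)$ is the ideal generated by $\psi_*(1)$ and $\psi_*(x_1)$, and then both deduce the last clause from the factorization $j=\psi\circ\widetilde{j}$. The paper bypasses your ``main obstacle'' by simply citing \cite[Lemma~7.3]{Lar19} for the exact values $\psi_*(1)=2$ and $\psi_*(x_1)=\beta_1+\gamma$, rather than recomputing them.

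Two small points on your write-up. First, there is no ambiguity in $a$: since $\psi$ is finite \'etale, $\psi_*$ preserves codimension, so $\psi_*(1)$ lives in degree $0$ and hence equals $2$ on the nose. Second, your argument for the inclusion $\operatorname{im}(\psi_*)\subseteq(2,\beta_1+\gamma)$ in the first paragraph does not go through as written: knowing only $\psi_*(x_1)\in\beta_1+(\gamma)$ does not place it in $(2,\beta_1+\gamma)$, since for example $\beta_1$ itself is not in $(2,\beta_1+\gamma)$ (in degree $1$ the quotient by $(2,\beta_1+\gamma)$ is $\Z/2$ generated by $\beta_1$). The relations $2\gamma=0$ and $\gamma(\beta_1+\gamma)=0$ do not help here. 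So you genuinely need $b=1$ even for the forward inclusion, not just the reverse one; once $\psi_*(x_1)=\beta_1+\gamma$ is established, both inclusions follow at once and the separate $\subseteq$ argument is unnecessary.
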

	\begin{proof}
		Notice that $\CH^*(B(\Gm^2\times\PGL_2))$ is generated by $1$ and $t_1$ as a $\CH^*(B(G\times\PGL_2))$-module via pullback along $\psi$. Then, by the push-pull formula, the image of $\psi_*$ is the ideal generated by $\psi_*(1)=2$ and $\psi_*(t_1)=\beta_1+\gamma$, by~\cite[Lemma 7.3]{Lar19}. For the last part of the statement, recall that in the proof of Lemma~\ref{lem: Chow alternative presentation Dg+1g+1mu2} we have also shown that $j_*(1)=\psi_{*}(\widetilde{j}(1))$, hence it is in the image of $\psi_*$.
	\end{proof}
	\subsection{A Chow envelope for $\underline{\Delta}$} \label{sec: envelope}
	
	By Lemma \ref{lem: Chow alternative presentation Dg+1g+1mu2} and the excision sequence
	
	$$
	\CH^*_{\mu_2\times\PGL_2}(\uD) \to \CH^*_{\mu_2\times\PGL_2}(\P(W_{\frac{g+1}{2}})\times\P(W_{\frac{g+1}{2}})) \to \CH^*_{\mu_2\times\PGL_2}([\P(W_{\frac{g+1}{2}})\times\P(W_{\frac{g+1}{2}})\smallsetminus\uD) \to 0
	$$
	the problem of computing $\CH^*_{\mu_2\times\PGL_2}(\P(W_{\frac{g+1}{2}})\times\P(W_{\frac{g+1}{2}})\smallsetminus\uD)$ reduces to computing the $\mu_2 \times \PGL_2$-equivariant pushforward of
	\[
	\begin{tikzcd}
		i : \uD \arrow[r,hookrightarrow] & \P\left(W_{\frac{g+1}{2}}\right) \times \P\left(W_{\frac{g+1}{2}}\right).
	\end{tikzcd}
	\]
	
	We begin by observing that $[\uD/ \mu_2 \times \PGL_2]$ is reducible, with a decomposition $[\uD/ \mu_2 \times \PGL_2] = [\uD_1 /\mu_2 \times \PGL_2] \cup [\uD_2/\mu_2 \times \PGL_2]$, where $\uD_1$ parametrizes pairs of polynomials $(f, g)$ admitting a non-invertible common factor, and $\uD_2$ parametrizes pairs for which either $f$ or $g$ is singular. Let
	\[
	\begin{tikzcd}
		i_j : \uD_j \arrow[r,hookrightarrow] & \P\left(W_{\frac{g+1}{2}}\right) \times \P\left(W_{\frac{g+1}{2}}\right), \quad j = 1, 2,
	\end{tikzcd}
	\]
	denote the natural inclusion maps. It follows that
	$
	\mathrm{Im}(i_*) = \mathrm{Im}((i_1)_*) + \mathrm{Im}((i_2)_*),
	$
	so it is enough to compute the pushforwards $(i_1)_*$ and $(i_2)_*$ separately.
	
	To do this, we employ two equivariant Chow envelopes. See~\cite[Page 603 and Lemma 3]{EG98} or \cite[Definition 27 and Lemma 28]{CIL24} for the definition of Chow envelope; for our purposes, it is important that the induced pushforwards are surjections in Chow.
	
	Define $\mu_2\times\PGL_2$-equivariant morphisms
	\[
	\begin{tikzcd}[row sep=tiny]
		M_r:\P^r\times\P^{g+1-r}\times\P^{g+1-r}\arrow[r] & \P(W_{\frac{g+1}{2}})\times\P(W_{\frac{g+1}{2}}),\\
		(h,f,g)\arrow[r] & (hf,hg),
	\end{tikzcd}
	\]
	
	\[
	\begin{tikzcd}[row sep=tiny]
		S_r:\P^r\times((\P^{g+1-2r}\times \P^{g+1})\sqcup(\P^{g+1}\times \P^{g+1-2r}))\arrow[r] & \P(W_{\frac{g+1}{2}})\times\P(W_{\frac{g+1}{2}}),\\
		(h,f,g)\arrow[r,mapsto] & (h^2f,g)\text{ or }(f,h^2g),
	\end{tikzcd}
	\]
	\[
	\begin{tikzcd}[row sep=tiny]
		H_r:\P^r\times\P^r\times\P^{g+1-2r}\times \P^{g+1-2r}\arrow[r] & \P(W_{\frac{g+1}{2}})\times\P(W_{\frac{g+1}{2}}),\\
		(h,l,f,g)\arrow[r,mapsto] & (h^2f,l^2g),
	\end{tikzcd}
	\]
	where $\mu_2$ acts on the domain of $S_r$ by exchanging the second and third components, and it acts on the domain of $H_r$ by simultaneously swapping the first with the second and the third with the fourth entries.
	
	\begin{lemma}\label{lem: equivariant Chow envelope g+1 case}
		The disjoint union of the maps $M_r$ for $r=1,\ldots, g+1$ form an equivariant Chow envelope of $\uD_1$. Similarly, the disjoint union of $S_r$ and $H_r$ with $r=1,\ldots, (g+1)/2$ form an equivariant Chow envelope of $\uD_2$.
	\end{lemma}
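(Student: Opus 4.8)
The plan is to verify, for each of the three families, the defining property of an equivariant Chow envelope: properness together with, for every invariant subvariety of the target, an invariant subvariety of the source mapping birationally onto it. I will obtain this through the standard stratification criterion: if a proper $\mu_2\times\PGL_2$-equivariant map admits, over each stratum of a finite invariant stratification of its image, an equivariant section, then it is an equivariant envelope, and in particular the induced pushforward is surjective. Properness of $M_r$, $S_r$ and $H_r$ is immediate, since their sources are (disjoint unions of) products of projective spaces and the targets are projective. Equivariance is built into the constructions: $\PGL_2$ acts on each $\P(W_j)$ by $f\mapsto f\circ B^{-1}$ (up to the irrelevant determinant twist), which is compatible both with the multiplication defining $M_r$ and with the squaring defining $S_r,H_r$, while $\mu_2$ acts by the prescribed swaps. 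Thus only the construction of equivariant sections over suitable strata remains.

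For $\uD_1$ I will stratify by the degree of the greatest common divisor, writing $\uD_1=\bigsqcup_{r=1}^{g+1}\uD_1^{=r}$, where $\uD_1^{=r}$ is the invariant locally closed locus of pairs $(F,G)$ with $\deg\gcd(F,G)=r$. Over $\uD_1^{=r}$ the assignment $(F,G)\mapsto(\gcd(F,G),\,F/\gcd(F,G),\,G/\gcd(F,G))$ is a morphism into $\P^r\times\P^{g+1-r}\times\P^{g+1-r}$ — here the hypothesis that the characteristic is $0$ or $>2g+2$ guarantees that the common factor can be extracted algebraically over the constant-degree stratum — and it is a section of $M_r$. It is $\mu_2$-equivariant because swapping $F$ and $G$ swaps the last two coordinates, and $\PGL_2$-equivariant because $\gcd$ commutes with substitution by $B^{-1}$. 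Hence $M_r$ restricts to an isomorphism onto $\uD_1^{=r}$, and the section images provide the required invariant birational models; note that $r$ must range up to $g+1$, the maximal degree of a common factor.

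For $\uD_2$ I will stratify by the pair $(a,b)$ of degrees of the square parts of $F$ and $G$ (the square part of a form $\prod_i\ell_i^{m_i}$ being $\prod_i\ell_i^{\lfloor m_i/2\rfloor}$, again well defined and algebraically varying on each constant-degree stratum under our characteristic hypothesis). Writing $\Sigma_{a,b}$ for the corresponding invariant locally closed locus, one has $\uD_2=\bigsqcup_{(a,b)\neq(0,0)}\Sigma_{a,b}$ with $0\le a,b\le (g+1)/2$, and $\mu_2$ exchanges $\Sigma_{a,b}$ with $\Sigma_{b,a}$. The diagonal strata $\Sigma_{a,a}$ are $\mu_2$-stable and are handled by $H_a$, via the section $(F,G)\mapsto(\mathrm{sq}(F),\mathrm{sq}(G),F/\mathrm{sq}(F)^2,G/\mathrm{sq}(G)^2)$, which is equivariant for the swap $(h,l,f,g)\mapsto(l,h,g,f)$. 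The off-diagonal strata, grouped as $\Sigma_{a,b}\sqcup\Sigma_{b,a}$ with $a>b$, are handled by $S_a$: on $\Sigma_{a,b}$ one records the square part of $F$ and lands in the first summand of the disjoint union, on $\Sigma_{b,a}$ one records that of $G$ and lands in the second, and the $\mu_2$-swap of the two summands makes this joint section equivariant.

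The point requiring the most care — and the reason both $S_r$ and $H_r$ are needed rather than $S_r$ alone — is precisely the contrast between diagonal and off-diagonal strata. Off the diagonal one may equivariantly decide, using the two summands of the source of $S_a$ exchanged by $\mu_2$, which of the two forms carries the square part being recorded; on the diagonal $\Sigma_{a,a}$ this choice cannot be made compatibly with the involution, so one is forced to record both square parts at once, which is exactly $H_a$. Having produced equivariant sections over every stratum, the stratification criterion yields that $\bigsqcup_r S_r\sqcup\bigsqcup_r H_r$ is an equivariant envelope of $\uD_2$; since the square part of a degree $g+1$ form has degree at most $(g+1)/2$, the index $r$ ranges over $1,\dots,(g+1)/2$, as stated. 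The surjectivity of the induced pushforwards then follows from the cited properties of equivariant Chow envelopes.
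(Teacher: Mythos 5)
Your overall architecture is sound and is, in substance, the paper's own argument recast: the stratification\hyp{}plus\hyp{}equivariant\hyp{}sections criterion is valid, your treatment of $\uD_1$ is correct (on the locus $\deg\gcd(F,G)=r$ the gcd is computed by the degree-$r$ subresultant, so the section of $M_r$ there is a genuine morphism), and your diagonal/off-diagonal dichotomy correctly identifies why $H_r$ cannot be dispensed with. The gap is the assertion that the square part is ``algebraically varying'' on the constant-square-degree strata $\Sigma_{a,b}$: this is false, so the maps you feed into your criterion for $\uD_2$ are not sections (they are not morphisms), and the criterion cannot be invoked as stated.

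Concretely, $\Sigma_{a,a}$ contains pairs in which $F$ has a root of multiplicity $\geq 3$: type $(3,1,\dots,1)$ has square part of degree $1$, the same as the generic singular type $(2,1,\dots,1)$, so both lie in the same stratum. Along the multiplicity-$3$ locus the stratum $\{\deg\mathrm{sq}(F)=a\}$ is cuspidal, hence non-normal, and $F\mapsto\mathrm{sq}(F)$ does not extend to a morphism there. Indeed, the incidence variety $\{(h,F): h^2\mid F,\ \deg h=a\}$ maps properly onto $\{\deg\mathrm{sq}(F)\geq a\}$ and bijectively over $\{\deg\mathrm{sq}(F)=a\}$, but this bijection is not an isomorphism: already for binary cubics, with $\phi(s,u)=(y-sx)^2(y-ux)$ one has $\partial_s\phi=-2x(y-sx)(y-ux)$ and $\partial_u\phi=-x(y-sx)^2$, which become proportional when $s=u$, so $\phi$ fails to be an immersion along the triple-root locus and is a nontrivial (bijective) normalization of its cuspidal image; a regular section would invert it, which is impossible. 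Note this has nothing to do with the characteristic --- it happens over $\C$ --- so the hypothesis you invoke cannot repair it; the characteristic is needed elsewhere (to descend the square part to a non-closed field, as in the paper). Two repairs are available: refine your stratification to constant multiplicity type of $F$ and of $G$ (there the iterated-gcd decomposition has constant degrees, every step is a subresultant or exact-division formula, and $\mathrm{sq}$ is a genuine morphism), regrouping the finer strata into $\mu_2$-orbits exactly as you did; or, as the paper does, forgo sections over strata and argue only at an invariant field-valued (generic) point $(f,g)$: invariance forces $\deg\mathrm{sq}(f)=\deg\mathrm{sq}(g)=r>0$, the point of the domain of $H_r$ above $(f,g)$ with $r$ maximal is unique, hence itself invariant, and lifting the generic point already produces the required invariant subvariety mapping birationally, with no extension problem to solve.
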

	
	\begin{proof} 
		The proof for the maps $M_r$ is analogous to~\cite[Lemma 5.2]{CLI}. By the same lemma, the morphism $S_r$ form a (non-equivariant) envelope. Let $k\subset K$ be any field extension and $p=(f(x_0,x_1),g(x_0,x_1))$ a $\mu_2\times\PGL_2$-invariant $K$-valued point of $\uD_2$. We aim to show that there exists a $\mu_2 \times \PGL_2$-equivariant $ K $-valued point in the domain of some $ S_r $ or $ H_r $ mapping to $ p $. In fact, we will show that such a point exists in the domain of some $ H_r $. The $\mu_2$-invariance implies that there exists a degree $r$ polynomial $h_1 \in K[x_0,x_1]$ such that $h_1^2|f$ if and only if there exists $h_2 \in K[x_0,x_1]$ of the same degree such that $h_2^2|g$. If $r$ is taken to be as big as possible, then since $\mathrm{char}(k)$ is bigger than the degree of $f$ and $g$, we have $r>0$. Moreover, there is a unique $(h_1,h_2,f_1,f_2)\in(\P^r(K))^2\times(\P^{g+1-2r}(K))^2$ such that $H_r(h_1,h_2,f_1,f_2)=(f,g)$. Indeed, by the uniqueness of such a point and the $\mu_2 \times \PGL_2$-invariance of $(f,g)$, we get that also $(h_1,h_2,f_1,f_2)$ is $\mu_2\times\PGL_2$-invariant.
	\end{proof}
	The images of $S_{r*}$, $M_{r*}$ and $H_{r*}$ are computed in \S\ref{subsec: computation Sr},~\S\ref{subsec: computation Mr} and~\S\ref{subsec: computation Hr}, respectively.
	
	\begin{remark}\label{rmk: GL3 counterparts of Chow envelope}
		When $r$ is even, we can construct $\mu_2\times\GL_3$-counterparts to the morphisms that form the Chow envelope in Lemma~\ref{lem: equivariant Chow envelope g+1 case}, see \S\ref{subsec: GL3 counterparts} for basics on $\GL_3$-counterparts. Recall that, from Definition \ref{def: GL3 counterparts} we denote them by the same letter but adding an apostrophe. As an example, for every $r\geq1$ the $\GL_3$-counterpart of $M_{2r}$ is
		\[
		\begin{tikzcd}[column sep=1.7em]
			M_{2r}':\P(V_r)\times_{\mathcal{S}}\P(V_{\frac{g+1}{2}-r})\times_{\mathcal{S}}\P(V_{\frac{g+1}{2}-r})\arrow[r] & \P(V_{\frac{g+1}{2}})\times_{\mathcal{S}}\P(V_{\frac{g+1}{2}}), & (q,h,f,g)\mapsto(q,hf,hg).
		\end{tikzcd}
		\]
	\end{remark}
	
	\subsection{Strategy for the computation  $\CH^*([\cD_{g+1,g+1}/\mu_2])$ }\label{subsec: strategy}

	By Lemmas \ref{lem: Chow alternative presentation Dg+1g+1mu2} and \ref{lem: equivariant Chow envelope g+1 case}, the Chow ring $\CH^*([\cD_{g+1,g+1}/\mu_2])$ is a quotient of
	\[
	\CH^*\big(B(G\times \PGL_2)\big) \cong \frac{\Z[\beta_1,\beta_2,\gamma,c_2,c_3]}{(2 \gamma, \gamma(\beta_1 + \gamma), 2 c_3)}
	\]
	where $G=(\Gm \times \Gm) \rtimes \mu_2$, by the ideal generated by:
	\begin{itemize}
		\item[(a)] the classes $c_{\mathrm{top}}(V^{\vee} \otimes W_{\frac{g+1}{2}})$, $j_*(1)$, and $j_*(x_1)$;
		\item[(b)] (lifts of) the ideal generated by the images $\operatorname{im}(S_r)_*$ for $r = 1, \ldots, (g+1)/2$;
		\item[(c)] (lifts of) the ideal generated by the images $\operatorname{im}(H_r)_*$ for $r = 1, \ldots, (g+1)/2$;
		\item[(d)] (lifts of) the ideal generated by the images $\operatorname{im}(M_r)_*$ for $r = 1, \ldots, g+1$.
	\end{itemize}
	
	We will show that this ideal is in fact generated by the classes in (b), together with $\operatorname{im}(M_1)_*$ and $\operatorname{im}(M_2)_*$. 
	
	\begin{notation}
		We denote by $I_S$ the ideal generated by the classes in (b), by $I_H$ the ideal generated by the classes in $(c)$ and by $I_M$ the ideal generated by the classes in $(d)$. 
		
		Finally, we denote by $I_M^{\leq 2}$ the ideal generated by $\operatorname{im}(M_1)_*$, and $\operatorname{im}(M_2)_*$.
	\end{notation}
	
	The computation of $I_S$ is carried out in \S\ref{subsec: computation Sr}. In \S\ref{subsec: computation Mr}, we explicitly compute $\operatorname{im}(M_1)_*$ and $\operatorname{im}(M_2)_*$, and show that $I_M \subseteq I_S + I_{M}^{\leq 2} + I_H$. Finally in \S\ref{subsec: computation Hr} we show that $I_H \subseteq I_S + I_M^{\leq 2}$,
	and in \S\ref{subsec: conclusion computation} we show that the classes in (a) are also in $I_S + I_M^{\leq 2}$.

	\subsection{Computation of the ideal generated by the maps $S_{r*}$}\label{subsec: computation Sr}
	
	To compute the pushforward along $S_{r}$, we write $S_r$ as a transfer of a $\PGL_2$-equivariant map. This $\PGL_2$-equivariant map can be chosen between two maps $F_r$ and $G_r$, whose pushforward are computed in~\cite[Section 5.1]{CLI}. We now introduce these maps and explain their relation with the map $S_r$.
	
	For $r=1,\ldots, (g+1)/2$, we have a commutative diagram 
	
	\begin{equation}\label{eqn: transfer Fr}
		\begin{tikzcd}
			\left[\frac{\P^r\times\P^{g+1-2r}\times\P^{g+1}}{ \PGL_2}\right] \arrow[r,"F_r"] \arrow[d,"\phi_r","\cong"'] & \left[\frac{\P^{g+1}\times\P^{g+1}}{\PGL_2}\right]\arrow[d,"\psi_{g+1}"]\\
			\left[\frac{\P^r\times((\P^{g+1-2r}\times \P^{g+1})\sqcup(\P^{g+1}\times \P^{g+1-2r}))}{\mu_2 \times \PGL_2}\right]  \arrow[r,"S_r"] & \left[\frac{\P^{g+1}\times\P^{g+1}}{\mu_2 \times \PGL_2}\right]
		\end{tikzcd}
	\end{equation}
	
	where the isomorphism $\phi_r$ on the left is induced by the inclusion $\P^r\times\P^{g+1-2r}\times\P^{g+1} \hookrightarrow \P^r\times((\P^{g+1-2r}\times \P^{g+1})\sqcup(\P^{g+1}\times \P^{g+1-2r}))$ of the first summand, the map $\psi_{g+1}$ is the same as in \eqref{eqn: def psi} and the map $F_r$ is defined by $(h,f,g) \mapsto (h^2f,g)$.
	
	The map
	\begin{equation}\label{eqn: def Gr}
		\begin{tikzcd}
			G_r : \left[\frac{\P^r \times \P^{g+1} \times \P^{g+1 - 2r}}{\PGL_2}\right] \arrow[r] &
			\left[\frac{\P^{g+1} \times \P^{g+1}}{\PGL_2}\right], & (h, f, g) \mapsto (f, h^2 g)
		\end{tikzcd}
	\end{equation}
	is defined similarly, and there is an analogous commutative diagram if we replace $\phi_r$ with the inclusion into the second summand. However, we will not need this and will only use the maps $G_r$ in the next sections.

	It will be useful to fix the following notation.

	\begin{notation}\label{not: notation projective space}
		For $r \geq 1$, we denote by $\P^{2r}$ the space $\P(W_r)$ (endowed with the induced $\PGL_2$-action), and define 
		$
		\xi_{2r} := c_1^{\PGL_2}(\cO_{\P^{2r}}(1)) \in \CH^*_{\PGL_2}(\P^{2r})
		$
		to be the first Chern class of the $\PGL_2$-linearized line bundle $\cO_{\P(W_r)}(1)$ (and its pullbacks to other spaces).
		
		When $r$ is odd, we can still consider $\P^r$ as the space of degree $r$ forms in two variables, up to scalar, with the $\PGL_2$-action defined by $[B] \cdot [f] = [f \circ B^{-1}]$. However, this is not the projectivization of any $\PGL_2$-representation.
		
	\end{notation}
	
	Note that under the $\GL_3$-counterpart isomorphism $[\P(W_r)/\PGL_2] \cong [\P(V_r)/\GL_3]$, the classes $\xi_{2r}$ correspond to each other. Recall also that the $\PGL_2$-equivariant Chow ring of $\P^1$ is well-known \cite[Lemma 5.1]{GV08}:
	$$
	\CH^*_{\PGL_2}(\P^1)= \frac{\Z[c_2,c_3, \tau, ]}{(c_3, \tau^2+c_2)} \cong \Z[\tau]
	$$
	where $\tau=c_1^{\PGL_2}(\cO(2))$.
	
	\begin{lemma}\label{lemma: IS}
		The ideal generated by all $\operatorname{im}(S_r)_*$ for $r = 1, \ldots, (g+1)/2$ is the ideal generated by the following classes:
		
		\begin{align*}
			S_{1*}\phi_{1*}(1)&=-2g\beta_1,\\
			S_{1*}\phi_{1*}(\tau)&=2\beta_1^2-4\beta_2-(g^2-1)c_2,\\
			S_{2*}\phi_{2*}(\xi_2^2)&=\beta_1^4+\beta_1^3\gamma-4\beta_1^2\beta_2+2\beta_2^2+\left(\frac{g^2+1}{2}\right)\beta_1^2c_2+\beta_1\gamma c_2\\
			&-(g^2+1)\beta_2c_2+c_3(\beta_1+\gamma)+\frac{(g+1)^2(g-1)^2}{8}c_2^2,
		\end{align*}
		and
		\begin{align*}
			S_{1*}\phi_{1*}(\xi_{g+1})&=4g\beta_2,\\
			S_{1*}\phi_{1*}(\tau\cdot\xi_{g+1})&=-2\beta_1\beta_2+\frac{g^2-1}{2}\beta_1c_2,\\
			S_{2*}\phi_{2*}(\xi_2^2\cdot\xi_{g+1})&=\beta_1^3\beta_2+\beta_1^2\beta_2\gamma-3\beta_1\beta_2^2-\beta_2^2\gamma+\left(\frac{g^2+1}{2}\right)\beta_1\beta_2c_2\\
			&+\beta_2\gamma c_2+\frac{(g+1)^2(g-1)^2}{16}\beta_1c_2^2,
		\end{align*}
	\end{lemma}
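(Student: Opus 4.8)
The plan is to reduce every $\operatorname{im}(S_{r*})$ to a computation that has already been carried out $\PGL_2$-equivariantly in \cite{CLI}. Since $\phi_r$ is an isomorphism and the square \eqref{eqn: transfer Fr} commutes, $S_{r*}\phi_{r*}=\psi_{g+1*}\circ F_{r*}$, so $\operatorname{im}(S_{r*})=\psi_{g+1*}(\operatorname{im}(F_{r*}))$, which is a $\CH^*(B(G\times\PGL_2))$-submodule by the projection formula. I would first recall from \cite[Section 5.1]{CLI} the $\PGL_2$-equivariant classes $F_{r*}(\tau^a\,\xi_{g+1}^c)$ (for $r$ odd) and $F_{r*}(\xi_r^a\,\xi_{g+1}^c)$ (for $r$ even, via the $\GL_3$-counterparts of Remark \ref{rmk: GL3 counterparts of Chow envelope}). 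Because $F_r$ is the identity on the spectator $\P^{g+1}$-factor, the class $\xi_{g+1}$ on the source is the pullback of the second-factor class $\xi_{g+1}^{(2)}$; hence the projection formula gives $F_{r*}(\alpha\cdot\xi_{g+1}^c)=F_{r*}(\alpha)\cdot(\xi_{g+1}^{(2)})^c$ with $F_{r*}(\alpha)$ a polynomial in $\xi_{g+1}^{(1)}$ alone. This separates the genuinely new first-factor directions from the spectator power.

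Next I would compute $\psi_{g+1*}$ by push-pull. Setting $x_i=-\xi_{g+1}^{(i)}$ and $p_{m,n}:=\psi_{g+1*}(x_1^m x_2^n)$, Equation \eqref{eqn: relation beta and x} gives $x_1 x_2=\psi_{g+1}^*\beta_2$ and $x_1+x_2=\psi_{g+1}^*\beta_1$, whence the recursions $p_{m+1,n+1}=\beta_2\,p_{m,n}$ and $p_{m+1,n}+p_{m,n+1}=\beta_1\,p_{m,n}$, with $p_{0,0}=2$, $p_{1,0}=\beta_1+\gamma$ and $p_{m,n}=p_{n,m}$ (Lemmas \ref{lemma: push-pull psi} and \ref{lem: image pushforward psi}). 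Expanding each $F_{r*}(\alpha)\cdot(\xi_{g+1}^{(2)})^c$ in the monomials $x_1^m x_2^n$ and applying these recursions expresses $\psi_{g+1*}F_{r*}(\cdots)$ as an explicit polynomial in $\beta_1,\beta_2,\gamma,c_2,c_3$. Specializing to $r\in\{1,2\}$ and to the generators $1,\tau,\xi_{g+1},\tau\xi_{g+1}$ (for $r=1$) and $\xi_2^2,\xi_2^2\xi_{g+1}$ (for $r=2$) yields the six displayed formulas. A built-in sanity check is that every output must lie in $\operatorname{im}(\psi_*)=(2,\beta_1+\gamma)$ by Lemma \ref{lem: image pushforward psi}, which forces the delicate mod-$2$ and $g$-dependent cancellations visible in the formulas.

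Finally I would show these six classes generate $I_S=\sum_r\operatorname{im}(S_{r*})$. For $r\geq3$, \cite{CLI} shows $\operatorname{im}(F_{r*})$ lies in the ideal generated by $\operatorname{im}(F_{1*})$ and $\operatorname{im}(F_{2*})$; writing a typical element as $x\,F_{1*}(\alpha)+y\,F_{2*}(\beta)$ and applying the projection formula as $\psi_{g+1*}(x\,F_{1*}\alpha)=S_{1*}\phi_{1*}(F_1^*x\cdot\alpha)\in\operatorname{im}(S_{1*})$ (and similarly for the second term) gives $\operatorname{im}(S_{r*})\subseteq\operatorname{im}(S_{1*})+\operatorname{im}(S_{2*})$. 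Within $r=1,2$, the $\P^{g+1-2r}$-factor classes are absorbed by $F_{r*}$ into the first-factor ideal generated by the $\tau^a$ (resp. $\xi_2^2$) by the multiplication-pushforward computation of \cite{CLI} together with $\tau^2=-c_2$; and the spectator powers $(\xi_{g+1}^{(2)})^c$ with $c\geq2$ reduce to $c\in\{0,1\}$ via $p_{m+1,n}+p_{m,n+1}=\beta_1 p_{m,n}$, which lowers the $x_2$-exponent at the cost of terms whose first-factor part is again a multiple of $F_{r*}(\tau^a)$ (here one uses that $F_1^*$ of the first-factor class $\xi_{g+1}$ equals $\tau+\xi_{g-1}$ to control the cross terms).

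I expect the main obstacle to be this final redundancy analysis together with the integral bookkeeping. Over $\Z$ one cannot split classes into $\mu^*$-invariant and anti-invariant parts, so the reductions must be run through the push-pull recursions directly, and one must verify that lowering the spectator exponent and clearing the $\P^{g+1-2r}$-directions never produces a class outside the ideal generated by the six listed ones; keeping track of this while matching the heavy $2$- and $g$-dependent coefficients in the explicit formulas is where essentially all the work lies. By contrast, the transfer identity $\operatorname{im}(S_{r*})=\psi_{g+1*}(\operatorname{im}(F_{r*}))$ and the formal properties of $\psi_{g+1*}$ are routine once the $F_{r*}$ of \cite{CLI} are in hand.
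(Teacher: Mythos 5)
Your proposal is correct and takes essentially the same route as the paper's proof: transfer via $S_{r*}\phi_{r*}=\psi_{g+1*}F_{r*}$ from diagram \eqref{eqn: transfer Fr}, import the $F_{1*},F_{2*}$ computations and the statement that the ideal generated by all $\operatorname{im}(F_{r*})$ is generated by $F_{1*}(1),F_{1*}(\tau),F_{2*}(\xi_2^2)$ from \cite[Lemma 5.3]{CLI}, and evaluate $\psi_{g+1*}$ by push-pull (the paper cites \cite[Lemma 7.3]{Lar19} where you re-derive the same formulas recursively). The final ``redundancy analysis'' you flag as the main obstacle is actually immediate in the paper's formulation: since $\CH^*_{\PGL_2}((\P^{g+1})^2)$ is generated by $1$ and $\xi_{g+1}^{(2)}$ as a $\CH^*_{\mu_2\times\PGL_2}((\P^{g+1})^2)$-module (your recursion $x_2^2=\psi_{g+1}^*(\beta_1)x_2-\psi_{g+1}^*(\beta_2)$ says exactly this, with symmetric coefficients, so no cross terms involving $F_1^*(\xi_{g+1}^{(1)})=\tau+\xi_{g-1}$ ever arise) and $F_r^*(\xi_{g+1}^{(2)})=\xi_{g+1}$, the projection formula reduces everything to the six listed classes in one step.
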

	\begin{proof}
		First, we compute the classes in the statement, using the equality $S_r\circ\phi_F=\psi_{g+1}\circ F_r$. The computation of the pushforward along $F_1$ and $F_2$ has already been computed in~\cite[Lemma 5.3]{CLI}. Then, the result follows from the formulas of~\cite[Lemma 7.3]{Lar19} for the computation of $\psi_*$. As an example,
		\[
		S_{1*}\phi_{1*}(1)=\psi_{g+1*}F_{1*}(1)=\psi_{g+1*}(2g\xi_{g+1}^{(1)})=\psi_*(-2gx_1)=-2g\beta_1,
		\]
		and
		\[
		S_{1*}\phi_{1*}(\xi_{g+1})=\psi_{g+1*}F_{1*}(\xi_{g+1})=\psi_{g+1*}(2g\xi_{g+1}^{(1)}\xi_{g+1}^{(2)})=\psi_*(2gx_1x_2)=4g\beta_2.
		\]
		
		Next, we explain why these classes generate the entire ideal. By \cite[Lemma 5.3]{CLI}, the ideal generated by the images of all the maps $F_{r*}$ is generated by $F_{1*}(1)$, $F_{1*}(\tau)$, and $F_{2*}(\xi_2^2)$.
		
		The map $\psi_{g+1}$ is a homomorphisms of $\CH^*_{\mu_2 \times \PGL_2}((\P^{g+1})^2)$-modules, and, by Equation \eqref{eqn: relation beta and x}, $\CH^*_{\PGL_2}((\P^{g+1})^2)$ is generated by $1$ and $-x_2=\xi_{g+1}^{(2)}$ as a $\CH^*_{\mu_2 \times \PGL_2}((\P^{g+1})^2)$-module.
		
		The conclusion follows from the observation that for all $r$, we have $F_r^*(\xi_{g+1}^{(2)}) = \xi_{g+1}$.
	\end{proof}

	\subsection{Computation of the ideal generated by the maps $M_{r*}$}\label{subsec: computation Mr}
	
	Let $I_M$ be the ideal generated by the images of the maps $M_{r*}$ for $r=1,\ldots,g+1$. In this section we compute generators for $I_M + I_S$.
	
	\subsubsection{Computation of the pushforward of low degree classes under $M_{1*}$ and $M_{2*}$}\label{sec: low degree classes unde M1 and M2}
	
	We start by computing the pushforward along $M_2$ of some classes in low degree.
	
	\begin{lemma}\label{lem: M2 computation low degree classes}
		The following equations hold:
		\begin{align*}
			M_{2*}(1)&=-g\beta_2+g\left(\frac{g+1}{2}\right)\beta_1^2+g\left(\frac{g+1}{2}\right)^2c_2 \\
			M_{2*}(\xi_2)&=-g\beta_1\beta_2+g\left(\frac{g^2-1}{4}\right)\beta_1c_2+\left(\frac{g+1}{2}\right)^2c_3 \\
			M_{2*}(\xi_2^2)&=\beta_2^2-\left(\frac{g^2-1}{4}\right)\beta_1^2c_2+\left(\frac{g^2-1}{2}\right)\beta_2c_2+\left(\frac{g+1}{2}\right)^2\left(\frac{g-1}{2}\right)^2c_2^2+\left(\frac{g+1}{2}\right)\beta_1c_3.
		\end{align*}
	\end{lemma}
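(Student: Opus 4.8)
The plan is to reduce the $\mu_2\times\PGL_2$-equivariant pushforward $M_{2*}$ to an ordinary $\PGL_2$-equivariant computation, and then to evaluate the latter by passing to $\GL_3$-counterparts and restricting to the maximal torus. Write $\overline{M}_2$ for the $\PGL_2$-equivariant map $\P(W_1)\times\P(W_{\frac{g-1}{2}})\times\P(W_{\frac{g-1}{2}})\to\P(W_{\frac{g+1}{2}})\times\P(W_{\frac{g+1}{2}})$, $(h,f,g)\mapsto(hf,hg)$, obtained from $M_2$ by forgetting the $\mu_2$-action. Then $\overline{M}_2$ and $M_2$ sit in a cartesian square whose vertical arrows are the $\mu_2$-covers $\psi^{\mathrm{dom}}$ and $\psi_{g+1}$ of \eqref{eqn: def psi}, so flat base change gives $\psi_{g+1}^*\circ M_{2*}=\overline{M}_{2*}\circ(\psi^{\mathrm{dom}})^*$. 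Since the test classes $1,\xi_2,\xi_2^2$ are pulled back from the (fixed) $\P(W_1)$-factor, we have $(\psi^{\mathrm{dom}})^*\xi_2^k=\xi_2^k$, hence $\psi_{g+1}^*\,M_{2*}(\xi_2^k)=\overline{M}_{2*}(\xi_2^k)$. Because interchanging $f$ and $g$ swaps the two output factors while fixing $\xi_2^k$, the class $\overline{M}_{2*}(\xi_2^k)$ is symmetric in $x_1,x_2$; by Lemma~\ref{lemma: image psi*} and the remark following it this is exactly $\operatorname{im}(\psi_{g+1}^*)$, whose kernel is $(\gamma)$. Thus $M_{2*}(\xi_2^k)$ is recovered as the unique $\gamma$-free symmetric lift via $\beta_1=x_1+x_2$, $\beta_2=x_1x_2$ of \eqref{eqn: relation beta and x}, once the residual $2$-torsion $\gamma$-ambiguity is shown to vanish (see the last paragraph).

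Next I would compute $\overline{M}_{2*}$ using $\GL_3$-counterparts. By Remark~\ref{rmk: GL3 counterparts of Chow envelope}, $\overline{M}_2$ has the $\GL_3$-counterpart
\[
M_2'\colon\P(V_1)\times_{\mathcal{S}}\P(V_{\frac{g-1}{2}})\times_{\mathcal{S}}\P(V_{\frac{g-1}{2}})\longrightarrow\P(V_{\frac{g+1}{2}})\times_{\mathcal{S}}\P(V_{\frac{g+1}{2}}),\qquad(q,h,f,g)\mapsto(q,hf,hg).
\]
The crucial structural observation is that the two output divisors share the common factor $h$, so $M_2'$ is \emph{not} a product of two independent multiplication maps; rather it factors as $M_2'=(\mu\times_{\mathcal{S}}\mu)\circ\delta$, where $\mu\colon\P(V_1)\times_{\mathcal{S}}\P(V_{\frac{g-1}{2}})\to\P(V_{\frac{g+1}{2}})$ is the multiplication map and $\delta$ is the closed embedding identifying the two copies of the $\P(V_1)$-factor (the diagonal in the $h$-direction). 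Correspondingly $M_{2*}'=(\mu\times_{\mathcal{S}}\mu)_*\circ\delta_*$. The pushforward $(\mu\times_{\mathcal{S}}\mu)_*$ is the relative product of two copies of the multiplication pushforward, available from the computations of \cite[Subsection 3.4]{DL18} and \cite[Subsection 3.4]{CLI} (the same input feeding the maps $F_r$ in Lemma~\ref{lemma: IS}), while $\delta_*(1)$ is the diagonal class of the $\P^2$-bundle $\P(V_1)$, a standard expression in $\xi_2$ and the Chern classes of $V_1$.

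I would then make everything explicit by restricting to the maximal torus $\Gm^3\subseteq\GL_3$, where $V_1$ splits and, on the charts $\mathcal{S}_{\underline i}$, the bundles $\P(V_m)$ trivialize with Chow rings described by \eqref{eqn: Chow P(Vm) restricted to Si}. Using the projective bundle formula together with Lemma~\ref{lem: computation Wm10}, one pushes $\xi_2^k$ forward first along $\delta$ (via the diagonal class, expressed in $\xi_2,t_1,t_2,t_3$) and then along $\mu\times_{\mathcal{S}}\mu$, collects terms, and rewrites the result in the hyperplane classes $\xi_{g+1}^{(i)}=-x_i$ of the two output factors. Descending from $\Gm^3$ back to $\GL_3$ by Remark~\ref{rmk: extension commutes with maps} and substituting $\beta_1=x_1+x_2$, $\beta_2=x_1x_2$ produces the three stated formulas.

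The main obstacle is precisely the shared factor $h$, that is, the diagonal contribution $\delta_*$: because $M_2'$ is the restriction of $\mu\times_{\mathcal{S}}\mu$ to the $h$-diagonal rather than a genuine product map, the two multiplication pushforwards do not simply multiply, and one must transport the diagonal class of $\P(V_1)$ through the second multiplication while correctly tracking the torus weights $t_i$ and the classes $\xi_{\ast}$ across the charts $\mathcal{S}_{\underline i}$. A secondary point to settle is the vanishing of the $\gamma$-component left undetermined by $\psi_{g+1}^*$: since $\ker\psi_{g+1}^*=(\gamma)$ and the only possible ambiguity lies in $\tfrac{\Z}{2}\langle\gamma^2\rangle$ (and its multiples in higher degree), it suffices to check it after restriction along $B(\Gm\rtimes\mu_2)\to B(G\times\PGL_2)$, exactly as in the proof of Corollary~\ref{cor: Chow2 of H11}; alternatively one runs the whole computation directly with the $\mu_2\times\GL_3$-counterpart $M_2'$ over $\mu_2\times\Gm^3$, which yields a $\gamma$-free answer by construction.
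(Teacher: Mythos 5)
Your first step is sound and matches the paper's strategy in spirit: the square relating $\overline{M}_2$ and $M_2$ is indeed cartesian with finite étale vertical maps, so $\psi_{g+1}^*M_{2*}(\xi_2^k)=\overline{M}_{2*}(\xi_2^k)$, reducing everything to a $\PGL_2$-equivariant computation; and your plan to evaluate that computation via $\GL_3$-counterparts and the factorization $M_2'=(\mu\times_{\mathcal{S}}\mu)\circ\delta$ is a plausible (if unexecuted) substitute for the paper's citation of \cite[Corollary 5.9]{CLI}. The genuine gap is your treatment of the $\gamma$-ambiguity, which is the actual crux of the lemma. Since $(\gamma)$ consists of $2$-torsion classes (in degrees $2,3,4$ it is spanned by $\gamma^2$; by $\gamma^3,\gamma\beta_2,\gamma c_2$; and by $\gamma^4,\gamma^2\beta_2,\gamma^2c_2,\gamma c_3$ respectively), your argument only pins down $M_{2*}(\xi_2^k)$ modulo these classes, and neither of your proposed fixes works as stated. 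Restriction along $B(\Gm\rtimes\mu_2)\to B(G\times\PGL_2)$ is not "exactly as in Corollary~\ref{cor: Chow2 of H11}": there one only had to detect one class in $\CH^2(BH_{1,1})$, whereas here you must compute the restriction of a \emph{pushforward}, which forces you to base-change the whole map $M_2$ into another equivariant theory still containing the swap; moreover such restrictions are typically not injective on the relevant torsion (under the restriction along $\mu_2\subset G$ one has $V|_{\mu_2}\cong\mathds{1}\oplus\Gamma$, so $\beta_2\mapsto0$ and hence $\gamma\beta_2\mapsto0$). Your second alternative is circular: working with the $\mu_2\times\GL_3$-counterpart over $\mu_2\times\Gm^3$ does not give "a $\gamma$-free answer by construction", because $\gamma$ is pulled back from $B\mu_2$ and the $\mu_2$-factor is still in the group; computing pushforwards in that theory is essentially the original difficulty.

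The paper removes the ambiguity structurally, and this is the idea you are missing: both $\xi_2^k$ and the candidate right-hand sides are restrictions of $\GL_2\times\PGL_2$-equivariant classes along the group-enlargement $G\subset\GL_2$. Lemma~\ref{lem: reduction to PGL2}, built on diagram~\eqref{eq: huge commutative diag} (whose squares are cartesian with flat vertical maps, so pushforward commutes with all the group-change pullbacks), shows that the identity $M_{2*}(\alpha)=\beta$ can be verified at the $\GL_2\times\PGL_2$ level, where restriction to the maximal torus $\Gm^2\times\PGL_2$ is injective by \cite[Proposition 3.6]{EG98}. Consequently the $G\times\PGL_2$-equivariant pushforward is automatically a pullback from the $\GL_2$ level — hence $\gamma$-free — and is uniquely determined by the $\PGL_2$-equivariant computation; no torsion analysis is ever needed. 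Without this lemma (or an honest computation of the $\gamma$-components), your proof is incomplete. A minor further point: the identification $\ker\psi_{g+1}^*=(\gamma)$ you cite from the remark after Lemma~\ref{lemma: image psi*} is a statement about classifying spaces, not about the quotient stacks; it does hold here in degrees $\leq 4$ because the relations $j_*(1)$, $j_*(x_1)$, $c_{\mathrm{top}}$ cutting out the quotient-stack Chow rings live in degrees $\geq g+2>4$, but this needs to be said (the paper proves the general low-degree statement only later, in Lemma~\ref{lemma: aux H2r(1)}).
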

	
	Before proceeding with the proof of the lemma, we first make some preliminary observations. For $r=1,\ldots,(g+1)/2$ we have a commutative diagram 
	\begin{equation}\label{eq: huge commutative diag}
		\begin{tikzcd}
			\left[\frac{\P^{2r}\times(\P^{g+1-2r}\times\P^{g+1-2r})}{\PGL_2}\right]\arrow[d,"\psi_{2r}^M"]\arrow[r,"M_{2r}"]\arrow[ddddd,bend right=70,leftrightarrow,"="'] & \left[\frac{\P^{g+1}\times\P^{g+1}}{\PGL_2}\right]\arrow[d,"\psi_{g+1}"]\arrow[ddddd,bend left=70,leftrightarrow,"="]\\        
			\left[\frac{\P^{2r}\times(\P^{g+1-2r}\times\P^{g+1-2r})}{\mu_2\times\PGL_2}\right]\arrow[d,"\cong"]\arrow[r,"M_{2r}"] & \left[\frac{\P^{g+1}\times\P^{g+1}}{\mu_2\times\PGL_2}\right]\arrow[d,"\cong"]\\        
			\left[\frac{\P^{2r}\times(W_{\frac{g+1}{2}-r}\otimes V^\vee\setminus Z_{g+1-2r})}{G \times\PGL_2}\right]\arrow[d,"d"]\arrow[r,"M_{2r}"] & \left[\frac{W_{\frac{g+1}{2}}\otimes V^{\vee}\setminus Z_{g+
					1}}{G \times\PGL_2}\right]\arrow[d,"\delta"]\\
			\left[\frac{\P^{2r}\times(W_{\frac{g+1}{2}}\otimes V^\vee\setminus Z_{g+1-2r})}{\GL_2\times\PGL_2}\right]\arrow[r,"M_{2r}"] & \left[\frac{W_{\frac{g+1}{2}}\otimes V^{\vee}\setminus Z_{g+1}}{\GL_2\times\PGL_2}\right]\\        
			\left[\frac{\P^{2r}\times(W_{\frac{g+1}{2}-r}\otimes V^\vee\setminus Z_{g+1-2r})}{(\Gm \times \Gm)\times\PGL_2}\right]\arrow[u,"u_1"']\arrow[r,"M_{2r}"] & \left[\frac{(W_{\frac{g+1}{2}}\otimes V^\vee\setminus Z_{g+1})}{(\Gm \times \Gm)\times\PGL_2}\right]\arrow[u,"v_1"']\\
			\left[\frac{\P^{2r}\times(\P^{g+1-2r}\times\P^{g+1-2r})}{\PGL_2}\right]\arrow[u,"\cong","u_2"']\arrow[r,"M_{2r}"] & \left[\frac{\P^{g+1}\times\P^{g+1}}{\PGL_2}\right]\arrow[u,"\cong","v_2"']
		\end{tikzcd}
	\end{equation}
	
	where we are denoting by $V$ both the standard $\GL_2$ representation and its restriction to $G \subseteq \GL_2$.

	\begin{lemma}\label{lem: reduction to PGL2}
		Let 
		$$
		\alpha \in \CH^*_{\GL_2 \times \PGL_2}(\P^{2r}\times(W_{\frac{g+1}{2}-r}\otimes V^{\vee}\setminus Z_{g+1-2r})) 
		$$
		and 
		$$
		\beta \in \CH^*_{\GL_2 \times \PGL_2}(W_{\frac{g+1}{2}}\otimes V^{\vee} \smallsetminus Z_{g+1})
		$$
		Then, the following are equivalent:
		\begin{itemize}
			\item[(i)] $M_{2r*}(\alpha)=\beta$ ;
			\item[(ii)] $M_{2r*}d^*(\alpha)=\delta^*(\beta)$;
			\item[(iii)] $M_{2r*}(\psi_{2r}^M)^*d^*(\alpha)=\psi_{g+1}^*\delta^*(\beta)$.
		\end{itemize}
	\end{lemma}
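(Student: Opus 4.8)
The plan is to deduce all three equivalences from the commutativity of the diagram \eqref{eq: huge commutative diag}, combined with two base-change identities and the injectivity of restriction to the maximal torus. First I would record the two compatibilities between proper pushforward and pullback coming from the cartesian squares in \eqref{eq: huge commutative diag}. Since $M_{2r}$ is proper and the change-of-group maps $d,\delta$ (along $G\subseteq\GL_2$) are smooth with the relevant square cartesian, flat base change gives $\delta^*\circ M_{2r*}=M_{2r*}\circ d^*$. Likewise, the top square, whose vertical arrows $\psi_{2r}^M$ and $\psi_{g+1}$ are the $\mu_2$-covers induced by $\PGL_2\hookrightarrow\mu_2\times\PGL_2$, is cartesian, so $\psi_{g+1}^*\circ M_{2r*}=M_{2r*}\circ(\psi_{2r}^M)^*$ (absorbing the intermediate isomorphisms into the notation).

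The second ingredient is injectivity. Restriction to the maximal torus $\CH^*_{\GL_2\times\PGL_2}(\,\cdot\,)\to\CH^*_{\Gm^2\times\PGL_2}(\,\cdot\,)$ is injective, since the associated morphism $[\,\cdot\,/\Gm^2\times\PGL_2]\to[\,\cdot\,/\GL_2\times\PGL_2]$ is a $\P^1$-bundle composed with an affine-space bundle and hence has split-injective pullback. As $\Gm^2\subseteq G\subseteq\GL_2$, this torus restriction factors through $\delta^*$, so $\delta^*$ is injective. Moreover, reading the outer left square of \eqref{eq: huge commutative diag} and using the curved equalities identifying the top and bottom $\PGL_2$-quotients, the composite $\delta\circ(\cong)\circ\psi_{g+1}$ agrees with $v_1\circ v_2$; since $v_2$ is an isomorphism and $v_1^*$ is precisely the injective torus restriction, the composite $\psi_{g+1}^*\circ\delta^*$ is injective as well. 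This last point is the crucial one: $\psi_{g+1}^*$ by itself has kernel $(\gamma)$ (see the remark following Lemma~\ref{lemma: image psi*}), but on classes pulled back from $\GL_2\times\PGL_2$, which involve no $\gamma$, injectivity is restored, and the diagram makes this transparent by rewriting $\psi_{g+1}^*\delta^*$ as restriction to the torus.

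Granting these, the equivalences become formal. For (i)$\Leftrightarrow$(ii): applying $\delta^*$ to $M_{2r*}(\alpha)=\beta$ and using $\delta^*M_{2r*}=M_{2r*}d^*$ yields (ii), while conversely (ii) combined with the same identity gives $\delta^*M_{2r*}(\alpha)=\delta^*(\beta)$, whence (i) by injectivity of $\delta^*$. For (i)$\Leftrightarrow$(iii): starting from (i) and applying $\psi_{g+1}^*\delta^*$, the left-hand side rewrites via the two base-change identities as $M_{2r*}(\psi_{2r}^M)^*d^*(\alpha)$, producing (iii); conversely (iii) rewrites as $\psi_{g+1}^*\delta^*\big(M_{2r*}(\alpha)-\beta\big)=0$, so injectivity of $\psi_{g+1}^*\delta^*$ gives (i).

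The main obstacle, and essentially the only non-formal step, is verifying the injectivity of $\psi_{g+1}^*\circ\delta^*$ in spite of $\psi_{g+1}^*$ not being injective; the resolution is exactly the identification, forced by the commutativity of \eqref{eq: huge commutative diag}, of this composite with the maximal-torus restriction $v_1^*v_2^*$. Beyond this, one must check that the squares in play are genuinely cartesian and Tor-independent so that flat base change applies — routine, since $d,\delta$ are smooth fiber bundles with fiber $\GL_2/G$ and $\psi_{2r}^M,\psi_{g+1}$ are finite étale double covers — and that the horizontal maps $M_{2r}$ at each level are proper, which is immediate.
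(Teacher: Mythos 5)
Your proposal is correct and follows essentially the same route as the paper: both rest on the push-pull (base-change) identities from diagram \eqref{eq: huge commutative diag} together with the injectivity of the maximal-torus restriction $v_2^*v_1^*$, which is exactly your identification of $\psi_{g+1}^*\delta^*$ with restriction to $\Gm^2\times\PGL_2$. The only (harmless) differences are cosmetic: you close the loop via two separate equivalences (adding a direct injectivity argument for $\delta^*$) where the paper cycles (i)$\Rightarrow$(ii)$\Rightarrow$(iii)$\Rightarrow$(i), and you prove the torus-restriction injectivity by the Borel/flag-bundle argument rather than citing \cite[Proposition 3.6]{EG98} and \cite[page 2]{EF09}.
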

	\begin{proof}
		The implications $(i) \implies (ii) $ and $(ii) \implies (iii)$ follow from the push-pull formula. Suppose that condition $(iii)$ holds. The map $v_2^*v_1^*$ is injective by~\cite[Proposition 3.6]{EG98} and \cite[page 2]{EF09}.
		Therefore, it suffices to show that $(i)$ holds after pulling back via $v_1 \circ v_2$. We compute:
		\[
		v_2^* v_1^* M_{2r*}(\alpha) = M_{2r*} u_2^* u_1^*(\alpha) = M_{2r*} (\psi_{2r}^M)^* d^*(\alpha) = \psi_{g+1}^* \delta^*(\beta) = v_2^* v_1^* \beta.
		\]
		This concludes the proof.
	\end{proof}

	\begin{proof}[Proof of Lemma \ref{lem: M2 computation low degree classes}]
		By Lemma \ref{lem: reduction to PGL2}, it suffices to compute the listed pushforward $\PGL_2$-equivariantly. This is carried out in \cite[Corollary 5.9]{CLI}, and we obtain the stated formula after making the substitutions $\beta_1 = -\xi_{2a} - \xi_{2b}$, $\beta_2 = \xi_{2a} \xi_{2b}$, and $a = b = (g+1)/2$ in that corollary.
	\end{proof}
	
	For $M_1$ we have the following result.
	
	\begin{lemma}\label{lem: M1 computation low degree classes}
		The following equations hold:
		\begin{align*}
			M_{1*}(1)&=-(g+1)\beta_1,\\
			M_{1*}(\tau)&=2\beta_2-\frac{(g+1)^2}{2}c_2.
		\end{align*}
	\end{lemma}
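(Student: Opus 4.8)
The plan is to reduce both pushforwards to a $\PGL_2$-equivariant computation, but \emph{not} through the $\GL_2$-counterpart mechanism of Lemma~\ref{lem: reduction to PGL2}: since $r=1$ is odd, the factor $\P^g$ is the space of odd-degree forms, which is not the projectivization of any $\GL_2$-representation (there is no $W_{g/2}$), so the diagram \eqref{eq: huge commutative diag} is unavailable. Instead I would use the $\mu_2$-transfer $\psi_{g+1}$. The map $M_1$ fits into a cartesian square whose top arrow is the $\PGL_2$-equivariant map $\widetilde{M}_1\colon \P^1\times\P^g\times\P^g\to \P(W_{\frac{g+1}{2}})\times\P(W_{\frac{g+1}{2}})$, $(h,f,g)\mapsto(hf,hg)$, and whose vertical maps are the $\mu_2$-torsors $\psi'$ (forgetting the swap of the two $\P^g$ factors on the source) and $\psi_{g+1}$ (on the target). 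Since $1$ and $\tau$ are $\mu_2$-invariant, $\psi'^*$ fixes them, so base change along this square gives $\psi_{g+1}^*\bigl(M_{1*}(1)\bigr)=\widetilde{M}_{1*}(1)$ and $\psi_{g+1}^*\bigl(M_{1*}(\tau)\bigr)=\widetilde{M}_{1*}(\tau)$. Because $\ker\psi_{g+1}^*=(\gamma)$ by the remark following Lemma~\ref{lemma: image psi*}, each pushforward is thereby pinned down modulo a $2$-torsion multiple of $\gamma$ (respectively $\gamma^2$), which I handle by a separate parity argument.

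For $M_{1*}(1)$ I would argue geometrically. The image of $M_1$ is the locus $\uD_1$ of pairs sharing a common linear factor, and $M_1$ is generically injective onto it, so $M_{1*}(1)=[\uD_1]$. This divisor is cut out by the resultant, an equivariant section of $\cO(g+1,g+1)$, whence $M_{1*}(1)=c_1$ of the corresponding $\mu_2\times\PGL_2$-linearized line bundle $\cM$. Pulling back by $\psi_{g+1}$ gives $(g+1)\bigl(\xi_{g+1}^{(1)}+\xi_{g+1}^{(2)}\bigr)=-(g+1)\psi_{g+1}^*\beta_1$, which matches $-(g+1)\beta_1$. For the $\gamma$-term: $\cM$ is the $(g+1)$-st tensor power of the $\mu_2$-linearized degree-one resultant bundle on $\cO(1,1)$, so writing the first Chern class of the latter as $-\beta_1+\delta\gamma$ with $\delta\in\{0,1\}$ gives $c_1(\cM)=(g+1)(-\beta_1+\delta\gamma)=-(g+1)\beta_1$, since $g+1$ is even and $2\gamma=0$. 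This evenness is exactly what forces the absence of $\gamma$.

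For $M_{1*}(\tau)$ I would start from the pullback identities $M_1^*\xi_{g+1}^{(1)}=h_1+f_1$ and $M_1^*\xi_{g+1}^{(2)}=h_1+g_1$, where $h_1,f_1,g_1$ are the $\cO(1)$-classes on the $\P^1$, first $\P^g$, and second $\P^g$ factors (the multiplication map pulls $\cO(1)$ back to the box product). Since $\tau=c_1\cO_{\P^1}(2)=2h_1$, this yields the honest $\mu_2\times\PGL_2$-class identity $\tau=M_1^*\bigl(\xi_{g+1}^{(1)}+\xi_{g+1}^{(2)}\bigr)-(f_1+g_1)$, in which $f_1+g_1$ is linearizable even though $f_1,g_1$ are not. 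The projection formula then gives $M_{1*}(\tau)=\bigl(\xi_{g+1}^{(1)}+\xi_{g+1}^{(2)}\bigr)M_{1*}(1)-M_{1*}(f_1+g_1)$; the first term is $(g+1)\beta_1^2$, again $\gamma$-free by the evenness of $g+1$. The remaining term is computed $\PGL_2$-equivariantly via $\psi_{g+1}^*M_{1*}(f_1+g_1)=\widetilde{M}_{1*}(f_1)+\mu^*\widetilde{M}_{1*}(f_1)$ (using Lemma~\ref{lemma: push-pull psi}), reducing everything to the single multiplication pushforward $\widetilde{M}_{1*}(f_1)$, which I would evaluate by the multiplication-map and diagonal techniques of \cite[Section~5]{CLI}. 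Assembling the pieces should produce $2\beta_2-\tfrac{(g+1)^2}{2}c_2$, with the potential $\gamma^2$-term again killed by parity.

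The hard part will be the explicit $\PGL_2$-equivariant evaluation of $\widetilde{M}_{1*}(f_1)$ (equivalently $\widetilde{M}_{1*}(\tau)$). Unlike the even case $M_{2r}$, this cannot be outsourced to a $\GL_3$-counterpart, so one must work directly with the non-linearizable odd-degree classes and, if the factorization $\widetilde{M}_1=(m\times m)\circ\Delta_h$ is used, with the genuinely subtle $\PGL_2$-equivariant class of the diagonal of $\P^1\times\P^1$ (where the naive $\tfrac12(\tau_1+\tau_2)$ is not integral). The secondary difficulty is the disciplined tracking of the $2$-torsion $\gamma$-contributions throughout the $\mu_2$-descent; the recurring mechanism that eliminates them is precisely that $g+1$ is even together with $2\gamma=0$, and I would make sure this is invoked cleanly at each step rather than left implicit.
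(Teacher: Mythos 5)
Your computation of $M_{1*}(1)$ is correct and is a genuinely different, more geometric route than the paper's: $M_1$ is birational onto $\uD_1$, which is the zero divisor of the resultant, and the $\gamma$-ambiguity is killed by writing $\cO(g+1,g+1)$ as the $(g+1)$-st power of a linearized $\cO(1,1)$. One point you leave implicit must be checked: the class $[\uD_1]$ is $c_1$ of $\cO(g+1,g+1)$ with the linearization for which the resultant is \emph{invariant}, and this agrees with the $(g+1)$-st power of the natural swap linearization only because $\mathrm{Res}(g,f)=(-1)^{(g+1)^2}\mathrm{Res}(f,g)$ and $g+1$ is even; otherwise an extra $\gamma$ would survive. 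With that check added, this half is fine.

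The computation of $M_{1*}(\tau)$ has a genuine gap, located exactly where you write ``the potential $\gamma^2$-term again killed by parity.'' Your evaluation of $M_{1*}(f_1+g_1)$ goes entirely through $\psi_{g+1}^*$, whose kernel contains $(\gamma)$; so this method can only determine $M_{1*}(f_1+g_1)$ modulo $\gamma^2$. For the other term, $(\xi_{g+1}^{(1)}+\xi_{g+1}^{(2)})\cdot M_{1*}(1)$, parity genuinely works, because you multiply by the even, $\gamma$-free class $(g+1)\beta_1$; but $M_{1*}(f_1+g_1)$ has no such product structure, and you provide no mechanism at all that sees its $\gamma^2$-coefficient. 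The danger is not hypothetical: since $g$ is odd, the one-dimensional space of $\SL_2$-invariant sections of $\cO(1,1)$ on $\P^g\times\P^g$ is spanned by the symplectic pairing on $\Sym^g k^2$, which is \emph{anti}-symmetric under the swap, so the natural invariant divisor representing $f_1+g_1$ carries exactly the kind of $\gamma$-correction you are trying to rule out. There is also a type error compounding this: as you yourself note, $f_1$ is not $\PGL_2$-linearizable, so the class $\widetilde{M}_{1*}(f_1)$ in your formula $\psi_{g+1}^*M_{1*}(f_1+g_1)=\widetilde{M}_{1*}(f_1)+\mu^*\widetilde{M}_{1*}(f_1)$ is not defined, and the ``hard part'' you defer is never actually carried out. (It is, in any case, not open: the $\PGL_2$-equivariant pushforwards $M_{1*}^{\PGL_2}(1)$ and $M_{1*}^{\PGL_2}(\tau)$ are \cite[Lemma 5.4]{CLI}, which is what the paper cites.)

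The paper closes both holes with a one-line observation you missed. In degrees $\leq 2$ the restriction $\CH^{\leq 2}(B(G\times\PGL_2))\to\CH^{\leq 2}(B(G\times\SL_2))$ is injective, so passing to $\SL_2$ loses no $\gamma$- or $\gamma^2$-information; and over $\SL_2$ the space $\P^g=\P(\Sym^g k^2)$ \emph{is} the projectivization of a representation, so the analogue of diagram~\eqref{eq: huge commutative diag} and Lemma~\ref{lem: reduction to PGL2} applies verbatim. Crucially, the injectivity input of that lemma lives at the $\GL_2$-level, where $\gamma$ does not exist, so the formula is obtained as a pullback of a $\GL_2\times\SL_2$-level identity and the $\gamma$-ambiguity never arises; the answer is then imported directly from \cite[Lemma 5.4]{CLI}. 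If you want to salvage your approach, you need to replace the unsupported parity claim for $M_{1*}(f_1+g_1)$ by an injection that detects $\gamma^2$ --- and the $\SL_2$-restriction is precisely such a device.
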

	
	\begin{proof}
		We would like to use the same technique as for Lemma~\ref{lem: M2 computation low degree classes}; the only issue to implement it is that $\P^g$ is not the projectivization of any $\PGL_2$-representation, as $g$ is odd. To remedy this, notice that the pullback morphism
		\[
		\begin{tikzcd}
			\CH^{\leq2}\left(\left[\frac{\P^{g+1}\times\P^{g+1}}{\mu_2\times\PGL_2}\right]\right)\arrow[r] & \CH^{\leq2}\left(\left[\frac{\P^{g+1}\times\P^{g+1}}{\mu_2\times\SL_2}\right]\right)
		\end{tikzcd}
		\]
		is injective in degree $\leq 2$ as, in these degrees, this is the same as the pullback of $B(G \times \PGL_2) \to B(G \times \SL_2)$ which is visibly injective in this range. In particular, $c_2\in\CH^*(B\PGL_2)$ is mapped to the generator $c_2$ of $\CH^*(B \SL_2)= \Z[c_2]$. Since $M_{1*}(1)$ and $M_{1*}(\tau)$ have degree 1 and 2 respectively, we can compute their pushforward $\mu_2\times\SL_2$-equivariantly. Then $\P^g$ is the projectivization of a $\SL_2$-representation, allowing to construct a diagram analogous to~\eqref{eq: huge commutative diag}. As for the case of $M_{2*}$, this shows that the $\mu_2\times\PGL_2$-equivariant pushforward of the classes in the statement can be recovered from their $\PGL_2$-equivariant pushforward, which has already been computed in~\cite[Lemma 5.4]{CLI}. This yields the result.
	\end{proof}
	\subsubsection{Computation of the ideal generated by the maps $M_{r*}$ after inverting $2$}
	
	Inverting $2$, the computation is easier.
	
	\begin{lemma}\label{lem: Mr inverting 2}
		Working with $\Z[1/2]$-coefficients, for $r \geq 2$, we have 
		$$
		\operatorname{im}(M_{r*}) \subseteq I_S  + \operatorname{im}(M_{1*}).
		$$
		Moreover, still with $\Z[1/2]$-coefficients, $\operatorname{im}(M_{1*})$ is generated by $M_{1*}(1)$ and $M_{1*}(\tau)$.
	\end{lemma}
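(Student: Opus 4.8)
The plan is to reduce the whole statement to a $\PGL_2$-equivariant assertion and then to the projection formula, after which a discriminant stratification of the space of common factors does the geometric work. Since $2$ is inverted, the transfer $\psi_{g+1}$ is faithful: by Lemma~\ref{lemma: push-pull psi} we have $\psi_{g+1}^*\psi_{g+1,*}=1+\mu^*$ and $\psi_{g+1,*}\psi_{g+1}^*=2$, so $\psi_{g+1,*}$ is surjective, and it intertwines $M_r$ with its $\PGL_2$-equivariant lift $\widetilde{M}_r\colon \P^r\times\P^{g+1-r}\times\P^{g+1-r}\to(\P^{g+1})^2$, $(h,f,g)\mapsto(hf,hg)$. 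Hence $\operatorname{im}(M_{r*})=\psi_{g+1,*}\operatorname{im}(\widetilde{M}_{r*})$, and likewise $I_S$ is the $\psi_{g+1,*}$-image of the ideal generated by the square-factor maps $F_r,G_r$ of \eqref{eqn: transfer Fr} and \eqref{eqn: def Gr}. It therefore suffices to prove the inclusion $\PGL_2$-equivariantly, and by the projection formula $\operatorname{im}(\widetilde{M}_{r*})$ is generated as an ideal by the finitely many classes $\widetilde{M}_{r*}(\xi_{\P^r}^a)$, where $\xi_{\P^r}$ is the hyperplane class of the common-factor factor (the pullbacks of the hyperplane classes of the two $\P^{g+1-r}$ factors being expressible through the target hyperplane classes).

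Next I would stratify the common-factor space $\P^r$ by its discriminant $\Delta_r$. On the closed stratum the common factor acquires a square, and the composite of $\widetilde{M}_r$ with the birational multiplication $\P^1\times\P^{r-2}\to\Delta_r$, $(l,h')\mapsto l^2h'$, equals $F_1\circ\Xi$ with $\Xi(l,h',f,g)=(l,\,h'f,\,l^2h'g)$; since $F_1$ is one of the square-factor maps, this stratum contributes only to $I_S$. On the open squarefree stratum one peels a single linear factor via $\P^1\times\P^{r-1}\to\P^r$, $(l,h')\mapsto lh'$, under which $\widetilde{M}_r$ becomes $\widetilde{M}_{r-1}\circ\rho$ with $\rho(l,h',f,g)=(h',lf,lg)$. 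This is precisely the mechanism relating $\widetilde{M}_r$ to $\widetilde{M}_{r-1}$, and fed into an induction on $r$ it is meant to reduce everything to $\widetilde{M}_1$ modulo $I_S$.

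For the \emph{moreover} part, the same projection-formula reasoning shows that $\operatorname{im}(M_{1*})$ is the ideal generated by $M_{1*}(\alpha)$ as $\alpha$ runs over module generators of the source over the pullback of the target ring. Here the common factor is linear, and since $\CH^*_{\PGL_2}(\P^1)=\Z[\tau]$ with $\tau^2=-c_2$ already descending to $B\PGL_2$, the module is generated by $1$ and $\tau$; passing to $\SL_2$ as in Lemma~\ref{lem: M1 computation low degree classes} to accommodate the non-linearizable $\P^g$ factors and taking $\mu_2$-invariants over $\Z[1/2]$, one obtains exactly the two stated generators $M_{1*}(1)$ and $M_{1*}(\tau)$.

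The hard part is the squarefree stratum. The linear-peeling map there is only finite étale of degree $r$, so naively one gets merely $r\cdot\operatorname{im}(\widetilde{M}_{r*})\subseteq I_S+\operatorname{im}(\widetilde{M}_{(r-1)*})$, and for odd $r$ the factor $r$ is \emph{not} a unit in $\Z[1/2]$; concretely, the fundamental class $\widetilde{M}_{r*}(1)=[\Sigma_r]$ of the locus $\Sigma_r=\{(\Phi,\Psi):\deg\gcd(\Phi,\Psi)\ge r\}$ is visibly not in the image of the degree-$r$ transfer, so the induction cannot be run as stated. This is exactly where inverting $2$ must do the real work: rather than peeling, I would compute the finitely many generators $\widetilde{M}_{r*}(\xi_{\P^r}^a)$ directly through the $\GL_3$-counterpart $M'_{r}$ of Remark~\ref{rmk: GL3 counterparts of Chow envelope} and localization to the torus $\Gm^3$, exactly as in Lemmas~\ref{lem: M2 computation low degree classes} and~\ref{lem: M1 computation low degree classes}, and then verify membership in $I_S+\operatorname{im}(M_{1*})$ using the explicit generators of $I_S$ from Lemma~\ref{lemma: IS}. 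After inverting $2$ the determinantal (resultant) relations among the classes $[\Sigma_r]$ close up without the contributions of $\operatorname{im}(M_{2*})$ and $I_H$ that are indispensable integrally, and this collapse is precisely the simplification the lemma records.
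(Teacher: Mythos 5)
Your opening reduction coincides with the paper's: after inverting $2$ the source-side double cover $\psi_r^M$ has surjective pushforward, the square $\psi_{g+1}\circ M_r^{\PGL_2}=M_r\circ\psi_r^M$ gives $\operatorname{im}(M_{r*})=\operatorname{im}(\psi_{g+1*}\circ M_{r*}^{\PGL_2})$, and diagram~\eqref{eqn: transfer Fr} identifies $\operatorname{im}(\psi_{g+1*}\circ F_{1*})$ with $\operatorname{im}(S_{1*})$ (a small imprecision: what this step needs is surjectivity of the transfer $\psi^M_{r*}$ on the \emph{source}, not of $\psi_{g+1*}$, though both hold for the same reason). The genuine gap is in what you do after this reduction. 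At exactly this point the paper does no further work: it invokes \cite[Proposition 5.5]{CLI} from the prequel, which is precisely the $\PGL_2$-equivariant statement $\operatorname{im}(M_{r*}^{\PGL_2})\subseteq\operatorname{im}(M_{1*}^{\PGL_2})+\operatorname{im}(F_{1*})+\operatorname{im}(G_{1*})$, together with the fact that $\operatorname{im}(M_{1*}^{\PGL_2})$ is generated by $M_{1*}^{\PGL_2}(1)$ and $M_{1*}^{\PGL_2}(\tau)$. You instead try to reprove this equivariant input from scratch, and your argument does not close. The peeling induction fails, as you yourself concede: over $\Z[1/2]$ the degree-$r$ multiplication map only yields $r\cdot\operatorname{im}(\widetilde{M}_{r*})\subseteq I_S+\operatorname{im}(\widetilde{M}_{(r-1)*})$, which is vacuous for odd $r$ (compare Lemma~\ref{lem: Mr reduction to r even}, where peeling is viable precisely because one works with $\Z_{(2)}$-coefficients, so odd degrees \emph{are} invertible). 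Your fallback --- compute all generators $\widetilde{M}_{r*}(\xi_{\P^r}^a)$ via $\GL_3$-counterparts and torus localization and then check membership in $I_S+\operatorname{im}(M_{1*})$ --- is not carried out, and cannot even be set up as stated when $r$ is odd: the counterparts of Remark~\ref{rmk: GL3 counterparts of Chow envelope} exist only for even $r$, since for odd $r$ the factor $\P^r$ is not the projectivization of any $\PGL_2$-representation. For the same reason, your preliminary claim that $\operatorname{im}(\widetilde{M}_{r*})$ is generated by the classes $\widetilde{M}_{r*}(\xi_{\P^r}^a)$ rests on the source Chow ring being generated by hyperplane classes over $\CH^*(B\PGL_2)$, which is a projective-bundle-formula statement only when all factors are linearizable, i.e.\ only for even $r$. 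The closing sentence, that after inverting $2$ the determinantal relations among the $[\Sigma_r]$ ``close up,'' is an expectation, not an argument; it is exactly the assertion that needs proof.

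The \emph{moreover} part has the same missing ingredient. Your projection-formula reasoning requires that $\CH^*_{\PGL_2}(\P^1\times\P^g\times\P^g)$ be generated, as a module over the pullback of $\CH^*_{\PGL_2}(\P^{g+1}\times\P^{g+1})$, by $1$ and $\tau$. Since the $\P^g$ factors are not linearizable ($g$ is odd), this is not automatic, and the $\SL_2$ device of Lemma~\ref{lem: M1 computation low degree classes} cannot supply it: that trick rests on injectivity of the pullback to $\SL_2$-equivariant Chow groups, which holds only in degrees $\leq 2$, whereas module generation is a claim in all degrees. This, again, is exactly what \cite[Proposition 5.5]{CLI} provides. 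In short, your proposal is structurally parallel to the paper's proof --- transfer down from the $\PGL_2$-equivariant setting --- but leaves its actual mathematical content, the equivariant result imported from the prequel, unproved.
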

	
	\begin{proof}
		Since we are working with $\Z[1/2]$-coefficients, the pushforward along the projection
		\[
		\begin{tikzcd}
			\psi_r^M:{\left[\frac{\P^r\times\P^{g+1-r}\times\P^{g+1-r}}{\PGL_2}\right]}\arrow[r] & {\left[\frac{\P^r\times\P^{g+1-r}\times\P^{g+1-r}}{\mu_2\times\PGL_2}\right]}
		\end{tikzcd}
		\]
		is surjective. Moreover, we have a commutative diagram
		\[
		\begin{tikzcd}
			{\left[\frac{\P^r\times\P^{g+1-r}\times\P^{g+1-r}}{\mu_2\times\PGL_2}\right]}\arrow[rr,"M_r"] & & {\left[\frac{\P^{g+1}\times\P^{g+1}}{\mu_2\times\PGL_2}\right]}\\
			{\left[\frac{\P^r\times\P^{g+1-r}\times\P^{g+1-r}}{\PGL_2}\right]}\arrow[u,"\psi_r^M"]\arrow[rr,"M_r^{\PGL_2}"] & &{\left[\frac{\P^{g+1}\times\P^{g+1}}{\PGL_2}\right]}\arrow[u,"\psi_{g+1}"].
		\end{tikzcd}
		\]
		By~\cite[Proposition 5.5]{CLI}, we know that the image of $M_{r*}^{\PGL_2}$ is contained in $\mathrm{im}(M_{1*}^{\PGL_2})+\mathrm{im}(F_{1*})+\mathrm{im}(G_{1*})$; see $\S$\ref{subsec: computation Sr} for the definition of the morphisms $F_r$ and $G_r$. The diagram above shows that $\mathrm{im}(\psi_{g+1*}\circ M_{1*}^{\PGL_2})=\mathrm{im}(M_{1*})$. Moreover, diagram~\eqref{eqn: transfer Fr} yields $\psi_{g+1}\circ F_1=S_1\circ\phi_1$, hence $\mathrm{im}(\psi_{g+1*}\circ F_{1*})=\mathrm{im}(S_{1*})$. The same holds for $G_{1*}$, thus concluding the first part.
		
		For the last statement, by~\cite[Proposition 5.5]{CLI}, we have that $\mathrm{im}(M_{1*}^{\PGL_2})$ is generated by $M_{1*}^{\PGL_2}(1)$ and $M_{1*}^{\PGL_2}(\tau)$. Also, by Lemma~\ref{lem: M1 computation low degree classes} and~\cite[Lemma 5.4]{CLI}, these classes coincide with the pullbacks $\psi_{g+1}^*(M_{1*}(1))$ and $\psi_{g+1}^*(M_{1*}(\tau))$ respectively. Therefore,
		\[
		\psi_{g+1*}(M_{1*}^{\PGL_2}(1),M_{1*}^{\PGL_2}(\tau)) \subseteq(M_{1*}(1),M_{1*}(\tau))
		\]
		by the push-pull formula.
	\end{proof}
	\subsubsection{Computation of $M_{1*}$ and $M_{2*}$}
	
	We are ready to compute the ideal generated by the images of $M_{1*}$ and $M_{2*}$ modulo $\mathrm{im}(S_{1*})$. This is enough for our objective.
	
	\begin{lemma}\label{lem: ideal M1 + M2}
		The ideal $\mathrm{im}(M_{1*})+\mathrm{im}(M_{2*})+\mathrm{im}(S_{1*})$ is generated by $\mathrm{im}(S_{1*})$ and the classes computed in Lemmas~\ref{lem: M2 computation low degree classes} and~\ref{lem: M1 computation low degree classes}.
	\end{lemma}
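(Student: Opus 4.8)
The plan is to show that the ideal $\mathrm{im}(M_{1*})+\mathrm{im}(M_{2*})+\mathrm{im}(S_{1*})$ is generated by $\mathrm{im}(S_{1*})$ together with the finitely many explicit classes $M_{1*}(1)$, $M_{1*}(\tau)$ (from Lemma~\ref{lem: M1 computation low degree classes}) and $M_{2*}(1)$, $M_{2*}(\xi_2)$, $M_{2*}(\xi_2^2)$ (from Lemma~\ref{lem: M2 computation low degree classes}). Since each $M_{r*}$ is a homomorphism of $\CH^*(B(G\times\PGL_2))$-modules via the push-pull formula, the image $\mathrm{im}(M_{r*})$ is the $\CH^*(B(G\times\PGL_2))$-submodule generated by the pushforwards $M_{r*}(\sigma)$ as $\sigma$ ranges over a set of module generators for the source Chow ring. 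The heart of the argument is therefore to identify a convenient generating set for $\CH^*_{\mu_2\times\PGL_2}(\P^r\times\P^{g+1-r}\times\P^{g+1-r})$ as a module over the base, and then to reduce the pushforward of every such generator to the listed classes modulo $\mathrm{im}(S_{1*})$.

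\emph{First} I would fix module generators of the source. The source Chow rings are generated, over $\CH^*(B(G\times\PGL_2))$ (equivalently over $\CH^*_{\mu_2\times\PGL_2}((\P^{g+1})^2)$ after pulling back the hyperplane classes via $M_r$), by monomials in the hyperplane classes $\xi$ coming from the three projective factors. For $M_1$ the relevant source classes reduce, in the degrees that matter, to $1$ and $\tau$, which is exactly why Lemma~\ref{lem: M1 computation low degree classes} suffices; for $M_2$, the source is governed by the single hyperplane $\xi_2$ on the $\P^2=\P(W_1)$ factor (the $\PGL_2$-equivariant Chow ring of $\P(W_1)$ being generated by $\xi_2$ subject to one relation, cf.\ the relation $\tau^2+c_2=0$ recorded in Notation~\ref{not: notation projective space}), so $1$, $\xi_2$, $\xi_2^2$ generate modulo higher relations. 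Thus the listed classes already span $\mathrm{im}(M_{1*})$ and $\mathrm{im}(M_{2*})$ as base-modules, provided the higher powers of $\xi_2$ and the contributions of the remaining $\P^{g-1}\times\P^{g-1}$ factors are accounted for.

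\emph{The main obstacle}, and the step requiring the most care, is handling the two large projective factors $\P^{g+1-r}\times\P^{g+1-r}$: their hyperplane classes $\xi^{(2)},\xi^{(3)}$ pull back from $\CH^*_{\mu_2\times\PGL_2}((\P^{g+1})^2)$ under $M_r$, so monomials involving only these classes contribute nothing new to the module image beyond what is already captured by multiplying the base classes $M_{r*}(\xi_2^i)$ by elements of $\CH^*(B(G\times\PGL_2))$. Concretely, I expect that $M_r^*(\xi^{(2)}_{g+1})=\xi^{(2)}_{g+1-r}$ (and similarly for the third factor), exactly as in the proof of Lemma~\ref{lemma: IS} where $F_r^*(\xi^{(2)}_{g+1})=\xi_{g+1}$ was used; this is what lets one peel off these factors by projection formula. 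The genuinely new generators therefore come only from powers of the $\P^r$-hyperplane $\xi_2$, and one must check that powers $\xi_2^i$ with $i\geq 3$ push forward into the ideal already generated by $1,\xi_2,\xi_2^2$ together with $\mathrm{im}(S_{1*})$ — this uses the relation satisfied by $\xi_2$ on $\P(W_1)$ to rewrite high powers.

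\emph{Finally}, to complete the reduction modulo $\mathrm{im}(S_{1*})$ I would invoke the structural input of Lemma~\ref{lem: Mr inverting 2}, which after inverting $2$ already places $\mathrm{im}(M_{r*})\subseteq I_S+\mathrm{im}(M_{1*})$ for $r\geq 2$, so the only subtlety is integral: the $2$-torsion and the classes divisible by $2$ must be tracked honestly. Combining the $\Z[1/2]$-statement with an analysis of the $2$-primary part — using that $\CH^*(B(G\times\PGL_2))$ has the explicit presentation with relations $2\gamma$, $\gamma(\beta_1+\gamma)$, $2c_3$ recorded after Lemma~\ref{lem: equivariant Chow envelope g+1 case}, and that $\mathrm{im}(S_{1*})$ contains the even classes $-2g\beta_1$ and $4g\beta_2$ from Lemma~\ref{lemma: IS} — should pin down that no further integral generators are needed. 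The expected conclusion is that $\mathrm{im}(M_{1*})+\mathrm{im}(M_{2*})+\mathrm{im}(S_{1*})$ is generated exactly by $\mathrm{im}(S_{1*})$ and the five explicit classes of Lemmas~\ref{lem: M2 computation low degree classes} and~\ref{lem: M1 computation low degree classes}, as claimed.
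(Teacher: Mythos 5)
Your skeleton for the $M_2$ half (push--pull plus a finite list of module generators for the source) is the same as the paper's, but the step you build it on is false, and the genuinely hard half of the lemma is absent. You claim $M_r^*(\xi^{(2)}_{g+1})=\xi^{(2)}_{g+1-r}$ ``exactly as'' $F_r^*(\xi_{g+1}^{(2)})=\xi_{g+1}$ in Lemma~\ref{lemma: IS}. The analogy fails: $F_r(h,f,g)=(h^2f,g)$ leaves the second target coordinate equal to $g$, whereas $M_r(h,f,g)=(hf,hg)$ multiplies $h$ into \emph{both} coordinates, so that, writing $\xi_r,\xi',\xi''$ for the hyperplane classes of the three source factors (in the $\PGL_2$-equivariant setting, where they exist), one has $M_r^*(\xi^{(1)}_{g+1})=\xi_r+\xi'$ and $M_r^*(\xi^{(2)}_{g+1})=\xi_r+\xi''$. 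Hence the hyperplane classes of the two large factors are not pullbacks under $M_r$, and your mechanism for ``peeling them off'' by the projection formula breaks. Moreover, $\xi'$ and $\xi''$ are not even defined on the $\mu_2$-quotient $[\P^r\times(\P^{g+1-r})^2/\mu_2\times\PGL_2]$, since $\mu_2$ swaps the two factors; its $\mu_2\times\PGL_2$-equivariant Chow ring is not spanned by symmetric polynomials in hyperplane classes (it contains $\gamma$, transfer classes, etc.), and this $\mu_2$-descent problem is the entire difficulty of the lemma. The paper's proof for $M_2$ instead uses the isomorphism $[(\P^{g-1})^2/\mu_2\times\PGL_2]\cong[(V^{\vee}\otimes W_{\frac{g-1}{2}}\smallsetminus Z_{g-1})/G\times\PGL_2]$, which exhibits the source of $M_2$ as a $\P^2$-bundle over a stack whose Chow ring is a quotient of $\CH^*(B(G\times\PGL_2))$ (so $1,\xi_2,\xi_2^2$ are module generators), together with the commutative square through $B(\Gm\times G\times\PGL_2)\rightarrow B(G\times\PGL_2)$, $(\lambda,A,[B])\mapsto(\lambda A,[B])$; that square is exactly the statement that replaces your false pullback identity and makes the push--pull formula applicable.

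The second gap is that the $M_1$ half at the prime $2$ is missing entirely. Lemma~\ref{lem: Mr inverting 2} settles everything after inverting $2$, so the whole content of the present lemma is $2$-local, and your closing ``analysis of the $2$-primary part'' is a placeholder rather than an argument. There is also no ``degrees that matter'' reduction available: one must push forward classes of \emph{all} degrees from $[\P^1\times(\P^g)^2/\mu_2\times\PGL_2]$, and since $g$ is odd, $\P^g$ is not the projectivization of any $\PGL_2$-representation (Notation~\ref{not: notation projective space}), so the vector-bundle presentation used for $M_2$ has no analogue here. What the paper actually does, with $\Z_{(2)}$-coefficients, is: precompose $M_1$ with the multiplication cover $\theta\colon[\P^1\times(\P^1)^2\times(\P^{g-1})^2/\mu_2\times\PGL_2]\to[\P^1\times(\P^g)^2/\mu_2\times\PGL_2]$, which has odd degree, so $\theta_*$ is surjective $2$-locally; observe that the Chow ring of its source is a quotient of $\CH^*_{G\times\PGL_2}((\P^1)^3)$, which is additively generated by pullbacks from $BG$ and pushforwards from the two diagonal loci $D$ and $\P^1\times\Delta_{\P^1\times\P^1}$ (the complement of their union being $BG$); and finally note that $M_1\circ\theta\circ i_1$ factors through $S_1$ while $M_1\circ\theta\circ i_2$ factors through $M_2$, which is what places $\mathrm{im}(M_{1*})$ inside $(M_{1*}(1),M_{1*}(\tau))+\mathrm{im}(M_{2*})+\mathrm{im}(S_{1*})$ integrally. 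Nothing in your proposal performs, or could substitute for, this step; the appeal to the relations $2\gamma$, $2c_3$ and the evenness of the classes in $\mathrm{im}(S_{1*})$ does not by itself control the $2$-torsion of the pushforwards from the source of $M_1$.
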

	\begin{proof}
		We start with computing $\mathrm{Im}(M_{2*})$. Notice that the source of $M_2$ is
		\[
		\left[\frac{\P^2\times\P^{g-1}\times\P^{g-1}}{\mu_2\times\PGL_2}\right]\cong\left[\frac{\P(W_1)}{\PGL_2}\right]\times_{B\PGL_2}\left[\frac{(V^{\vee}\otimes W_{\frac{g-1}{2}})\setminus Z_{g-1}}{G\times\PGL_2}\right],
		\]
		and this is a projective bundle over $[(V^{\vee}\otimes W_{\frac{g-1}{2}}\setminus Z_{g-1})/G\times\PGL_2]$. Therefore, its Chow ring is generated by $1$, $\xi_2$ and $\xi_2^2$ as a $\CH^*(B(G\times\PGL_2))$-modulo. Moreover, the target of $M_2$ is isomorphic to $[(V^{\vee}\otimes W_{\frac{g+1}{2}}\setminus Z_{g+1})/G\times\PGL_2]$, and we have a commutative diagram
		\[
		\begin{tikzcd}
			{\left[\frac{\P(W_1)}{\PGL_2}\right]\times_{B\PGL_2}\left[\frac{(V^{\vee}\otimes W_{\frac{g-1}{2}})\setminus Z_{g-1}}{G\times\PGL_2}\right]}\arrow[r,"M_2"]\arrow[d,"u"] & {\left[\frac{(V^{\vee}\otimes W_{\frac{g+1}{2}})\setminus Z_{g+1}}{G\times\PGL_2}\right]}\arrow[d]\\
			B(\Gm\times G\times\PGL_2)\arrow[r,"v"] & B(G\times\PGL_2)
		\end{tikzcd}
		\]
		where $u$ is induced by the isomorphism $\P(W_1)\cong (\chi^{-1}\otimes W_1\setminus0)/\Gm$ and $v$ comes from the homomorphism $\Gm\times G\times\PGL_2\rightarrow G\times\PGL_2$ given by $(\lambda,A,[B])\mapsto(\lambda\cdot A,[B])$. Together with the push-pull formula, this shows that $\mathrm{im}(M_{2*})$ is generated by the classes in Lemma~\ref{lem: M2 computation low degree classes}. 
		
		Now, we pass to $M_1$, and we show that $\mathrm{im}(M_{1*})$ is contained in $J:=(M_{1*}(1),M_{1*}(\tau))+\mathrm{im}(M_{2*})+\mathrm{im}(S_{1*})$. Thanks to Lemma~\ref{lem: Mr inverting 2}, we know that this is true tensor $\Z[1/2]$, thus it suffice to prove the equality after taking the tensor product with $\Z_{(2)}$. For the rest of the proof we will work with $\Z_{(2)}$-coefficients without further mention. Notice that the morphism
		\[
		\begin{tikzcd}
			\theta:\left[\frac{\P^1\times(\P^1\times\P^1)\times\P^{g-1}\times\P^{g-1}}{\mu_2\times\PGL_2}\right]\arrow[r] & \left[\frac{\P^1\times\P^{g}\times\P^{g}}{\mu_2\times\PGL_2}\right], & (h,h_1,h_2,f,g)\mapsto(h,h_1f,h_2g)
		\end{tikzcd}
		\]
		has odd degree, hence $\theta_*$ is surjective by the push-pull formula. We will show that $\mathrm{im}(M_{1*}\circ\theta_*)$ is contained in $J$. Notice that
		\[
		\left[\frac{(\P^1)^3\times(\P^{g-1})^2}{\mu_2\times\PGL_2}\right]\cong \left[\frac{(\P^1)^3\times(V^{\vee}\otimes W_{\frac{g-1}{2}}\setminus Z_{g-1})}{G\times\PGL_2}\right]
		\]
		hence its Chow ring is a quotient of $\CH_{G\times\PGL_2}^*((\P^1)^3)$. Let $D=\{(h,h_1,h_2)\in(\P^1)^3:\ h=h_1\text{ or }h=h_2\}$, and $\Delta=D\cup(\P^1\times\Delta_{\P^1\times\P^1})$. Since $[((\P^1)^3\setminus\Delta)/G\times\PGL_2]\cong BG$, the Chow ring of $[(\P^1)^3/G\times\PGL_2]$ is additively generated by the classes obtained by pullback from $BG$ and the classes obtained by pushforward along the closed immersions $i_1:D\hookrightarrow(\P^1)^3$, $i_2:\P^1\times\Delta_{\P^1\times\P^1} \hookrightarrow(\P^1)^3$. By the push-pull formula, we only need to show that $\mathrm{im}(M_{1*}\circ\theta_*\circ i_{1*})+\mathrm{im}(M_{1*}\circ\theta_*\circ i_{2*})$ is contained in the ideal $J$. This follows from the fact that $M_1\circ\theta\circ i_1$ and $M_1\circ\theta\circ i_2$ factor through $S_1$ and $M_2$ respectively.
	\end{proof}
	
	\subsubsection{Computation of $M_{r*}$ for $r>2$}\label{subsec: Mr end of computation}
	In this subsection we show that $\mathrm{im}(M_{r*}) \subseteq I_S + I_{M}^{\leq 2} + I_H$. Recall that we have already shown that this holds after taking the tensor product with $\Z[1/2]$ in Lemma \ref{lem: Mr inverting 2}. This will allow us to work with $\Z_{(2)}$-coefficients whenever necessary.
	
	\begin{lemma}\label{lem: Mr reduction to r even}
		For odd $r>1$, we have
		\[
		\mathrm{im}(M_{r*})\subseteq I_S+\mathrm{im}(M_{1*}).
		\]
	\end{lemma}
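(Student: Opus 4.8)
By Lemma~\ref{lem: Mr inverting 2} the claimed inclusion already holds with $\Z[1/2]$-coefficients, and an inclusion of subgroups of $\CH^*_{\mu_2\times\PGL_2}(\P(W_{\frac{g+1}{2}})\times\P(W_{\frac{g+1}{2}}))$ can be detected after localizing at every prime. It therefore suffices to establish the (a priori stronger) inclusion $\mathrm{im}(M_{r*})\subseteq\mathrm{im}(M_{1*})$ with $\Z_{(2)}$-coefficients; combining this with the $\Z[1/2]$-statement then yields the integral inclusion $\mathrm{im}(M_{r*})\subseteq I_S+\mathrm{im}(M_{1*})$. The plan for the $2$-local statement is to cover the source of $M_r$ by a morphism of odd degree $r$ along which the equivariant pushforward is surjective $2$-locally, chosen so that its composite with $M_r$ factors through $M_1$.

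Concretely, we split off a linear factor from the degree $r$ form $h$. Let
\[
P:\P^1\times\P^{r-1}\times\P^{g+1-r}\times\P^{g+1-r}\longrightarrow \P^r\times\P^{g+1-r}\times\P^{g+1-r},\qquad (h_1,k,f,g)\longmapsto (h_1k,f,g),
\]
be induced by the multiplication $\P^1\times\P^{r-1}\to\P^r$, $(h_1,k)\mapsto h_1k$, and the identity on the remaining two factors, and let
\[
Q:\P^1\times\P^{r-1}\times\P^{g+1-r}\times\P^{g+1-r}\longrightarrow \P^1\times\P^g\times\P^g,\qquad (h_1,k,f,g)\longmapsto (h_1,kf,kg).
\]
Here $\mu_2$ acts on all four spaces by swapping the two $f,g$ entries (and the two target copies of $\P^g$) while fixing $h_1$ and $k$, and $\PGL_2$ acts on forms by $[B]\cdot f=f\circ B^{-1}$. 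With these actions both $P$ and $Q$ are $\mu_2\times\PGL_2$-equivariant: $P$ is the identity on the $\mu_2$-swapped factors, and multiplication respects the $\PGL_2$-action since $(h_1k)\circ B^{-1}=(h_1\circ B^{-1})(k\circ B^{-1})$. Using $r-1\geq 2$, so that $kf$ and $kg$ really have degree $g$, a direct computation gives the key factorization
\[
M_r\circ P=M_1\circ Q:\ (h_1,k,f,g)\longmapsto (h_1kf,\,h_1kg),
\]
where $M_1\colon \P^1\times\P^g\times\P^g\to \P(W_{\frac{g+1}{2}})\times\P(W_{\frac{g+1}{2}})$ is the corresponding map of the envelope in Lemma~\ref{lem: equivariant Chow envelope g+1 case}.

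Now we run a push-pull argument. The multiplication $\P^1\times\P^{r-1}\to\P^r$ is proper, surjective and generically finite of degree $r$ (a general degree $r$ form has $r$ distinct roots, hence $r$ factorizations as a linear form times a degree $r-1$ form); since $P$ is the identity on the other factors, it is likewise proper and generically finite of degree $r$, and the equivariant projection formula gives $P_*P^*=r\cdot\mathrm{id}$. As $r$ is odd it is invertible $2$-locally, so $M_{r*}(\alpha)=r^{-1}(M_r\circ P)_*(P^*\alpha)$ for every class $\alpha$, whence
\[
\mathrm{im}(M_{r*})_{(2)}\subseteq \mathrm{im}\big((M_r\circ P)_*\big)_{(2)}=\mathrm{im}\big((M_1\circ Q)_*\big)_{(2)}=\mathrm{im}\big(M_{1*}\circ Q_*\big)_{(2)}\subseteq \mathrm{im}(M_{1*})_{(2)}.
\]
This proves the $2$-local inclusion, and combining it with Lemma~\ref{lem: Mr inverting 2} completes the proof. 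The only delicate point is the parity bookkeeping: the factorization through $M_1$ becomes effective only after inverting $r$, which is possible $2$-locally precisely because $r$ is odd (this fails for even $r$, handled separately in \S\ref{subsec: Mr end of computation}), while the extra term $I_S$ in the statement is forced by the odd-primary contribution supplied by Lemma~\ref{lem: Mr inverting 2}.
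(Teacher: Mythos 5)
Your proof is correct and matches the paper's argument essentially verbatim: the paper also covers the source of $M_r$ by the odd-degree multiplication map $(\P^{r-1}\times\P^1)\times(\P^{g+1-r})^2\to\P^r\times(\P^{g+1-r})^2$, observes that the composite with $M_r$ factors through $M_1$, and works with $\Z_{(2)}$-coefficients (combining with Lemma~\ref{lem: Mr inverting 2} for the odd-primary part). Your write-up merely makes explicit the degree-$r$ push-pull identity and the prime-by-prime detection of the inclusion, both of which the paper leaves implicit.
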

	\begin{proof}
		We work with $\Z_{(2)}$-coefficients. It this case, the pushforward along the multiplication map
		\[
		\begin{tikzcd}
			\left[\frac{(\P^{r-1}\times\P^1)\times(\P^{g+1-r})^2}{\mu_2\times\PGL_2}\right]\arrow[r] & \left[\frac{\P^{r}\times(\P^{g+1-r})^2}{\mu_2\times\PGL_2}\right]
		\end{tikzcd}    
		\]
		is surjective, as the map is of odd degree. Moreover, its composite with $M_r$ factors through $M_1$, yielding the desired inclusion.
	\end{proof}
	
	Thanks to the previous lemma and Lemma~\ref{lem: ideal M1 + M2}, we are left with studying $M_{2r}$ for $r>1$.
	
	\begin{lemma}\label{lem: M2r obvious generators image}
		The image of $M_{2r*}$ is generated by the classes $M_{2r*}(\xi_{2r}^i)$ for $0\leq i\leq r$.
	\end{lemma}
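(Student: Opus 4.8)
The plan is to use the projective-bundle structure of the source of $M_{2r}$ to obtain an a priori generating set indexed by $0\le i\le 2r$, and then to cut the range down to $0\le i\le r$ by an explicit torus computation of the pushforwards.

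First I would record the geometry of the source. Identifying the two $\P^{g+1-2r}$-factors with $\mathcal{B}:=\left[(V^\vee\otimes W_{\frac{g+1}{2}-r}\smallsetminus Z_{g+1-2r})/(G\times\PGL_2)\right]$ exactly as in the middle rows of \eqref{eq: huge commutative diag}, the source of $M_{2r}$ becomes
\[
\left[\frac{\P^{2r}\times(\P^{g+1-2r})^2}{\mu_2\times\PGL_2}\right]\;\cong\;\left[\frac{\P(W_r)}{\PGL_2}\right]\times_{B\PGL_2}\mathcal{B},
\]
namely the projectivization of the $\PGL_2$-representation $W_r$ pulled back to $\mathcal{B}$. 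Hence the Chow ring of the source is free over $\CH^*(\mathcal{B})$ with basis $1,\xi_{2r},\dots,\xi_{2r}^{2r}$. Both $\mathcal{B}$ and the target $\left[(V^\vee\otimes W_{\frac{g+1}{2}}\smallsetminus Z_{g+1})/(G\times\PGL_2)\right]$ have Chow rings that are quotients of $\CH^*(B(G\times\PGL_2))$ generated by $\beta_1,\beta_2,\gamma,c_2,c_3$ (Lemma~\ref{lem: Chow alternative presentation Dg+1g+1mu2}), and $M_{2r}^*$ is the identity on these classes since $M_{2r}$ is $G\times\PGL_2$-equivariant. Thus $M_{2r}^*\CH^*(\mathrm{target})=\CH^*(\mathcal{B})$ inside the Chow ring of the source, and the projection formula immediately gives that $\operatorname{im}(M_{2r*})$ is generated as a $\CH^*(\mathrm{target})$-module by the classes $M_{2r*}(\xi_{2r}^i)$ for $0\le i\le 2r$.

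The remaining, and main, task is to show that the classes $M_{2r*}(\xi_{2r}^i)$ with $r<i\le 2r$ already lie in the submodule generated by those with $0\le i\le r$. Here I would pass to the $\GL_3$-counterpart $M_{2r}'$ of Remark~\ref{rmk: GL3 counterparts of Chow envelope} and restrict to the maximal torus $\Gm^3$, where over each open $\mathcal{S}_{\underline i}$ the projective bundles $\P(V_r)$ and $\P(V_{\frac{g+1}{2}-r})^{\times 2}$ trivialize. Reducing the computation to this $\Gm^3$-equivariant setting as in Lemma~\ref{lem: reduction to PGL2} and Remark~\ref{rmk: extension commutes with maps}, the pushforward $M_{2r*}(\xi_{2r}^i)$ can be evaluated by the projective-bundle pushforward along $\P(V_r)$ together with the restriction formula \eqref{eqn: Chow P(Vm) restricted to Si} and the explicit $\Gm^3$-equivariant classes of \S\ref{subsec: GL3 counterparts}. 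Writing these out, one produces, for each $i$ with $r<i\le 2r$, a candidate $\CH^*(\mathrm{target})$-linear expression of $M_{2r*}(\xi_{2r}^i)$ in terms of the $M_{2r*}(\xi_{2r}^j)$ with $j\le r$; since such an equality of pushforwards may be verified after restriction to $\Gm^3$, a descending induction on $i$ from $2r$ down to $r+1$ then finishes the reduction.

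The hard part is exactly this last reduction, and the subtle point is that it is genuinely integral and requires $r\ge 2$. Indeed, for $r=1$ the explicit formulas of Lemma~\ref{lem: M2 computation low degree classes} show that $M_{2*}(\xi_2^2)$, whose $\beta_2^2$-coefficient equals $1$, cannot be written as an integral $\CH^*(\mathrm{target})$-combination of $M_{2*}(1)$ and $M_{2*}(\xi_2)$, which is precisely why $M_2$ is treated separately and only $M_{2r}$ with $r>1$ is claimed here. Consequently one must keep careful track of the integral (rather than merely $\Z[1/2]$) coefficients coming out of the $\Gm^3$-localization, so that the linear dependences among the top classes $M_{2r*}(\xi_{2r}^i)$ hold over $\Z$ and not only after inverting $2$.
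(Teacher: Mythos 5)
Your first paragraph follows the same route as the paper (projective-bundle structure of the source plus push-pull), but the justification of the crucial module compatibility is wrong. The map $M_{2r}$ is \emph{not} induced by a $G\times\PGL_2$-equivariant map of schemes in the way you use it: writing a point of the source as $([h],(f,g))$, forming $(hf,hg)$ requires choosing a representative $h$ of $[h]\in\P(W_r)$, and rescaling $h$ rescales $(hf,hg)$ by a central element of $G$. Consequently $M_{2r}$ does not commute with the structure maps to $B(G\times\PGL_2)$, and $M_{2r}^*$ is not the identity on the $\GL_2$-classes: with the paper's conventions $M_{2r}^*(\beta_1)=\beta_1-2\xi_{2r}$ and $M_{2r}^*(\beta_2)=\beta_2-\beta_1\xi_{2r}+\xi_{2r}^2$, and only $\gamma,c_2,c_3$ pull back untwisted; in particular $M_{2r}^*\CH^*(\mathrm{target})\neq\CH^*(\mathcal{B})$ inside the Chow ring of the source. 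This twist is exactly what the commutative diagram with the maps $u$ and $v$ (induced by $(\lambda,A,[B])\mapsto(\lambda A,[B])$) in the proof of Lemma~\ref{lem: ideal M1 + M2} is there to handle: the Chow ring of the source is generated over the \emph{twisted} pullback $\operatorname{im}(u^*v^*)=\operatorname{im}(M_{2r}^*)$ by the powers $\xi_{2r}^j$, and since the twist only involves $\xi_{2r}$ while the projective-bundle relation has coefficients pulled back from $B\PGL_2$ (hence untwisted), one can still cut down to $1,\xi_{2r},\dots,\xi_{2r}^{2r}$. With that correction, your first paragraph becomes the paper's proof, with generating set $0\le i\le 2r$.

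Your second and third paragraphs, which you yourself identify as the main task, contain the genuine gap: the claimed reduction from the range $0\le i\le 2r$ to $0\le i\le r$ is never actually carried out. No "candidate $\CH^*(\mathrm{target})$-linear expression" is produced, and the descending induction has no specified inductive step, so the entire content of that claim is missing. Worse, the reduction is false for $r=1$, as you correctly observe via the $\beta_2^2$-coefficient of $M_{2*}(\xi_2^2)$; but, contrary to your reading, the lemma does \emph{not} exclude $r=1$: the paper's proof explicitly takes $r=1$, treated at the beginning of the proof of Lemma~\ref{lem: ideal M1 + M2} (where the generators found are $M_{2*}(1)$, $M_{2*}(\xi_2)$ \emph{and} $M_{2*}(\xi_2^2)$, i.e.\ all powers up to $2r$), as the model case for all $r$. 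So the bound ``$i\le r$'' in the statement is a slip for ``$i\le 2r$'': that is what the paper's argument proves, and it is all that is needed downstream, since in Proposition~\ref{prop: Mr conclusion} the classes $M_{2r*}(\xi_{2r}^i)$ with $i>0$ (with no upper bound on $i$) are disposed of by Lemma~\ref{lem: M2r i>0} and the case $i=0$ by Lemma~\ref{lem: M2r i=0}. In short, the bulk of your effort is aimed at a strengthening that is unnecessary for the paper, unproved in your proposal, and false in the case $r=1$ that the lemma covers.
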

	\begin{proof}
		The case $r=1$ has already been taken care of at the beginning of the proof of Lemma~\ref{lem: ideal M1 + M2}. The proof in the case $r>1$ is completely analogous.
	\end{proof}
	
	\begin{lemma}\label{lem: M2r i>0}
		For every $r>1$ and $i>0$, we have
		\[
		M_{2r*}(\xi_{2r}^i)\in I_S+\mathrm{im}(M_{1*})+\mathrm{im}(M_{2r-2}).
		\]
	\end{lemma}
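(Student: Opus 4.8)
The plan is to reduce everything to $\Z_{(2)}$-coefficients and prove there the cleaner statement $M_{2r*}(\xi_{2r}^i)\in\operatorname{im}(M_{2r-2*})$. Indeed, Lemma~\ref{lem: Mr inverting 2} already gives $M_{2r*}(\xi_{2r}^i)\in I_S+\operatorname{im}(M_{1*})$ after inverting $2$; and an integral class that lies in a fixed subgroup both after inverting $2$ and after localizing at $2$ lies in it integrally, since for any abelian group $N$ the map $N\to N_{(2)}\times N[1/2]$ is injective (an element killed by a power of $2$ and by an odd integer vanishes). Applying this to $N=\CH^*/J$ with $J=I_S+\operatorname{im}(M_{1*})+\operatorname{im}(M_{2r-2*})$ reduces the integral statement to the $\Z_{(2)}$-statement above.

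First I would introduce the multiplication map $m\colon\P(W_1)\times\P(W_{r-1})\to\P(W_r)$, i.e. $\P^2\times\P^{2r-2}\to\P^{2r}$, $(q,h')\mapsto qh'$, which is $\PGL_2$-equivariant, extended by the identity on the remaining factors. The key geometric input is the factorization $M_{2r}\circ(m\times\mathrm{id})=M_{2r-2}\circ\rho$, where $\rho(q,h',f,g)=(h',qf,qg)$ regroups the degree-$2$ factor $q$ into the two free factors; both composites send $(q,h',f,g)$ to $(qh'f,qh'g)$. This gives at once $\operatorname{im}\bigl(M_{2r*}\circ(m\times\mathrm{id})_*\bigr)\subseteq\operatorname{im}(M_{2r-2*})$.

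The heart of the argument is the computation $m_*(\xi_2)=(2r-1)\,\xi_{2r}$ in $\CH^1_{\PGL_2}(\P(W_r))=\Z\,\xi_{2r}$. I would obtain it by pre-composing $m$ with the squaring map $\P^1\to\P(W_1)$, $[\ell]\mapsto[\ell^2]$, whose image is the conic of perfect squares, of class $2\xi_2$; the resulting composite maps generically one-to-one onto the discriminant divisor $\uD_{2r}\subseteq\P(W_r)$, whose class should be $2(2r-1)\xi_{2r}$ (the discriminant of a binary form of degree $2r$ has degree $2(2r-1)$, with no correction term since $\CH^1(B\PGL_2)=0$). Comparing gives $2\,m_*(\xi_2)=2(2r-1)\xi_{2r}$, and since $\CH^1$ is torsion-free we may cancel the $2$. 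With this in hand the push-pull formula yields, using $i\geq1$,
\[
(m\times\mathrm{id})_*\bigl(\xi_2\cdot(m\times\mathrm{id})^*\xi_{2r}^{\,i-1}\bigr)=(2r-1)\,\xi_{2r}^{\,i},
\]
so applying $M_{2r*}$ gives $(2r-1)\,M_{2r*}(\xi_{2r}^i)\in\operatorname{im}(M_{2r-2*})$; as $2r-1$ is a unit in $\Z_{(2)}$, the desired $\Z_{(2)}$-membership follows.

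The hard part will be precisely the $2$-primary behaviour. The naive square/discriminant factorization only produces the \emph{even} multiple $2(2r-1)\xi_{2r}^i$ inside $\operatorname{im}(M_{2r-2*})$, which is insufficient $2$-locally. The point of extracting a \emph{generic} degree-$2$ factor via $m$, rather than a double linear factor $\ell^2$, is that $m_*(\xi_2)$ carries the \emph{odd} coefficient $2r-1$; it is this oddness that lets us invert the coefficient over $\Z_{(2)}$ and dispose of the stray factor of $2$. Once the two localizations are assembled via the fracture argument of the first paragraph, the integral statement with the three ideals $I_S$, $\operatorname{im}(M_{1*})$ and $\operatorname{im}(M_{2r-2*})$ drops out.
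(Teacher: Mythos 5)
Your proposal is correct, and it takes a genuinely different route from the paper's. The common skeleton is the reduction to $\Z_{(2)}$-coefficients via Lemma~\ref{lem: Mr inverting 2} (the paper performs exactly your fracture argument, just less explicitly), followed by the production of the \emph{odd} multiple $(2r-1)\,M_{2r*}(\xi_{2r}^i)$ inside $\operatorname{im}(M_{2r-2*})$; the difference is where the factor $2r-1$ comes from. The paper passes to $\GL_3$-counterparts and works $\mu_2\times\Gm^3$-equivariantly: it composes $M'_{2r}$ with the inclusion $j_{r;1,0}$ (multiplication by the coordinate $X_0$, which is only torus-equivariant), invokes the formula $[W_{r;1,0}]=\xi_{2r}^2+r^2c_2+(2r-1)\xi_{2r}t_1+r(2r-1)t_1^2$ from \cite{CLI}, and uses the freeness of $\CH^*_{\Gm^3}$ over $\CH^*_{\GL_3}$ to isolate the $t_1$-coefficient $(2r-1)M_{2r*}(\xi_{2r}^{i+1})$ inside $\operatorname{im}(M'_{2r-2*})$. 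You instead multiply by a \emph{varying} degree-$2$ form: since $2$ is even there is no determinant obstruction, so $m\colon\P(W_1)\times\P(W_{r-1})\to\P(W_r)$ is honestly $\PGL_2$-equivariant; your factorization $M_{2r}\circ(m\times\mathrm{id})=M_{2r-2}\circ\rho$ is correct, and $m_*(\xi_2)=(2r-1)\xi_{2r}$ is right (besides your discriminant argument, it follows from the non-equivariant count $\deg\bigl(h_1\cdot(h_1+h_2)^{2r-1}\bigr)=2r-1$ on $\P^2\times\P^{2r-2}$, where $h_1,h_2$ are the hyperplane classes, together with $\CH^1(B\PGL_2)=0$). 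Morally your $m$ is the universal version of the paper's $j_{r;1,0}$: the paper fixes one quadratic form ($X_0$ restricted to the conic), breaking equivariance down to the torus, while you let it vary over $\P(W_1)$ and thereby retain full $\PGL_2$-equivariance; this is what lets you bypass the counterpart machinery and the module-theoretic bookkeeping entirely, whereas the paper's route reuses formalism that is needed anyway in the neighboring arguments (Lemmas~\ref{lem: M2r i=0}, \ref{lem: ctop is superfluos} and \ref{lem: j* is superfluous}). One step you should spell out: the identity $(m\times\mathrm{id})_*(\xi_2)=(2r-1)\xi_{2r}$ is asserted in $\CH^1_{\mu_2\times\PGL_2}$ of the target of $M_{2r}$, where a priori a correction term involving $\gamma$ (or classes from the swapped factors) could appear; it follows from your $\PGL_2$-equivariant computation because the square formed by $m\times\mathrm{id}$, $m$, and the two flat projections to $[\P(W_1)\times\P(W_{r-1})/\PGL_2]$ and $[\P(W_r)/\PGL_2]$ is cartesian, so flat base change applies. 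With that remark added, your argument is complete.
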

	\begin{proof}
		Thank to Lemma \ref{lem: Mr inverting 2}, we can work with $\Z_{(2)}$-coefficient. In this proof, we will use $\GL_3$-counterparts; see \S\ref{subsec: GL3 counterparts} for the  definitions and results. For instance, we have
		\[
		\left[\frac{\P^{g+1}\times\P^{g+1}}{\mu_2\times\PGL_2}\right]\cong \left[\frac{\P(V_{\frac{g+1}{2}})\times_{\mathcal{S}}\P(V_{\frac{g+1}{2}})}{\mu_2\times\GL_3}\right].
		\]
		To prove the statement we will work $\mu_2\times\Gm^3$-equivariantly, where $\Gm^3\subset\GL_3$ is the maximal torus consisting of diagonal matrices. The commutative diagram
		\[
		\begin{tikzcd}[column sep=large]
			\left[\frac{\P(V_{r-1})\times_{\mathcal{S}}\P(V_{\frac{g+1}{2}-r})\times_{\mathcal{S}}\P(V_{\frac{g+1}{2}-r})}{\mu_2\times\Gm^3}\right]\arrow[d,hookrightarrow,"j_{r;1,0}"']\arrow[rr,hookrightarrow,"\left(j_{\frac{g+1}{2}-r+1;1,0}\right)^2"] & & \left[\frac{\P(V_{r-1})\times_{\mathcal{S}}\P(V_{\frac{g+1}{2}-r+1})\times_{\mathcal{S}}\P(V_{\frac{g+1}{2}-r+1})}{\mu_2\times\Gm^3}\right]\arrow[d,"M'_{2r-2}"]\\
			\left[\frac{\P(V_{r})\times_{\mathcal{S}}\P(V_{\frac{g+1}{2}-r})\times_{\mathcal{S}}\P(V_{\frac{g+1}{2}-r})}{\mu_2\times\Gm^3}\right]\arrow[rr,"M'_{2r}"] & &\left[\frac{\P(V_{\frac{g+1}{2}})\times_{\mathcal{S}}\P(V_{\frac{g+1}{2}})}{\mu_2\times\Gm^3}\right]
		\end{tikzcd}
		\]
		shows that $M'_{2r*}([W_{r;1,0}])\in\mathrm{im}(M'_{2r-2*})$. By~\cite[Proposition 3.16, Remark 3.17]{CLI}, we have
		\[
		\xi_{2r}^i[W_{r;1,0}]=\xi_{2r}^{i+2}+r^2c_2\xi_{2r}^i+(2r-1)\xi_{2r}^{i+1}t_1+r(2r-1)\xi_{2r}^it_1^2
		\]
		for every $i\geq0$. Also, from Remark \ref{rmk: extension commutes with maps} and \cite[Proposition 3.4]{CLI} and using the fact that $2r-1$ is always odd, we obtain that $M_{2r*}(\xi_{2r}^i)\in\mathrm{im}(M_{2r-2})$ for every $i>0$.
	\end{proof}
	
	\begin{lemma}\label{lem: M2r i=0}
		For every $r>1$, we have $M_{2r*}(1)\in\mathrm{im}(M_{1*})+I_S+I_H$.
	\end{lemma}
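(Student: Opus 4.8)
The plan is to combine an excision argument against $\uD_2$ with a family of covering maps that factor through the simpler pushforwards, and to control the resulting $2$-adic ambiguity by means of the maps $H_r$.

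First I would reduce to $\Z_{(2)}$-coefficients, which is legitimate by Lemma~\ref{lem: Mr inverting 2}. Next, recall from Lemma~\ref{lem: equivariant Chow envelope g+1 case} that the maps $S_r$ and $H_r$ together form an equivariant Chow envelope of $\uD_2$; hence the image of the pushforward from $[\uD_2/\mu_2\times\PGL_2]$ is exactly $I_S+I_H$, and by excision this is the kernel of the restriction
\[
\mathrm{res}\colon \CH^*_{\mu_2\times\PGL_2}\big(\P(W_{\frac{g+1}{2}})^2\big)\longrightarrow \CH^*_{\mu_2\times\PGL_2}(U),\qquad U:=\P(W_{\tfrac{g+1}{2}})^2\smallsetminus\uD_2,
\]
the open locus of pairs $(f,g)$ with both $f$ and $g$ squarefree. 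Since $I_S+I_H\subseteq \mathrm{im}(M_{1*})+I_S+I_H$, it suffices to prove the membership after applying $\mathrm{res}$: if $\mathrm{res}(M_{2r*}(1))=\mathrm{res}(m)$ for some $m\in\mathrm{im}(M_{1*})$, then $M_{2r*}(1)-m\in\ker(\mathrm{res})=I_S+I_H$. This is precisely where $H_r$ enters the argument: not directly, but through its role in identifying $\ker(\mathrm{res})$ with $I_S+I_H$.

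To produce the required relation over $U$ I would use the covers obtained by splitting off part of the common factor. Multiplication $\P^1\times\P^{2r-1}\to\P^{2r}$, $(\ell,h)\mapsto \ell h$, is a finite surjection of degree $2r$ whose composite with $M_{2r}$ sends $(\ell,h,f,g)$ to $(\ell hf,\ell hg)=M_1(\ell,hf,hg)$; thus $M_{2r}\circ(\text{split})$ factors through $M_1$, and the projection formula gives $2r\,M_{2r*}(1)\in\mathrm{im}(M_{1*})$. More generally, splitting off $k$ factors routes $M_{2r}$ through $M_k$ and yields $\binom{2r}{k}\,M_{2r*}(1)\in\mathrm{im}(M_{k*})$, which for odd $k$ lands in $I_S+\mathrm{im}(M_{1*})$ by Lemma~\ref{lem: Mr reduction to r even}. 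By Kummer's theorem every $\binom{2r}{k}$ with $k$ odd is even, so this family never resolves the class on the nose; the best it gives is $2^{1+v_2(r)}\,M_{2r*}(1)\in I_S+\mathrm{im}(M_{1*})$ over $\Z_{(2)}$. This surviving power of $2$ is exactly the parity obstruction that the maps $H_r$ must remove.

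Removing that $2$-power is the main obstacle, and here I would exploit $H_r$ through its diagonal factorisation $H_r\circ\delta=M_{2r}\circ(\mathrm{sq}\times\mathrm{id})$, where $\mathrm{sq}\colon\P^r\to\P^{2r}$ is $h\mapsto h^2$ and $\delta$ is the diagonal on the first two factors. Pushing forward, and using that $\mathrm{sq}_*(1)=2^r\xi_{2r}^r+(\text{lower order in }\xi_{2r})$ together with Lemma~\ref{lem: M2r i>0} to absorb every term involving a positive power of $\xi_{2r}$ into $\mathrm{im}(M_{2r-2})$, produces relations in $I_H$ that constrain the $2$-primary part of $M_{2r*}(1)$. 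Concretely, I expect to run this comparison with the $\mu_2\times\GL_3$-counterparts of Remark~\ref{rmk: GL3 counterparts of Chow envelope} and to localise at the maximal torus $\Gm^3$, exactly as in the proof of Lemma~\ref{lem: M2r i>0}, so that the relevant pushforwards and the self-intersection contributions of $M_{2r}$ become explicit polynomials in $\xi_{2r}$, the $t_i$, and $c_2$. Combining the resulting $I_H$-relation with the odd-coefficient relations of the previous paragraph should pin down $M_{2r*}(1)$ modulo $\mathrm{im}(M_{1*})+I_S+I_H$. The delicate point — the step requiring the most care — is the bookkeeping of $2$-adic valuations: one must verify that in the combined relation the coefficient of $M_{2r*}(1)$ is an odd (hence $\Z_{(2)}$-invertible) integer, so that the power of $2$ left over by the splitting covers is genuinely cancelled rather than merely lowered.
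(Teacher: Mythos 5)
Your opening reductions are sound: passing to $\Z_{(2)}$-coefficients via Lemma~\ref{lem: Mr inverting 2}, identifying $\ker(\mathrm{res})=I_S+I_H$ by excision together with the envelope of Lemma~\ref{lem: equivariant Chow envelope g+1 case}, and the covering relations $\binom{2r}{k}\,M_{2r*}(1)\in\operatorname{im}(M_{k*})$ are all correct. But the argument fails at exactly the step you flag as delicate, and it fails irreparably within your framework. In the relation coming from $H_r\circ\delta=M_{2r}\circ(\mathrm{sq}\times\mathrm{id})$, write $\mathrm{sq}_*(1)=2^r\xi_{2r}^r+a_{r-1}\xi_{2r}^{r-1}+\cdots+a_0$; the coefficient $a_i$ is an equivariant class of degree $r-i$, so after absorbing all positive powers of $\xi_{2r}$ via Lemma~\ref{lem: M2r i>0}, what is left on the $M_{2r}$ side is $a_0\cdot M_{2r*}(1)$, where $a_0$ is a class of \emph{positive} degree $r$ — not an integer. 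Meanwhile all your binomial relations have even integer coefficients: $\binom{2r}{k}$ is even for every odd $k$, and when $2r$ is a power of $2$ (the case to which even-$k$ routes plus induction reduce you) it is even for every $0<k<2r$. Since the equivariant Chow ring is graded with degree-zero part $\Z$, the coefficient of $M_{2r*}(1)$ in any $\Z_{(2)}$-linear combination of relations whose coefficients lie in $2\Z_{(2)}$ plus positive-degree classes has even degree-zero part; it can never be an odd unit. So the surviving factor of $2$ cannot be cancelled by combining these relations, and the excision reduction, while valid, is never supplied with the input it needs.

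The paper removes the factor of $2$ by a mechanism your proposal lacks: flat pullback along the \emph{multiplication} map $m:[\P^{g+1}\times\P^{g+1}/\mu_2\times\PGL_2]\to[\P^{2g+2}/\PGL_2]$, $(f,g)\mapsto fg$. On the target, the classes of the strata $\mathsf{sing}_k$ (forms divisible by the square of a degree-$k$ form) lie, by \cite{DL18}, in the ideal generated by $(4g+2)\xi_{2g+2}$ and $2\xi_{2g+2}^2-2g(g+1)c_2$, and the $m^*$-images of these two generators lie in $I_S+\operatorname{im}(M_{1*})$ by Lemmas~\ref{lemma: IS} and~\ref{lem: M1 computation low degree classes}. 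Flatness of $m$ gives $m^*[\mathsf{sing}_k]=\sum_{\underline{d}}\alpha_{\underline{d}}$, the sum of fundamental classes of the components of $m^{-1}(\mathsf{sing}_k)$, indexed by how the square factor of $fg$ distributes between $f$, $g$, and their common factor. The component $\underline{d}=(k,0,0)$ contributes exactly $M_{k*}(1)$ with coefficient one, because $M_k$ is birational onto its image, while every other component factors through some $S_{d_2}$ or $H_{d_3}$. Hence $M_{k*}(1)\in\operatorname{im}(M_{1*})+I_S+I_H$ follows with the unit coefficient that your relations structurally cannot produce. That coefficient-one relation is the essential missing idea.
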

	\begin{proof}
		Let $m:[\P^{g+1} \times \P^{g+1} / \mu_2 \times \PGL_2 ] \to [\P^{2g+2} /\PGL_2]$ be the multiplication map $(f,g) \mapsto fg$ and let $\mathsf{sing} \subseteq \P^{2g+2}$ be the locus of singular polynomials.
		
		We have a stratification of
		$$
		\mathsf{sing}=\mathsf{sing}_1 \supseteq \mathsf{sing}_2 \supseteq \ldots \mathsf{sing}_{g+1}
		$$
		where $\mathsf{sing}_r$ is the locus of polynomials that are divisible by the square of a polynomial of degree $r$ for each $1 \leq r \leq g+1$. Equivalently, it is the image of the natural map $\pi_r:  \P^r \times \P^{2g+2-2r} \to \P^{2g+2}$. As shown in \cite[Corollary 6.3]{DL18} and \cite[Proposition 4.13]{CLI}, the maps $\pi_r$ form a $\PGL_2$-envelope for $\mathsf{sing}$. Moreover, by \cite[Propositions 5.7 and 6.1, and Lemma 6.7]{DL18}, the pushforwards of the fundamental classes $\pi_{r*}(1)$  for all $r \geq 1$ lie in the ideal generated by two classes:
		\begin{itemize}
			\item $(4g+2) \xi_{2g+2}$ whose pullback under $m^*$ is equal to $-2(2g+1) \beta_1$, which in turn is a multiple of $-2 \beta_1=  2 M_{1*}(1)-S_{1*}(1) \in I_S + \operatorname{im}(M_{1*})$;
			\item $2 \xi_{2g+2}^2-2(g+1) g c_2$ whose pullback under $m^*$ is $2 \beta_1^2-2g(g+1) c_2 = S_{1*} \phi_{1*}(\tau)+2 M_{1*}(\tau) \in I_S+  \operatorname{im}(M_{1*})$;
		\end{itemize}
		
		In particular, $m^*(\mathsf{sing}_k) \in I_S + \operatorname{im}(M_{1*})$ for all $k = 1, \ldots, g+1$. A point $(f,g)\in\P^{g+1}\times\P^{g+1}$ is in $m^{-1}(\mathsf{sing}_k)$ if and only if there exists polynomials $u$, $h_1$, $h_2$, $h$, $f_1$ and $g_1$ with $d_1:=\deg u$, $d_2:=h$, $d_2:=\deg h_1=\deg h_2$, satisfying $d_1+d_2+2d_3=k$, and such that $(f,g)=(uh^2h_1^2f_1,uh_2^2g_1)$ or $(f,g)=(uh_1^2f_1,uh^2h_2^2g_1)$. For every such triple $\underline{d}=(d_1,d_2,d_3)$, we denote by $\mathsf{sing}_{\underline{d}}$ the locus of points as above. If $d_2=0$, then $\mathsf{sing}_{\underline{d}}$ is the image of
		\[
		\begin{tikzcd}
			\P^{d_1}\times(\P^{d_3})^2\times(\P^{g+1-k+d_2})^2\arrow[r,"\varphi_{\underline{d}}"] & \P^{g+1}\times\P^{g+1}, & (u,(h_1,h_2),(f_1,g_1))\mapsto(uh_1^2f_1,uh_2^2g_1).
		\end{tikzcd}
		\]
		If $d_2\geq1$, then $\mathsf{sing}_{\underline{d}}$ is the image of the morphism
		\[
		\begin{tikzcd}
			\P^{d_1}\times\P^{d_2}\times(\P^{d_3})^2\times((\P^{g+1-k-d_2}\times\P^{g+1-k+d_2})\sqcup(\P^{g+1-k+d_2}\times\P^{g+1-k-d_2}))\arrow[r,"\varphi_{\underline{d}}"] & \P^{g+1}\times\P^{g+1} 
		\end{tikzcd}
		\]
		sending $(u,h,(h_1,h_2),(f_1,g_1))$ to $(uh^2h_1^2f_1,uh_2^2g_1)$ or to $(uh_1^2f_1,uh^2h_2^2g_1)$ depending on the connected component where the point lies in. In both cases, $\varphi_{\underline{d}}$ is $\mu_2\times\PGL_2$-equivariant under the natural $\mu_2\times\PGL_2$-action, and it is birational onto its image, thus showing that $\mathrm{im}(\varphi_{\underline{d}})=\mathsf{sing}_{\underline{d}}$ is integral of codimension $k$. Denote by $\alpha_{\underline{d}}$ the $\mu_2\times\PGL_2$-equivariant pushforward along $\varphi_{\underline{d}}$ of the fundamental class of the source, which is then the fundamental class of $\mathsf{sing}_{\underline{d}}$. If $\underline{d}=(d_1,d_2,d_3)$, we set $\mathrm{size}(\underline{d}):=d_1+d_2+2d_3$.
		
		Note that $m$ is flat and, scheme-theoretically, we have 
		\[
		m^{-1}(\mathsf{sing}_k) = \bigcup_{\mathrm{size}(\underline{d})=k} \mathsf{sing}_{\underline{d}},
		\]
		and consequently that
		\[
		m^*(\mathsf{sing}_k) = \sum_{\mathrm{size}(\underline{d})=k}\alpha_{\underline{d}}.
		\]
		Clearly, $\alpha_{(k,0,0)}=M_{k*}(1)$. Moreover, if $\underline{d}=(d_1,d_2,d_3)$ with $d_2\geq1$ or $d_3\geq1$, then $\varphi_{\underline{d}}$ factors through $S_{d_2}$ or $H_{d_2}$, respectively. Putting everything together, we have proved that $M_{k*}(1)\in\mathrm{im}(M_{1*})+I_S+I_h$ for every $k\geq1$, as wanted.
	\end{proof}
	
	Putting everything together, we obtain the following desired conclusion.
	
	\begin{proposition}\label{prop: Mr conclusion}
		We have the inclusion
		\[
		I_M\subset I_S+I_H+I_M^{\leq2}.
		\]
	\end{proposition}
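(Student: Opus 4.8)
The plan is to show, by induction, that $\operatorname{im}(M_{r*}) \subseteq I_S + I_H + I_M^{\leq 2}$ for every $r = 1, \ldots, g+1$; since these images generate $I_M$ by definition, this yields the proposition. The cases $r = 1$ and $r = 2$ are immediate, as $\operatorname{im}(M_{1*})$ and $\operatorname{im}(M_{2*})$ lie in $I_M^{\leq 2}$ by the definition of that ideal. For odd $r > 1$, Lemma~\ref{lem: Mr reduction to r even} already gives $\operatorname{im}(M_{r*}) \subseteq I_S + \operatorname{im}(M_{1*}) \subseteq I_S + I_M^{\leq 2}$. Thus the only remaining cases are the even indices $r = 2s$ with $s > 1$, which I would treat by induction on $s$, the base case $s = 1$ (that is, $M_2$) being already settled above.

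For the inductive step, I would fix $s > 1$ and assume $\operatorname{im}(M_{2(s-1)*}) \subseteq I_S + I_H + I_M^{\leq 2}$. By Lemma~\ref{lem: M2r obvious generators image} (applied with its $r$ equal to $s$), the image $\operatorname{im}(M_{2s*})$ is generated as an ideal by the classes $M_{2s*}(\xi_{2s}^{\,i})$ for $0 \leq i \leq s$, so it suffices to place each of these generators in $I_S + I_H + I_M^{\leq 2}$. For the generators with $i > 0$, Lemma~\ref{lem: M2r i>0} gives $M_{2s*}(\xi_{2s}^{\,i}) \in I_S + \operatorname{im}(M_{1*}) + \operatorname{im}(M_{2(s-1)*})$, and the last summand is contained in $I_S + I_H + I_M^{\leq 2}$ by the inductive hypothesis. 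For the remaining generator $M_{2s*}(1)$, Lemma~\ref{lem: M2r i=0} gives directly $M_{2s*}(1) \in \operatorname{im}(M_{1*}) + I_S + I_H \subseteq I_S + I_H + I_M^{\leq 2}$. This closes the induction and hence the proof.

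The genuine content of the argument lies entirely in the preceding lemmas, where the pushforwards are computed via $\GL_3$-counterparts and the $\Z_{(2)}$-localization permitted by Lemma~\ref{lem: Mr inverting 2}; Proposition~\ref{prop: Mr conclusion} itself is the bookkeeping step that threads them together. The only point requiring attention is the structure of the recursion: each application of Lemma~\ref{lem: M2r i>0} lowers the even index by $2$, and one must check that this descent terminates at the base case $M_2 \in I_M^{\leq 2}$ rather than spilling outside the range $\{M_r\}_{r \geq 1}$. Since $2(s-1) \geq 2$ precisely when $s > 1$, the induction is well-founded and never leaves the allowed set of maps, so no such obstruction arises. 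In short, I expect the \emph{assembly} of the four reduction lemmas to be routine, and I would not anticipate a substantive obstacle in this proposition beyond organizing the two separate reductions (odd $r$ versus even $r$) and verifying that the even-index induction bottoms out correctly.
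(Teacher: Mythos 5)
Your proposal is correct and follows essentially the same route as the paper: reduce odd $r>1$ via Lemma~\ref{lem: Mr reduction to r even}, then induct on even indices using Lemma~\ref{lem: M2r obvious generators image} for the generators, Lemma~\ref{lem: M2r i>0} for the positive powers of $\xi$, and Lemma~\ref{lem: M2r i=0} for the fundamental class. The only cosmetic difference is that the paper also cites Lemma~\ref{lem: ideal M1 + M2} at the outset, whereas you correctly note that the cases $r=1,2$ already follow from the definition of $I_M^{\leq 2}$.
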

	\begin{proof}
		By Lemmas~\ref{lem: ideal M1 + M2} and~\ref{lem: Mr reduction to r even}, it is enough to prove that $\mathrm{im}(M_{2r*})\subset I_S+I_H+I_M^{\leq2}$ for every $r>1$. We prove it by induction on $r>1$ and base case $r=1$. By Lemmas~\ref{lem: M2r obvious generators image} and~\ref{lem: M2r i>0}, it suffices to prove that $M_{2r*}(1)\in I_S+I_H+I_M^{\leq2}$, which is Lemma~\ref{lem: M2r i=0}.
	\end{proof}
	
	\subsection{Computation of the ideal generated by the maps $H_{r*}$}\label{subsec: computation Hr}
	
	In this section we show that $I_H \subseteq I_{S}+ I_M^{\leq 2}$.

	\subsubsection{Preliminary observations}
	
	\begin{lemma}\label{lemma: Hr inverting 2}
		After taking the tensor product $- \otimes_{\Z} \Z[1/2]$, the image of $H_{r*}$ is contained in $I_S$ for all $r=1,\ldots,(g+1)/2$.
	\end{lemma}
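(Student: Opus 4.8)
The plan is to mirror the argument of Lemma~\ref{lem: Mr inverting 2}: after inverting $2$ I reduce to a purely $\PGL_2$-equivariant statement, and then exhibit a factorization of the underlying $\PGL_2$-map of $H_r$ through the map $F_r$ of diagram~\eqref{eqn: transfer Fr}, whose image we already know to be controlled by $S_r$.

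First I would pass to the double cover. Write $D_H := \P^r\times\P^r\times\P^{g+1-2r}\times\P^{g+1-2r}$ for the source of $H_r$, and let $\psi^H_r\colon [D_H/\PGL_2]\to[D_H/\mu_2\times\PGL_2]$ be the associated degree-$2$ cover. Since $\psi^H_{r*}(\psi^H_r)^*$ is multiplication by $2$, the pushforward $\psi^H_{r*}$ becomes surjective after tensoring with $\Z[1/2]$, so that $\operatorname{im}(H_{r*}) = \operatorname{im}(H_{r*}\circ\psi^H_{r*})$ over $\Z[1/2]$. By $\mu_2\times\PGL_2$-equivariance of $H_r$, its underlying $\PGL_2$-equivariant map $H^{\PGL_2}_r\colon (h,l,f,g)\mapsto(h^2 f,\, l^2 g)$ fits into a commutative square giving $H_r\circ\psi^H_r = \psi_{g+1}\circ H^{\PGL_2}_r$, where $\psi_{g+1}$ is the map from \eqref{eqn: def psi}. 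Hence it suffices to show that $\operatorname{im}(\psi_{g+1*}\,H^{\PGL_2}_{r*})$ is contained in $I_S\otimes\Z[1/2]$.

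The key observation is then that $H^{\PGL_2}_r$ factors through $F_r$. Consider the $\PGL_2$-equivariant map $q_r\colon (h,l,f,g)\mapsto(h,f,l^2 g)$ landing in $\P^r\times\P^{g+1-2r}\times\P^{g+1}$, which is exactly the source of $F_r$ in \eqref{eqn: transfer Fr}; one checks directly that $F_r\circ q_r = H^{\PGL_2}_r$. By functoriality of proper pushforward this gives $\operatorname{im}(H^{\PGL_2}_{r*})\subseteq\operatorname{im}(F_{r*})$, and therefore $\operatorname{im}(\psi_{g+1*}H^{\PGL_2}_{r*})\subseteq\operatorname{im}(\psi_{g+1*}F_{r*})$. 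Finally, the commutative square \eqref{eqn: transfer Fr} yields $\psi_{g+1}\circ F_r = S_r\circ\phi_r$ with $\phi_r$ an isomorphism, so $\operatorname{im}(\psi_{g+1*}F_{r*}) = \operatorname{im}(S_{r*}\phi_{r*}) = \operatorname{im}(S_{r*})\subseteq I_S$. Chaining the three inclusions gives $\operatorname{im}(H_{r*})\subseteq I_S$ after inverting $2$, for each $r=1,\ldots,(g+1)/2$.

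Since the argument is essentially a diagram chase, I do not expect a serious obstacle. The two points requiring care are the surjectivity of $\psi^H_{r*}$ over $\Z[1/2]$, which is precisely where inverting $2$ enters, and the verification that the factorization $F_r\circ q_r = H^{\PGL_2}_r$ respects the $\PGL_2$-actions. The structural reason the argument works is that $H_r$ squares its two factors independently, so squaring one factor first lands us in the domain of $F_r$, reducing everything to the already-computed image of $S_r$.
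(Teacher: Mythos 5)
Your proof is correct and takes essentially the same approach as the paper: the paper's (very brief) proof also combines the surjectivity of $(\psi_r^H)_*$ after inverting $2$ with the claim that the composite $H_r\circ\psi_r^H$ factors through $S_r$, and your chain $H_r\circ\psi_r^H=\psi_{g+1}\circ H_r^{\PGL_2}=\psi_{g+1}\circ F_r\circ q_r=S_r\circ\phi_r\circ q_r$ is precisely the factorization the paper leaves implicit. The only difference is that you make the intermediate map $q_r$ and the equivariance checks explicit, which the paper omits.
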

	
	\begin{proof}
		This follows from the fact that the composite
		\[
		\begin{tikzcd}
			\left[\frac{(\P^r)^2 \times (\P^{g+1-2r})^2}{\PGL_2}\right] \arrow[r,"\psi_r^H"] & \left[\frac{(\P^r)^2 \times (\P^{g+1-2r})^2}{\mu_2 \times \PGL_2}\right] \arrow[r,"H_r"] & \left[\frac{(\P^{g+1})^2}{\mu_2 \times \PGL_2}\right]
		\end{tikzcd}
		\]
		factors through $S_r$, and that ${(\psi_r^H)}_*$ is surjective in Chow after inverting $2$ (as $\psi_r^H$ is a finite of degree $2$).
	\end{proof}
	
	In particular, this will allow us to work with $\Z_{(2)}$-coefficients whenever necessary.
	
	\begin{lemma}
		For odd $r>1$, we have 
		$$
		\operatorname{im} (H_{r*}) \subseteq I_S + \operatorname{im} (H_{1*}).
		$$ 
	\end{lemma}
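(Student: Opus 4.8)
The plan is to mirror the odd-index reduction already carried out for the maps $M_r$ in Lemma~\ref{lem: Mr reduction to r even}. By Lemma~\ref{lemma: Hr inverting 2} we already know $\operatorname{im}(H_{r*})\subseteq I_S$ after inverting $2$, so it suffices to prove $\operatorname{im}(H_{r*})\subseteq\operatorname{im}(H_{1*})$ with $\Z_{(2)}$-coefficients and then glue the two localizations. The idea is to split off a linear factor from a generic degree-$r$ form on \emph{both} of the two $\P^r$-slots at once, which lets us pull out the square of a linear form from each of the two output polynomials and thereby factor through $H_1$.

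Concretely, I would introduce the multiplication morphism
\[
\mu_r:\left[\frac{(\P^{r-1}\times\P^1)^2\times(\P^{g+1-2r})^2}{\mu_2\times\PGL_2}\right]\longrightarrow\left[\frac{(\P^r)^2\times(\P^{g+1-2r})^2}{\mu_2\times\PGL_2}\right],
\]
given by $(h',h_1,l',l_1,f,g)\mapsto(h'h_1,\,l'l_1,\,f,g)$, where $\mu_2$ swaps the two $\P^{r-1}\times\P^1$ blocks and swaps $f$ with $g$. On each factor $\P^{r-1}\times\P^1\to\P^r$ multiplication is finite and flat of degree $r$, so $\mu_r$ is representable, finite and flat of degree $r^2$; as $r$ is odd, $r^2$ is odd, and the projection formula makes $\mu_{r*}$ surjective over $\Z_{(2)}$. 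The crucial observation is the factorization $H_r\circ\mu_r=H_1\circ\rho$, where
\[
\rho:(h',h_1,l',l_1,f,g)\longmapsto\bigl(h_1,\,l_1,\,(h')^2f,\,(l')^2g\bigr)
\]
takes values in the domain $\P^1\times\P^1\times\P^{g-1}\times\P^{g-1}$ of $H_1$ (note $2(r-1)+(g+1-2r)=g-1$). Indeed, $H_r(h'h_1,l'l_1,f,g)=\bigl(h_1^2(h')^2f,\ l_1^2(l')^2g\bigr)=H_1\bigl(h_1,l_1,(h')^2f,(l')^2g\bigr)$, and one checks directly that $\rho$ is $\mu_2\times\PGL_2$-equivariant: the source swap $(h',h_1)\leftrightarrow(l',l_1)$, $f\leftrightarrow g$ is carried to the swap of the first two and of the last two entries on the target, while squaring and multiplication are $\PGL_2$-equivariant.

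Granting this, the push-pull formula gives
\[
\operatorname{im}(H_{r*})=\operatorname{im}(H_{r*}\,\mu_{r*})=\operatorname{im}(H_{1*}\,\rho_*)\subseteq\operatorname{im}(H_{1*})
\]
with $\Z_{(2)}$-coefficients. Combining this with the $\Z[1/2]$-statement of Lemma~\ref{lemma: Hr inverting 2} and using that each Chow group is finitely generated---so that an element contained in a subgroup both after inverting $2$ and after localizing at $2$ already lies in it integrally---yields $\operatorname{im}(H_{r*})\subseteq I_S+\operatorname{im}(H_{1*})$. The steps I expect to require the most care are the degree computation showing $\mu_r$ has odd degree (which is what forces the reduction to land in $\operatorname{im}(H_{1*})$ rather than something larger) and the bookkeeping of the two $\mu_2$-actions needed to see that $\rho$ descends to the quotient stacks; once the polynomial degrees are matched the factorization itself is purely formal.
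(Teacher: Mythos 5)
Your proof is correct and takes essentially the same route as the paper: reduce to $\Z_{(2)}$-coefficients via Lemma~\ref{lemma: Hr inverting 2}, then use the multiplication map $(\P^{r-1}\times\P^1)^2\times(\P^{g+1-2r})^2 \to (\P^r)^2\times(\P^{g+1-2r})^2$, which has odd degree $r^2$ and whose composite with $H_r$ factors through $H_1$. Your explicit map $\rho$ and the degree count simply spell out what the paper leaves implicit in the phrase ``factors through $H_1$.''
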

	
	\begin{proof}
		By Lemma~\ref{lemma: Hr inverting 2}, it is enough to show the inclusion after tensoring with $\otimes_{\Z} \Z_{(2)}$. In this case, the composite
		\[
		\begin{tikzcd}
			\left[\frac{(\P^{r-1} \times \P^1)^2 \times (\P^{g+1 - 2r})^2}{\mu_2 \times \PGL_2}\right] \arrow[r] & \left[\frac{(\P^r)^2 \times (\P^{g+1 - 2r})^2}{\mu_2 \times \PGL_2}\right] \arrow[r,"H_r"] & \left[\frac{(\P^{g+1})^2}{\mu_2 \times \PGL_2}\right],
		\end{tikzcd}
		\]
		where the first arrow is induced by the multiplication map, factors through $H_{1}$. Since this first map has odd degree, the induced pushforward is surjective on Chow groups after tensoring with $\Z_{(2)}$. This concludes.
	\end{proof}

	\subsubsection{Computation of $H_{1*}$}
	
	The main result of this subsection is the following.
	
	\begin{proposition}\label{prop: pushforward H1}
		We have $\operatorname{im}(H_{1*}) \subseteq I_{S}+ I_{M}^{\leq 2}$.
	\end{proposition}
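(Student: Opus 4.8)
The plan is to reduce everything to $\Z_{(2)}$-coefficients and then split $\operatorname{im}(H_{1*})$ into a ``transfer part'' landing in $I_S$ and a ``fixed-locus part'' landing in $I_M^{\leq 2}$. By Lemma~\ref{lemma: Hr inverting 2} the image of $H_{1*}$ is already contained in $I_S$ after inverting $2$, so I work throughout with $\Z_{(2)}$-coefficients; here the degree-$2$ cover $\psi_1^H$ is no longer surjective on Chow groups, and the $\mu_2$-action must be tracked carefully rather than averaged away.

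The main structural input I would establish is that $\CH^*_{\mu_2\times\PGL_2}(\P^1\times\P^1\times\P^{g-1}\times\P^{g-1})$ is generated, as a module over $\CH^*(B(\mu_2\times\PGL_2))$, by three kinds of classes: (i) transfer classes $\psi^H_{1*}(\alpha)$; (ii) classes $\iota_*(\beta)$ pushed forward from the $\mu_2$-fixed diagonal $\mathcal{F}=\{(h,h,f,f)\}\cong\P^1\times\P^{g-1}$ along its inclusion $\iota$; and (iii) $\gamma$-multiples of the previous two. This follows from the excision sequence for $\iota$ together with the observation that $\mu_2$ acts freely off $\mathcal{F}$, so on the open complement Lemma~\ref{lemma: push-pull psi} presents every class as a norm plus a $\gamma$-multiple. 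Since $\operatorname{im}(H_{1*})$ is generated as an ideal by the images of such module generators (push-pull), it then suffices to treat each of the three types separately.

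For type (i), I would use that the $\PGL_2$-equivariant lift $\widetilde{H}_1\colon(h,l,f,g)\mapsto(h^2f,l^2g)$ factors as $F_1\circ\widetilde{\rho}$, where $\widetilde{\rho}(h,l,f,g)=(h,f,l^2g)$ and $F_1\colon(h,f,G)\mapsto(h^2f,G)$ is the map from \eqref{eqn: transfer Fr}; hence $\operatorname{im}(\widetilde{H}_{1*})\subseteq\operatorname{im}(F_{1*})$. The commutative transfer square $H_1\circ\psi^H_1=\psi_{g+1}\circ\widetilde{H}_1$ gives $H_{1*}\psi^H_{1*}=\psi_{g+1*}\widetilde{H}_{1*}$, so each transfer class maps into $\psi_{g+1*}(\operatorname{im}F_{1*})=\operatorname{im}(S_{1*}\phi_{1*})\subseteq I_S$. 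For type (ii), the restriction $H_1\circ\iota$ sends $(h,f)$ to $(h^2f,h^2f)=M_2(h^2,f,f)$, so it factors $\mu_2\times\PGL_2$-equivariantly through $M_2$ via the squaring-and-diagonal map $(h,f)\mapsto(h^2,f,f)$; thus its image lies in $\operatorname{im}(M_{2*})\subseteq I_M^{\leq2}$. Finally, type (iii) is automatic: as $\gamma$ on the source is the pullback $H_1^*\gamma$, push-pull gives $H_{1*}(\gamma\cdot{-})=\gamma\cdot H_{1*}({-})$, and $I_S+I_M^{\leq2}$ is a $\gamma$-stable ideal.

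I expect the main obstacle to be the structural claim of the second paragraph, namely that transfer classes and fixed-diagonal pushforwards, together with their $\gamma$-multiples, really do generate the equivariant Chow ring over $\Z_{(2)}$. Making this precise requires a careful analysis of the excision sequence for $\iota$ and of the $\mu_2$-torsor on the free locus, keeping track of the Euler class of the normal bundle of $\mathcal{F}$ so that the fixed-locus contribution is matched correctly with $M_2$; this is exactly the place where the non-invertibility of $2$ blocks the simpler argument used in Lemma~\ref{lemma: Hr inverting 2}.
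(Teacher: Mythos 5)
Your reduction of the ``transfer part'' to $I_S$ (via the factorization $\widetilde H_1 = F_1\circ\widetilde\rho$ and the transfer square) and of the fixed-locus part to $\operatorname{im}(M_{2*})$ is fine, and matches the easy cases of the paper's proof. But the structural claim in your second paragraph --- that over $\Z_{(2)}$ every equivariant class on the locus where $\mu_2$ acts freely is a norm $\psi^H_{1*}(\alpha)$ plus a $\gamma$-multiple --- is false, and it is false exactly at the classes that carry all the difficulty. The quotient of the free locus contains (an open substack of a vector bundle over) $BH_{1,1}$, and by Proposition~\ref{prop: Chow classifying Hl1l2} and Corollary~\ref{cor: Chow2 of H11} its Chow ring has additional module generators of the form $\phi^*(\epsilon)\,\pi_1^*(\beta_2)^i$: concretely, $\pi_1^*(\beta_2)$ (the second Chern class of the rank-$2$ bundle obtained by descending $\cO(-1)\boxplus\cO(-1)$ along the swap) satisfies $\psi_*(x_1x_2)=2\pi_1^*(\beta_2)$, so only its double is a norm; it is not a $\gamma$-multiple since it survives pullback along $\psi$ (which kills $\gamma$); and it cannot come from the fixed locus $\mathcal{F}$, which has codimension $g\geq 3$ while $\pi_1^*(\beta_2)$ has degree $2$. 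Lemma~\ref{lemma: push-pull psi} only gives $f^*f_*(\alpha)=\alpha+\mu^*(\alpha)$, i.e.\ that the cokernel of the norm is $2$-torsion --- with $\Z_{(2)}$-coefficients that is not enough. Handling $H_{1*}$ of precisely these $\beta_2$-type classes is the technical core of the paper's argument (Case (b) of its proof), which requires establishing the decomposition $H_1^*(\beta_2)=g^*(\beta_2)+\alpha$ of Equation~\eqref{eqn: H1*(beta2)} and a delicate analysis using Corollary~\ref{cor: Chow2 of H11}; none of this is subsumed by your three types of generators.

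A second, related gap: your list of module generators does not contain the fundamental class. In degree $0$ the norms span only $2\Z_{(2)}$, and $\gamma$-multiples and $\iota_*$-classes live in positive degree, so $1$ is not in your span and $H_{1*}(1)$ is never treated. This class genuinely needs a separate argument: in the paper it is Lemma~\ref{lemma: pushforward H1(1)}, proved not by any module-generation statement but by pulling back the class of the stratum $\mathsf{sing}_2$ of the singular locus along the multiplication map $m$ and identifying the resulting cycle decomposition $m^*([\mathsf{sing}_2])=\alpha_{(2,0,0)}+\alpha_{(1,1,0)}+\alpha_{(0,2,0)}+H_{1*}(1)$. So both missing pieces --- the $\beta_2$-classes and the fundamental class --- are exactly where the non-invertibility of $2$ bites, and your proposal as written collapses to the cases that were already easy.
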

	
	We first show that $H_{1*}(1)$ lies in $I_{S}+ I_M^{\leq 2}$ separately.
	
	\begin{lemma}\label{lemma: pushforward H1(1)}
		We have $H_{1*}(1) \in \mathrm{im}(S_{1*})+(M_{1*}(1),M_{1*}(\tau),M_{2*}(1))$.
	\end{lemma}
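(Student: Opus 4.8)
The plan is to prove the stronger statement that $H_{1*}(1)$ already lies in $\operatorname{im}(S_{1*})$. First I would identify the class geometrically. Let $\mathsf{sing} \subseteq \P(W_{\frac{g+1}{2}})$ denote the discriminant hypersurface, i.e. the reduced locus of singular degree $g+1$ forms; its class is $[\mathsf{sing}] = 2g\,\xi_{g+1}$, since the discriminant of a binary form of degree $g+1$ has degree $2g$ in the coefficients. The morphism $b\colon \P^1 \times \P^{g-1} \to \P(W_{\frac{g+1}{2}})$, $(h,f) \mapsto h^2 f$, is birational onto $\mathsf{sing}$, so $H_1$ (which is $b\times b$ descended to $\mu_2$) is birational onto $[\,\mathsf{sing}\times\mathsf{sing}/\mu_2\,]$, and hence $H_{1*}(1) = [\,\mathsf{sing}\times\mathsf{sing}/\mu_2\,]$. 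Writing $\mathcal{E} := \mathrm{pr}_1^*\cO(\mathsf{sing}) \oplus \mathrm{pr}_2^*\cO(\mathsf{sing})$ for the rank $2$ bundle on $[\,\P(W_{\frac{g+1}{2}})^2/\mu_2\times\PGL_2\,]$ on which $\mu_2$ swaps the two summands, the subscheme $\mathsf{sing}\times\mathsf{sing}$ is cut out, as a regular zero locus, by the $\mu_2$-invariant section whose two components are the pulled-back discriminant sections. Thus $H_{1*}(1) = c_2(\mathcal{E})$.

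Next I would compute $c_2(\mathcal{E})$ integrally. Pulling back along the double cover $\psi_{g+1}$ of \eqref{eqn: def psi} and using \eqref{eqn: relation beta and x}, one finds $\psi_{g+1}^* c_2(\mathcal{E}) = (2g\,\xi_{g+1}^{(1)})(2g\,\xi_{g+1}^{(2)}) = 4g^2 x_1 x_2 = \psi_{g+1}^*(4g^2\beta_2)$. By the remark following Lemma~\ref{lemma: image psi*} the kernel of $\psi_{g+1}^*$ is the ideal $(\gamma)$, whose degree $2$ part is $\langle \gamma^2\rangle \cong \Z/2\Z$ (using that $\CH^1$ is generated by $\beta_1$ and $\gamma$, cf. Lemma~\ref{lem: Chow alternative presentation Dg+1g+1mu2}, together with the relation $\gamma(\beta_1+\gamma)=0$). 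Hence $c_2(\mathcal{E}) = 4g^2\beta_2 + \varepsilon\,\gamma^2$ for some $\varepsilon \in \{0,1\}$, and the whole difficulty is to pin down this $2$-torsion coefficient, which the $\PGL_2$-equivariant (equivalently, rational) computation cannot detect. I expect this to be the main obstacle.

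To determine $\varepsilon$, I would restrict along the map $\iota\colon B\mu_2 \to [\,\P(W_{\frac{g+1}{2}})^2/\mu_2\times\PGL_2\,]$ classifying a point on the diagonal of $\P(W_{\frac{g+1}{2}})^2$ (which is fixed by the swap) with trivial $\PGL_2$-structure. Over this point the two summands of $\mathcal{E}$ coincide and are exchanged by $\mu_2$, so $\iota^*\mathcal{E}$ is the regular representation of $\mu_2$, giving $\iota^* c_2(\mathcal{E}) = 0$; the same reasoning applied to $\beta_2 = c_2(V)$, which corresponds under \eqref{eqn: relation beta and x} to the swap bundle $\mathrm{pr}_1^*\cO(1)\oplus\mathrm{pr}_2^*\cO(1)$, gives $\iota^*\beta_2 = 0$, while $\iota^*\gamma^2 = \gamma^2 \neq 0$ in $\CH^2(B\mu_2)$. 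Therefore $\varepsilon\,\gamma^2 = 0$, so $\varepsilon = 0$ and $H_{1*}(1) = 4g^2\beta_2$. Finally, by Lemma~\ref{lemma: IS} we have $4g\beta_2 = S_{1*}\phi_{1*}(\xi_{g+1}) \in \operatorname{im}(S_{1*})$, whence $H_{1*}(1) = g\cdot(4g\beta_2) \in \operatorname{im}(S_{1*})$, which is contained in the ideal appearing in the statement. The crux is thus Step 2, the integral identification of $c_2(\mathcal{E})$ ruling out the $\gamma^2$-contribution, for which the $B\mu_2$-restriction is the decisive tool.
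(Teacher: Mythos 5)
Your proof is correct, and it takes a genuinely different route from the paper's --- in fact it proves more than the statement asks: the exact identity $H_{1*}(1)=4g^2\beta_2=g\,S_{1*}\phi_{1*}(\xi_{g+1})$, hence $H_{1*}(1)\in\operatorname{im}(S_{1*})$ alone. The paper instead recycles the setup of the proof of Lemma~\ref{lem: M2r i=0}: for the flat multiplication map $m\colon[(\P^{g+1})^2/\mu_2\times\PGL_2]\to[\P^{2g+2}/\PGL_2]$ it writes $m^*([\mathsf{sing}_2])=\alpha_{(2,0,0)}+\alpha_{(1,1,0)}+\alpha_{(0,2,0)}+H_{1*}(1)$, where the left-hand side is already known to lie in $\operatorname{im}(S_{1*})+(M_{1*}(1),M_{1*}(\tau))$ via Di Lorenzo's discriminant computations, and the three $\alpha$'s are respectively $M_{2*}(1)$, an element of $\operatorname{im}(S_{1*})$, and $S_{2*}(1)$; solving for $H_{1*}(1)$ gives the (weaker) membership. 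The trade-off: the paper's argument is four lines long given machinery it needs anyway for the $M_{r*}$'s, while yours is self-contained and exact, and your $B\mu_2$-restriction at a diagonal point is a markedly cheaper way to kill the torsion ambiguity $\gamma^2$ than the arguments the paper deploys for the analogous ambiguities in $H_{2r*}(1)$ (Lemmas~\ref{lemma: aux div plus pullback} and~\ref{lemma: aux H2r(1)}); on the other hand your method is special to $H_1$, since only there is the image $\mathsf{sing}\times\mathsf{sing}$ a complete intersection of two invariant divisors, making the pushforward a second Chern class. One step you should state more carefully: the remark after Lemma~\ref{lemma: image psi*} computes $\ker(\psi^*)=(\gamma)$ on \emph{classifying} stacks, whereas you need $\ker(\psi_{g+1}^*)$ in degree $2$ on the quotient stacks of $(\P^{g+1})^2$; this is legitimate, but only because all additional relations in Lemma~\ref{lem: Chow alternative presentation Dg+1g+1mu2} (namely $j_*(1)$, $j_*(x_1)$, $c_{\mathrm{top}}$, and likewise the projective-bundle relations on the $\PGL_2$-side) sit in degrees $\geq g+2>2$, so that in degree $2$ both Chow rings coincide with those of $B(G\times\PGL_2)$ and $B(\Gm^2\times\PGL_2)$ --- the paper proves the much harder statement in degrees up to $g+2$ in Lemma~\ref{lemma: aux H2r(1)}, which you do not need. (A last cosmetic point: since $x_i=-\xi_{g+1}^{(i)}$, the bundle realizing $\beta_2$ is $\mathrm{pr}_1^*\cO(-1)\oplus\mathrm{pr}_2^*\cO(-1)$ with the swap, not $\cO(1)$'s; this changes nothing in your restriction argument.)
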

	\begin{proof}
		Let $m : [\P^{g+1} \times \P^{g+1} / \mu_2 \times \PGL_2 ] \to [\P^{2g+2} /\PGL_2]$, $\mathsf{sing}$, $\mathsf{sing}_k$, and $\alpha_{\underline{d}}$ be as in the proof of Lemma \ref{lem: M2r i=0}. Then, we have
		$$
		\mathrm{im}(S_{1*})+(M_{1*}(1),M_{1*}(\tau)) \ni m^*([\mathsf{sing}^2])= \alpha_{(2,0,0)} + \alpha_{(1,1,0)} + \alpha_{(0,2,0)} +H_{1*}(1)
		$$
		and thus $H_{1*}(1) \in \mathrm{im}(S_{1*})+(M_{1*}(1),M_{1*}(\tau),M_{2*}(1))$, as $\alpha_{(2,0,0)}=M_{2*}(1)$, $\alpha_{(1,1,0)}\in\mathrm{im}(S_{1*})$, and $\alpha_{(0,2,0)}=S_{2*}(1)\in\mathrm{im}(S_{1*})$.
	\end{proof}
	\begin{proof}[Proof of Proposition \ref{prop: pushforward H1}]
		The map
		\begin{equation*}
			\begin{tikzcd}
				g: \bigg[ \frac{(\P^1)^2 \times (\P^{g-1})^2}{ \mu_2 \times \PGL_2} \bigg] \cong \bigg[ \frac{(\P^1)^2 \times (W_{\frac{g-1}{2}} \otimes V^\vee \smallsetminus Z_{g-1})}{ G \times \PGL_2} \bigg] \subseteq \bigg[ \frac{(\P^1)^2 \times (W_{\frac{g-1}{2}} \otimes V^\vee )}{ G \times \PGL_2} \bigg]  \arrow[r] &  \bigg[ \frac{(\P^1)^2}{ G \times \PGL_2} \bigg]
			\end{tikzcd}
		\end{equation*}
		exhibits the domain of $H_1$ as an open in a vector bundle over $[(\P^1)^2 / G \times \PGL_2]$. Also, we have an open/closed decomposition 
		\begin{equation*}
			\begin{tikzcd}
				\bigg[\frac{ \Delta }{G \times \PGL_2} \bigg] \subset  \bigg[\frac{ \P^1\times\P^1}{G \times \PGL_2} \bigg] \supset   \bigg[\frac{ (\P^1\times\P^1) \smallsetminus \Delta}{G \times \PGL_2} \bigg] \cong B H_{1,1},
			\end{tikzcd}
		\end{equation*}
		where $\Delta \subseteq \P^1 \times \P^1$ denotes the diagonal and $H_{1,1}= (\Gm \times \Gm^{\times 2}) \rtimes \mu_2$ is the group introduced in \S\ref{sec: groups H}. The last isomorphism is obtained by looking at the stabilizer of any point of $\P^1\times\P^1 \smallsetminus \Delta$; for instance, taking the point $(X,Y)$ then $\mu_2$ corresponds to the pair of matrices 
		\begin{equation*}
			(A, [B])= \bigg( \begin{pmatrix}
				0 & 1 \\
				1 & 0 
			\end{pmatrix},
			\ \
			\bigg[ 
			\begin{pmatrix}
				0 & 1 \\
				1 & 0 
			\end{pmatrix}
			\bigg] \bigg)\in G\times\PGL_2.
		\end{equation*}
		
		We have obtained a commutative diagram
		\begin{equation}\label{eqn: diagram with map g*}
			\begin{tikzcd}
				\CH^*_{G \times \PGL_2}(\Delta) \arrow[r] \arrow[d] & \CH^*_{G \times \PGL_2}( (\P^1)^2) \arrow[d, "g^*" ] \arrow[r] &\CH^*(BH_{1,1}) \arrow[r] &0\\
				\CH^*_{\mu_2 \times \PGL_2}(\Delta \times (\P^{g-1})^2)  \arrow[r,"i_*"] \arrow[dr] & \CH^*_{\mu_2 \times \PGL_2}( (\P^1)^2 \times (\P^{g-1})^2) \arrow[d,"H_{1*}"] & & \\
				&  \CH^*_{\mu_2 \times \PGL_2}( (\P^g)^2) & &
			\end{tikzcd}
		\end{equation}
		where the top row is exact and the vertical arrows from the first to the second row are surjective. Note that $H_{1*} \circ i_*$ factors through $M_{1*}$, and recall that, by Proposition~\ref{prop: Chow classifying Hl1l2}, the group $\CH^*(B H_{1,1})$ is additively generated by classes in $\operatorname{im}(\psi_*)$ and classes of the form $\phi^*(\epsilon) \pi_{1}^*(\beta_2)^i$ (see \S\ref{sec: groups H} for the notation). It suffices therefore to show that the image of $H_{1*} \circ g^*$ applied to lifts of such classes is in $I_{S}+ I_M^{\leq 2}$.
		
		\emph{Case (a): Lifts of the classes in $\operatorname{im}(\psi_*)$.}
		
		In this case, we observe that there is a commutative diagram 
		
		\begin{equation*}
			\begin{tikzcd}
				\CH^*_{\Gm^2 \times \PGL_2}( (\P^1)^2) \arrow[r] \arrow[d, "\psi'_*"] & \CH^*_{\Gm^2 \times \PGL_2}((\P^1)^2 \smallsetminus \Delta) \cong \CH^*(B \Gm^3) \arrow[d,"\psi_*"]\\
				\CH^*_{G \times \PGL_2}( (\P^1)^2)  \arrow[r] &\CH^*(BH_{1,1})
			\end{tikzcd}
		\end{equation*}
		where the top horizontal arrow is surjective. It is therefore enough to show that $\operatorname{im}(H_{1*} \circ g^* \circ \psi'_*) \subseteq I_{S}+I_M^{\leq 2}$. However, $H_{1*} \circ g^* \circ \psi'_*$ is equal to the pullback under
		\[
		\begin{tikzcd}
			\bigg[ \frac{(\P^1)^2 \times (\P^{g-1})^2}{ \PGL_2} \bigg] \cong \bigg[ \frac{(\P^1)^2 \times (W_{\frac{g-1}{2}} \otimes V^\vee \smallsetminus Z_{g-1})}{ \Gm^2 \times \PGL_2} \bigg] \subseteq \bigg[ \frac{(\P^1)^2 \times (W_{\frac{g-1}{2}} \otimes V^\vee )}{ \Gm^2 \times \PGL_2} \bigg]  \arrow[r] & \bigg[ \frac{(\P^1)^2}{ \Gm^2 \times \PGL_2} \bigg]
		\end{tikzcd}
		\]
		followed by the pushforward $\psi_* \circ F_{1*} \circ G_{1*} $ and thus lies in $I_{S}$ by diagram \eqref{eqn: transfer Fr}.
		
		\emph{Case (b): Lifts of the classes of the form $\phi^*(\epsilon) \pi_{1}^*(\beta_2)^i$.} 
		
		Recall that, by \cite[Theorem 3.10]{CL24}, we have
		$$
		\CH^*(B(\Gm \rtimes \mu_2))= \frac{\Z[c_2,\gamma]}{(2 \gamma)}
		$$
		where $c_2$ is pulled back from $B(\Gm \rtimes \mu_2)\to B\PGL_2$. In particular, we may assume $\epsilon= c_2^k \gamma^j$ for some $k,j \geq 0$. Since $\phi^*(\gamma)= \pi_1^*(\gamma)$, we can write
		$$
		g^*(\text{lift of }(\phi^*(\epsilon) \pi_{1}^*(\beta_2)^i)) = g^*(c_2^k \gamma^j  \beta_2^i ).
		$$
		We claim that
		\begin{equation}\label{eqn: H1*(beta2)}
			H_1^*(\beta_2) = g^*(\beta_2)+ \alpha
		\end{equation}
		for some $\alpha= \alpha_1+\alpha_2+\alpha_3$ where $\alpha_1 \in \operatorname{im}(g^* \circ \psi'_*)$, $\alpha_2 $ is a polynomial in $c_2$ and $\gamma$ and $\alpha_3 \in \operatorname{im}(i_*)$. Assume the claim for a moment. Then, by the push-pull formula, we have
		\begin{align*}
			\begin{split}
				H_{1*}( g^*(\text{lift of }(\phi^*(\epsilon) \pi_1^*(\beta_2)^i)))&= H_{1*}(g^*(c_2^k \gamma^j  \beta_2^i )) \\
				&= H_{1*}(g^*(c_2^k \gamma^j) \cdot (H_1^*(\beta_2)-\alpha)^i) \\
				&= H_{1*}(H_1^*(c_2^k \gamma^j) \cdot (H_1^*(\beta_2)-\alpha)^i) \\
				&=c_2^k \gamma^j H_{1*}((H_1^*(\beta_2)-\alpha)^i)
			\end{split}
		\end{align*}
		Since $H_{1*}(H_1^*(\beta_2)^y \alpha^x) = \beta_2^y \cdot H_{1*}(\alpha^x)$, it is then enough to check that
		\[
		H_{1*}(\alpha_1^{x_1} \alpha_2^{x_2} \alpha_3^{x_3}) \in I_S + I_M^{\leq 2} \quad \text{for all } x_1, x_2, x_3 \geq 0.
		\]
		If $x_3 > 0$, this follows from the fact that $\operatorname{im}(i_*)$ is an ideal and that $H_{1*} \circ i_*$ factors through $M_{1*}$. If $x_3 = 0$ and $x_1 > 0$, then since $g^*$ is surjective, $\operatorname{im}(g^* \circ \psi'_*)$ is also an ideal, and the result follows from Case (a) above. If instead $x_1 = x_3 = 0$, then this follows from the push-pull formula again and Lemma \ref{lemma: pushforward H1(1)}.

		Finally, we establish the existence of a decomposition as in Equation \eqref{eqn: H1*(beta2)}. It suffices to do so modulo $\operatorname{im}(i_*)$ and $\gamma^2$. In other terms, by Corollary \ref{cor: Chow2 of H11}, it is enough to prove such a decomposition after pulling back $( j^{\PGL_2})^* \circ (\psi^H_1)^*$, where $j^{\PGL_2}$ and $\psi_1^H$ are the maps in the commutative square 
		
		\begin{equation*}
			\begin{tikzcd}
				\bigg[ \frac{((\P^1)^2 \smallsetminus \Delta) \times (\P^{g-1})^2}{\PGL_2} \bigg] \arrow[hookrightarrow, r, "j^{\PGL_2} "] \arrow[d, " (\psi_1^H)' "] &\bigg[ \frac{(\P^1)^2 \times (\P^{g-1})^2}{\PGL_2} \bigg] \arrow[r, "H_1^{\PGL_2}"] \arrow[d, "\psi^H_1 "] & \bigg[ \frac{(\P^{g+1})^2}{\PGL_2}  \bigg] \cong \bigg[ \frac{W_{\frac{g+1}{2}} \otimes V^\vee \smallsetminus Z_{g+1}}{\Gm^2 \times \PGL_2}\bigg] \arrow[d, "\psi"]\\
				\bigg[ \frac{((\P^1)^2 \smallsetminus \Delta) \times (\P^{g-1})^2}{\mu_2 \times \PGL_2} \bigg] \arrow[hookrightarrow, r, "j "]&\bigg[ \frac{(\P^1)^2 \times (\P^{g-1})^2}{\mu_2 \times \PGL_2} \bigg]  \arrow[r, "H_1" ] & \bigg[ \frac{(\P^{g+1})^2}{\mu_2 \times \PGL_2} \bigg] \cong \bigg[ \frac{W_{\frac{g+1}{2}} \otimes V^\vee \smallsetminus Z_{g+1}}{\Gm^2 \rtimes \mu_2 \times \PGL_2}\bigg]
			\end{tikzcd}
		\end{equation*}
		Indeed, the top left vertical map can be identified with the restriction of
		\begin{equation*}
			\begin{tikzcd}
				\bigg[ \frac{(\P^1)^2 \smallsetminus \Delta) \times (V^\vee \otimes W_{\frac{g-1}{2} })}{ \Gm^2 \times \PGL_2} \bigg]  \arrow[r] & \bigg[ \frac{((\P^1)^2 \smallsetminus \Delta) \times (V^\vee \otimes W_{\frac{g-1}{2} })}{ G \times \PGL_2} \bigg]
			\end{tikzcd}
		\end{equation*}
		on the open $((\P^1)^2 \smallsetminus \Delta) \times (V^\vee \otimes W_{\frac{g-1}{2}}\smallsetminus Z_{g-1})$, and in turn this is obtained as a vector bundle base change of 
		\begin{equation*}
			\begin{tikzcd}
				\psi: B \Gm^3 \cong \bigg[ \frac{(\P^1)^2 \smallsetminus \Delta) )}{ \Gm^2 \times \PGL_2} \bigg]  \arrow[r] & \bigg[ \frac{((\P^1)^2 \smallsetminus \Delta) }{ G \times \PGL_2} \bigg] \cong B H_{1,1}
			\end{tikzcd}
		\end{equation*}
		Since $Z_{g-1}$ has codimension $g$ in $V^\vee \otimes W_{\frac{g-1}{2}}$ and $g>2$, Corollary \ref{cor: Chow2 of H11} implies that $\ker(((\psi_1^H)')^*) \cap \CH^2(B\Gm^3) = \langle \gamma^2 \rangle$.
		
		Therefore, it is enough to show that $(H_1^{\PGL_2})^*(x_1x_2)$ has a decomposition as in Equation~\eqref{eqn: H1*(beta2)}, eventually after pullback along $j^{\PGL_2}$.    
		Because $\PGL_2$ has no non-trivial characters, we have 
		$$
		(H^{\PGL_2}_1)^*(x_1 x_2)=(\tau_1+\xi_{g-1}^{(1)})(\tau_2+\xi_{g-1}^{(2)})= \tau_1 \tau_2 + \xi_{g-1}^{(1)} \tau_2 + \xi_{g-1}^{(2)} \tau_1 + \xi_{g-1}^{(1)} \xi_{g-1}^{(2)}
		$$
		where $\tau_i=c_1^{\PGL_2}(\cO_{\P^1}(2))$ lie on the $i$-th projective line for $i=1,2$, and similarly $\xi_{g-1}^{(i)}= c_1^{\PGL_2}(\cO_{\P^{g-1}}(1))$. Moreover,
		$$
		\xi_{g-1}^{(1)} \tau_2 + \xi_{g-1}^{(2)} \tau_1= -x_1 \tau_2 -x_2 \tau_1= {(\psi_1^H)}^* {(\psi_1^H)}_*(-x_1 \tau_2)= {(\psi_1^H)}^*g^*( \psi'_*(-x_1\tau_2)) \in {(\psi_1^H)}^*( \operatorname{im}(g^* \circ \psi_*'))
		$$
		where the second equality uses Lemma \ref{lemma: push-pull psi}, and
		$$
		\xi_{g-1}^{(1)} \xi_{g-1}^{(2)}= g^*(\beta_2).
		$$
		Finally, $\tau_1 \tau_2+c_2$ restricts to $0$ on the complement $B \Gm$ of $[\Delta/\PGL_2] \subseteq  [(\P^1)^2/\PGL_2]$ by \cite[page 5]{Vez98}, and we are done.
	\end{proof}
	
	\subsubsection{Computation of $H_{2r*}(1)$ }
	
	In this subsection we prove the following.
	
	\begin{proposition}\label{prop: H2r(1)}
		For $r=1,\ldots,\lfloor (g+1)/4\rfloor$ we have $H_{2r*}(1) \in I_{S}+I_{M}^{\leq 2}$.
	\end{proposition}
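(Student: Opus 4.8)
\section*{Proof proposal for Proposition~\ref{prop: H2r(1)}}

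The plan is to compute $H_{2r*}(1)$ \emph{independently} of $M_{4r*}(1)$, so as to break the circular dependence between these two classes that appears in the decomposition of $m^*(\mathsf{sing}_{4r})$ used in Lemma~\ref{lem: M2r i=0}. The mechanism that makes an independent computation possible is the evenness of $2r$: it allows one to pass to $\mu_2\times\GL_3$-counterparts (Remark~\ref{rmk: GL3 counterparts of Chow envelope}), and on the $\PGL_2$-cover it means that $\P^{2r}=\P(W_r)$ is a genuine $\PGL_2$-projectivization. I would first introduce the squaring--multiplication map
\[
\sigma\colon \P^{2r}\times\P^{g+1-4r}\longrightarrow \P\!\left(W_{\tfrac{g+1}{2}}\right),\qquad (h,f)\mapsto h^2 f,
\]
which is defined and birational onto its image precisely because $r\le\lfloor (g+1)/4\rfloor$, and set $a:=\sigma_*(1)\in\CH^*_{\PGL_2}(\P^{g+1})$, the fundamental class of the locus of forms divisible by the square of a degree-$2r$ form.

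Next I would observe that the $\PGL_2$-version of $H_{2r}$ factors as $(h,l,f,g)\mapsto(h,f,l^2 g)\mapsto (h^2 f,\,l^2 g)$, i.e.\ as an intermediate map followed by $F_{2r}$. Since $F_{2r}$ is birational onto $\operatorname{Im}(\sigma)\times\P^{g+1}$ we have $F_{2r*}(1)=\pi_1^* a$, and the projection formula then gives
\[
\big(H_{2r}^{\PGL_2}\big)_*(1)=\pi_1^* a\cdot\pi_2^* a .
\]
Equivalently this is the relative K\"unneth decomposition over $B\PGL_2$, available because $B\PGL_2$ satisfies CKP by Proposition~\ref{prop: CKP for some classifying stacks}. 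I would then compute $a$ explicitly by factoring $\sigma$ through the squaring map $\P^{2r}\to\P^{4r}$ and the multiplication $\P^{4r}\times\P^{g+1-4r}\to\P^{g+1}$, applying the pushforward formulas of~\cite{DL18} and~\cite{CLI} that already underlie the $S_r$- and $M_r$-computations; by diagram~\eqref{eqn: transfer Fr} one has the useful consistency $\psi_{g+1*}(\pi_1^* a)=S_{2r*}(1)\in I_S$.

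The square comparing the $\PGL_2$- and $\mu_2\times\PGL_2$-quotients of $(\P^{g+1})^2$, equipped with the maps $H_{2r}^{\PGL_2}$ and $H_{2r}$, is cartesian, so base change along the double cover $\psi_{g+1}$ yields $\psi_{g+1}^*\!\big(H_{2r*}(1)\big)=\pi_1^* a\cdot\pi_2^* a$. This class is $\mu$-invariant, and using $\psi_{g+1}^*\beta_1=x_1+x_2$, $\psi_{g+1}^*\beta_2=x_1 x_2$ it descends to an explicit symmetric polynomial $\tilde A\in\Z[\beta_1,\beta_2,c_2,c_3]$ — the second elementary symmetric function of the two copies of $a$. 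Because $\ker\psi_{g+1}^*=(\gamma)$ (the remark following Lemma~\ref{lemma: image psi*}), this determines $H_{2r*}(1)$ up to a $\gamma$-term, and the full class, including its $2$-torsion part, is obtained by running the same computation $\mu_2\times\Gm^3$-equivariantly on the counterpart, where $\gamma$ is carried throughout. It then remains to verify that the resulting class lies in $I_S+I_M^{\le2}$; I would do this by expanding $\tilde A$ in the explicit generators of $I_S$ (Lemma~\ref{lemma: IS}) and of $I_M^{\le2}$ (Lemmas~\ref{lem: M2 computation low degree classes} and~\ref{lem: M1 computation low degree classes}), the identity $2\tilde A=S_{2r*}(1)^2-\psi_{g+1*}\!\big(\pi_1^*(a^2)\big)$ serving to organize the expansion. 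As a check, after inverting $2$ the whole class lands in $I_S$, recovering Lemma~\ref{lemma: Hr inverting 2}.

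The hard part will be the final identification: producing $a$ in closed form and matching the descended product against concrete generators of $I_S+I_M^{\le2}$ is a polynomial computation whose complexity grows with $r$, and it must be carried out integrally, so the $2$-primary and $\gamma$-bookkeeping (passing from the congruence modulo $(\gamma)$ to an honest membership) requires genuine care rather than a naive combination of the $\Z[1/2]$- and $\Z_{(2)}$-statements. The evenness of $2r$ is used essentially twice — to guarantee both the existence of the $\mu_2\times\GL_3$-counterpart and the birationality of $\sigma$ in the range $r\le\lfloor (g+1)/4\rfloor$ — which is exactly why the statement is restricted to even index and to this range of $r$.
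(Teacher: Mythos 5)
Your starting point agrees with the paper's: the square comparing the $\PGL_2$- and $\mu_2\times\PGL_2$-quotients is indeed cartesian, so $\psi_{g+1}^*\big(H_{2r*}(1)\big)=F_{2r*}(1)\cdot G_{2r*}(1)=\pi_1^*a\cdot\pi_2^*a$, and your identity $2\tilde A=S_{2r*}(1)^2-\psi_{g+1*}\big(\pi_1^*(a^2)\big)$ is a correct consequence of Lemma~\ref{lemma: push-pull psi}. But from there the proposal has two genuine gaps. The first is the step you yourself call ``the hard part'': computing $a=\sigma_*(1)$ in closed form and matching the descended product against generators of $I_S+I_M^{\leq2}$ is an unbounded polynomial computation that you do not carry out, and nothing in the proposal indicates how it would terminate for general $r$. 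The paper's proof of Lemma~\ref{lemma: psi* H2r(1)} avoids this entirely: by \cite[Lemma 6.7]{DL18} one writes $F_{2r*}(1)=\theta_1 f_1+\theta_2 f_2$ with $f_1=F_{1*}(1)$, $f_2=F_{1*}(\tau)$ and \emph{unknown} coefficients $\theta_i$; then $G_{2r*}(1)=\mu^*F_{2r*}(1)$, the diagonal terms $f_ig_i\,\theta_i\mu^*(\theta_i)$ are handled because $\theta_i\mu^*(\theta_i)$ is $\mu^*$-invariant, hence in $\operatorname{im}(\psi_{g+1}^*)$ by Lemma~\ref{lemma: image psi*}, while $f_1g_1$ and $f_2g_2$ are small explicit classes, and the cross terms are absorbed by a transfer through $S_1$ via the projection formula. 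No closed form for $a$ is ever needed, and the dependence on $r$ disappears.

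The second, more serious gap is the descent from the $\PGL_2$-cover. Your argument determines $H_{2r*}(1)$ only modulo $\ker(\psi_{g+1}^*)$ (equivalently, your identity gives $2\tilde A=2H_{2r*}(1)$, which pins the class down only up to $2$-torsion), and you offer no mechanism to remove this ambiguity: ``running the same computation $\mu_2\times\Gm^3$-equivariantly, carrying $\gamma$ throughout'' is precisely what is hard, since the $\Gm^3$-counterpart trick simplifies the $\PGL_2$-direction but not the $\mu_2$-direction, where the torsion lives. Note also that your appeal to the remark after Lemma~\ref{lemma: image psi*} for $\ker(\psi_{g+1}^*)=(\gamma)$ is misplaced: that remark concerns the map of classifying stacks $B(\Gm^2\times\PGL_2)\to B(G\times\PGL_2)$, and transferring it to the quotients of $(\P^{g+1})^2$ is exactly the content of the degree-restricted Lemma~\ref{lemma: aux H2r(1)}, whose proof (the monic-polynomial/Gauss-lemma induction) is nontrivial. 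The paper resolves the ambiguity by a $2$-divisibility argument: it proves $H_{2r*}(1)\in(2)$ (Lemma~\ref{lemma: H2r(1) in (2)}, itself using \cite[Lemma 6.8]{DL18}, Lemma~\ref{lemma: aux H2r(1)}, and the claim $\Phi_{2r}^*(\beta_2)=2\beta_2$), observes that every element of $\operatorname{im}(S_{1*})+(M_{1*}(1),M_{1*}(\tau))$ is divisible by $2$, and then applies the key Lemma~\ref{lemma: aux div plus pullback}: a class of degree $\leq 2g+3$ that is divisible by $2$ and killed by $\psi_{g+1}^*$ vanishes. Without analogues of these three lemmas your proposal proves the statement only after inverting $2$, which is already Lemma~\ref{lemma: Hr inverting 2}; the integral claim remains open.
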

	
	The proof is rather long, so we first explain the strategy.
	
	For $r=1,\ldots, \lfloor (g+1)/4 \rfloor$, let 
	\[
	\begin{tikzcd}
		\Phi_{2r}: \bigg[ \frac{V^\vee \otimes W_r \smallsetminus Z_{2r}} {G \times \PGL_2} \bigg] \cong \bigg[ \frac{(\P^{2r})^2} {\mu_2 \times \PGL_2} \bigg] \arrow[r] & \bigg[ \frac{(\P^{4r})^2}{\mu_2 \times \PGL_2} \bigg] \cong \bigg[ \frac{V^\vee \otimes W_{2r} \smallsetminus Z_{4r}} {G \times \PGL_2} \bigg]
	\end{tikzcd}
	\]
	be the squaring map. Notice that $H_{2r}$ factors through $\Phi_{2r} \times \mathrm{id}: [ (\P^{2r})^2 \times (\P^{g+1-4r})^2 /\mu_2 \times \PGL_2] \to [(\P^{4r})^2 \times (\P^{g+1-4r})^2 /\mu_2 \times \PGL_2]$. We will show that:
	
	\begin{itemize}
		\item ${\Phi_{2r}}_*(1)$ (and thus also $H_{2r*}(1)$) is $2$ divisible (see Lemma \ref{lemma: H2r(1) in (2)});
		\item and $\psi^* H_{2r*} (1) \in \psi^*\big( \operatorname{im}(S_{1*}) + (M_{1*}(1),M_{1*}(\tau) ))$ (see Lemma \ref{lemma: psi* H2r(1)})
	\end{itemize}
	
	An application of the next lemma, together with Lemmas \ref{lemma: IS} and \ref{lem: M1 computation low degree classes}, will then yield Proposition \ref{prop: H2r(1)}, whose proof is presented at the end of the subsection.
	
	\begin{lemma}\label{lemma: aux div plus pullback}
		Let $\alpha \in \CH^k_{\mu_2 \times \PGL_2}((\P^{g+1})^2)$ with $k \leq 2g+3$. Assume that $\alpha=0$ modulo $2$ and that $\psi_{g+1}^*(\alpha)=0 \in \CH^k_{\PGL_2}((\P^{g+1})^2)$. Then $\alpha=0$.
	\end{lemma}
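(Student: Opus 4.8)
The plan is to prove the stronger assertion that $\ker(\psi_{g+1}^*)\cap 2R=0$ in degrees $\le 2g+3$, where $R:=\CH^*_{\mu_2\times\PGL_2}((\P^{g+1})^2)$; the two hypotheses say precisely that $\alpha\in\ker(\psi_{g+1}^*)$ and $\alpha\in 2R$. First I would identify the kernel. By the same argument as in the Remark following Lemma~\ref{lemma: image psi*} (where $\ker(\psi^*)=(\gamma)$ is established for the classifying-space map, using $\psi^*\gamma=0$ and injectivity of $\psi^*$ on the complementary $\GL_2$-part), combined with $\psi_{g+1}^*\gamma=0$ and the description of $\operatorname{im}(\psi_{g+1}^*)$ as the $\mu^*$-invariants in Lemma~\ref{lemma: image psi*}, one obtains $\ker(\psi_{g+1}^*)=(\gamma)$. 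Thus $\alpha=\gamma\delta$ for some $\delta$, and it remains to show $\gamma R\cap 2R=0$ in the given range, equivalently that the reduction $\gamma R\to R/2R$ is injective.

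Second, I would make the range explicit. By Lemma~\ref{lem: Chow alternative presentation Dg+1g+1mu2}, $R=S/(c_{\mathrm{top}}(V^\vee\otimes W_{\frac{g+1}{2}}),\,j_*(1),\,j_*(x_1))$ with $S=\CH^*(B(G\times\PGL_2))$; since $c_{\mathrm{top}}$ lies in degree $2g+4$, in degrees $\le 2g+3$ the only discriminant relations are $j_*(1)$ and $j_*(x_1)$ — this is exactly where the hypothesis $k\le 2g+3$ enters. Reducing modulo $2$ gives $\bar S:=S/2S\cong \Z/2[\beta_1,\beta_2,\gamma,c_2,c_3]/(\gamma(\beta_1+\gamma))$ and, in this range, $R/2R=\bar S/\bar N$, where $\bar N$ is the image of $N:=(j_*(1),j_*(x_1))$. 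The crucial input is that $j_*(1),j_*(x_1)\in\operatorname{im}(\psi_*)=(2,\beta_1+\gamma)$, by Lemma~\ref{lem: image pushforward psi} together with the factorization $j=\psi_{g+1}\circ\widetilde{j}$; this forces $\bar N\subseteq(\beta_1+\gamma)\bar S$.

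Third comes the decisive computation. In $\bar S$ the classes $\gamma$ and $\beta_1+\gamma$ are two distinct degree-one irreducible forms in the polynomial ring $\Z/2[\beta_1,\beta_2,\gamma,c_2,c_3]$, hence coprime, so the ideals they generate satisfy $(\gamma)\cap(\beta_1+\gamma)=(\gamma(\beta_1+\gamma))=0$ in $\bar S$. Since $\bar N\subseteq(\beta_1+\gamma)\bar S$, this gives $\gamma\bar S\cap\bar N=0$, i.e. the ideal $\gamma\bar S$ injects into $R/2R$. Now $\gamma\bar S$ is spanned over $\Z/2$ by the monomials $\gamma^m\beta_2^b c_2^e c_3^f$ with $m\ge 1$ (using $\gamma\beta_1=\gamma^2$ to eliminate $\beta_1$), and $\gamma R$ is the $\Z/2$-span of the same monomials viewed in $R$ (all $2$-torsion, as $2\gamma=0$). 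The reduction $\gamma R\to R/2R$ carries these generators to their linearly independent images in $\gamma\bar S$, so it is injective; hence $\gamma R\cap 2R=0$ and $\alpha=\gamma\delta\in\gamma R\cap 2R$ must vanish.

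The routine parts are the mod-$2$ reductions and the identification of the monomial bases of $\gamma S$ and $\bar S$, which follow from the computations of $\CH^*(BG)$ and $\CH^*(B\PGL_2)$ recalled earlier together with the K\"unneth isomorphism of Proposition~\ref{prop: CKP for some classifying stacks}. The main obstacle is the mod-$2$ detection step: one must check that $\gamma$-multiples survive in $R/2R$, which is precisely what the coprimality of $\gamma$ and $\beta_1+\gamma$ in $\bar S$ delivers, once one knows $\bar N\subseteq(\beta_1+\gamma)\bar S$ — that is, that the discriminant relations $j_*(1),j_*(x_1)$ lie in $\operatorname{im}(\psi_*)$. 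The other point requiring care is establishing $\ker(\psi_{g+1}^*)=(\gamma)$ in the projective (rather than classifying-space) setting, though this parallels the cited Remark.
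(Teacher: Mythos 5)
Your second step is sound, and it is genuinely different from what the paper does at the corresponding point: the facts $j_*(1),\,j_*(x_1)\in\operatorname{im}(\psi_*)=(2,\beta_1+\gamma)$ (Lemma~\ref{lem: image pushforward psi} together with $j=\psi_{g+1}\circ\widetilde{j}$), the coprimality of $\gamma$ and $\beta_1+\gamma$ in $\Z/2[\beta_1,\beta_2,\gamma,c_2,c_3]/(\gamma(\beta_1+\gamma))$, and the monomial-basis argument do prove that $\gamma R\cap 2R=0$ in degrees $\le 2g+3$, with the degree bound entering exactly where you say (to discard $c_{\mathrm{top}}$). Had you also had your first step, this would give a cleaner mechanism than the paper's explicit polynomial manipulation.

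The genuine gap is the first step, $\ker(\psi_{g+1}^*)=(\gamma)$. This is not ``parallel to the cited Remark'': the Remark computes the kernel for the map of \emph{classifying} stacks, where the target is a polynomial ring and the statement is a triviality about symmetric functions. In the projective setting both rings carry excision relations --- $(c_{\mathrm{top}},j_*(1),j_*(x_1))$ upstairs (Lemma~\ref{lem: Chow alternative presentation Dg+1g+1mu2}) and the two monic degree-$(g+2)$ projective-bundle relations $p_1,p_2$ downstairs --- so $\ker(\psi_{g+1}^*)$ is the preimage of the ideal $(2c_3,p_1,p_2)$, which could a priori be strictly larger than $(\gamma)$. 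Identifying it amounts to showing that, in the relevant degrees, every \emph{symmetric} element of $(2c_3,p_1,p_2)$ lies in the ideal generated by $2c_3$, $p_1+p_2$, $x_1p_1+x_2p_2$ with symmetric coefficients; this is a delicate $2$-torsion problem, and it is precisely the content of the paper's Lemma~\ref{lemma: aux H2r(1)}, whose leading-coefficient argument establishes the kernel statement only in degrees $\le 2r+1$, i.e.\ $\le g+2$ for $(\P^{g+1})^2$ --- half the range $\le 2g+3$ that you need. Knowing $\operatorname{im}(\psi_{g+1}^*)$ (Lemma~\ref{lemma: image psi*}) does not determine the kernel, and no counting argument is supplied. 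Note that the paper's own proof is structured exactly so as to \emph{avoid} any kernel computation: it lifts $\alpha$ to the polynomial ring over the $\Gm^3$-counterpart and uses \emph{both} hypotheses at once, writing $2\theta=\gamma\theta_1+2t_2t_3(t_2+t_3)\theta_2+\widetilde{j_*(1)}\theta_3+\widetilde{j_*(x_1)}\theta_4$ and then applying Gauss's lemma to the mod-$2$ reduction (here $k\le 2g+3$ guarantees $\deg\theta_3,\deg\theta_4<g+2$) to force $\theta_3,\theta_4$, and hence $\gamma\theta_1$, to be divisible by $2$. So either you must prove your kernel claim up to degree $2g+3$ --- which is not available in the paper and does not look easier than the lemma itself --- or you must restructure so that the $2$-divisibility of $\alpha$ is exploited \emph{before} any kernel identification, as the paper does. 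As written, the proposal is incomplete.
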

	
	\begin{proof}
		Observe that 
		$$
		\bigg[ \frac{\P^{g+1} \times \P^{g+1}}{ \mu_2\times\PGL_2} \bigg] \cong \bigg[ \frac{V_{\frac{g+1}{2}} \otimes V^\vee \smallsetminus Z }{G \times \GL_3} \bigg] 
		$$
		where $Z:=V_{\frac{g+1}{2}} \times 0 \sqcup 0 \times V_{\frac{g+1}{2}} \xhookrightarrow{j} V_{\frac{g+1}{2}} \otimes V^\vee$. By~\cite[Proposition 3.6]{EG98},~\cite[page 2]{EF09}, we have an injection 
		$$
		\CH_{\mu_2\times\PGL_2}^*((\P^{g+1})^2)\cong \CH_{G\times\GL_3}^*(V_{\frac{g+1}{2}} \otimes V^\vee \smallsetminus Z)\hookrightarrow\CH_{G \times \Gm^3}^*(V_{\frac{g+1}{2}} \otimes V^\vee \smallsetminus Z)
		$$
		given by the maximal torus $\Gm^3 \subseteq \GL_3$. As in the proof of Lemma~\ref{lem: Chow alternative presentation Dg+1g+1mu2},
		\[
		\CH_{G \times \Gm^3}^*(V_{\frac{g+1}{2}} \otimes V^\vee \smallsetminus Z) \cong \frac{\Z[\beta_1,\beta_2,\gamma,t_2,t_3]}{(2\gamma,\gamma(\beta_1+\gamma),2t_2t_3(t_2+t_3), c_{2g+4}( V_{\frac{g+1}{2}} \otimes V^\vee),  j_*(1),j_*(x_1))},
		\]
		
		Since by assumption $\alpha$ is in degree smaller that $2g+4$, we will ignore $c_{2g+4}( V_{\frac{g+1}{2}} \otimes V^\vee)$ in the sequel.
		
		The pullback $\psi^*$ (this time $\psi$ denotes the base change by $V_{\frac{g+1}{2}} \otimes V^\vee \smallsetminus Z$ of $B (\Gm^2 \times \Gm^3) \to B(G \times \Gm^3)$ ) amounts to setting $\gamma=0$, $\beta_1= x_1 + x_2$, $\beta_2=x_1x_2$ and 
		$$
		j_{*}(1)= p_1 + p_2,\qquad j_*(x_1)= x_1 p_1 +x_2 p_2
		$$
		where $p_1,p_2 \in \Z[x_1,x_2,t_2,t_3]$ should be thought of as the monic polynomials of degree $g+2$ that define the projective bundle relations of $\CH_{\Gm^3}^*(\P(V_{(g+1)/2}))$. Indeed, we can factor $j = \psi \circ \widetilde{j}$, as in Equation \eqref{eqn: j factorization}, and compute
		\[
		\psi^* j_*(1) = \psi^* \psi_* (\widetilde{j}_*(1)) = \psi^* \psi_* (p_1) = p_1 + p_2,
		\]
		where the last equality follows from Lemma~\ref{lemma: push-pull psi}. Similarly, we have $ j_*(x_1) = x_1 p_1 + x_2 p_2$.
		
		Now, let $\alpha$ be as in the statement, and let $\widetilde{\alpha}$ be a lift of it in $\Z[\beta_1,\beta_2,\gamma,t_2,t_3]$. By assumption, we can choose it to be divisible by 2, hence $\widetilde{\alpha}=2\theta$ for some $\theta \in \Z[\beta_1,\beta_2,\gamma,t_2,t_3]$. Moreover, there exist $\theta_1,\theta_2,\theta_3,\theta_4\in\Z[\beta_1,\beta_2,\gamma,t_2,t_3]$ such that
		\begin{equation}\label{eq: alphatilde}
			2\theta=\widetilde{\alpha}=\gamma\theta_1+2t_2t_3(t_2+t_3)\theta_2+\widetilde{j_*(1)}\theta_3+\widetilde{j_*(x_1)}\theta_4
		\end{equation}
		where $\widetilde{j_*(1)}$ and $\widetilde{j_*(x_1)}$ are lifts of the respective classes $j_*(1)$ and $j_*(x_1)$. Up to modifying $\theta_1$, we may assume that no monomial in $\theta_3$ and $\theta_4$ is divisible by $\gamma$. To conclude that $\alpha=0$, it is enough to show that 2 divides $\theta_1$.
		Passing modulo $\gamma$ and setting $\beta_1=x_1+x_2$ and $\beta_2= x_1 x_2$, we get 
		\[
		2\overline{\theta}=2t_2t_3(t_2+t_3)\overline{\theta}_2+(p_1 +p_2)\overline{\theta}_3+(x_1 p_1+x_2 p_2)\overline{\theta}_4\in\Z[\beta_1,\beta_2,t_2,t_3],
		\]
		It follows that
		\[
		(\overline{\theta}_3+x_1\overline{\theta}_4)p_1+(\overline{\theta}_3+x_2\overline{\theta}_4)p_2\in2\Z[\beta_1,\beta_2,t_2,t_3].
		\]
		Since $\overline{\theta}_3,\overline{\theta}_4$ have smaller degree than the degree of the monic polynomials $p_1,p_2$, Gauss Lemma implies that both $\overline{\theta}_3+x_1\overline{\theta}_4$ and $\overline{\theta}_3+x_2\overline{\theta}_4$ are divisible by 2. In turn, this implies that $\overline{\theta}_3$ and $\overline{\theta}_4$ are both divisible by 2. Since $\gamma$ does not divide any monomial in $\theta_3$ or $\theta_4$, $2$ divides $\theta_1$, as wanted.
	\end{proof}
	
	We will need a further auxiliary lemma. Recall that
	\[
	\begin{tikzcd}
		\psi_{r}: \bigg[ \frac{(\P^{r})^2}{ \PGL_2} \bigg] \arrow[r] & \bigg[ \frac{(\P^{r})^2}{\mu_2 \times \PGL_2} \bigg] 
	\end{tikzcd}
	\]
	denotes the base change of $B \PGL_2 \to B(\mu_2 \times \PGL_2)$. 
	
	\begin{lemma}\label{lemma: aux H2r(1)}
		For $r=1,\ldots, (g+1)/2$ we have
		$$
		\mathrm{ker}(\psi_{2r}^*) \cap \CH^{\leq 2r+1}_{\mu_2 \times \PGL_2}( (\P^{2r})^2)= (\gamma) \cap \CH^{\leq 2r+1}_{\mu_2 \times \PGL_2}( (\P^{2r})^2).
		$$
	\end{lemma}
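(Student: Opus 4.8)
The plan is to compare the explicit presentations of the two rings and argue degree by degree, exploiting the fact that in the range $*\le 2r+1$ only the lowest-codimension relations are visible. First I would record that the presentation of Lemma~\ref{lem: Chow alternative presentation Dg+1g+1mu2} holds verbatim with $(g+1)/2$ replaced by $r$: writing $R=\CH^*(B(G\times\PGL_2))=\Z[\beta_1,\beta_2,\gamma,c_2,c_3]/(2\gamma,\gamma(\beta_1+\gamma),2c_3)$, one has
$$\CH^*_{\mu_2\times\PGL_2}((\P^{2r})^2)\cong R/(c_{\mathrm{top}}(V^\vee\otimes W_r),\,j_*(1),\,j_*(x_1)),$$
where $j\colon Z_{2r}\hookrightarrow V^\vee\otimes W_r$ is the inclusion of the codimension-$(2r+1)$ locus. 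Since $\deg j_*(1)=2r+1$, $\deg j_*(x_1)=2r+2$ and $\deg c_{\mathrm{top}}=4r+2$, in the range $*\le 2r+1$ the only relation beyond those of $R$ is $j_*(1)$. On the other side $\CH^*_{\PGL_2}((\P^{2r})^2)\cong R'/(P_1,P_2)$ with $R'=\Z[x_1,x_2,c_2,c_3]/(2c_3)$, where $P_i=c_{\mathrm{top}}((\chi^{(i)})^{-1}\otimes W_r)$ are the two projective-bundle (Euler) relations, each homogeneous of degree $2r+1$; and $\psi_{2r}^*$ is induced by the ring map $\psi^*\colon R\to R'$ of the Remark following Lemma~\ref{lemma: image psi*}, which sends $\beta_1\mapsto x_1+x_2$, $\beta_2\mapsto x_1x_2$, $\gamma\mapsto 0$, $c_i\mapsto c_i$, has kernel exactly $(\gamma)$, and has image the $\mu^*$-invariant (symmetric) subring.

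The inclusion $(\gamma)\subseteq\ker\psi_{2r}^*$ is immediate from $\psi_{2r}^*(\gamma)=0$, so the content is the reverse inclusion. I would take a homogeneous $\alpha\in\ker\psi_{2r}^*$ of degree $d\le 2r+1$ and lift it to $\tilde\alpha\in R_d$; then $\psi^*(\tilde\alpha)\in(P_1,P_2)\cap R'_d$. If $d\le 2r$ this intersection is zero (both $P_i$ have degree $2r+1$), so $\psi^*(\tilde\alpha)=0$ and $\tilde\alpha\in\ker\psi^*=(\gamma)$, finishing this case. The only remaining case is $d=2r+1$, where degree forces $\psi^*(\tilde\alpha)=n_1P_1+n_2P_2$ with $n_1,n_2\in R'_0=\Z$. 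Here I would use that $\psi^*(\tilde\alpha)$ lies in the image of $\psi^*$, hence is $\mu^*$-invariant, together with $\mu^*P_1=P_2$, to obtain $(n_1-n_2)(P_1-P_2)=0$ in $R'$; comparing the coefficient of the monomial $x_1^{2r+1}$ (whose coefficient in $P_1-P_2$ is $\pm1$ and which is not killed by the relation $2c_3$) forces $n_1=n_2=:n$. Recalling from the proof of Lemma~\ref{lemma: aux div plus pullback} that $\psi^*(j_*(1))=P_1+P_2$, I conclude $\psi^*(\tilde\alpha-n\,j_*(1))=0$, hence $\tilde\alpha-n\,j_*(1)\in(\gamma)$; since $j_*(1)$ is one of the relations we quotient by, this gives $\alpha\in(\gamma)$ in $\CH^*_{\mu_2\times\PGL_2}((\P^{2r})^2)$.

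The main obstacle is the top degree $d=2r+1$: this is exactly the degree where the relation $j_*(1)$ and the Euler classes $P_1,P_2$ first appear, so the naive argument (``$\psi^*$ injective modulo $(\gamma)$'') fails and one genuinely needs the symmetry input. The two facts that make it go through are the identification $\psi^*(j_*(1))=P_1+P_2$ and the non-vanishing of the leading coefficient of $P_1-P_2$; both are harmless but must be pinned down. The restriction $*\le 2r+1$ in the statement is precisely what keeps the higher relations $j_*(x_1)$ and $c_{\mathrm{top}}$ (and the products $x_iP_j$) out of range, which would otherwise spoil the clean comparison.
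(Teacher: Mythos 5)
Your proof is correct, and it rests on the same skeleton as the paper's: identify $\psi_{2r}^*$, in degrees $\leq 2r+1$, with the explicit map of presented rings, and then use the three facts that $\ker\big(\psi^*\colon \CH^*(B(G\times\PGL_2))\to\CH^*(B(\Gm^2\times\PGL_2))\big)=(\gamma)$, that the image of $\psi^*$ is $\mu^*$-invariant, and that $\psi^*(j_*(1))=P_1+P_2$. Where you genuinely diverge is in the execution of the key step. The paper does not argue degree by degree: it takes an element of the kernel of the model map, writes its image as $a_1p_1+a_2p_2$ with \emph{polynomial} coefficients $a_i$, proves by induction on the $x_1$-degree a Claim that the $a_i$ can be taken $\mu^*$-conjugate, then shows through an explicit telescoping identity that every such symmetric combination lies in $(p_1+p_2,\,x_1p_1+x_2p_2)$, and finally descends to the ideal $(\psi^*j_*(1),\psi^*j_*(x_1))$ via the extension-contraction property $\widetilde{I}\cap\CH^*(B(\GL_2\times\PGL_2))=I$. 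You instead impose the bound $d\leq 2r+1$ from the outset, which collapses the coefficients $a_i$ to integers: in degrees $\leq 2r$ the ideal $(P_1,P_2)$ is invisible and $\ker\psi^*=(\gamma)$ finishes immediately, while in the single remaining degree $2r+1$ the $\mu^*$-invariance of $\operatorname{im}(\psi^*)$ together with the unit leading coefficient of $P_1-P_2$ (which cannot be killed by $2c_3$) forces $n_1=n_2$, and then $\psi^*(j_*(1))=P_1+P_2$ closes the argument. Your route buys a much shorter proof with no polynomial manipulation and no need for the descent through $\CH^*(B(\GL_2\times\PGL_2))$; the paper's route buys a stronger intermediate statement (the kernel of the model ring map is $(\gamma)$ whenever the coefficients $a_i$ have degree $<2r+1$, i.e.\ well beyond degree $2r+1$), which is more than the lemma as stated requires. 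Both arguments are complete; yours is essentially the minimal one for the given degree range.
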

	\begin{proof}
		We can identify the maps $\psi_{2r}$ with 
		\[
		\begin{tikzcd}
			\psi_{2r}: \bigg[  \frac{V^\vee \otimes W_r \smallsetminus Z_{2r}}{\Gm^2 \times \PGL_2}\bigg] \arrow[r] & \bigg[  \frac{V^\vee \otimes W_r \smallsetminus Z_{2r}}{G \times \PGL_2}\bigg]
		\end{tikzcd}
		\]
		given by the inclusion of groups $\Gm^2 \subseteq G$. As in the proof of the previous lemma, up to degree $2r+1$, the induced pullback $\psi_{2r}^*$ can be identified with the ring homomorphism  
		\[
		\frac{\Z[\beta_1,\beta_2,\gamma,c_2,c_3]}{(2c_3,2 \gamma, \gamma(\gamma+\beta_1),j_*(1),j_*(x_1))} \longrightarrow \frac{\Z[x_1,x_2,c_2,c_3]}{(2c_3, p_1, p_2)}
		\]
		sending $\gamma \mapsto 0$, $\beta_1 \mapsto x_1+x_2$, $\beta_2 \mapsto x_1 x_2$ and $j_*(1) \mapsto p_1+p_2, j_*(x_1) \mapsto x_1 p_1 +x_2 p_2$. Here $j$ denotes the inclusion $Z_{2r} \hookrightarrow V^\vee \otimes W_{r}$. We wish to show that the kernel of the above morphism is $(\gamma)$. Let $x$ be an element in the kernel and let $\tilde x $ be a lift in $\CH^*(B(G \times \PGL_2))$ and $\widetilde y$ its image in $\CH^*(B (\Gm^2 \times \PGL_2))$.
		Write 
		$$
		\tilde y = a_1 p_1 +a_2 p_2
		$$
		for some $a_1,a_2 \in \CH^*(B (\Gm^2 \times \PGL_2))$. Recall that  the pullback along $\psi:B(\Gm^2 \times \PGL_2) \to B(G\times \PGL_2)$ has kernel $(\gamma)$ and image $\Z[x_1+x_2,x_1x_2,c_2,c_3]/(2c_3)$, which coincides with the image of the pullback along $B(\Gm^2\times\PGL_2)\rightarrow B(\GL_2\times\PGL_2)$. Let $I$ be the ideal $(p_1+p_2,x_1p_1+x_2p_2)$ in $\CH^*(B(\GL_2\times\PGL_2))$, and let $\widetilde{I}$ be its extension in $\CH^*(B(\Gm^2\times\PGL_2))$. If we show that $\tilde y\in\widetilde{I}$, then we would get
		\[
		\tilde y\in\widetilde{I}\cap\CH^*(B(\GL_2\times\PGL_2))=I=(\psi^*j_*(1),\psi^*j_*(x_1)),
		\]
		which in turn implies $\widetilde{x}\in(\gamma,j_*(1),j_*(x_1))$, hence $x\in(\gamma)$.
		In conclusion, it is enough to show that $\tilde y \in (p_1+p_2, x_1 p_1+x_2 p_2)\subset\CH^*(B(\Gm^2\times\PGL_2))$. 
		
		Recall that $\mu^*$ is the endomorphism of $\CH^*(B \Gm^2 \times \PGL_2)$ swapping $x_1$ and $x_2$. First, we observe the following.
		
		\emph{Claim: we have $\mu^*(a_2)=a_1$ (and conversely).} Indeed, the identity
		$$
		\mu^*(a_1) p_2 + \mu^*(a_2) p_1 =\mu^*(\tilde y)= \tilde y= a_1 p_1 +a_2 p_2
		$$
		implies that the degree $d$ in $x_1$ of $a_1$ (as a polynomial in $x_1$) is the same as the degree in $x_2$ of $a_2$. This uses the fact that $p_i$ has degree $2r+1$ in $x_i$ (and degree $0$ in $x_j$ for $j \neq i$) and each $a_i$ have total degree at most $2r<2r+1$. The proof of the claim is by induction on $d$:
		\begin{itemize}
			\item when $d=0$ we have
			$$
			a_1=\mathrm{Coeff}(a_1p_1+a_2 p_2, x_1^{2r+1})= \mu^* \mathrm{Coeff}(a_1p_1+a_2 p_2, x_2^{2r+1})= \mu^*(a_2,x_2^d);
			$$
			\item when $d>0$ instead, then
			$$
			\mathrm{Coeff}(a_1,x_1^d)= \mathrm{Coeff}(a_1p_1+a_2 p_2, x_1^{2r+1+d})= \mu^* \mathrm{Coeff}(a_1p_1+a_2 p_2, x_2^{2r+1+d})= \mu^*(\mathrm{Coeff}(a_2))
			$$
			Call the coefficient above $c\in \Z[x_2,c_2,c_3]/(2 c_3)$. Then, 
			$$
			\tilde y ':= a_1 p_1 +a_2 p_2 - (c x_1^d p_1 + \mu^*(c) x_2^d p_2)= (a_1 - c x_1^d) p_1 + (a_2- \mu^*(c) x_2^d) p_2
			$$
			satisfies $\mu^*(a_1 - c x_1^d)= (a_2- \mu^*(c) x_2^d)$ by the inductive step and thus also $\mu^*(a_1)=a_2$ as desired.
		\end{itemize}
		
		Having established the claim, we show that if $a_1= c_2^{x} c_3^{y} x_1^{i_1} x_{2}^{i_2}$ and $a_2= c_2^{x} c_3^{y} x_2^{i_1} x_1^{i_2}$ then $a_1 p_1 +a_2 p_2 \in (p_1+p_2, x_1 p_1+x_2 p_2)$. Clearly, we may assume $x=y=0$. Also, up to factorizing some power of $x_1x_2$ we may assume that either $i_1=0$ or that $i_2=0$.
		
		If $i_1=0$, then we have
		$$
		x_2^{i} p_1 +x_1^{i}p_2 - \bigg[ x_2^i (p_1+p_2) - \sum_{\substack{1\leq j\leq i, \\ j \ \text{odd}}} x_2^{i-j}x_1^{j-1} (x_1p_1+x_2p_2) + \sum_{\substack{1 \leq j \leq i, \\ j \ \text{even}}} x_2^{i-j}x_1^{j-1} (x_2p_1 +x_1 p_2) \bigg]
		$$
		is equal to $x_1^i(p_1+p_2)\in(p_1+p_2,x_1p_1+x_2p_2)$ when $i$ is odd and to $0$ when $i$ is even. Noticing that
		$$
		x_2 p_1 +x_1 p_2= (x_1+x_2) (p_1+p_2) -(x_1 p_1+x_2 p_2)\in(p_1+p_2,x_1p_1+x_2p_2)
		$$
		we are done in this case. The case $i_2=0$ is completely analogous, by swapping $p_1$ and $p_2$ everywhere. All together, this shows that $\widetilde{y}\in(p_1+p_2,x_1p_1+x_2p_2)$, concluding the proof.
	\end{proof}
	
	We are ready to prove that $\Phi_{2r*}(1) \in(2)$.
	
	\begin{lemma}\label{lemma: H2r(1) in (2)}
		For $r=1,\ldots,\lfloor (g+1)/4\rfloor$ we have $\Phi_{2r*}(1) \in(2)$.
	\end{lemma}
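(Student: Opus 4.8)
The plan is to peel off the swap-$\mu_2$ by passing to the $\PGL_2$-equivariant double cover, reduce to divisibility for a \emph{single} squaring map (which is transparent), and then transfer the result back across the $\mu_2$-torsor; the only genuinely delicate point is a residual piece of $2$-torsion.

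\textbf{Reduction to a single squaring.} Let $\phi\colon \P^{2r}=\P(W_r)\to \P^{4r}=\P(W_{2r})$, $[f]\mapsto[f^2]$, be the $\PGL_2$-equivariant squaring map, so that $\Phi_{2r}$ is the descent of $\phi\times\phi$ to the $\mu_2$-quotients. Because $\phi\times\phi$ commutes with the swap, the square with vertical maps the $\mu_2$-torsors $\psi_{2r}$ and $\psi_{4r}$ and horizontal maps $\phi\times\phi$ and $\Phi_{2r}$ is cartesian. Base change along it gives
\[
\psi_{4r}^*\,\Phi_{2r*}(1)=(\phi\times\phi)_*\,\psi_{2r}^*(1)=(\phi\times\phi)_*(1)=\phi_*(1)\boxtimes\phi_*(1).
\]

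\textbf{The single squaring is divisible by $2$.} Since the coefficients of $f^2$ are quadratic in those of $f$, we have $\phi^*\xi_{4r}=2\xi_{2r}$. Writing $\pi$ for the structure maps to $B\PGL_2$, the projection formula yields, for every $j\ge 0$,
\[
\pi_*\!\big(\phi_*(1)\cdot\xi_{4r}^{\,j}\big)=\pi_{\P(W_r)*}\!\big(\phi^*\xi_{4r}^{\,j}\big)=2^{\,j}\,\pi_{\P(W_r)*}\!\big(\xi_{2r}^{\,j}\big)\in 2\,\CH^*(B\PGL_2)\quad(j\ge 1),
\]
and the $j=0$ term vanishes for degree reasons. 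Now $\CH^*_{\PGL_2}(\P(W_{2r}))$ is free over $R:=\CH^*(B\PGL_2)$ with basis $1,\xi_{4r},\dots,\xi_{4r}^{4r}$, and the intersection pairing $\langle\xi_{4r}^{\,i},\xi_{4r}^{\,j}\rangle=\pi_*(\xi_{4r}^{\,i+j})$ is anti-triangular with $1$'s on the anti-diagonal, hence unimodular over $R$. As every pairing of $\phi_*(1)$ against the basis lands in $2R$, unimodularity forces $\phi_*(1)\in 2\,\CH^*_{\PGL_2}(\P(W_{2r}))$; write $\phi_*(1)=2\eta$. (This pairing argument is the cleanest route: it gives honest divisibility by $2$ without worrying about the $c_3$-torsion in $R$.)

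\textbf{Transfer and the remaining obstacle.} By the previous step $\psi_{4r}^*\Phi_{2r*}(1)=4\,\eta\boxtimes\eta$. Applying $\psi_{4r*}$ and using $\psi_{4r*}(1)=2$ (Lemma~\ref{lem: image pushforward psi} and base change) gives $2\,\Phi_{2r*}(1)=4\,\psi_{4r*}(\eta\boxtimes\eta)$. Setting $T:=\Phi_{2r*}(1)-2\,\psi_{4r*}(\eta\boxtimes\eta)$ we get $2T=0$, and since $\eta\boxtimes\eta$ is $\mu^*$-invariant, $\psi_{4r}^*T=4\,\eta\boxtimes\eta-2(1+\mu^*)(\eta\boxtimes\eta)=0$. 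As $\Phi_{2r*}(1)$ sits in codimension $4r\le 4r+1$, Lemma~\ref{lemma: aux H2r(1)} (applied with index $4r=2\cdot 2r$, which is legitimate precisely because $r\le\lfloor(g+1)/4\rfloor$ forces $2r\le(g+1)/2$) identifies $\ker\psi_{4r}^*$ with $(\gamma)$ in this range, so $T\in(\gamma)$ and $\Phi_{2r*}(1)=2\,\psi_{4r*}(\eta\boxtimes\eta)+T$. The main obstacle is exactly to show $T=0$: it is a $2$-torsion class living in the ideal $(\gamma)$, which is invisible to $\psi_{4r}^*$, so no pullback to a torus can detect it, and divisibility by $2$ of a cycle class is genuinely not seen by $\psi_{4r}^*$ alone. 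I would kill $T$ by detecting the $\gamma$-component directly: restrict along the section $s\colon (\Gm\rtimes\mu_2)\times\PGL_2\hookrightarrow G\times\PGL_2$ given by the swap matrix together with the anti-diagonal torus, under which $\gamma\mapsto\gamma$ while $\beta_2$ survives (so that $s^*$ is injective on the torsion ideal $(\gamma)$ in the relevant degrees), and check that the restricted squaring pushforward $s^*\Phi_{2r*}(1)$ has vanishing $\gamma$-torsion; equivalently, one can compute $\Phi_{2r*}(1)$ explicitly through the $\mu_2\times\GL_3$-counterpart of $\Phi_{2r}$ and Di~Lorenzo's formula for the squaring pushforward, and read off that every coefficient is even. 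This final verification that the anti-invariant ($\gamma$) part of the squaring cycle vanishes is where the real work lies.
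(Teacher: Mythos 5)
Your opening two steps are sound and run parallel to the paper's own argument: the cartesian square giving $\psi_{4r}^*\Phi_{2r*}(1)=(\phi\times\phi)_*(1)=\phi_*(1)\boxtimes\phi_*(1)$, your unimodular-pairing argument that $\phi_*(1)\in 2\,\CH^*_{\PGL_2}(\P(W_{2r}))$ (a nice self-contained substitute for the paper's citation of \cite[Lemma~6.8]{DL18}), and the reduction, via Lemma~\ref{lem: image pushforward psi}, Lemma~\ref{lemma: push-pull psi} and Lemma~\ref{lemma: aux H2r(1)}, to a $2$-torsion class $T\in(\gamma)$ with $\Phi_{2r*}(1)=2\,\psi_{4r*}(\eta\boxtimes\eta)+T$. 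This is exactly the paper's decomposition $\Phi_{2r*}(1)=2\theta+\alpha\gamma$. But the content of the lemma is precisely that $T=0$, and this you do not prove; you say so yourself, and what you establish as written is only $\Phi_{2r*}(1)\in(2,\gamma)$, not $\Phi_{2r*}(1)\in(2)$.

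Neither of your two sketched strategies for killing $T$ closes the gap. For the restriction along $s\colon(\Gm\rtimes\mu_2)\times\PGL_2\hookrightarrow G\times\PGL_2$: injectivity of $s^*$ on $(\gamma)$ is plausible at the level of $\CH^*(B(G\times\PGL_2))$, but $T$ lives in the quotient ring $\CH^*_{G\times\PGL_2}(V^\vee\otimes W_{2r}\smallsetminus Z_{4r})\cong\CH^*(B(G\times\PGL_2))/(j_*(1),j_*(x_1),c_{\mathrm{top}})$, so you would need injectivity on the image of $(\gamma)$ there, i.e.\ control of how the relations $j_*(1),j_*(x_1)$ interact with $(\gamma)$ after restriction --- which is exactly the delicate bookkeeping you are trying to avoid; and even granting that, computing $s^*\Phi_{2r*}(1)$ means computing a pushforward equivariantly for a group containing the swap, which is the original difficulty in another guise. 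The second strategy is circular: Di Lorenzo's formulas compute $\phi_*$, i.e.\ the class \emph{after} applying $\psi_{4r}^*$, and one cannot recover $\Phi_{2r*}(1)$ from $\psi_{4r}^*\Phi_{2r*}(1)$ precisely because $\ker\psi_{4r}^*$ contains the $\gamma$-torsion in question. The paper's mechanism for this step is genuinely different and is the real content of the proof: one first shows $\Phi_{2r}^*(\beta_2)=2\beta_2$, so the projection formula gives $\Phi_{2r*}(1)\,\beta_2=\Phi_{2r*}(\Phi_{2r}^*\beta_2)=2\,\Phi_{2r*}(\beta_2)\in(2)$, hence $T\beta_2=\alpha\gamma\beta_2\in(2)\cap\ker\psi_{4r}^*$; Lemma~\ref{lemma: aux div plus pullback} then forces $\alpha\gamma\beta_2=0$ in the quotient ring, and a lifting argument in $\CH^*(B(G\times\PGL_2))$ --- writing $\alpha\gamma\beta_2=\theta_1 j_*(1)+\epsilon j_*(x_1)$ by degree reasons, showing $\epsilon=0$ by comparing coefficients against the monic relations $p_1,p_2$ after applying $\psi^*$, and using $j_*(1)\in(2,\beta_1+\gamma)\subseteq\mathrm{Ann}(\gamma)$ --- yields $\alpha\gamma\beta_2=0$, whence $\alpha\gamma=0$ since $\beta_2$ is a non-zero-divisor. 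Some argument of this multiplicative kind (or a fully worked-out substitute) is indispensable; without it your proposal proves a strictly weaker statement.
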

	\begin{proof}
		From \cite[Lemma 6.8]{DL18}, we immediately have $\psi_{4r}^* \Phi_{2r*}(1) \in (2)$. Thus, Lemma \ref{lemma: aux H2r(1)} implies that $\Phi_{2r*}(1) \in (2, \gamma)$. We may therefore write
		\begin{equation}\label{eqn: H2r(1)}
			\Phi_{2r*}(1) = 2\theta + \alpha \gamma
		\end{equation}
		for some $\theta, \alpha \in \CH^*_{\mu_2 \times \PGL_2}\big( (\P^{4r})^2 \big)$. We will show that $\alpha\gamma = 0$.
		
		We claim that $\Phi_{2r}^*(\beta_2)=2\beta_2$. To show this, first notice that $\psi_{2r}^*\Phi_{2r}^*(\beta_2)=2\beta_2$. Since the degree of $\beta_2$ is $2<2r+1$, this implies, as in lemma \ref{lemma: aux H2r(1)}, that $\Phi_{2r}^*(\beta_2)=2\beta_2+\gamma\cdot\tau$, with $\tau$ a class of degree 1, thus either $\gamma$ or 0. To show that $\tau=0$, consider the pullback along the morphism $B(\mu_2\times\PGL_2)\rightarrow B(G\times\PGL_2)$ induced by the inclusion $\mu_2\subset G$. As the pullback of both $\beta_2$ and $\Phi_{2r}^*(\beta_2)$ is 0, we get $\gamma\cdot\tau=0$ in $\CH^*(B(\mu_2\times\PGL_2))$, thus $\tau\not=\gamma$.
		
		The claim implies that
		\[
		2\theta\beta_2+\alpha\gamma\beta_2=\Phi_{2r*}(1)\beta_2=\Phi_{2r*}(\Phi_{2r}^*(\beta_2))=2\Phi_{2r*}(\beta_2)\in(2)
		\]
		thus $\alpha\gamma\beta_2\in(2)\cap(\gamma)$. It follows from Lemma~\ref{lemma: aux div plus pullback} that $\alpha\gamma\beta_2=0$ in $\CH_{G\times\PGL_2}^*(V^{\vee}\otimes W_{2r}\setminus Z_{4r})\cong\CH^*(B(G\times\PGL_2))/(j_*(1),j_*(x_1),c_{\mathrm{top}}(V^{\vee}\otimes W_{2r}))$. As the degree of $\alpha\gamma\beta_2$ is $4r+2$, we obtain an equality
		\[
		\alpha\gamma\beta_2=\theta_1j_*(1)+\epsilon j_*(x_1),
		\]
		in $\CH^*(B(G\times\PGL_2))\cong\Z[\beta_1,\beta_2,\gamma,c_2,c_3]/(2c_3,2\gamma,\gamma(\beta_1+\gamma))$, with $\theta_1$ of degree 1 and $\epsilon\in\Z$. Taking the pullback along $\psi$ as above, this implies that
		\[
		\psi^*(\theta_1)\cdot(p_1+p_2)+\epsilon(x_1p_1+x_2p_2)=0
		\]
		where $p_1,p_2$ should be thought of as the monic polynomials of degree $4r+1$ that define the projective bundle relations of $\CH_{\PGL_2}^*(\P(W_{2r}))$. By looking at the degree in $x_1$ and $x_2$ separately, the above equation yields $\psi^*(\theta_1)=-\epsilon(x_1+x_2)$, thus $\epsilon(x_2p_1+x_1p_2)=0$, which implies $\epsilon=0$.
		It follows that either $\theta_1=0$ or $\theta_1=\gamma$. By Lemma~\ref{lemma: image psi*}, we have $ j_*(1) \in (2, \beta_1 + \gamma) $, which is contained in the annihilator of $\gamma$, thus $ \gamma \alpha \beta_2 = 0 $ in $\CH^*(B(G \times \PGL_2))$. Since $\beta_2$ is not a zero-divisor in $\CH^*(B(G\times\PGL_2))$, we get that $\gamma\alpha=0$, as desired.
	\end{proof}
	
	\begin{lemma}\label{lemma: psi* H2r(1)}
		Let $ r = 1, \ldots, \left\lfloor \frac{g+1}{4} \right\rfloor $. Then
		\[
		\psi_{g+1}^*\big(H_{2r*}(1)\big) \in \psi_{g+1}^*\big( \operatorname{im}(S_{1*}) + (M_{1*}(1),M_{1*}(\tau) )),
		\]
		where Chow groups are taken with $ \mathbb{Z}_{(2)} $-coefficients.
	\end{lemma}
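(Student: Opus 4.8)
The first step I would take is to trade the $\mu_2\times\PGL_2$-equivariant pushforward for a purely $\PGL_2$-equivariant one. Writing $\psi_{2r}^H\colon[(\P^{2r})^2\times(\P^{g+1-4r})^2/\PGL_2]\to[(\P^{2r})^2\times(\P^{g+1-4r})^2/\mu_2\times\PGL_2]$ for the degree-two cover, the square
\[
\begin{tikzcd}
{[(\P^{2r})^2\times(\P^{g+1-4r})^2/\PGL_2]}\arrow[r,"\psi_{2r}^H"]\arrow[d,"H_{2r}^{\PGL_2}"'] & {[(\P^{2r})^2\times(\P^{g+1-4r})^2/\mu_2\times\PGL_2]}\arrow[d,"H_{2r}"]\\
{[(\P^{g+1})^2/\PGL_2]}\arrow[r,"\psi_{g+1}"] & {[(\P^{g+1})^2/\mu_2\times\PGL_2]}
\end{tikzcd}
\]
is cartesian, being the pullback of the $\mu_2$-torsor $\psi_{g+1}$ along the $\mu_2$-equivariant map $H_{2r}$. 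Since $\psi_{g+1}$ is finite and flat, flat base change yields $\psi_{g+1}^*H_{2r*}(1)=H_{2r*}^{\PGL_2}(1)$, where $H_{2r}^{\PGL_2}(h,l,f,g)=(h^2f,l^2g)$. Observing that $H_{2r}^{\PGL_2}$ is the fibre product over $B\PGL_2$ of the single-factor map $p_{2r}\colon[\P^{2r}\times\P^{g+1-4r}/\PGL_2]\to[\P^{g+1}/\PGL_2]$, $(h,f)\mapsto h^2f$, with itself, and using that $B\PGL_2$ satisfies the Chow–Künneth property (Proposition \ref{prop: CKP for some classifying stacks}), I would conclude
\[
\psi_{g+1}^*H_{2r*}(1)=H_{2r*}^{\PGL_2}(1)=p_{2r*}(1)^{(1)}\cdot p_{2r*}(1)^{(2)},
\]
where the superscripts denote the pullbacks of $p_{2r*}(1)\in\CH^*_{\PGL_2}(\P^{g+1})$ along the two projections.

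Next I would feed in Di Lorenzo's description of the degeneration classes on a single factor. By \cite[Propositions 5.7, 6.1 and Lemma 6.7]{DL18} (the degree $g+1$ analogue of the computation recalled in the proof of Lemma \ref{lem: M2r i=0}), $p_{2r*}(1)$ lies in the ideal of $\CH^*_{\PGL_2}(\P^{g+1})$ generated by
\[
u_1:=2g\,\xi_{g+1}=p_{1*}(1),\qquad u_2:=2\xi_{g+1}^2-\frac{g^2-1}{2}\,c_2 .
\]
The role of these two generators is that they are exactly the ``halves'' of the classes produced by $S_1$: using $\beta_1=-(\xi_{g+1}^{(1)}+\xi_{g+1}^{(2)})$, $\beta_2=\xi_{g+1}^{(1)}\xi_{g+1}^{(2)}$ and the formulas of Lemma \ref{lemma: IS} one checks that $u_1^{(1)}+u_1^{(2)}=\psi_{g+1}^*(S_{1*}\phi_{1*}(1))$ and $u_2^{(1)}+u_2^{(2)}=\psi_{g+1}^*(S_{1*}\phi_{1*}(\tau))$. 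Since $p_{2r*}(1)^{(i)}\in(u_1^{(i)},u_2^{(i)})$ and $p_{2r*}(1)$ is the same class on both factors, the symmetric product $p_{2r*}(1)^{(1)}p_{2r*}(1)^{(2)}$ lies in the ideal generated, with $\mu^*$-invariant coefficients, by the three symmetric products $u_1^{(1)}u_1^{(2)}$, $u_1^{(1)}u_2^{(2)}+u_2^{(1)}u_1^{(2)}$ and $u_2^{(1)}u_2^{(2)}$.

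It then remains to show that each of these products lies in $\psi_{g+1}^*(\operatorname{im}(S_{1*})+(M_{1*}(1),M_{1*}(\tau)))$. For the first two this follows directly from Lemma \ref{lemma: IS}: a short expansion gives
\[
u_1^{(1)}u_1^{(2)}=g\cdot\psi_{g+1}^*\big(S_{1*}\phi_{1*}(\xi_{g+1})\big),\qquad u_1^{(1)}u_2^{(2)}+u_2^{(1)}u_1^{(2)}=2g\cdot\psi_{g+1}^*\big(S_{1*}\phi_{1*}(\tau\xi_{g+1})\big),
\]
both of which are $S_1$-classes. The diagonal product $u_2^{(1)}u_2^{(2)}$ is the genuinely delicate term, and I expect this to be the main obstacle. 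Expanding it produces, besides a multiple of $\xi_{g+1}^{(1)}\xi_{g+1}^{(2)}$ (absorbed as above) and a multiple of $(\xi_{g+1}^{(1)}+\xi_{g+1}^{(2)})^2c_2$, a top term proportional to $c_2^2$. To control these I would use the identity
\[
\psi_{g+1}^*\big(S_{1*}\phi_{1*}(\tau)+2M_{1*}(\tau)\big)=2\beta_1^2-2g(g+1)c_2,
\]
which allows one to trade powers of $c_2$ for powers of $\beta_1$ modulo the ideal, together with the $\Z_{(2)}$-arithmetic coming from $g$ odd (so that $g$ is a unit, $g+1$ is even, and $g^2-1\equiv0\pmod 8$); these divisibilities are exactly what should force the leftover $c_2$-power coefficients into the ideal generated by $u_1^{(1)}u_1^{(2)}$, by $u_2^{(1)}+u_2^{(2)}$, and by $\psi_{g+1}^*M_{1*}(\tau)$.

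As a cross-check on this last step — and as an alternative organization of the whole argument — I would compare $H_{2r*}^{\PGL_2}(1)$ with $(m^{\PGL_2})^*(\mathsf{sing}_{4r})$, where $m$ is the multiplication map of Lemma \ref{lem: M2r i=0}. The two generators of $\mathsf{sing}$ pull back to $(4g+2)(\xi_{g+1}^{(1)}+\xi_{g+1}^{(2)})$ and to $2\beta_1^2-2g(g+1)c_2=\psi_{g+1}^*(S_{1*}\phi_{1*}(\tau)+2M_{1*}(\tau))$, both lying in the target ideal (using that $\psi_{g+1}^*S_{1*}\phi_{1*}(1)=2g\,\beta_1$ and $\psi_{g+1}^*M_{1*}(1)=-(g+1)\beta_1$ generate $2\beta_1$ over $\Z_{(2)}$). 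Since $H_{2r*}^{\PGL_2}(1)=\varphi^{\PGL_2}_{(0,0,2r)*}(1)$ is the term $\underline d=(0,0,2r)$ in $(m^{\PGL_2})^*(\mathsf{sing}_{4r})=\sum_{\mathrm{size}(\underline d)=4r}\varphi^{\PGL_2}_{\underline d*}(1)$, it differs from this pullback only by the remaining degeneration terms, each of which factors through an $M$- or $S$-type map; confirming that these are controlled by the same $S_1$- and $M_1$-generators (in particular that no higher $S_r$ is truly needed over $\Z_{(2)}$) is the crux of the computation.
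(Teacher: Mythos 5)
Your overall strategy coincides with the paper's: identify $\psi_{g+1}^*H_{2r*}(1)$ with the $\PGL_2$-equivariant product $F_{2r*}(1)\cdot G_{2r*}(1)$ (your $p_{2r*}(1)^{(1)}\cdot p_{2r*}(1)^{(2)}$), expand each factor in the two generators $u_1=F_{1*}(1)=2g\,\xi_{g+1}$ and $u_2=F_{1*}(\tau)=2\xi_{g+1}^2-\tfrac{g^2-1}{2}c_2$ from \cite[Lemma 6.7]{DL18}, and push the resulting terms into the target ideal; your explicit identities for $u_1^{(1)}u_1^{(2)}$ and $u_1^{(1)}u_2^{(2)}+u_2^{(1)}u_1^{(2)}$ are correct. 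The genuine gap is in the cross terms. Writing $F_{2r*}(1)=\theta_1u_1^{(1)}+\theta_2u_2^{(1)}$, so that $G_{2r*}(1)=\mu^*(\theta_1)u_1^{(2)}+\mu^*(\theta_2)u_2^{(2)}$, the cross terms of the product are
\[
\theta_1\mu^*(\theta_2)\,u_1^{(1)}u_2^{(2)}+\theta_2\mu^*(\theta_1)\,u_2^{(1)}u_1^{(2)},
\]
whose coefficients are exchanged by $\mu^*$ but are \emph{not} individually invariant, and over $\Z_{(2)}$ one cannot symmetrize by dividing by $2$. Hence your claim that the product lies in the ideal generated by the three symmetric products \emph{with $\mu^*$-invariant coefficients} is unjustified, and invariance is precisely what your argument needs: $\psi_{g+1}^*\big(\operatorname{im}(S_{1*})+(M_{1*}(1),M_{1*}(\tau))\big)$ is only a module over $\operatorname{im}(\psi_{g+1}^*)$, which by Lemma \ref{lemma: image psi*} is the $\mu^*$-invariant subring, so non-invariant coefficients cannot be absorbed. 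In particular, knowing $u_1^{(1)}u_2^{(2)}+u_2^{(1)}u_1^{(2)}=2g\,\psi_{g+1}^*S_{1*}\phi_{1*}(\tau\xi_{g+1})$ does not dispose of the cross terms. The missing idea, which is how the paper handles them, is a transfer: by Lemma \ref{lemma: push-pull psi} the cross-term sum equals $\psi_{g+1}^*\psi_{g+1*}\big(\theta_1\mu^*(\theta_2)u_1^{(1)}u_2^{(2)}\big)$, and since $u_1^{(1)}=F_{1*}(1)$, the projection formula and diagram \eqref{eqn: transfer Fr} give $\psi_{g+1*}\big(\theta_1\mu^*(\theta_2)u_1^{(1)}u_2^{(2)}\big)=S_{1*}\phi_{1*}\big(F_1^*(\theta_1\mu^*(\theta_2)u_2^{(2)})\big)\in\operatorname{im}(S_{1*})$.

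The second shortfall is the diagonal term $\theta_2\mu^*(\theta_2)\,u_2^{(1)}u_2^{(2)}$: here the coefficient \emph{is} invariant, so reducing to $u_2^{(1)}u_2^{(2)}$ is legitimate, but you leave its membership in the ideal as an expectation (``should force'') rather than a proof. This is the step the paper settles by direct computation: $u_2^{(1)}u_2^{(2)}=4\beta_2^2-(g^2-1)c_2(\beta_1^2-2\beta_2)+\big(\tfrac{g^2-1}{2}\big)^2c_2^2$, and over $\Z_{(2)}$ each summand lies in the ideal using $4g\beta_2=\psi_{g+1}^*S_{1*}\phi_{1*}(\xi_{g+1})$, $2g\beta_1=-\psi_{g+1}^*S_{1*}\phi_{1*}(1)$, $2\beta_2-\tfrac{(g+1)^2}{2}c_2=\psi_{g+1}^*M_{1*}(\tau)$, invertibility of $g$, and $8\mid g^2-1$; for instance $\big(\tfrac{g^2-1}{2}\big)^2c_2^2\equiv\tfrac{(g-1)^2}{2}c_2\cdot 2\beta_2$ modulo $\psi_{g+1}^*M_{1*}(\tau)$, which lies in the ideal generated by $\psi_{g+1}^*S_{1*}\phi_{1*}(\xi_{g+1})$. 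Your sketched trade of $c_2$ against $\beta_1^2$ would also close this, but it is not carried out. Finally, your ``cross-check'' via $m^*(\mathsf{sing}_{4r})$ is likewise only a plan: the remaining strata classes are $\PGL_2$-pushforwards, and showing they lie in $\psi_{g+1}^*$ of the stated ideal runs into the same invariance issue, so it does not repair the gap.
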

	
	\begin{proof}[Proof of Proposition \ref{prop: H2r(1)}]
		Note that 
		$$
		\psi_{g+1}^*( H_{2r*}(1)) = F_{2r *}(1) \cdot G_{2r*}(1) \in \CH^*_{\PGL_2}( (\P^{g+1})^2)
		$$
		where the maps $F_r$ and $G_r$ are defined in \eqref{eqn: transfer Fr} and \eqref{eqn: def Gr} respectively. Set $f_1= F_{1*}(1)$, $f_2=F_{1*}(\tau)$, $g_1= G_{1*}(1)$, and $g_2=G_{2*}(\tau)$. Then, by \cite[Lemma 6.7]{DL18}, we can write 
		$$
		F_{2r*}(1)= \theta_1 f_1 + \theta_2 f_2
		$$
		for some $\theta_1,\theta_2 \in \CH^*_{\PGL_2}( (\P^{g+1})^2)$. Recall that $\mu: [ (\P^{g+1})^2 / \PGL_2 ] \to [ (\P^{g+1})^2 / \PGL_2 ]$ denotes the involution swapping the two factors. Then, we have
		\[
		G_{2r*}(1) = \mu^* F_{2r*}(1) = \mu^*(\theta_1) g_1 + \mu^*(\theta_2) g_2,
		\]
		and thus
		\begin{align}\label{eqn: H2R(1)}
			\psi_{g+1}^*(H_{2r*}(1))= (f_1 \theta_1 g_1 \mu^*(\theta_1) + f_2 \theta_2 g_2 \mu^*(\theta_2)) + (f_1 \theta_1 g_2 \mu^*(\theta_2) + f_2 \theta_2 g_1 \mu^*(\theta_1)).
		\end{align}
		We wish to show that, when working with $\mathbb{Z}_{(2)}$-coefficients, this is of the form $\psi_{g+1}^*(z)$ for some $z \in \operatorname{im}(S_{1*}) + (M_{1*}(1),M_{1*}(\tau))$. By Lemma \ref{lemma: IS}, we have
		\[
		f_1 g_1 = 4g^2 x_1 x_2 = 4g^2 \beta_2 = g\, \psi_{g+1}^* S_{1*} \phi_{1*}(\xi_{g+1}).
		\]
		Since $\theta_1 \mu^*(\theta_1)$ is $\mu^*$-invariant, it lies in the image of $\psi_{g+1}^*$, by Lemma~\ref{lemma: image psi*}. It follows that $f_1 g_1 \theta_1 \mu^*(\theta_1)$ is in the image of $\psi_{g+1}^* \circ S_{1*}$.
		
		Similarly, we have
		\[
		f_2 g_2 = 4 \beta_2^2 - (g^2 - 1) c_2 (\beta_1^2 - 2 \beta_2) + \left( \frac{g^2 - 1}{2} \right)^2 c_2^2.
		\]
		With $\Z_{(2)}$-coefficients, this is seen to lie in the image of $\psi_{g+1}^* \circ S_{1*} + \psi_{g+1}^* ( M_{1*}(1), M_{1*}(\tau))$. Since $\theta_2 \mu^*(\theta_2)$ is also in the image of $\psi_{g+1}^*$, it follows that $f_2 g_2 \theta_2 \mu^*(\theta_2)$ lies in the image of $\psi_{g+1}^* \circ S_{1*} + \psi_{g+1}^* \circ M_{1*}$ as well.

		For the second term in parenthesis of Equation \eqref{eqn: H2R(1)}, consider the commutative diagram
		
		\begin{equation*}
			\begin{tikzcd}
				& \left[\frac{\P^1\times((\P^{g-1}\times\P^{g+1}) \sqcup (\P^{g+1} \times \P^{g-1}))}{ \PGL_2}\right] \arrow[r,"F_1 \sqcup G_1"] \arrow[d,"\psi_1^S"] & \left[\frac{\P^{g+1}\times\P^{g+1}}{\PGL_2}\right]\arrow[d,"\psi_{g+1}"]\\
				\left[\frac{\P^1\times(\P^{g-1}\times\P^{g+1})}{\PGL_2}\right]\arrow[r,"\phi_1","\cong"'] & \left[\frac{\P^1\times((\P^{g-1}\times\P^{g+1}) \sqcup (\P^{g+1} \times \P^{g-1}))}{\mu_2 \times \PGL_2}\right]  \arrow[r,"S_1"] & \left[\frac{\P^{g+1}\times\P^{g+1}}{\mu_2 \times \PGL_2}\right]
			\end{tikzcd}
		\end{equation*}
		
		Define $z_F=F_1^*(\theta_1 g_2 \mu^*(\theta_2))$ and $z_G= G_1^*(\theta_2 f_2 \mu^*(\theta_1))$, so that the second term in parenthesis in Equation \eqref{eqn: H2R(1)} is given by
		$$
		F_{1*}(z_F)+G_{1*}(z_G)= (F_1 \sqcup G_1)_* ((\psi_1^S)^*(\phi_{1*}(z_F)))= \psi_{g+1}^*(S_{1*}(\phi_{1*}(z_F))).
		$$
		This concludes the proof.
	\end{proof}
	
	\begin{proof}[Proof of Proposition \ref{prop: H2r(1)}]
		By Lemma \ref{lemma: Hr inverting 2}, it is enough to show this with $\Z_{(2)}$ coefficients. By Lemmas \ref{lemma: IS} and \ref{lem: M1 computation low degree classes}, every $z \in \operatorname{im}(S_{1*}) +((M_{1*}(1), M_{1*}(\tau))$ is divisible by $2$. The conclusion follows from Lemmas \ref{lemma: aux div plus pullback}, \ref{lemma: H2r(1) in (2)} and \ref{lemma: psi* H2r(1)}.    
	\end{proof}
	
	\subsubsection{Computation of $H_{2r*}$}
	
	In this subsection we conclude the computation and prove the following.
	
	\begin{proposition}\label{prop: full H2r}
		For $r=1,\ldots, (g+1)/2$ we have $\operatorname{im}(H_{2r*}) \subseteq I_{S}+ I_M^{\leq 2}$.
	\end{proposition}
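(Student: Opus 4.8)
The plan is to establish Proposition~\ref{prop: full H2r} by induction on $r$, combining the inductive $\xi$-reduction scheme used for the maps $M_{2r}$ in \S\ref{subsec: Mr end of computation} with the two-case analysis of the fundamental group-theoretic directions carried out for $H_1$ in Proposition~\ref{prop: pushforward H1}. First I would note that it suffices to prove the containment after localizing at $2$: by Lemma~\ref{lemma: Hr inverting 2} the image lies in $I_S$ after inverting $2$, and since any abelian group injects into the product of its localization at $2$ and its localization away from $2$, the integral statement follows once the $\Z_{(2)}$-statement is known. So for the rest I work with $\Z_{(2)}$-coefficients, taking $r=1$ (Proposition~\ref{prop: pushforward H1}) as the base of the induction and assuming the result for all smaller even maps $H_{2(r-1)}$. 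The source of $H_{2r}$ is $[(\P^{2r})^2\times(\P^{g+1-4r})^2/\mu_2\times\PGL_2]$; the two squared factors $\P^{2r}=\P(W_r)$ exhibit it as a tower of projective bundles (with relative hyperplane class $\xi_{2r}$) over a classifying-type stack whose generic stabilizer is of the form $H_{\ell_1,\ell_2}\times\PGL_2$. Combining the projective bundle theorem in the $\xi_{2r}$-directions with Proposition~\ref{prop: Chow classifying Hl1l2} in the classifying directions, every class on the source is a sum of $\xi_{2r}$-monomials times either a class in the image of the double-cover pushforward $(\psi_{2r}^H)_*$ or a class of the form $\phi^*(\epsilon)\,\pi_j^*(\beta_2)^i$. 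By $\CH^*(B(G\times\PGL_2))$-linearity and push--pull it is then enough to control $H_{2r*}$ on these generators.

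Next I would kill the positive $\xi_{2r}$-powers. For a monomial carrying $\xi_{2r}^{k}$ with $k>0$ I pass to the $\mu_2\times\GL_3$-counterpart and use the class $[W_{r;1,0}]$ of Lemma~\ref{lem: computation Wm10}, together with the factorization through $j_{r;1,0}$, exactly as in Lemma~\ref{lem: M2r i>0}: this expresses the relevant pushforward, up to the odd factor $2r-1$ (harmless $2$-locally), as an element of $\operatorname{im}(H_{2(r-1)*})$, which lies in $I_S+I_M^{\leq 2}$ by the inductive hypothesis. This reduces the whole computation to the $\xi_{2r}$-free generators, which split into the two types above.

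For generators in $\operatorname{im}((\psi_{2r}^H)_*)$ I would use the commuting square relating $\psi_{2r}^H$ and $\psi_{g+1}$, so that $H_{2r*}\circ(\psi_{2r}^H)_*=\psi_{g+1*}\circ H_{2r*}^{\PGL_2}$. The $\PGL_2$-equivariant map $H_{2r}^{\PGL_2}\colon(h,l,f,g)\mapsto(h^2f,l^2g)$ is the external product of the squaring-multiplication map $\sigma_{2r}\colon(h,f)\mapsto h^2f$ with itself, so $H_{2r*}^{\PGL_2}$ of an external monomial is a product of pushforwards of $\sigma_{2r}$, i.e. of the maps $F_{2r}$ and $G_{2r}$ from \eqref{eqn: transfer Fr}. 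By \cite[Lemma~6.7]{DL18} and the pushforward formulas of \cite[Section~5.1]{CLI} these products reduce to the low-degree generators $F_{1*}(1),F_{1*}(\tau),F_{2*}(\xi_2^2)$ and their $G$-analogues, whose images under $\psi_{g+1*}$ were shown to lie in $I_S+I_M^{\leq 2}$ in Lemma~\ref{lemma: psi* H2r(1)}; the general case runs along the same lines. For generators of the form $\phi^*(\epsilon)\,\pi_j^*(\beta_2)^i$ I would first move the pulled-back factors outside the pushforward by push--pull, after rewriting $H_{2r}^*(\beta_2)$ as a source class plus a correction — the analogue of \eqref{eqn: H1*(beta2)} — thereby reducing everything to $H_{2r*}(1)$, which lies in $I_S+I_M^{\leq 2}$ by Proposition~\ref{prop: H2r(1)}. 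Assembling the three cases and invoking the induction finishes the proof, just as Proposition~\ref{prop: full H2r} is the $H$-counterpart of the conclusion reached for the $M$-maps.

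I expect the main obstacle to be twofold and to live in the last paragraph. First, one must produce the correction term in the analogue of \eqref{eqn: H1*(beta2)} for $H_{2r}$, i.e. control $H_{2r}^*(\beta_2)$ modulo $\gamma^2$-type and image-of-$i_*$ ambiguities; as in the $H_1$ case this requires the injectivity/pullback machinery (Lemma~\ref{lemma: image psi*} and the maximal-torus pullback arguments) to guarantee that the $\gamma$-divisible corrections do not escape $I_S+I_M^{\leq 2}$. Second, the $\xi_{2r}$-reduction must be carried out on \emph{two} simultaneously squared factors through the $\mu_2\times\GL_3$-counterpart, so the bookkeeping of $[W_{r;1,0}]$ and of the involution $\mu^{*}$ in the external products of $F_{2r*}$ and $G_{2r*}$ is the delicate part; everything else is a routine combination of push--pull with results already recorded for $F_r$, $G_r$, $M_r$ and $S_r$.
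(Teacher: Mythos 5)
Your skeleton matches the paper's: reduce to $\Z_{(2)}$-coefficients via Lemma~\ref{lemma: Hr inverting 2}, decompose the Chow ring of the source of $H_{2r}$ using Proposition~\ref{prop: Chow classifying Hl1l2}, kill the contribution of the squared factors by an induction built on $[W_{r;1,0}]$ and $j_{r;1,0}$, handle transfer-type classes through $F_{2r}$, and invoke Proposition~\ref{prop: H2r(1)} for the fundamental class. However, two of your steps have genuine gaps. The first is structural: because $\mu_2$ swaps the two $\P^{2r}$ factors (simultaneously with the two $\P^{g+1-4r}$ factors), the source is \emph{not} a tower of projective bundles, and the individual hyperplane classes $\xi_{2r}^{(i)}$ do not exist in $\CH^*_{\mu_2\times\PGL_2}$; the projective bundle theorem cannot be applied ``in the $\xi_{2r}$-directions''. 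The paper instead presents the source as an open substack of a representation of $H_{0,2}\times\PGL_2$ and applies Proposition~\ref{prop: Chow classifying Hl1l2} to all four factors at once, so that the squared factors enter only through powers of $\pi_1^*(\beta_2)$ (besides transfers). Relatedly, for generators in $\operatorname{im}(\psi^H_{2r*})$ you cite Lemma~\ref{lemma: psi* H2r(1)}, which is a statement about the pullback $\psi_{g+1}^*$, not about images under $\psi_{g+1*}$; the correct (and much shorter) argument is that $H_{2r}\circ\psi^H_{2r}=\psi_{g+1}\circ H_{2r}^{\PGL_2}$ factors through $F_{2r}$, and $\psi_{g+1*}\circ F_{2r*}=S_{2r*}\circ\phi_{2r*}$ by diagram~\eqref{eqn: transfer Fr}, so these pushforwards land in $I_S$ outright.

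The second gap is the critical one. Your plan disposes of the generators $\phi^*(\epsilon)\pi_j^*(\beta_2)^i$ by writing $H_{2r}^*(\beta_2)$ as a source class plus a correction, \`a la \eqref{eqn: H1*(beta2)}, and reducing to $H_{2r*}(1)$. But the pullback identity here is $H_{2r}^*(\beta_2)=4\pi_1^*(\beta_2)+2(\cdots)+\pi_2^*(\beta_2)$: only $\pi_2^*(\beta_2)$ (the class of the \emph{un-squared} factors) is visible modulo $2$, while $\pi_1^*(\beta_2)$ occurs with coefficient $4$ and hence cannot be produced as ``pullback plus controllable correction'' $2$-locally. So your correction argument handles $j=2$ but says nothing about $j=1$, and the classes $\pi_1^*(\beta_2)^i$ are exactly where the work lies. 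They are also not killed ``exactly as in Lemma~\ref{lem: M2r i>0}'': the paper must use the symmetrized class $[W_{r;1,0}\times W_{r;1,0}]$ (a product of two copies of the expression in Lemma~\ref{lem: computation Wm10} sharing the same $t_1$), extract its $t_1^2$-coefficient in the basis of \cite[Proposition~3.4]{CLI} modulo $(2,\gamma)$ and modulo $\operatorname{im}(\psi^H_{2r*})$ --- getting $\pi_1^*(\beta_2)$ for $r=1$ and $\pi_1^*(\beta_2)-\pi_1^*(\beta_1)^2-c_2$ for $r$ a power of $2$ --- and use $H_{2r*}(c_2)=c_2H_{2r*}(1)$ together with Proposition~\ref{prop: H2r(1)}. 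This computation is also why the paper first reduces (by odd-degree multiplication maps) to $r=1$ or $r$ a power of $2$: your induction ``on all smaller even maps $H_{2(r-1)}$'' is not supported by any computation valid for arbitrary $r$, and without that reduction the inductive step is not well-founded. In short, you have assembled the right toolbox, but the proof's core --- the treatment of $\pi_1^*(\beta_2)^i$ --- is missing, and the framework you propose for it would not produce it.
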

	
	Before starting the proof we make the following reduction.
	
	\begin{remark}
		It suffices to prove Proposition \ref{prop: full H2r} in the case where $r$ is even (or even a power of $2$), or when $r = 1$. 
		
		Indeed, if $r > 1$ is not a power of $2$, we can write $2r = 2^{\ell} + u$ with $0 < u < 2^{\ell}$, and consider the composite
		\[
		\begin{tikzcd}
			\left[ \frac{(\P^{2^\ell} \times \P^{u})^2 \times (\P^{g+1 - 4r})^2}{\mu_2 \times \PGL_2} \right]
			\arrow[r] &
			\left[ \frac{(\P^{2r})^2 \times (\P^{g+1 - 4r})^2}{\mu_2 \times \PGL_2} \right]
			\arrow[r,"H_{2r}"] &
			\left[ \frac{\P^{g+1} \times \P^{g+1}}{\mu_2 \times \PGL_2} \right],
		\end{tikzcd}
		\]
		where the first map is the natural multiplication map. The resulting morphism factors through $H_{2^\ell}$. Moreover, the first map has degree $\binom{2r}{2^\ell}$, which is odd (see also \cite[Lemma 5.24]{CLI}). In particular, when working with $\Z_{(2)}$-coefficients, its pushforward in Chow is surjective. Since Proposition \ref{prop: full H2r} is proved after inverting $2$ in Lemma \ref{lemma: Hr inverting 2}, the claim follows.
	\end{remark}
	
	\begin{proof}[Proof of Proposition \ref{prop: full H2r}]
		We will assume throughout the proof that $r=1$ or that $r$ is a even. We start by identifying 
		\[
		\begin{tikzcd}
			\bigg[ \frac{(\P^{2r})^2 \times (\P^{g+1-4r})^2}{\mu_2 \times \PGL_2} \bigg] \cong \bigg[ \frac{\big( (V^{(1)})^\vee \otimes W_{2r} \smallsetminus Z_{4r} \big) \times \big( (V^{(2)})^\vee \otimes W_{\frac{g+1}{2}-2r} \smallsetminus Z_{g+1-4r} \big)}{H_{0,2} \times \PGL_2} \bigg] 
		\end{tikzcd}
		\]
		where $H_{0,2}$ acts on the scheme in the numerator on the right via $ \pi_1: H_{0,2} \to G$ acting on $ (V^{(1)})^\vee \otimes W_{2r} \smallsetminus Z_{4r} $ and via $\pi_2: H_{0,2} \to G$ acting on $(V^{(2)})^\vee \otimes W_{\frac{g+1}{2}-2} \smallsetminus Z_{g+1-2r}$. Since $\PGL_2$ satisfies CKP by Proposition \ref{prop: CKP for some classifying stacks}, by Proposition \ref{prop: Chow classifying Hl1l2}, the Chow ring of this stack is additively generated by classes of the form:
		\begin{enumerate}
			\item\label{item: type1} $\alpha \cdot \beta$ where $\alpha$ is pulled back from $B \PGL_2$ and $\beta \in \operatorname{im}(\psi^H_{2r*})$;
			\item\label{item: type2} $\alpha \gamma^\ell \pi_1^*(\beta_2)^{i_1} \pi_2^*(\beta_2)^{i_2}$ for some $\alpha$ pulled back from $B\PGL_2$ and integers $\ell, i_1, i_2 \geq 0$.
		\end{enumerate}
		
		The pushforward $H_{2r*}(\alpha \beta )= \alpha H_{2r*}(\beta)$ of classes of type~\eqref{item: type1} is contained in $I_S$. Indeed, $H_{2r} \circ \psi^H_{2r}= \psi_{g+1} \circ H_{2r}^{\PGL_2}$ factors as 
		$$
		\bigg[ \frac{(\P^{2r})^2 \times (\P^{g+1-4r})^2}{ \PGL_2} \bigg] \to \bigg[ \frac{\P^{2r} \times \P^{g+1-4r} \times \P^{g+1}}{ \PGL_2} \bigg] \xrightarrow{F_{2r}} \bigg[ \frac{\P^{g+1} \times \P^{g+1}}{ \PGL_2} \bigg] \xrightarrow{\psi_{g+1}} \bigg[ \frac{\P^{g+1} \times \P^{g+1}}{ \mu_2 \times \PGL_2} \bigg]
		$$
		and hence the image of its pushforward lies in $I_S$ by the diagram \eqref{eqn: transfer Fr}.
		
		Now, we deal with classes of type~\eqref{item: type2}. By the push-pull formula, to show that the pushforward via $H_{2r*}$ of such classes lies in $I_S + I_{M}^{\leq 2}$, it suffices to treat the case $\alpha = 1$, $l=0$. Moreover,
		\begin{align*}
			H_{2r}^*(\beta_2) &= c_2^{\mu_2 \times \PGL_2}( \cO_{\P^{2r}}(2) \otimes \cO_{\P^{g+1-4r}}(1) \otimes \cO_{\P^{2r}}(2) \otimes \cO_{\P^{g+1-4r}}(1)) \\
			& \equiv \pi_{2}^*(\beta_2) \mod (2)
		\end{align*}
		and $(2) \subseteq \operatorname{im}(\psi^H_{2r*})$ because $\psi^H_{2r}$ is a degree-$2$ map. It follows from the above and another application of the push-pull formula that it suffices to show $H_{2r*}(\pi_1^*(\beta_2))^i \in I_{S}+I_M^{\leq 2}$ for all $i \geq 1$. For this we use $\GL_3$-counterparts (see \S\ref{subsec: GL3 counterparts}) and proceed by induction on $r$.
		
		First, notice that following. For $r=1$, the composite $H_{2r*}' \circ j_{r;1,0} $ factors through $M_2'$ and thus 
		$$
		H'_{2*}( [W_{1;1,0} \times W_{1;1,0}]) \in \widetilde{I_S}+\widetilde{I_{M}^{\leq 2}}
		$$
		
		For $r>1$, the composite $H'_{2r*} \circ j_{r;1,0} $, where $j_{r;1,0}$ is the map in Equation \eqref{eqn: map j-m,r,l}, factors through $H'_{2r-2}$, thus 
		\begin{equation}
			H'_{2r*}( [W_{r;1,0} \times W_{r;1,0}]) \in \operatorname{im}({H'_{2r-2}}_*)
		\end{equation}
		and the latter will sit inside $\widetilde{I_S}+\widetilde{I_M^{\leq 2}}$ by the inductive step.
		
		Now, recall that $\CH^*(B \Gm^3)$ is a free module over $\CH^*(\GL_3)$ with basis $t_1,t_2,t_1^2,t_1 t_2, t_1^2 t_2$. It follows from this that we can uniquely write any element in $\CH^*_{\mu_2 \times \Gm^3}(\P(V_r)^2)$ as a linear combination of these monials in $t_i$ with coefficients in $\CH^*_{\GL_3}(\P(V_r))$; see \cite[Proposition 3.4]{CLI}. A computation using Lemma \ref{lem: computation Wm10} shows that, for $r=1$ or $r$ even, and working modulo $(2,\gamma)$, the coefficient of $t_1^2$ of $[W_{r;1,0} \times W_{r;1,0}]$ is given by:
		\begin{itemize}
			\item If $r = 1$, the coefficient is simply $\pi_1^*(\beta_2)$;
			\item If $r > 1$ is a power of $2$, the coefficient is  
			$$
			\pi_1^*(\beta_2)-\pi_1^*(\beta_1)^2-c_2
			$$
			Note that, modulo $(2,\gamma)$, we have $\psi_*(x_1^2)=\beta_1^2$ and thus, working further modulo $\operatorname{im}(\psi^H_{2r*})$, the expression remains $\pi_1^*(\beta_2)-c_2$. Finally, note that $H_{2r*}(c_2)= c_2 H_{2r*}(1) \in I_S+I_{M}^{\leq 2}$.
		\end{itemize}
		Note that for every term of the form $c \cdot \gamma$ in the coefficient of $t_1^2$ of $[W_{r;1,0} \times W_{r;1,0}]$ it must be $c \in \operatorname{im}(\psi_{2r*}^H)$ by Proposition \ref{prop: Chow classifying Hl1l2}, as $c$ has degree 1.
		
		In particular, by \cite[Proposition 3.4]{CLI}, we immediately conclude that
		$
		H_{2*}(\pi_1^*(\beta_2)) \in I_S + I_M^{\leq 2}.
		$
		For $r > 1$, we apply the inductive hypothesis to see that 
		$
		H'_{2r*}([W_{r;1,0} \times W_{r;1,0}]) \in \widetilde{I_S}+\widetilde{I_M^{\leq 2}}
		$
		and again by \cite[Proposition 3.4]{CLI}, we conclude that
		$
		H_{2r*}(\pi_1^*(\beta_2)) \in I_S + I_M^{\leq 2}.
		$
		Finally, to show that $H_{2r*}(\pi_1^*(\beta_2)^i) \in I_{S}+ I_M^{\leq 2}$ for $i>1$, argue as above using the identity
		$$
		H'_{2r*}( [W_{r;1,0} \times W_{r;1,0}] \cdot \pi_1^*(\beta_2)^{i-1})= H'_{2r*}({ j_{r;1,0}}_*( j_{r;1,0}^*( \pi_1^*(\beta_2)))) \subseteq  \widetilde{I_S}+\widetilde{I_M^{\leq 2}}.
		$$
		This concludes the proof.
	\end{proof}
	
	\subsection{Conclusion of the computation}\label{subsec: conclusion computation}
	
	So far we have computed the ideal in $\CH_{\mu_2\times\PGL_2}^*(\P(W_{\frac{g+1}{2}})^2)$ generated by image of $M_{r*}$, $S_{r*}$ and $H_{r*}$ for all $r$. We are left with showing that the classes 
	$$
	c_{\mathrm{top}}(V^{\vee}\otimes W_{\frac{g+1}{2}}), \ j_*(1), \ \text{and } j_*(x_1)
	$$
	appearing in Lemma~\ref{lem: Chow alternative presentation Dg+1g+1mu2} are superfluous, in the following sense. Let $I$ be the ideal in 
	$$\
	\CH^*(B(G\times\PGL_2))\cong \frac{\Z[\beta_1,\beta_2,\gamma,c_2,c_3]}{(2\gamma,\gamma(\beta_1+\gamma),2c_3)}
	$$
	generated by the classes computed in Lemmas~\ref{lemma: IS},~\ref{lem: M2 computation low degree classes} and~\ref{lem: M1 computation low degree classes}. Then, we seek to prove that any lift in the above ring of the last three classes of Lemma~\ref{lem: Chow alternative presentation Dg+1g+1mu2} is contained in $I$. 
	
	\begin{notation}
		By abuse of notation, we simply write that such a class is contained in $I$, for instance $j_*(1)\in I$.
	\end{notation}
	
	The following is the main result of this section.
	
	\begin{proposition}\label{prop: projective relations are superfluous}
		The classes $c_{\mathrm{top}}(V^{\vee}\otimes W_{\frac{g+1}{2}})$, $j_*(1)$ and $j_*(x_1)$, viewed in $\CH^*(B(G\times\PGL_2))$, are contained in $I$.
	\end{proposition}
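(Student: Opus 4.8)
The plan is to check each of the three memberships after pulling back along the double cover $\psi\colon B(\Gm^2\times\PGL_2)\to B(G\times\PGL_2)$, whose kernel on Chow is exactly $(\gamma)$ (the Remark following Lemma~\ref{lemma: image psi*}). As recorded in the proof of Lemma~\ref{lemma: aux div plus pullback}, under $\psi^*$ the three classes become the symmetric combinations $\psi^*(j_*(1))=p_1+p_2$, $\psi^*(j_*(x_1))=x_1p_1+x_2p_2$ and $\psi^*\big(c_{\mathrm{top}}(V^\vee\otimes W_{\frac{g+1}{2}})\big)=p_1p_2$, where $p_i$ is the monic degree $g+2$ Chern polynomial of $W_{\frac{g+1}{2}}$ in the variable $x_i$. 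Since $\ker\psi^*=(\gamma)$, for each class it suffices to (i) place its $\psi$-pullback inside $\psi^*(I)$, which yields membership in $I+(\gamma)$, and then (ii) eliminate the residual $\gamma$-torsion. This is precisely the two-step pattern already used for $H_{2r*}(1)$ in Lemmas~\ref{lem: M2r i=0} and~\ref{lemma: pushforward H1(1)}, and step (ii) is designed to be settled by the divisibility criterion of Lemma~\ref{lemma: aux div plus pullback}.

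For $j_*(1)$ and $j_*(x_1)$ I would argue through the multiplication map $m\colon[\P(W_{\frac{g+1}{2}})^2/\mu_2\times\PGL_2]\to[\P^{2g+2}/\PGL_2]$ and the envelope of the singular locus $\mathsf{sing}\subseteq\P^{2g+2}$. By~\cite{DL18} the fundamental classes of the strata of $\mathsf{sing}$ lie in the ideal generated by $(4g+2)\xi_{2g+2}$ and $2\xi_{2g+2}^2-2g(g+1)c_2$, and as computed in Lemma~\ref{lem: M2r i=0} these pull back under $m^*$ to $-2(2g+1)\beta_1$ and $2\beta_1^2-2g(g+1)c_2$, both of which already lie in $I_S+\operatorname{im}(M_{1*})\subseteq I$ by Lemmas~\ref{lemma: IS} and~\ref{lem: M1 computation low degree classes}. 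Feeding this into step (i), together with $j_*(1)\in\operatorname{im}(\psi_*)=(2,\beta_1+\gamma)$ (Lemma~\ref{lem: image pushforward psi}), one arranges that the difference between $j_*(1)$ (respectively $j_*(x_1)$) and a suitable element of $I$ is divisible by $2$ and has vanishing $\psi^*$-image; Lemma~\ref{lemma: aux div plus pullback} then shows this difference is $0$, giving membership in $I$.

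For $c_{\mathrm{top}}$ I would reduce to the previous two classes. Geometrically $c_{\mathrm{top}}=[0]$ is the class of the origin, the transverse intersection $[\{f=0\}]\cdot[\{g=0\}]$ of the two components of $Z_{g+1}$, which on the cover is the identity $\psi^*(c_{\mathrm{top}})=p_1p_2$. From $x_2\psi^*(j_*(1))-\psi^*(j_*(x_1))=(x_2-x_1)p_1$ and its $\mu$-conjugate one obtains $(\beta_1^2-4\beta_2)\,c_{\mathrm{top}}\in(j_*(1),j_*(x_1))+(\gamma)$, while $2c_{\mathrm{top}}=\psi_*(p_1p_2)$ is visibly in $\operatorname{im}(\psi_*)$; combining these with the already-established $j_*(1),j_*(x_1)\in I$ and the divisibility lemma pins down $c_{\mathrm{top}}$ as well. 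Alternatively one may view $[0]$ as lying in the deepest stratum of the affine $\Delta$, using the presentation $[\cD_{g+1,g+1}/\mu_2]\cong[(V^\vee\otimes W_{\frac{g+1}{2}}\smallsetminus\Delta)/G\times\PGL_2]$ of Theorem~\ref{thm: presentation Dg+1g+1mu2} and the fact that the envelope of $\Delta$ factors through maps whose images lie in $I$ by Propositions~\ref{prop: Mr conclusion},~\ref{prop: pushforward H1} and~\ref{prop: full H2r}.

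The delicate part throughout is the integral, rather than merely mod-$2$, bookkeeping: the classes naturally live in $\operatorname{im}(\psi_*)=(2,\beta_1+\gamma)$, and the singular-locus input on the $\PGL_2$-side only controls them modulo $\gamma$ and often modulo $2$. The role of Lemma~\ref{lemma: aux div plus pullback} is exactly to upgrade this partial information—two-divisibility together with vanishing of the $\psi^*$-pullback—to exact membership, and verifying its hypotheses in each case (in particular that the relevant $\gamma$-torsion really is accounted for) is where the main work lies.
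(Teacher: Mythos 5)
Your proposal does not go through; there are two structural gaps. First, the mechanism you offer for $j_*(1)$ and $j_*(x_1)$ never engages those classes. The locus $Z_{g+1}$ is the ``one factor vanishes'' locus in the affine cone $V^\vee\otimes W_{\frac{g+1}{2}}$; it is removed before the multiplication map $m$ to $[\P^{2g+2}/\PGL_2]$ is even defined, and the pullbacks $m^*[\mathsf{sing}_k]$ (used in Lemma~\ref{lem: M2r i=0} to control $M_{k*}(1)$) are classes of cycles inside $(\P^{g+1})^2$ with no relation whatsoever to $j_*(1)$. Consequently the crucial sentence ``one arranges that the difference between $j_*(1)$ and a suitable element of $I$ is divisible by $2$ and has vanishing $\psi^*$-image'' is not an argument: it is exactly the statement to be proved, and nothing you propose produces an element $z\in I$ with $\psi^*(z)=p_1+p_2$ \emph{and} $j_*(1)-z\in(2)$. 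Note also that Lemma~\ref{lemma: aux div plus pullback} is stated in $\CH^*_{\mu_2\times\PGL_2}((\P^{g+1})^2)$, which is precisely the quotient of $\CH^*(B(G\times\PGL_2))$ by (lifts of) $j_*(1)$, $j_*(x_1)$, $c_{\mathrm{top}}$; in that ring all three classes are zero, so the lemma as cited cannot detect membership in $I\subseteq\CH^*(B(G\times\PGL_2))$. (An analogue of the lemma does hold upstairs in $\CH^*(B(G\times\PGL_2))$, and is in fact easier, but you would need to state and prove it, and it still would not repair the first gap.) Second, your reduction of $c_{\mathrm{top}}$ is blocked by a division by a non-unit: from $(\beta_1^2-4\beta_2)\,c_{\mathrm{top}}\in(j_*(1),j_*(x_1))+(\gamma)$ you cannot cancel $\beta_1^2-4\beta_2$, and the observation $2c_{\mathrm{top}}=\psi_*(p_1p_2)\in\operatorname{im}(\psi_*)=(2,\beta_1+\gamma)$ is of no use since that ideal is not contained in $I$. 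Your fallback via the envelope of $\uD$ is similarly off target: Propositions~\ref{prop: Mr conclusion} and~\ref{prop: full H2r} control pushforwards from the \emph{projective} loci $\uD_1,\uD_2\subseteq\P(W_{\frac{g+1}{2}})^2$, i.e.\ classes in the quotient ring, not the class of the origin of the affine cone.

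For comparison, the paper's proof (Lemmas~\ref{lem: ctop is superfluos} and~\ref{lem: j* is superfluous}) runs in the opposite order and with different machinery: it first proves $c_{\mathrm{top}}\in I$ (which is then needed to puncture the affine space and reduce $j_*$ to $j^o_*$), and both steps pass to $\GL_3$-counterparts (\S\ref{subsec: GL3 counterparts}), restrict to the maximal torus $\Gm^3$ and to the open $\mathcal{S}_{0,0,2}$, invoke the divisibility of the relevant classes by $[W_{1;1,0}]$-type classes \emph{modulo $2t_3$} from \cite[Lemma 5.11]{CLI}, and then kill the mod-$2t_3$ ambiguity by multiplying with $t_1t_2$ and using the relation $2t_1t_2t_3=0$ in $\CH^*([\mathcal{S}/\Gm^3])$, descending back via \cite[Proposition 3.4, Lemma 3.3]{CLI}. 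This $t_1t_2$-multiplication trick is the step that replaces your unproven ``arrange $2$-divisibility'' step, and it has no counterpart in the $\PGL_2$-equivariant setting in which your proposal works.
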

	
	We split the proof in two lemmas.
	
	\begin{lemma}\label{lem: ctop is superfluos}
		Let $I_M^2$ be the ideal in 
		$$
		\CH^*(B(G\times\PGL_2))\cong \frac{\Z[\beta_1,\beta_2,\gamma,c_2,c_3]}{(2\gamma,\gamma(\beta_1+\gamma),2c_3)}
		$$
		generated by the classes appearing in Lemma~\ref{lem: M2 computation low degree classes}. Then, $c_{\mathrm{top}}(V^{\vee}\otimes W_{\frac{g+1}{2}})\in I_M^{2}\subset I$.
	\end{lemma}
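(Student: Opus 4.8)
\emph{Strategy.} By the proof of Lemma~\ref{lem: ideal M1 + M2} the source of $M_2$ is a $\P(W_1)$-bundle over $[(V^\vee\otimes W_{\frac{g-1}{2}}\smallsetminus Z_{g-1})/G\times\PGL_2]$, so its Chow ring is generated by $1,\xi_2,\xi_2^2$ over $\CH^*(B(G\times\PGL_2))$; consequently $\operatorname{im}(M_{2*})=I_M^2$. It therefore suffices to exhibit $c_{\mathrm{top}}(V^\vee\otimes W_{\frac{g+1}{2}})$ as a \emph{single} pushforward $M_{2*}(\zeta)$, for then $\zeta$ is a combination of $1,\xi_2,\xi_2^2$ over the base and $M_{2*}(\zeta)$ lands automatically in $I_M^2$.

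\emph{Geometric model.} Write $E:=V^\vee\otimes W_{\frac{g+1}{2}}$ and pass to the \emph{affine} multiplication map
\[
\mathbf{M}_2\colon \Big[\tfrac{\P(W_1)\times (V^\vee\otimes W_{\frac{g-1}{2}})}{G\times\PGL_2}\Big]\longrightarrow \big[\tfrac{E}{G\times\PGL_2}\big]\cong B(G\times\PGL_2),\qquad (h,(f,g))\mapsto (hf,hg).
\]
This map is proper, since it factors as the closed immersion $(h,(f,g))\mapsto(h,(hf,hg))$ followed by the projection off the proper factor $\P(W_1)$, and its pushforward computes exactly the generators $M_{2*}(\xi_2^i)$ of Lemma~\ref{lem: M2 computation low degree classes}: both are obtained as the pushforward through $v$ of the $\P(W_1)$-bundle projection in diagram~\eqref{eq: huge commutative diag}, which is insensitive to the removed loci $Z_{g\pm1}$. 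The crucial point is that the fibre of $\mathbf{M}_2$ over the origin is $\mathbf{M}_2^{-1}(0)=\P(W_1)\times\{0\}$. Let $\iota'\colon[\P(W_1)/G\times\PGL_2]\hookrightarrow\text{(source)}$ denote this zero-section inclusion, $q\colon[\P(W_1)/G\times\PGL_2]\to B(G\times\PGL_2)$ the projection, and $\iota\colon B(G\times\PGL_2)\hookrightarrow[E/G\times\PGL_2]$ the zero section of $E$; then $\mathbf{M}_2\circ\iota'=\iota\circ q$.

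\emph{The computation.} Identifying $\CH^*([E/G\times\PGL_2])\cong\CH^*(B(G\times\PGL_2))$ via $\iota^*=(p^*)^{-1}$, where $p$ is the bundle projection, so that $M_{2*}=\iota^*\circ\mathbf{M}_{2*}$, we find
\[
M_{2*}\big(\iota'_*(\xi_2^2)\big)=\iota^*(\iota\circ q)_*(\xi_2^2)=\iota^*\,\iota_*\,q_*(\xi_2^2)=\iota^*\iota_*(1)=c_{\mathrm{top}}(E).
\]
Here $q_*(\xi_2^2)=1$ is the Segre-class identity for the rank-$3$ bundle $W_1$, and $\iota^*\iota_*(1)=c_{\mathrm{top}}(N_\iota)=c_{\mathrm{top}}(E)$ because the normal bundle of the zero section is $E$ itself. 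Since $\iota'_*(\xi_2^2)$ is a class on the source it is a combination of $1,\xi_2,\xi_2^2$ over the base, so $c_{\mathrm{top}}(E)=M_{2*}\big(\iota'_*(\xi_2^2)\big)\in\operatorname{im}(M_{2*})=I_M^2$, which is the assertion.

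\emph{Main obstacle.} The one genuinely delicate step is the bridge between the proper affine model $\mathbf{M}_2$ (which alone sees the origin, and hence $c_{\mathrm{top}}(E)$) and the projective map $M_2$ used to \emph{define} the generators of $I_M^2$: one must confirm that the classes $M_{2*}(\xi_2^i)$ are unchanged upon deleting $Z_{g-1}$ and $Z_{g+1}$, which is precisely the content of the factorization through $v$ in diagram~\eqref{eq: huge commutative diag} together with the reduction of Lemma~\ref{lem: reduction to PGL2}. The remaining ingredients—$q_*(\xi_2^2)=1$ and the self-intersection formula $\iota^*\iota_*(1)=c_{\mathrm{top}}(E)$—are standard, so once this identification is in place the proof is immediate.
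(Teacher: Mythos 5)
Your route is genuinely different from the paper's. The paper proves this lemma by passing to $\GL_3$-counterparts, restricting to the maximal torus $\Gm^3 \subset \GL_3$, and showing that modulo $2t_3$ the class $c_{\mathrm{top}}$ is divisible by the lift of $M'_{2*}([W_{1;1,0}])$, then descending via the technical going-down result of [CLI, Proposition 3.4]. You instead exhibit $c_{\mathrm{top}}(V^{\vee}\otimes W_{\frac{g+1}{2}})$ as an honest pushforward along the affine multiplication map, using that its fibre over the origin is $\P(W_1)$, the Segre identity $q_*(\xi_2^2)=1$, and the self-intersection formula. This mechanism is correct, conceptually attractive, and (as you note) only visible affinely, since the projective model deletes the origin; it avoids $\GL_3$-counterparts for this lemma entirely.

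There is, however, one genuine gap, located exactly at the step you flag as the main obstacle, and the justification you cite for it does not do the job. Your computation shows $c_{\mathrm{top}}(E)\in(\mathbf{M}_{2*}(1),\mathbf{M}_{2*}(\xi_2),\mathbf{M}_{2*}(\xi_2^2))$ in $\CH^*(B(G\times\PGL_2))$, whereas $I_M^2$ is generated by the \emph{particular polynomials} displayed in Lemma~\ref{lem: M2 computation low degree classes}; those identities hold in the Chow ring of the target of the projective $M_2$, i.e.\ modulo the ideal $(c_{\mathrm{top}}, j_*(1), j_*(x_1))$ of Lemma~\ref{lem: Chow alternative presentation Dg+1g+1mu2}. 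Compatibility of proper pushforward with restriction to opens only says that $\mathbf{M}_{2*}(\xi_2^i)$ is \emph{some} lift of $M_{2*}(\xi_2^i)$; a priori it could differ from the displayed polynomial by a multiple of $c_{\mathrm{top}}$ itself, in which case your conclusion degenerates to the vacuous statement $c_{\mathrm{top}}\in I_M^2+(c_{\mathrm{top}},j_*(1),j_*(x_1))$. Neither the factorization through $v$ in diagram~\eqref{eq: huge commutative diag} nor Lemma~\ref{lem: reduction to PGL2} rules this out, since both live entirely on the complements of the $Z$-loci. The gap closes with a degree count you never make: $\mathbf{M}_{2*}(\xi_2^i)$ has degree $2+i\leq 4$, while $j_*(1)$, $j_*(x_1)$, $c_{\mathrm{top}}$ have degrees $g+2$, $g+3$, $2g+4$, all at least $g+2\geq 5$ for $g\geq 3$; hence the kernel of restriction vanishes in degrees $\leq 4$, lifts there are unique, and $\mathbf{M}_{2*}(\xi_2^i)$ is forced to coincide with the polynomial of Lemma~\ref{lem: M2 computation low degree classes}. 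Two smaller repairs: the formula $(h,(f,g))\mapsto(hf,hg)$ is not well defined on the scheme $\P(W_1)\times(V^{\vee}\otimes W_{\frac{g-1}{2}})$, since rescaling $h$ rescales the image, so $\mathbf{M}_2$ must be defined through the twisted presentation $[(W_1\smallsetminus 0)\times(V^{\vee}\otimes W_{\frac{g-1}{2}})/(\Gm\times G\times\PGL_2)]$, exactly as the paper does for $M_2$ via $u$ and $v$ in Lemma~\ref{lem: ideal M1 + M2}; correspondingly, the projection formula in your last display needs $\iota'_*(\xi_2^2)$ written with coefficients pulled back along $\mathbf{M}_2^*$ rather than along the untwisted structure map, which works because $\mathbf{M}_2^*(\beta_1)$ and $\mathbf{M}_2^*(\beta_2)$ differ from $\beta_1$ and $\beta_2$ only by terms in $\Z[\xi_2]$ and the $\P(W_1)$-bundle relation has coefficients pulled back from $B\PGL_2$. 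With these points supplied, your proof is complete and shorter than the paper's.
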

	\begin{proof}
		Under the isomorphism
		\begin{equation}\label{eq: isomorphisms presentation in superfluous classes}
			\left[\frac{V^{\vee}\otimes W_{\frac{g+1}{2}}}{G\times\PGL_2}\right] \cong \left[\frac{V^{\vee}\otimes V_{\frac{g+1}{2}}}{G\times\GL_3}\right]
		\end{equation}
		the class $c_{\mathrm{top}}(V^{\vee}\otimes W_{\frac{g+1}{2}})$ corresponds to $c_{\mathrm{top}}(V^{\vee}\otimes V_{\frac{g+1}{2}})$ in $\CH^*(BG\times[\mathcal{S}/\GL_3])$. From now on, we will work in the $\GL_3$-equivariant setting.
		
		Note that $C:=c_{\mathrm{top}}(V^{\vee}\otimes V_{\frac{g+1}{2}})$ is equal to the restriction to $BG\times[\mathcal{S}/\GL_3]$ of the top Chern class of the $\GL_2\times\GL_3$-equivariant bundle $V_{\GL_2}^{\vee}\otimes V_{\frac{g+1}{2}}$ over $B\GL_2\times[\mathcal{S}/\GL_3]$. Moreover, by diagram~\eqref{eq: huge commutative diag}, we know that $M_2'$ is the restriction of a $\GL_2\times\GL_3$-equivariant morphism, that we denote by the same name. This allows us to work equivariantly with respect to $\GL_2\times\GL_3$ and show that the statement holds in $\CH^*(B\GL_2\times[\mathcal{S}/\GL_3]))$.
		
		Let $\Gm^2\times\Gm^3\subset\GL_2\times\GL_3$ be the maximal torus consisting of diagonal matrices, and $\widetilde{I}_M^2$ be the extension of $I_M^2$ to $\CH^*(B\Gm^2\times[\mathcal{S}/\Gm^3])$.    
		Notice that
		\[
		C=c_{\mathrm{top}}((\chi^{(1)})^{-1}\otimes V_{\frac{g+1}{2}})\cdot c_{\mathrm{top}}((\chi^{(2)})^{-1}\otimes V_{\frac{g+1}{2}})\in\CH^*(B\Gm^2\times[\mathcal{S}/\Gm^3])
		\]
		and recall that
		\[
		\CH^*([\mathcal{S}/\Gm^3])\cong\frac{\Z[t_1,t_2,t_3]}{(t_1+t_2+t_3,2t_1t_2t_3)}\cong\frac{\Z[t_1,t_2]}{(2(t_1+t_2)t_1t_2)}.
		\]
		By the key technical result~\cite[Proposition 3.4]{CLI}, it is enough to show that
		\[
		t_1t_2C=t_1t_2c_{\mathrm{top}}^{\Gm^2\times\Gm^3}(V^\vee\otimes V_{\frac{g+1}{2}})\in\widetilde{I}_M^2.
		\]   
		Therefore, from now on we will work $(\Gm^2\times\Gm^3)$-equivariantly, and we will still denote by $C$ its restriction to $B\Gm\times[\mathcal{S}/\Gm^3]$.
		
		Consider the restriction to the locus $\mathcal{S}_{0,0,2}$, which is the same as working modulo $2t_3$ (equivalently modulo $2(t_1+t_2)$), see equation~\eqref{eqn: Chow P(Vm) restricted to Si} in Section~\ref{subsec: GL3 counterparts}. Then, analogously to~\cite[Lemma 5.11]{CLI}, we see that the unique lifting of
		\[
		M_{2*}'([W_{1;1,0}])=\left[W_{\frac{g+1}{2};1,0}^{(1)}\right]\times\left[W_{\frac{g+1}{2};1,0}^{(2)}\right]
		\]
		to $\CH^*(B\Gm^2\times[\mathcal{S}/\Gm^3])$ divides the class $C$ modulo $2t_3$ in the same ring. This implies that $C=\alpha+2t_3C_1$ for some classes $\alpha\in\widetilde{I}_M^2$ and $C_1$. Using that $2t_1t_2t_3=0$, we get
		\[
		t_1t_2C=t_1t_2\alpha\in\widetilde{I}_M^2,
		\]
		as wanted.
	\end{proof}
	
	\begin{lemma}\label{lem: j* is superfluous}
		The classes $j_*(1)$ and $j_*(x_1)$ are contained in $I$.
	\end{lemma}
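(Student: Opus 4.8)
The plan is to reduce $j_*(1)$ and $j_*(x_1)$ to a single Euler class, and then to run the torus-restriction argument already used for $c_{\mathrm{top}}(V^\vee\otimes W_{\frac{g+1}{2}})$ in Lemma~\ref{lem: ctop is superfluos}. First I would exploit the factorization $j=\psi_{g+1}\circ\widetilde j$ of Equation~\eqref{eqn: j factorization}. Since $\widetilde j$ is the inclusion of the zero section of the second factor, its pushforward of $1$ is the Euler class of the corresponding normal bundle, so
\[
\widetilde j_*(1)=c_{\mathrm{top}}\big((\chi^{(2)})^{-1}\otimes W_{\frac{g+1}{2}}\big)=:u,\qquad \widetilde j_*(x_1)=x_1\,u,
\]
the second equality because $x_1$ is pulled back along $\widetilde j$. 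Hence $j_*(1)=\psi_{g+1*}(u)$ and $j_*(x_1)=\psi_{g+1*}(x_1u)$, exactly as recorded in the proof of Lemma~\ref{lem: Chow alternative presentation Dg+1g+1mu2}. By Lemma~\ref{lem: image pushforward psi} both classes already lie in $(2,\beta_1+\gamma)$; this is the constraint I must refine to membership in $I$.

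Next I would pass to the $\GL_3$-counterpart of the target of $j$ and restrict the $\GL_3$-factor to its maximal torus $\Gm^3$, as in Lemma~\ref{lem: ctop is superfluos}. By the key technical result~\cite[Proposition~3.4]{CLI} together with Remark~\ref{rmk: extension commutes with maps}, it suffices to prove that $t_1t_2\cdot j_*(1)$ and $t_1t_2\cdot j_*(x_1)$ lie in the extended ideal $\widetilde I$. The gain is that $u$ is now the Euler class of $(\chi^{(2)})^{-1}\otimes V_{\frac{g+1}{2}}$, precisely the second factor of the product $C=c_{\mathrm{top}}(V^\vee\otimes V_{\frac{g+1}{2}})$ analyzed there. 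Restricting to the chart $\mathcal{S}_{0,0,2}$ — equivalently, working modulo $2t_3$ — the single-factor version of the divisibility underlying Lemma~\ref{lem: ctop is superfluos} (cf.~\cite[Lemma~5.11]{CLI}) together with Lemma~\ref{lem: computation Wm10} should show that $[W_{\frac{g+1}{2};1,0}]$ divides $u$ modulo $2t_3$, so that $u=\alpha+2t_3D$ with $\alpha$ in the ideal generated by this counterpart class.

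Now I would push forward by $\psi_{g+1}$ and clean up. Since the torus variables $t_i$ are untouched by $\psi_{g+1*}$ and $2t_1t_2t_3=0$ in $\CH^*([\mathcal{S}/\Gm^3])$, multiplying by $t_1t_2$ annihilates the correction: $t_1t_2\,\psi_{g+1*}(2t_3D)=2t_1t_2t_3\,\psi_{g+1*}(D)=0$. Thus $t_1t_2\,j_*(1)=t_1t_2\,\psi_{g+1*}(\alpha)$, and the remaining point is that $\psi_{g+1*}(\alpha)$ lands in $I_S+I_M^{\le 2}$. I expect this to follow from the same envelope identifications used in \S\ref{subsec: computation Sr} and Lemma~\ref{lem: ctop is superfluos}: the pushforward of the divisor class $[W_{\frac{g+1}{2};1,0}]$ (multiplication of one polynomial by a linear form) is, up to $2$-divisible corrections controlled by $2\beta_1\in I$ and $2\gamma=0$, one of the generators produced by the maps $S_r$ and $M_r$. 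The class $j_*(x_1)$ is then handled identically after replacing $u$ by $x_1u$ throughout.

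The hard part will be the $\gamma$-torsion. Because $\psi_{g+1}^*$ kills $\gamma$, the torus computation only pins down $j_*(1)$ and $j_*(x_1)$ modulo $\ker\psi_{g+1}^*=(\gamma)$, so I still have to show that the residual $\gamma$-part lies in $I$. Here I would combine the constraint $j_*(1)\in(2,\beta_1+\gamma)$ — noting that $\beta_1+\gamma$ annihilates $\gamma$ — with a pullback to $B(\mu_2\times\PGL_2)$ under $\mu_2\hookrightarrow G$, where $\beta_1\mapsto\gamma$ and $\beta_2\mapsto 0$, in the spirit of Lemmas~\ref{lemma: aux div plus pullback} and~\ref{lemma: H2r(1) in (2)}, to isolate the $\gamma$-coefficient and verify it is already absorbed into $I$. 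Coupling the torus divisibility with this separate $\gamma$-analysis is the main obstacle; everything else is the bookkeeping already carried out for $c_{\mathrm{top}}(V^\vee\otimes W_{\frac{g+1}{2}})$.
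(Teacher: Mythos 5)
Your proposal has the same outer skeleton as the paper's proof: factor $j=\psi_{g+1}\circ\widetilde j$ so that $j_*(1)=\psi_{g+1*}(u)$ and $j_*(x_1)=\psi_{g+1*}(x_1u)$ with $u$ the Euler class of the second factor, pass to $\GL_3$-counterparts and restrict to $\Gm^3$, reduce via \cite[Proposition 3.4]{CLI} to showing $t_1t_2\,j_*(1),\,t_1t_2\,j_*(x_1)\in\widetilde I$, prove a divisibility of $u$ modulo $2t_3$ on the chart $\mathcal{S}_{0,0,2}$, and kill the correction term using $2t_1t_2t_3=0$. The gap is at the pivot: you divide $u$ by the class $[W_{\frac{g+1}{2};1,0}]$ and then assert that the $\psi_{g+1*}$-pushforward of its multiples is, ``up to $2$-divisible corrections, one of the generators produced by the maps $S_r$ and $M_r$.'' This is unjustified and, as stated, false. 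The class $[W_{\frac{g+1}{2};1,0}]$ is only an auxiliary $\Gm^3$-equivariant class (the locus where the second form is divisible by the linear section $X_0$); that locus is not contained in the discriminant $\uD$, and the class is not the pushforward along (an extension of) any envelope map. Concretely, by Lemma~\ref{lem: computation Wm10} and the formula $\psi_*(x_i)=\beta_1+\gamma$ underlying Lemma~\ref{lem: image pushforward psi}, its pushforward contains a term $g\,t_1(\beta_1+\gamma)$ (note $2m-1=g$ is odd and $\beta_1+\gamma\notin I$), and no mechanism in your argument forces such expressions into $\widetilde I$. The paper's proof hinges on a finer divisibility: $\widetilde j^{\,o}_*(1)\equiv\overline{G}'_{2*}(\alpha_1)\cdot\alpha_2 \pmod{2t_3}$, where $\overline{G}'_2$ is the extension of the $\GL_3$-counterpart of $G_2$ (multiplication of one polynomial by the \emph{square} of a linear section) and $\alpha_1$ lifts $[W_{1;1,0}]$. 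Realizing the divisor as a $\overline{G}'_2$-pushforward is exactly what makes the descent work: the commutative square \eqref{eq: commutativity G2 S2 bar} converts $\psi_{g+1*}\circ\overline{G}'_{2*}$ into $\overline{S}'_{2*}\circ\phi_{G*}$, and since $\CH^*(B(\Gm^2\times\Gm^3))$ is generated by $1,x_1$ as a $\CH^*(B(G\times\Gm^3))$-module and $\alpha_1=\theta_0+\theta_1t+\theta_2t^2$, everything lands in the ideal generated by $\overline{S}'_{2*}\phi_{G*}(t^i)$ and $\overline{S}'_{2*}\phi_{G*}(t^ix_1)$, which lies in $\widetilde I$ by Lemma~\ref{lemma: IS}. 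This identification of the divisor with an envelope pushforward is the essential idea missing from your proposal.

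Your final paragraph on ``$\gamma$-torsion'' is a red herring. The argument proceeds by pushing forward an exact identity on the double cover: $t_1t_2\,j_*(1)=t_1t_2\,\psi_{g+1*}(\alpha)$ holds on the nose, not merely after applying $\psi_{g+1}^*$, so nothing is lost in $\ker\psi_{g+1}^*=(\gamma)$ and no separate analysis of the $\gamma$-part is needed (the paper has none). Your proposed pullback to $B(\mu_2\times\PGL_2)$ thus addresses a problem that does not arise, while the real missing step --- proving $\psi_{g+1*}(\alpha)\in\widetilde I$ for your choice of divisor --- remains open.
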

	\begin{proof}
		We follow a similar strategy as for Lemma~\ref{lem: ctop is superfluos}. Again, we start by working $G\times\GL_3$-equivariantly, that is, in the space $[V^{\vee}\otimes V_{\frac{g+1}{2}}/G\times\GL_3]$, see equation~\eqref{eq: isomorphisms presentation in superfluous classes}. By Lemma~\ref{lem: ctop is superfluos}, we can then always work with $0\in V^{\vee}\otimes V_{\frac{g+1}{2}}$ removed, as its class corresponds exactly to the class in that statement. More precisely, we consider the commutative diagram
		\begin{equation}\label{eq: commutativity js}
			\begin{tikzcd}
				\left[\frac{Z_{g+1}\smallsetminus0}{G\times\GL_3}\right]\arrow[r,"j^o"] & \left[\frac{V^{\vee}\otimes V_{\frac{g+1}{2}}\smallsetminus0}{G\times\GL_3}\right]\arrow[r] & BG\times[\mathcal{S}/\GL_3]\\
				\left[\frac{((\chi^{(1)})^{-1}\otimes V_{\frac{g+1}{2}}\smallsetminus0)\times0}{\Gm^2\times\GL_3}\right]\arrow[r,"\widetilde{j}^o"]\arrow[u,"v","\cong"'] & \left[\frac{((\chi^{(1)})^{-1}\otimes V_{\frac{g+1}{2}}\times(\chi^{(2)})^{-1}\otimes V_{\frac{g+1}{2}})\smallsetminus0}{\Gm^2\times\GL_3}\right]\arrow[u,"\psi_{g+1}"']\arrow[r] & B\Gm^2\times[\mathcal{S}/\GL_3]\arrow[u,"\pi"]
			\end{tikzcd}
		\end{equation}
		and show that $j^o_*(1)$, $j^o_*(v_*(x_1))\in I \subseteq \CH^*( BG \times [\mathcal{S} /\GL_3])$, with the notation above. By Lemma~\ref{lem: Chow alternative presentation Dg+1g+1mu2} or the diagram above, we know that $j^o_*(1)=\psi_{g+1*}\widetilde{j}^o_*(1)$ and $j^o_*(v_*(x_1))=\psi_{g+1*}(x_1\widetilde{j}^o_*(1))$.
		
		Let us work $\Gm^3$-equivariantly everywhere in diagram~\eqref{eq: commutativity js}, instead of $\GL_3$-equivariantly, and let $\widetilde{I}$ be the extension of $I$ in $\CH^*(BG\times[\mathcal{S}/\Gm^3])$. First, notice that the $\GL_3$-counterparts $G_2'$ of $G_2$ (see Equation~\eqref{eqn: def Gr}) and $S_2'$ of $S_2$ extend to morphisms $\overline{G}_2'$ and $\overline{S}'_2$ fitting in the following commutative square
		\begin{equation}\label{eq: commutativity G2 S2 bar}
			\begin{tikzcd}
				\left[\frac{(\chi^{(3)})^{-1}\otimes V_{1}\times_{\mathcal{S}}(\chi^{(1)})^{-1}\otimes V_{\frac{g+1}{2}-r}\times_{\mathcal{S}}(\chi^{(2)})^{-1}\otimes V_{\frac{g+1}{2}}}{\Gm^2\times \Gm\times\GL_3}\right]\arrow[r,"\overline{G}_2'"]\arrow[d,"\phi_G","\cong"'] & \left[\frac{(\chi^{(1)})^{-1}\otimes V_{\frac{g+1}{2}}\times_{\mathcal{S}}(\chi^{(2)})^{-1}\otimes V_{\frac{g+1}{2}}}{\Gm^2\times\GL_3}\right]\arrow[d,"\psi_{g+1}"]\\
				\left[\frac{ X }{G \times \Gm \times\GL_3}\right]\arrow[r,"\overline{S}_2'"] & \left[\frac{V^{\vee}\otimes V_{\frac{g+1}{2}}}{G\times\GL_3}\right]
			\end{tikzcd}
		\end{equation}
		where $X$ is the disjoint union of two copies of the source of $\phi_G$, with the usual $G \times \Gm \times \GL_3$-action. Let $t=c_1(\chi^{(3)})$ be the first Chern class of the standard representation of the $\Gm$ factor appearing in the middle.
		
		Restricting again to the $\Gm^3$-invariant open subscheme $\mathcal{S}_{0,0,2}$ of $\mathcal{S}$, by~\cite[Lemma 5.11]{CLI} we know that $\widetilde{j}^o_*(1)$ is equal modulo $2t_3$ to $\overline{G}'_{2*}(\alpha_1)\alpha_2\in\widetilde{I}$, where $\alpha_1$ is the unique lift of $[W_{1;1,0}]$ to $\CH^*(B(\Gm^2\times\Gm)\times[\mathcal{S}/\Gm^3])$, and $\alpha_2$ is some class in the same ring.
		Multiplying by $t_1t_2$ and using the fact that $2t_1t_2t_3=0$, we get that $t_1t_2\widetilde{j}^o_*(1)=t_1t_2\overline{G}'_{2*}(\alpha_1)\alpha_2$, hence
		\[
		t_1t_2j_*^o(1)=t_1t_2\psi_{g+1*}(\widetilde{j}^o_*(1))=t_1t_2\psi_{g+1*}(\overline{G}'_{2*}(\alpha_1)\alpha_2)\in\widetilde{I}.
		\]
		
		The last inclusion in $\widetilde{I}$ is explained as follows. Since $\CH^*(B(\Gm^2\times\Gm^3))$ is generated by $1$, $x_1$ as a $\CH^*(B(G\times\Gm^3))$-module, the image under $\psi_{g+1*}$ of the ideal generated by $\overline{G}'_{2*}(\alpha_1)$ is equal to  $(\psi_{g+1*}(\overline{G}'_{2*}(\alpha_1)),\psi_{g+1*}(\overline{G}'_{2*}(\alpha_1)x_1))$. In turn, by diagram~\eqref{eq: commutativity G2 S2 bar}, this ideal is equal to the ideal generated by $\overline{S}_{2*}'(\phi_{G*}(\alpha_1))$ and $\overline{S}_{2*}'(\phi_{G*}(\alpha_1x_1)))$. Since $\alpha_1=\theta_0+\theta_1t+\theta_2t^2$ for some $\theta_i\in\CH^*([\mathcal{S}/\GL_3])$, this last ideal is contained in the ideal generated by $\overline{S}_{2*}'\phi_{G*}(t^i)$ and $\overline{S}_{2*}'\phi_{G*}(t^ix_1)$ for $i=0,1,2$ and these classes are contained in $\widetilde{I}$ by Lemma~\ref{lemma: IS}.
		
		Similarly, we have
		\[
		t_1t_2j^o_*(v_*(x_1))=t_1t_2\psi_{g+1*}(\widetilde{j}^o_*(x_1))=t_1t_2\psi_{g+1*}(x_1\widetilde{j}^o_*(1))
		=t_1t_2\psi_{g+1*}(x_1\overline{G}'_{2*}(\alpha_1)\alpha_2)\in\widetilde{I}.
		\]    
		Applying~\cite[Lemma 3.3]{CLI} we get that $j^o_*(1),j^o_*(v_*(x_1))\in I$.
	\end{proof}
	Putting everything together, we get the Chow ring of $[\cD_{g+1,g+1}/\mu_2]$ and $\RH_g^{(g+1)/2}$.
	\begin{proof}[Proof of Theorem~\ref{thm: Chow Dg+1g+1mu2}]
		This follows from the discussion in \S\ref{subsec: strategy}, Lemmas~\ref{lemma: IS}, \ref{lem: M1 computation low degree classes}, \ref{lem: M2 computation low degree classes}, \ref{lem: ideal M1 + M2}, and Propositions \ref{prop: Mr conclusion}, ~\ref{prop: full H2r},~\ref{prop: projective relations are superfluous}. Note that in rewriting the generators of $I$ we used the identities
		$$
		-2 M_{1*}(1) - S_{1*}(1)= 2 \beta_1
		$$
		and 
		$$
		(g-1)M_{1*}(\tau)+\left(\frac{g+1}{2}\right)(-S_{1*}\phi_{1*}(\tau)+2\beta_1^2)= 4g \beta_2= S_{1*}\phi_{1*}(\xi_{g+1}).
		$$
	\end{proof}
	
	\begin{proof}[Theorem~\ref{thm: Chow RHgg+1 odd}]
		By Lemma~\ref{lemma: key cartesian diagram} and the discussion in \S\ref{subsec: pres g odd}, we know that $\RH_g^{(g+1)/2}$ is the root gerbe over $[\cD_{g+1,g+1}/\mu_2]$ associated with the pullback $\cL$ of $\cO_{\P(W_{g+1})}(-1)$ along the multiplication map
		\[
		\begin{tikzcd}
			\left[\frac{\cD_{g+1,g+1}}{\mu_2}\right]\cong\left[\frac{\P(W_{\frac{g+1}{2}})\times\P(W_{\frac{g+1}{2}})}{\mu_2\times\PGL_2}\right]\arrow[r] & \left[\frac{\P(W_{g+1})}{\PGL_2}\right]\cong\cD_{2g+2}.
		\end{tikzcd}
		\]
		By Theorem~\ref{thm: Chow Dg+1g+1mu2} and~\cite[Proposition 3.5]{CLI}, it is enough to compute $c_1(\cL)$. Under the identifications in Lemma~\ref{lem: Chow alternative presentation Dg+1g+1mu2}, this is the same as the pullback of $t=c_1(\chi)\in\CH^*(B\Gm)$ along the morphism $B(G\times\PGL_2)\rightarrow B(\Gm\times\PGL_2)$ sending $(a,b;\epsilon, [B])$ to $(ab,[B])$. Therefore, we have $c_1(\cL)=\beta_1+\gamma$.
	\end{proof}
	
	\subsection{Geometric Interpretation of the Generators}\label{subsec: interpretation generators g+1}
	
	By Theorem~\ref{thm: Chow RHgg+1 odd}, we know that $\CH^*(\RH_g^{(g+1)/2})$ is generated by $\beta_1,\beta_2,\gamma,c_2,c_3,t$. In this section, we give a geometric interpretation of these classes, producing vector bundles over $\RH_g^{(g+1)/2}$ whose Chern classes give all the elements written above.
	
	First, notice that the classes $c_2$, $c_3$ and $t$ are pullback of classes from $\cH_g$ along the map $\RH_g^{\frac{g+1}{2}}\rightarrow\cH_g$. For their geometric interpretation we refer to \cite[Section 5.3]{CLI} (or ~\cite[Theorem 7.2]{DL18} or \cite[Theorem 7.2]{FV11}), where the same notation $c_2,c_3$ and $t$ is adopted.
	
	The classes $\beta_1$, $\beta_2$ and $\gamma$ are the pullback of the homonymous classes of $[\cD_{g+1,g+1}/\mu_2]$ along the map $\RH_g^{\frac{g+1}{2}}\rightarrow[\cD_{g+1,g+1}/\mu_2]$ from Lemma~\ref{lemma: key cartesian diagram}. We describe such classes on $[\cD_{g+1,g+1}/\mu_2]$.
	
	First, $\gamma$ is pulled back from $B\mu_2$, where it corresponds to the universal $\mu_2$-torsor $\mathrm{Spec}(k) \to B\mu_2$. The pullback of this torsor under the map $[\cD_{g+1,g+1}/\mu_2] \to B\mu_2$ is the projection 
	\[
	p : \cD_{g+1,g+1} \to [\cD_{g+1,g+1}/\mu_2].
	\]
	By \cite[Lemma 4.17]{CLI}, it follows that
	\[
	\gamma = c_1(p_* \cO_{\cD_{g+1,g+1}}).
	\]
	
	As in~\cite[Section 5.3]{CLI}, let $\mathcal{N}_{\frac{g+1}{2}}$ be the line bundle on $\mathcal{D}_{g+1}$ defined by $\pi_*\omega_\pi^{\otimes\frac{g+1}{2}}(\mathcal{F}_{g+1})$ where $\mathcal{F}_{g+1} \subseteq \mathcal{P} \xrightarrow{\pi} \mathcal{D}_{g+1}$ are the universal divisor and the universal Brauer-Severi variety of relative dimension $1$ over $\mathcal{D}_{g+1}$. Notice that there is a $\mu_2$-action on the vector bundle $\mathrm{pr}_1^*\mathcal{N}_{\frac{g+1}{2}}\oplus\mathrm{pr}_2^*\mathcal{N}_{\frac{g+1}{2}}$ over $\cD_{g+1,g+1}$ that exchanges the two components.
	This induces a rank 2 vector bundle $\widetilde{\mathcal{N}}_{\frac{g+1}{2}}$ over $[\cD_{g+1,g+1}/\mu_2]$. The following lemma concludes the geometric description of the generators.
	
	\begin{lemma}\label{lem: geometric interpretation beta1 beta2}
		In $\CH^*([\cD_{g+1,g+1}/\mu_2])$ we have
		\[
		\beta_1=c_1(\widetilde{\mathcal{N}}_{\frac{g+1}{2}}^{\vee}),\qquad\beta_2=c_2(\widetilde{\mathcal{N}}_{\frac{g+1}{2}}^{\vee}).
		\]
	\end{lemma}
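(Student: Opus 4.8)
The plan is to prove both identities after pulling back along the $\mu_2$-cover $p\colon \cD_{g+1,g+1}\to[\cD_{g+1,g+1}/\mu_2]$, where the bundle splits and the classes become transparent, and then to control the residual ambiguity, which is supported on $\gamma$. By construction $\widetilde{\mathcal{N}}_{\frac{g+1}{2}}$ is the descent of the $\mu_2$-equivariant bundle $\mathrm{pr}_1^*\mathcal{N}_{\frac{g+1}{2}}\oplus\mathrm{pr}_2^*\mathcal{N}_{\frac{g+1}{2}}$, so $p^*\widetilde{\mathcal{N}}_{\frac{g+1}{2}}^\vee=\mathrm{pr}_1^*\mathcal{N}_{\frac{g+1}{2}}^\vee\oplus\mathrm{pr}_2^*\mathcal{N}_{\frac{g+1}{2}}^\vee$. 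Identifying $p^*$ with the pullback $\psi^*$ along $B(\Gm^2\times\PGL_2)\to B(G\times\PGL_2)$ and using $\psi^*\beta_1=x_1+x_2$, $\psi^*\beta_2=x_1x_2$ from \eqref{eqn: relation beta and x}, the Whitney formula reduces the statement to the single identity $c_1(\mathcal{N}_{\frac{g+1}{2}}^\vee)=x_1$ on $\cD_{g+1}$, i.e. $c_1(\mathrm{pr}_i^*\mathcal{N}_{\frac{g+1}{2}}^\vee)=x_i$ for $i=1,2$.

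For this I would compute $c_1(\mathcal{N}_{\frac{g+1}{2}})$ equivariantly. Since $\omega_\pi^{\otimes\frac{g+1}{2}}(\mathcal{F}_{g+1})$ has relative degree $-(g+1)+(g+1)=0$, it is the pullback $\pi^*\mathcal{M}$ of a line bundle $\mathcal{M}$ on $\cD_{g+1}$, and the projection formula gives $\mathcal{M}=\pi_*\pi^*\mathcal{M}=\mathcal{N}_{\frac{g+1}{2}}$; hence $\pi^*c_1(\mathcal{N}_{\frac{g+1}{2}})=c_1(\cO(\mathcal{F}_{g+1}))+\tfrac{g+1}{2}c_1(\omega_\pi)$. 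I only need to track the coefficient of $x_1$, the weight of the scaling factor $\Gm^{(1)}$: the relative hyperplane classes cancel because the total relative degree is $0$, and the $\PGL_2$-contribution to a first Chern class vanishes since $\CH^1(B\PGL_2)=0$. In the presentation $\cD_{g+1}\cong[(\chi^{(1)})^{-1}\otimes W_{\frac{g+1}{2}}\smallsetminus\Delta/\Gm\times\PGL_2]$ the tautological section $F(f,x)=f(x)$ cutting out $\mathcal{F}_{g+1}$ satisfies $F(\lambda\cdot f,x)=\lambda^{-1}F(f,x)$, so it is $\Gm^{(1)}$-equivariant of weight $-1$, while $\omega_\pi$ has weight $0$. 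Therefore $c_1(\cO(\mathcal{F}_{g+1}))$ contributes $-x_1$ and $c_1(\mathcal{N}_{\frac{g+1}{2}})=-x_1$, that is $c_1(\mathcal{N}_{\frac{g+1}{2}}^\vee)=x_1$. This is the even-degree analogue of the line bundle computations in \cite[Section 5.3]{CLI}, which one may alternatively cite.

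Assembling the two steps gives $\psi^*c_1(\widetilde{\mathcal{N}}^\vee)=x_1+x_2=\psi^*\beta_1$ and $\psi^*c_2(\widetilde{\mathcal{N}}^\vee)=x_1x_2=\psi^*\beta_2$, so each difference $\beta_i-c_i(\widetilde{\mathcal{N}}^\vee)$ lies in $\ker\psi^*=(\gamma)$ (the remark following Lemma~\ref{lemma: image psi*}). Resolving this $\gamma$-ambiguity is the main obstacle, since $\gamma^2$ is nonzero in degree $2$ and cannot be detected by $\psi^*$. I would settle it by upgrading the first-Chern-class computation to an isomorphism of equivariant bundles: because $\mathcal{N}_{\frac{g+1}{2}}^\vee$ and $\chi^{(1)}$ have the same first Chern class on the smooth quotient stack $\cD_{g+1}$, where $\Pic=\CH^1$, they are isomorphic $\Gm\times\PGL_2$-equivariantly; pulling this isomorphism and its $\mu_2$-translate back to $\cD_{g+1,g+1}$ identifies $\mathrm{pr}_1^*\mathcal{N}^\vee\oplus\mathrm{pr}_2^*\mathcal{N}^\vee$ with $\chi^{(1)}\oplus\chi^{(2)}$ compatibly with the swap, whence $\widetilde{\mathcal{N}}_{\frac{g+1}{2}}^\vee\cong V$ and $c_i(\widetilde{\mathcal{N}}_{\frac{g+1}{2}}^\vee)=c_i(V)=\beta_i$. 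As a robustness check, even if the descent were pinned down only up to a twist by $\Gamma$, the relations $2\gamma=0$ and $\gamma(\beta_1+\gamma)=0$ from Theorem~\ref{thm: Chow Dg+1g+1mu2} give $c_1(V\otimes\Gamma)=\beta_1+2\gamma=\beta_1$ and $c_2(V\otimes\Gamma)=\beta_2+\gamma(\beta_1+\gamma)=\beta_2$, so the conclusion is unaffected.
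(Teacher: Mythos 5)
Your proof is correct and in the end takes essentially the same route as the paper: everything reduces to the $\Gm\times\PGL_2$-equivariant isomorphism $\mathcal{N}_{\frac{g+1}{2}}^{\vee}\cong\chi$ (pulled back from $B\Gm$), after which descending the swap-equivariant sum identifies $\widetilde{\mathcal{N}}_{\frac{g+1}{2}}^{\vee}$ with the pullback of $V$ along $[\cD_{g+1,g+1}/\mu_2]\to BG$, giving $c_i(\widetilde{\mathcal{N}}_{\frac{g+1}{2}}^{\vee})=\beta_i$. The only difference is that the paper obtains the line-bundle identification by citing \cite[Lemma 5.20]{CLI}, whereas you reprove it directly (the relative-degree-zero argument, the weight $-1$ of the tautological section, and $\Pic=\CH^1$ on the smooth quotient stack $\cD_{g+1}$); your observation that a Chern-class comparison alone only determines the classes up to the ideal $(\gamma)$ is exactly why both proofs must work at the level of bundles rather than classes.
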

	
	\begin{proof}
		By~\cite[Lemma 5.20]{CLI}, $\mathcal{N}_{\frac{g+1}{2}}$ is the pullback of the line bundle $\chi^{-1}$ on $B\Gm$ along the composite
		\begin{tikzcd}
			D_{g+1}\cong\left[\frac{\chi^{-1} \otimes W_{\frac{g+1}{2}}\setminus\Delta}{\Gm\times\PGL_2}\right] \arrow[r] & B(\Gm \times \PGL_2) \arrow[r] & B\Gm.
		\end{tikzcd}
		It follows that $\widetilde{N}_{\frac{g+1}{2}}^{\vee}$ is the pullback along the morphism $\left[D_{g+1,g+1}/\mu_2\right]\rightarrow BG$ of the rank 2 vector bundle $V$. This concludes.
	\end{proof}
	
	\section{Application: Hyperelliptic Prym pairs of odd genus}\label{sec: Spin pairs}
	
	In this section we prove Proposition \ref{prop: decomposition Spin}, providing an identification of the irreducible components of the moduli stack $\mathcal{SH}_g$ of hyperelliptic Spin curves with those of $\RH_g$ and $\cH_g$.
	
	Before starting the proof, let us recall a useful fact. Given a (representable) finite, étale morphism $\phi:\cX\rightarrow\cY$ of algebraic stacks and a universal homeomorphism $\psi:\cY'\rightarrow\cY$, there exists a section $s:\cY\rightarrow\cX$ to $\phi$ if and only if there exists $\rho:\cY'\rightarrow\cX$ such that $\phi\circ\rho=\psi$. Notice that we are not asking $\psi$ to be representable. The set of such $\rho$ is in bijection with the set of sections $s'$ to the finite, étale cover $\phi'$ in the following cartesian diagram
	\[
	\begin{tikzcd}
		\cX'\arrow[r,"\phi'"]\arrow[d,"\psi'"] & \cY'\arrow[d,"\psi"]\\
		\cX\arrow[r,"\phi"] & \cY.
	\end{tikzcd}
	\]
	As $s'$ is a section to a finite, étale morphism, it is an open and closed immersion, hence $\cX'=\cY'\sqcup(\cX'\setminus\cY')$, where we identified $\cY'$ with its image under $s'$. As $\psi'$ is an homeomorphism, $\cX\cong \cX_1\sqcup\cX_2$ with $\cX_1\times_{\cY}\cY'\cong\cY'$. It follows that $\cX_1\rightarrow\cY$ is finite, étale and of degree 1, hence an isomorphism, thus yielding an inverse.
	
	\begin{proof}[Proof of Proposition \ref{prop: decomposition Spin}]
		
		Consider the universal commutative diagram over $\cH_g$
		\begin{equation*}
			\begin{tikzcd}
				\mathcal{C} \arrow[rr,"f"] \arrow[dr] & & \mathcal{P}\arrow[dl] \\
				& \cH_g
			\end{tikzcd}
		\end{equation*}
		where $\mathcal{P}\to\cH_g$ is a Brauer–Severi scheme of relative dimension $1$. There exists a $\mu_2$-gerbe $\psi:\widehat{\cH}_g\rightarrow\cH_g$ such that $\widehat{\mathcal{P}}:=\mathcal{P}\times_{\cH_g}\widehat{\cH}_g\rightarrow\widehat{\cH}_g$ is a trivial Brauer-Severi scheme, that is, there exists a rank-2 vector bundle $\widehat{\mathcal{E}}$ over $\widehat{\cH}_g$ such that $\widehat{\mathcal{P}}\cong\P(\widehat{\mathcal{E}})$. Such a gerbe $\widehat{\cH}_g$ may be constructed by taking the image the Brauer class $[\mathcal{P}] \in H^1_{\mathrm{\acute{e}t}}(\cH_g, \PGL_2)$ along the boundary map in cohomology associated with the short exact sequence of sheaves
		\[
		0 \to \mu_2 \to \SL_2 \to \PGL_2 \to 0.
		\]
		Under this map, the Brauer class $[\mathcal{P}]$ is sent to a class $[\widehat{\cH}_g] \in H^2_{\mathrm{\acute{e}t}}(\cH_g, \mu_2)$, which defines the desired $\mu_2$-gerbe.
		In particular, $\widehat{\mathcal{C}}:=\mathcal{C}\times_{\cH_g}\widehat{\cH}_g$ admits the $g_2^1$, namely $\widehat{f}^*\cO_{\P(\mathcal{E})}(1)$, where $\widehat{f}$ is the base change of $f$. By the above discussion, it is enough to construct a morphism $\rho:\widehat{\cH}_g\rightarrow\mathcal{SH}_g$ whose composite with $\mathcal{SH}_g\rightarrow\cH_g$ is $\psi$.
		
		Set $\widehat{M}:=\widehat{f}^*\cO_{\widehat{\mathcal{P}}}(1)^{\otimes(g-1)/2}$. Then, by Riemann-Hurwitz formula, the sheaf $\widehat{M}^{\otimes2}\otimes\omega_{\widehat{\mathcal{C}}/\widehat{\cH}_g}^{-1}$ is trivial on the fibers of $\widehat{\mathcal{C}}\rightarrow\widehat{\cH}_g$. As $\widehat{\cH}_g$ is reduced, it follows that $\widehat{M}$ is Zariski-locally a square root of the dualizing sheaf and this yields the desired morphism $\rho$. The final part of the statement follows from the splitting in Equation~\eqref{eqn: decomposition J[2]}.
	\end{proof}
	\begin{remark}\label{rmk: explicit section}
		We can explicitly describe the section to $\mathcal{SH}_g\rightarrow\cH_g$ as follows. If $C$ is an hyperelliptic curve over an algebraically closed field, and $f:C\rightarrow\P^1$ is a double cover with Weierstrass divisor $W$, then every theta characteristic is of the form
		\[
		f^*\cO_{\P^1}(1)^{\otimes(\frac{g-1}{2}+n)}\otimes\cO_C(-w_1-\ldots-w_{2n})
		\]
		for some $0\leq n\leq(g-1)/2$ and $w_i\in W$ distinct. Then, the statement of Proposition~\ref{prop: decomposition Spin} is essentially equivalent to saying that the number $n$ is a deformation invariant, and the section is given by the `isolated' theta characteristic $f^*\cO_{\P^1}((g-1)/2)$. Given a family $C\xrightarrow{f} P\rightarrow S$ of hyperelliptic curves realized as a double cover of a Brauer-Severi scheme $P\rightarrow S$ of relative dimension 1, to construct the map $S\rightarrow\mathcal{SH}_g$ one can work étale-locally on $S$, to trivialize $P$ and construct the root. The first step is replaced by a global construction in the proof of Proposition~\ref{prop: decomposition Spin}, and it is not necessary if $g$ is congruent to 1 modulo 4, as the candidate section can be taken to be $f^*\omega_{P/S}^{-\frac{g-1}{4}}$ in this case.
	\end{remark}

	\bibliographystyle{amsalpha}
	\bibliography{library}

	$\,$\
	\noindent
	
	$\,$\
	\noindent
	\textsc{Department of Pure Mathematics {\it \&} Mathematical Statistics, 
		University of Cambridge, Cambridge, UK}
	
	\textit{e-mail address:} \href{mailto:au270@cam.ac.uk}{ac2758@cam.ac.uk}
	
	$\,$\
	\noindent
	
	$\,$\
	\noindent
	\textsc{Department of Pure Mathematics, Brown University, 151 Thayer Street, Providence, RI 02912, USA}
	
	\textit{e-mail address:} \href{mailto:alberto_landi@brown.edu}{alberto\_landi@brown.edu}
	
\end{document}